\pgfplotsset{compat=1.5}
 \newcounter{extralabel}[section]
 \newtheorem{ittheorem}{Theorem}
 \newtheorem{itlemma}{Lemma}
 \newtheorem{itproposition}{Proposition}
 \newtheorem{itdefinition}{Definition}
 \newtheorem{itcorollary}{Corollary}
 \newtheorem{itconjecture}{Conjecture}
 \newtheorem{itremark}{Remark}
 \newtheorem{itassumption}{Assumption}
 \newenvironment{theorem}{\addtocounter{extralabel}{1}
 \begin{ittheorem}}{\end{ittheorem}}
 \newenvironment{lemma}{\addtocounter{extralabel}{1}
 \begin{itlemma}}{\end{itlemma}}
 \newenvironment{proposition}{\addtocounter{extralabel}{1}
 \begin{itproposition}}{\end{itproposition}}
 \newenvironment{definition}{\addtocounter{extralabel}{1}
 \begin{itdefinition}}{\end{itdefinition}}
 \newenvironment{corollary}{\addtocounter{extralabel}{1}
 \begin{itcorollary}}{\end{itcorollary}}
 \newenvironment{remark}{\addtocounter{extralabel}{1}
 \begin{itremark}}{\end{itremark}}
 \newenvironment{assumption}{\addtocounter{extralabel}{1}
 \begin{itassumption}}{\end{itassumption}}
\newcommand{\Rand}[1]{\marginpar{#1}}
\newcommand{\be}[1]{\Rand{\vspace{0,6cm}\tt #1}\begin{equation}\label{#1}}
\newcommand{\beL}[1]{\Rand{\vspace{0,6cm}\tt #1}\begin{lemma}\label{#1}}
\newcommand{\belC}[2]{\Rand{\vspace{0,6cm}\tt #1}\begin{lemma}[#2]\label{#1}}
\newcommand{\beP}[1]{\Rand{\vspace{0,6cm}\tt #1}\begin{proposition}\label{#1}}
\newcommand{\bePC}[2]{\Rand{\vspace{0,6cm}\tt #1}\begin{proposition}[#2]\label{#1}}
\newcommand{\beD}[1]{\Rand{\vspace{0,6cm}\tt #1}\begin{definition}\label{#1}}
\newcommand{\beT}[1]{\Rand{\vspace{0,6cm}\tt #1}\begin{theorem}\label{#1}}
\newcommand{\beC}[1]{\Rand{\vspace{0,6cm}\tt #1}\begin{corollary}\label{#1}}
\newcommand{\bePr}[1]{\Rand{\vspace{0,6cm}\tt #1}\begin{proof}\label{#1}}
\newcommand{\bea}[1]{\Rand{\vspace{0,7cm}\tt #1\vspace{-0,7cm}}\begin{eqnarray}\label{#1}}
\renewcommand{\d}{{\rm d}}
\newcommand{\e}{{\rm e}}
\newcommand{\E}{\mathbb{E}}
\renewcommand{\P}{\mathbb{P}}
 \def\1{{\mathchoice {1\mskip-4mu\mathrm l} 
{1\mskip-4mu\mathrm l}
{1\mskip-4.5mu\mathrm l} {1\mskip-5mu\mathrm l}}}
\def\CA{\mathcal{A}}
\def\CB{\mathcal{B}}
\def\CE{\mathcal{E}}
\def\CF{\mathcal{F}}
\def\CG{\mathcal{G}}
\def\CM{\mathcal{M}}
\def\CL{\mathcal{L}}
\def\CI{\mathcal{I}}
\def\CJ{\mathcal{J}}
\def\CT{\mathcal{T}}
\def\CP{\mathcal{P}}
\def\CV{\mathcal{V}}
\def\CR{\mathcal{R}}
\def\E{\mathbb{E}}
\def\G{\mathbb{G}}
\def\N{\mathbb{N}}
\def\P{\mathbb{P}}
\def\R{\mathbb{R}}
\def\S{\mathbb{S}}
\def\Z{\mathbb{Z}}
\def\suml{\sum\limits}
\DeclareMathSymbol{\varNu}{\mathord}{letters}{78}
\newcommand{\ee}{\end{equation}}
\newcommand{\eea}{\end{eqnarray}}
\newcommand{\bean}{\begin{eqnarray*}}
\newcommand{\eean}{\end{eqnarray*}}
\definecolor{rood}{rgb}{1,0,0}
\newtheorem{xx}{\bf xxx}
\newtheorem{zz}{\bf zzz}
\newtheorem{yy}{\bf yyy}
\begin{document}


\title{Spatial populations with seed-bank:\\
finite-systems scheme}

\author{Andreas Greven$^1$, Frank den Hollander$^2$}

\date{19 September 2022}

\maketitle

\begin{abstract}
This is the third in a series of four papers in which we consider a system of interacting Fisher-Wright diffusions with seed-bank. Individuals carry type $\heartsuit$ or $\diamondsuit$, live in colonies, and are subject to resampling and migration as long as they are \emph{active}. Each colony has a structured seed-bank into which individuals can retreat to become \emph{dormant}, suspending their resampling and migration until they become active again. As geographic space labelling the colonies we consider a countable Abelian group $\G$ endowed with the discrete topology. Our goal is to understand in what way the seed-bank enhances genetic diversity and causes new phenomena. 

In \cite{GdHOpr1} we showed that the system of continuum stochastic differential equations, describing the population in the large-colony-size limit, has a unique strong solution. We further showed that if the system starts from an initial law that is invariant and ergodic under translations with a density of $\heartsuit$ that is equal to $\theta$, then it converges to an equilibrium $\nu_\theta$ whose density of $\heartsuit$ also is equal to $\theta$. Moreover, $\nu_\theta$ exhibits a dichotomy of \emph{coexistence} (= locally multi-type equilibrium) versus \emph{clustering} (= locally mono-type equilibrium). We identified the parameter regimes for which these two phases occur, and found that these regimes are different when the mean wake-up time of a dormant individual is finite or infinite.   

The goal of the present paper is to establish the \emph{finite-systems scheme}, i.e., identify how a finite truncation of the system (both in the geographic space and in the seed-bank) behaves as both the time and the truncation level tend to infinity, properly tuned together. Since the finite system exhibits clustering, we focus on the regime where the infinite system exhibits coexistence, which consists of two sub-regimes. If the wake-up time has finite mean, then the scaling time turns out to be proportional to the volume of the truncated geographic space, and there is a \emph{single universality class} for the scaling limit, namely, the system moves through a succession of equilibria of the infinite system with a density of $\heartsuit$ that evolves according to a \emph{renormalised Fisher-Wright diffusion} and ultimately gets trapped in either $0$ or $1$. On the other hand, if the wake-up time has infinite mean, then the scaling time turns out to grow faster than the volume of the truncated geographic space, and there are \emph{two universality classes} depending on how fast the truncation level of the seed-bank grows compared to the truncation level of the geographic space. For slow growth the scaling limit is the same as when the wake-up time has finite mean, while for fast growth the scaling limit is different, namely, the density of $\heartsuit$ initially remains fixed at $\theta$, afterwards makes \emph{random switches} between $0$ and $1$ on a range of different time scales, driven by individuals in deep seed-banks that wake up, until it finally gets \emph{trapped} in either $0$ or $1$ on the time scale where the individuals in the deepest seed-banks wake up. Thus, the system evolves through a sequence of \emph{partial clusterings} (or partial fixations) before it reaches \emph{complete clustering} (or complete fixation).  

\medskip\noindent
\emph{Keywords:} 
Resampling, migration, dormancy, seed-bank, duality, coexistence versus clustering, finite-systems scheme.

\medskip\noindent
\emph{MSC 2010:} 
Primary 
60J70, 
60K35; 
Secondary 
92D25. 

\medskip\noindent 
\emph{Acknowledgements:} 
AG was supported by the Deutsche Forschungsgemeinschaft  (through grant DFG-GR 876/16-2 of SPP-1590), FdH was supported by the Netherlands Organisation for Scientific Research (through NWO Gravitation Grant NETWORKS-024.002.003), and by the Alexander von Humboldt Foundation (during visits to Bonn and Erlangen in the Fall of 2019, 2020 and 2021). The present paper grew out of joint work with Margriet Oomen \cite{GdHOpr1}, \cite{GdHOpr3}, to whom the authors are indebted.   
\end{abstract}

\bigskip

\footnoterule
\noindent
\hspace*{0.3cm} {\footnotesize $^{1)}$ 
Department Mathematik, Universit\"at Erlangen-N\"urnberg, Cauerstrasse 11, D-91058 Erlangen, Germany\\
greven@mi.uni-erlangen.de}\\
\hspace*{0.3cm} {\footnotesize $^{2)}$ 
Mathematisch Instituut, Universiteit Leiden, Niels Bohrweg 1, 2333 CA  Leiden, NL\\
denholla@math.leidenuniv.nl}


\tableofcontents


\section{Introduction}
\label{s.introduct}

Section~\ref{ss.goal} outlines the goal of the paper. Section~\ref{ss.process} recalls the three models introduced in \cite[Section 2]{GdHOpr1}. Section~\ref{ss.obs} contains some key observations made in \cite[Section 2]{GdHOpr1} concerning the choice of model parameters and initial laws, and the role of duality and diffusion function. Section~\ref{ss.coreinf} summarises the core results in \cite[Section 3]{GdHOpr1} and provides a brief outline of the remainder of the paper. Section~\ref{s.scaling} describes our main results for the finite-system scheme. Section~\ref{s.classpres} contains preparations and Sections~\ref{s.fssrhofin}--\ref{s.fssrhoinffast} provide proofs. For background and motivation we refer the reader to \cite[Section 1]{GdHOpr1}.  

Sections~\ref{ss.process}--\ref{ss.coreinf} are largely copied from \cite{GdHOpr1}, but are needed to set the stage.


\subsection{Goal}
\label{ss.goal}

In \cite{GdHOpr1} we considered a system of interacting Fisher-Wright diffusions with seed-bank. Individuals carry type $\heartsuit$ or $\diamondsuit$, live in colonies, and are subject to resampling and migration as long as they are \emph{active}. Each colony has a structured seed-bank into which individuals can retreat to become \emph{dormant}, suspending their resampling and migration until they become active again. As geographic space labelling the colonies we considered a countable Abelian group $\G$ endowed with the discrete topology. We showed that the system of continuum stochastic differential equations, describing the population in the large-colony-size limit, has a unique strong solution. We further showed that if the system starts from an initial law that is invariant and ergodic under translations with a density of $\heartsuit$ that is equal to $\theta$, then the system converges to an equilibrium $\nu_\theta$ whose density of $\heartsuit$ also is equal to $\theta$. Moreover, $\nu_\theta$ exhibits a dichotomy of \emph{coexistence} (= locally multi-type equilibrium) versus \emph{clustering} (= locally mono-type equilibrium). We identified the parameter regimes for which these two phases occur, and found that these regimes are qualitatively different when the mean wake-up time of an individual is finite or infinite.   

The goal of the present paper is to establish the \emph{finite-systems scheme}, i.e., identify how a finite truncation of the system (both in the geographic space and in the seed-bank) behaves as both the time and the truncation level tend to infinity, properly tuned together. Since the finite system exhibits clustering, we focus on the regime where the infinite system exhibits coexistence. To allow for a proper truncation limit, we additionally assume that $\G$ is \emph{profinite}, i.e., $\G$ is (isomorphic to) the limit of a projective system of finite groups endowed with the discrete topology. Our main findings are the following:
\begin{itemize}
\item
If the wake-up time has finite mean, then the scaling time is proportional to the volume of the truncated geographic space and there is a \emph{single universality class} for the scaling limit, namely, the system moves through a succession of equilibria of the infinite system with a density of $\heartsuit$ that evolves according to a \emph{renormalised Fisher-Wright diffusion} and ultimately gets trapped in either $0$ or $1$ (i.e., the system ultimately clusters). 
\item
If the wake-up time has infinite mean, then the scaling time grows faster than the volume of the truncated geographic space, and there are \emph{two universality classes} depending on how fast the truncation level of the seed-bank grows compared to the truncation level of the geographic space. For slow growth the scaling limit is the same as when the wake-up time has finite mean, while for fast growth the scaling limit is different, namely, the density of $\heartsuit$ initially remains fixed at $\theta$, afterwards makes \emph{random switches} between $0$ and $1$ on a range of different time scales, driven by individuals in deeper seed-banks that wake up, until it finally gets \emph{trapped} in either $0$ or $1$ on the time scale where the individuals in the deepest seed-banks wake up. Thus, the system evolves through a sequence of \emph{partial clusterings} (or partial fixations) before it reaches \emph{complete clustering} (or complete fixation).  
\end{itemize}
The finite-systems scheme underpins the relevance of systems with an infinite geographic space and an infinite seed-bank for the description of systems with a large but finite geographic space and a large but finite seed-bank per colony. 


\subsection{Migration, resampling and seed-bank: three models}
\label{ss.process}

In this section, which is largely copied from \cite[Section 2]{GdHOpr1}, we restrict ourselves to recalling the definition of the three models introduced in \cite{GdHOpr1}. In \cite[Appendix A]{GdHOpr1} we explained how the system of continuum stochastic differential equations that is our object of study arises as the large-colony-size limit of a sequence of discrete individual-based systems. In what follows we will interpret properties of the continuum system in terms of the underlying discrete systems in order to provide the proper intuition.  

We consider populations of individuals that can be of two types: $\heartsuit$ and $\diamondsuit$. These populations are located in a \emph{geographic space} $\G$ with a group structure, namely, $(\G,+)$ is a countable Abelian group with $+$ a group action. We are interested in the frequencies of types in the various locations. To describe the migration we use a {\em migration kernel} $a(\cdot,\cdot)$, which is a $\G \times \G$ matrix of transition rates that determines a continuous-time random walk and satisfies:

\begin{assumption}{\bf [Migration kernel]}
\label{ass.migration}
\begin{equation}
\label{gh10}
a(i,j) = a(0,j-i) \quad \forall\,i,j \in \G, \qquad \sum_{i \in \G} a(0,i) < \infty, \qquad a(\cdot,\cdot) \text{ is irreducible.} 
\hfill\Box
\end{equation}
\end{assumption}

In each of the three models to be described below, the population at a location consist of an \emph{active part} and a \emph{dormant part}. The seed-bank at a given location is the repository of the dormant population at that location (and for two of the models has an internal structure that regulates the wake-up time). For each active individual that becomes dormant a randomly chosen dormant individual becomes active, i.e., the active and the dormant population \emph{exchange} individuals (see Fig.~\ref{fig:Model1}). This guarantees that the sizes of the active and the dormant population stay fixed over time. 


\paragraph{Model 1: single-layer seed-bank.}

For $i \in \G$ and $t \geq 0$, let $x_i(t)$ denote the fraction of individuals in colony $i$ of type $\heartsuit$ that are active at time $t$, and $y_i(t)$ the fraction of individuals in colony $i$ of type $\heartsuit$ that are dormant at time $t$. These fractions evolve according to the SSDE 
\begin{eqnarray}
\label{gh1}
\d x_i(t) &=& \sum_{j \in \G} a(i,j)\, [x_j (t) - x_i(t)]\,\d t
+ \sqrt{d x_i(t)[1-x_i(t)]}\,\d w_i(t)\\ \nonumber
&&+\,Ke\,[y_i(t) - x_i(t)]\,\d t,\\[0.2cm]
\label{gh2}
\d y_i (t) &=& e\,[x_i(t) - y_i(t)]\,\d t,
\end{eqnarray}
where  $(w_i(t))_{t \geq 0}$, $i \in \G$, are independent standard Brownian motions. The first term in \eqref{gh1} describes the \emph{migration} of individuals (at rate $a(i,j)$ from $j$ to $i$), the second term in \eqref{gh1} describes the \emph{resampling} of individuals (at rate $d \in (0,\infty)$ for all $i$). The third term in \eqref{gh1} together with the term in \eqref{gh2} describe the \emph{exchange} of active and dormant individuals (at rate $e \in (0,\infty)$ for all $i$). See Fig.~\ref{fig:Model1} for an illustration.

\medskip\noindent
\vspace{-.5cm}
\begin{figure}[htbp]
\begin{center}
\begin{tikzpicture}
\draw [fill=red!20!blue!20!,ultra thick] (0,0) rectangle (2,4);
\draw [fill=red!20!blue!20!,ultra thick] (4,0) rectangle (6.5,4);
\node  at (1,2) {\text{\Large A}};
\node  at (5.25,2) {\text{\Large D}};
\draw[ultra thick,<-](2.25,2)--(3.75,2);
\draw[ultra thick,->](2.25,3)--(3.75,3);
\draw[ultra thick,<-](-1.5,2)--(0,2);
\draw[ultra thick,->](-1.5,3)--(0,3);
\node at (3,4.3) {\text{exchange}};
\node at (1,5.3) {\text{resampling}};
\node at (-1,4.3) {\text{migration}};
\draw [ultra thick] (1,3.75)to [out=180,in=180](1, 4.75) ;
\draw [ultra thick,->] (1,4.75)to [out=0,in=60](1.1, 3.75) ;
\node[above]  at (3,3) {$Ke$};
\node[above]  at (3,2) {$e$};
\node[above]  at (-0.75,3) {};
\node[above]  at (-0.75,2) {};
\node[left]  at (1.7,4.5) {$d$};
\end{tikzpicture}
\vspace{0.2cm}
\caption{\small The evolution in a single colony in Model 1. Individuals are subject to migration (see Fig.~\ref{fig:spatial}), resampling and exchange with the seed-bank.}
\label{fig:Model1}
\end{center}
\vspace{-.5cm}
\end{figure}
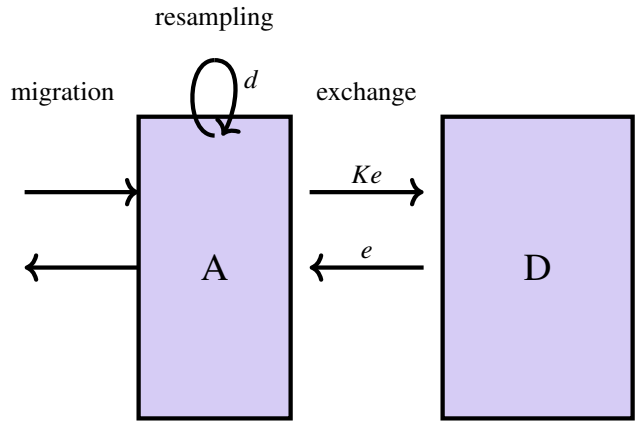

The factor $K \in (0,\infty)$ allows for an \emph{asymmetry} between the sizes of the active and the dormant population. Indeed, because we are tracking \emph{fractions} of individuals of type $\heartsuit$, we have 
\begin{equation}
\label{ratio1}
K = \frac{\text{size dormant population}}{\text{size active population}}, 
\end{equation}
and this ratio is the same for all colonies. 

The \emph{state space} of the system is
\begin{equation}
\label{Edef}
E= [0,1]^\S, \qquad \S=\G\times\{A,D\}, 
\end{equation} 
endowed with the product topology, where $A$ denotes the residence of the active population and $D$ the repository of the dormant population. Alternatively, we may also think of two populations, one active and one dormant, and write $E = ([0,1] \times [0,1])^\G$. Accordingly, the configuration of the system at time $t$ is written as
\begin{equation}
Z(t) = \big(z_{u}(t)\big)_{u \in \S}
\end{equation} 
with $z_{u}(t)=x_i(t)$ if $u=(i,A)$ and $z_{u}(t) = y_i(t)$ if $u=(i,D)$, respectively, 
\begin{equation}
\label{e409}
Z(t) = \big(z_i(t)\big)_{i\in\G}
\end{equation} 
with $z_i(t) = (x_i(t),y_i(t))$. Via duality, our SSDE can be understood in terms of tuples of random walks on $\G$ with internal states $A$ and $D$ \cite[Section 2]{GdHOpr1}.  


\paragraph{Model 2: multi-layer seed-bank.}

\begin{figure}[htbp]
\begin{center}
\begin{tikzpicture}
\draw [fill=red!20!blue!20!,ultra thick] (0,0) rectangle (2,4);
\draw [fill=red!20!blue!20!,ultra thick] (4,0) rectangle (6.5,4);
\draw [fill=yellow!50!, ultra thick] (4.25,3.15) rectangle (6.25,3.85); 
\draw [fill=red!50!, ultra thick] (4.25,2.15) rectangle (6.25,2.85); 
\draw [fill=green!50!, ultra thick] (4.25,0.55) rectangle (6.25,1.25); 
\node  at (1,2) {\text{\Large A}};
\node  at (5.25,3.45) {\text{\Large $D_0$}};
\node  at (5.25,2.45) {\text{\Large $D_1$}};
\node  at (5.25,0.85) {\text{\Large $D_m$}};
\draw[ultra thick,<-](2.25,3.4)--(3.75,3.4);
\draw[ultra thick,->](2.25,3.7)--(3.75,3.7);
\draw[ultra thick,<-](2.25,2.4)--(3.75,2.4);
\draw[ultra thick,->](2.25,2.7)--(3.75,2.7);
\draw[ultra thick,<-](2.25,0.8)--(3.75,0.8);
\draw[ultra thick,->](2.25,1.1)--(3.75,1.1);
\node at (3,4.3) {\text{exchange}};
\node at (1,5.3) {\text{resampling}};
\draw [ultra thick] (1,3.75)to [out=180,in=180](1, 4.75) ;
\draw [ultra thick,->] (1,4.75)to [out=0,in=60](1.1, 3.75) ;
\foreach \x in {1.9,1.7,1.5,0.35,0.15}
\draw[fill] (5.25,\x) circle [radius=0.05];
\node[above]  at (3,3.6) {$K_0 e_0$};
\node[below]  at (3,3.49) {$e_0$};
\node[above]  at (3,2.6) {$K_1 e_1$};
\node[below]  at (3,2.49) {$e_1$};
\node[above]  at (3,1.0) {$K_me_m$};
\node[below]  at (3,0.89) {$e_m$};
\node[left]  at (1.7,4.5) {$d$};
\node at (-1,4.3) {\text{migration}};
\draw[ultra thick,<-](-1.5,2)--(0,2);
\draw[ultra thick,->](-1.5,3)--(0,3);	
\end{tikzpicture}
\vspace{.3cm}
\caption{\small The evolution in a single colony in Model 2. Individuals are subject to migration (see Fig.~\ref{fig:spatial}), resampling and 
exchange with the seed-bank, as in Model 1. Additionally, when individuals become dormant 
they get a colour and when they become active they loose their colour.}
\label{fig:Model2}
\end{center}
\end{figure}
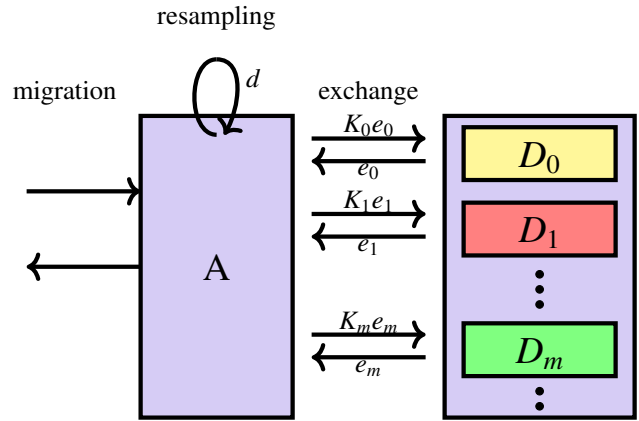

In order to allow for fat tails in the wake-up times of individuals and still preserve the Markov property, we enrich the state space. Namely, we allow individuals to become dormant with a colour that is drawn randomly from an infinite sequence of colours, labelled by $\N_0$. See Figs.~\ref{fig:Model2} and \ref{fig:spatial} for an illustration. 

As before, let $x_i(t)$ denote the fraction of individuals in colony $i$ of type $\heartsuit$ that are active at time $t$, but now let $y_{i,m}(t)$ denote the fraction of individuals in colony $i$ of type $\heartsuit$ that are dormant with colour $m$ at time $t$. Suppose that active individuals exchange with dormant individuals with colour $m$ at rate $e_m \in (0,\infty)$. Then the SSDE in \eqref{gh1}--\eqref{gh2} is replaced by
\begin{eqnarray}
\label{gh1*}
\d x_i(t) &=& \sum_{j \in \G} a(i,j)\,[x_j (t) - x_i(t)]\,\d t
+ \sqrt{d x_i(t)[1-x_i(t)]}\,\d w_i(t)\\ \nonumber
&&+\,\sum_{m\in\N_0} K_me_m\, [y_{i,m}(t) - x_i(t)]\,\d t,\\[0.2cm]
\label{gh2*}
\d y_{i,m} (t) &=& e_m\,[x_i(t) - y_{i,m}(t)]\,\d t, \qquad m\in\N_0,
\end{eqnarray}
where the factor $K_m \in (0,\infty)$ captures the asymmetry between the size of the active population and the $m$-dormant population, i.e., similarly as in \eqref{ratio1}, 
\begin{equation}
\label{ratio2}
K_m = \frac{\text{size $m$-dormant population}}{\text{size active population}},
\qquad m\in\N_0, 
\end{equation}
where $K_m \in (0,\infty)$ is the same for all colonies. The state space is
\begin{equation}
\label{Edef*}
E= [0,1]^\S, \qquad \S = \G \times \{A,(D_m)_{m\in\N_0}\},
\end{equation} 
endowed with the product topology, where $A$ denotes the residence of the active population and $D_m$ the repository of the $m$-dormant population. Alternatively, we may think of infinitely many populations, one active and all the others dormant, and write $E = ([0,1] \times [0,1]^{\N_0})^\G$. Accordingly, the configuration of the system at time $t$ is written as
\begin{equation}
Z(t) = \big(z_{u}(t)\big)_{u\in\S} 
\end{equation}
with $z_{u}(t) = x_i(t)$ if $u=(i,A)$ and $z_{u}(t) = y_{i,m}(t)$ if $u=(i,D_m)$ for some $m\in\N_0$, respectively,
\begin{equation}
\label{e510}
Z(t) = \big(z_i(t)\big)_{i \in \G}
\end{equation}
with $z_i(t) = (x_i(t),(y_{i,m}(t))_{m \in \N_0})$. 

\paragraph{Model 3: multi-layer seed-bank with displaced seeds.}
We can extend the mechanism of Model 2 by allowing individuals that move into a seed-bank to do so in a randomly chosen colony. This amounts to introducing a sequence of irreducible \emph{displacement kernels} $a_m(\cdot,\cdot)$, $m \in \N_0$, satisfying 
\begin{equation}
\label{pmdef}
a_m(i,j) = a_m(0,j-i) \quad \forall\,i,j \in \G,\, m \in\N_0,
\qquad \sup_{m \in\N_0} \sum_{i \in \G} a_m(0,i) < \infty,
\end{equation}  
and replacing \eqref{gh1*}--\eqref{gh2*} by
\begin{eqnarray}
\label{gh1**}
\d x_i(t) &=& \sum_{j \in \G} a(i,j)\,
[x_j (t) - x_i(t)]\,\d t
+ \sqrt{d x_i(t)[1-x_i(t)]}\,\d w_i(t)\\ \nonumber
&&+\, \sum_{j\in\G} \sum_{m\in\N_0} K_me_m\,a_m(j,i)\,[y_{j,m}(t) - x_i(t)]\,\d t,\\[0.2cm]
\label{gh2**}
\d y_{i,m} (t) &=& \sum_{j \in \G} e_m\,a_m(i,j)\,[x_j(t) - y_{i,m}(t)]\,\d t, \qquad m\in\N_0.
\end{eqnarray}
Here, the third term in \eqref{gh1**} together with the term in \eqref{gh2**} describe the \emph{migration} of individuals and the \emph{switch of colony} of individuals during the exchange between active to dormant (at rate $a_m(i,j)$ between $i$ and $j,m$). The state space and the configuration at time $t$ are the same as in \eqref{Edef*}. Also \eqref{ratio2} remains the same.

\begin{figure}
\begin{center}
\includegraphics[width=1\textwidth]{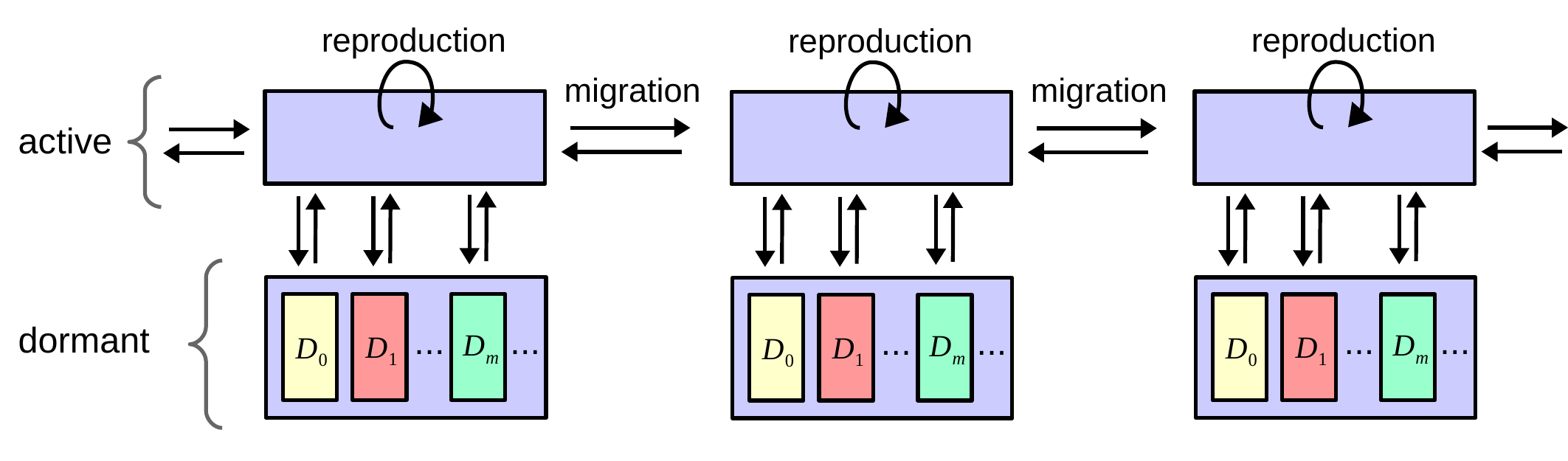}
\end{center}
\caption{\small The spatial evolution in Model 2. Each colony has its own seed-bank. Resampling occurs within the active population in each        colony. Migration occurs between active populations in different colonies (top layer). Exchange between the active and the dormant population occurs in each colony (transitions between the top layer and the bottom layer).}
\label{fig:spatial}
\end{figure}


\paragraph{Duality.}

As shown in \cite[Section 2]{GdHOpr1}, the three models have a tractable dual, in which paths of individuals (see Fig.~\ref{fig:spatial})  are \emph{reversed in time} to become ancestral lineages of individuals (see Fig.~\ref{fig:Duality}). In the dual, which plays a crucial role in the analysis, lineages evolve as independent continuous-time Markov processes with state space
\begin{equation}
\label{lineages1}
\text{$\S=\G \times \{A,D\}$ (Model 1), \quad $\S=\G \times \{A, (D_m)_{m\in\N_0}\}$ (Models 2--3)},
\end{equation}
and with transition kernel 
\begin{equation}
\label{lineages2}
\text{$b^{(1)}(\cdot,\cdot)$ in \eqref{mrw1}, $b^{(2)}(\cdot,\cdot)$ in \eqref{mrw2}, $b^{(3)}(\cdot,\cdot)$ in \eqref{mrw3},}
\end{equation}
and coalesce at rate $d$ when they are at the same site in $\G$ and are both active. Unlike in the original system, there is \emph{no exchange} between active and dormant lineages in the dual, which makes the dual easier to work with. The duality relations are \emph{mixed-moment relations} linking the original process to the dual process and vice versa. The dual is therefore called a \emph{moment dual}.   

For $g \neq dg_{\text{FW}}$ no tractable dual is available and we need to argue by comparison with models that have a tractable dual.  

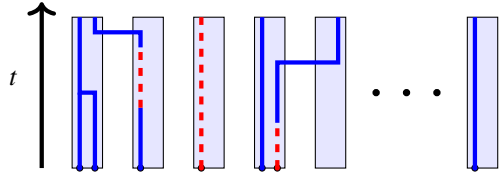
\begin{figure}[htbp]
\begin{center}
\vspace{.2cm}
\setlength{\unitlength}{.5cm}
		\begin{tikzpicture}[scale=0.4]
		\draw [fill=blue!10!] (0,-0.5) rectangle (1,4.5);
		\draw [fill=blue!10!] (2,-0.5) rectangle (3,4.5);
		\draw [fill=blue!10!] (4,-0.5) rectangle (5,4.5);
		\draw [fill=blue!10!] (6,-0.5) rectangle (7,4.5);
		\draw [fill=blue!10!] (8,-0.5) rectangle (9,4.5);
		\draw [fill=black, ultra thick] (10,2) circle [radius=0.05] ;
		\draw [fill=black, ultra thick] (11,2) circle [radius=0.05] ;
		\draw [fill=black, ultra thick] (12,2) circle [radius=0.05] ;
		\draw [fill=blue!10!] (13,-0.5) rectangle (14,4.5);
		\draw [ultra thick, ->] (-1,-.5) -- (-1,5);
		\node[left] at (-1.5,2.5) {$t$};
		\draw [fill=blue] (.25,-.5) circle [radius=0.1] ;
		\draw [fill=blue] (.75,-.5) circle [radius=0.1] ;
		\draw[blue,ultra thick] (.25,-.5)--(.25,2);	
		\draw[blue,ultra thick] (.75,-.5)--(.75,2)--(.25,2)--(.25,4.5);
		\draw [fill=blue] (2.25,-.5) circle [radius=0.1] ;
		\draw[blue,ultra thick] (2.25,-.5)--(2.25,1.5);
		\draw[red,ultra thick, dashed] (2.25,1.5)--(2.25,3.5);
		\draw[blue,ultra thick] (2.25,3.5)--(2.25,4)--(.75,4)--(.75, 4.5);
		\draw [fill=red] (4.25,-.5) circle [radius=0.1] ;
		\draw[red,ultra thick, dashed] (4.25,-.5)--(4.25,4.5);	
		\draw [fill=blue] (6.25,-.5) circle [radius=0.1] ;
		\draw[blue,ultra thick] (6.25,-.5)--(6.25,4.5);
		\draw [fill=red] (6.75,-.5) circle [radius=0.1] ;
		\draw[red,ultra thick, dashed] (6.75,-.5)--(6.75,1);
		\draw[blue,ultra thick] (6.75,1)--(6.75,3)--(8.75,3)--(8.75,4.5);
		\draw [fill=blue] (13.25,-.5) circle [radius=0.1] ;
		\draw[blue,ultra thick] (13.25,-.5)--(13.25,4.5);	
		\end{tikzpicture}
\vspace{0.2cm}
\caption{\small Picture of the evolution of lineages in the spatial coalescent. The purple blocks depict the colonies, the blue lines the active lineages, and the red dashed lines the dormant lineages. Blue lineages can migrate and become dormant, (i.e., become red dashed lineages). Two blue lineages can coalesce when they are at the same colony. Red dashed dormant lineages first have to become active (blue) before they can coalesce with other blue and active lineages or migrate. Note that the dual runs \emph{backwards in time}. The collection of all lineages determines the genealogy of the system.}
\label{fig:Duality}
\end{center}
\end{figure}
 

\subsection{Observations: $\rho<\infty$ versus $\rho=\infty$}
\label{ss.obs} 
 
In this section, which is also largely copied from \cite[Section 2]{GdHOpr1}, we recall a number of important observations regarding the models introduced in Section~\ref{ss.process}.

   
\paragraph{Two key quantities.}

In Models 2 and 3 we must assume that
\begin{equation}
\label{emcond}
\chi = \sum_{m\in\N_0} K_me_m < \infty
\end{equation}
in order for active \emph{lineages} in the dual not to become dormant instantly. The advantage of having infinitely many colours is that it allows us to have \emph{wake-up times with fat tails} and at the same time \emph{preserve the Markov property for the evolution} of the system. Indeed, in Model 2 the rate for an active lineage to become dormant is $\chi$, the go-to-sleep time $\sigma$ of an active lineage has law
\begin{equation}
\label{sigmagotosleep}
\mathbb{P}(\sigma>t) = \e^{-\chi t}, \qquad t\geq 0,
\end{equation}
while the wake-up time $\tau$ of a dormant lineage has law
\begin{equation}
\label{tauwakeup}
\mathbb{P}(\tau>t) = \sum_{m\in\N_0} \frac{K_me_m}{\chi}\,\e^{-e_m t}, \qquad t \geq 0,
\end{equation} 
where $K_me_m/\chi$ is the probability that a dormant lineage has colour $m$ (when the system is in equilibrium). It is possible to talk about paths in the forward time direction rather than about lineages in the backward time direction, but this requires an enrichment of the state spaces and the introduction of historical processes. We will not elaborate on this extension.

Recall that in the continuum limit the size of the active population is scaled to $1$. Note that
\begin{equation}
\label{taumean}
\mathbb{E}[\sigma] = \frac{1}{\chi}, \qquad \mathbb{E}[\tau] = \frac{\rho}{\chi}
\end{equation} 
with
\begin{equation}
\label{rhodef}
\rho = \sum_{m\in\N_0} K_m = \frac{\text{size dormant population}}{\text{size active population}}.
\end{equation}
We saw in \cite{GdHOpr1} that $\rho<\infty$ (`finite-size seed-bank') and $\rho=\infty$ (`infinite-size seed-bank') represent \emph{different regimes} for the long-time behaviour of the system (see Section~\ref{ss.coreinf} below). In particular, the criterion whether coexistence or clustering prevails is different. In the present paper we will see that also the scaling in the finite-systems scheme is different. 


\paragraph{Initial laws.}

We need to specify the law $\mu(0)$ from which the \emph{initial configuration} is drawn. Let $\CP(E)$ denote the set of probability measures on $E$. Define
\begin{equation}
\label{laws}
\begin{aligned}
\CT &= \big\{\mu\in\CP(E)\colon\, \mu \text{ is invariant under translations in } \G\big\},\\
\CT^\mathrm{erg} &= \big\{\mu\in\CT\colon\, \mu \text{ is ergodic under translations in } \G\big\},
\end{aligned}
\end{equation}
where translation stands for group action (recall \eqref{e409} and \eqref{e510}). For Model 1, as well as for Models 2 and 3 with $\rho<\infty$, $\mu(0)$ can be any element of $\CT^\mathrm{erg}$. However, for Models 2--3 with $\rho=\infty$ an extra restriction is needed, namely, we additionally require that $\mu(0)$ is \emph{colour regular}:
\begin{equation}
\label{covcond1}
\theta^\bullet_{\mu(0)} = \lim_{m\to\infty} \E_{\mu(0)}[y_{0,m}] \text{ exists.}
\end{equation}
This condition gives us control over the \emph{deep seed-banks}, which play a dominant role when $\rho=\infty$. (For $\mu(0) \in \CT \setminus \CT^\mathrm{erg}$ \eqref{covcond1} is replaced by the requirement that $\lim_{m\to\infty} \E_{\mu(0)}[y_{0,m} \mid \CB]$ exists $\mu(0)$-a.s.\ with $\CB$ the translation invariant sigma-field.) Accordingly, we define
\begin{equation}
\label{lawscr}
\begin{aligned}
\CT^\bullet &= \big\{\mu\in\CT\colon\,\mu \text{ is colour regular}\big\},\\
\CT^\mathrm{erg,\bullet} &= \big\{\mu\in\CT^\mathrm{erg}\colon\,\mu \text{ is colour regular}\big\}.
\end{aligned}
\end{equation}

\begin{remark}
\label{rem:topology}
{\rm Note that the set $\CT^\bullet$ is not closed in the weak topology as a subset of $\CT$. To remedy this problem, consider the set $\CT^\bullet \times [0,1]$ endowed with the metric that is the sum of the metric for weak convergence on $\CT$ and the Euclidean metric on $[0,1]$. Consider the subset $D = \{(\mu,\theta_\mu^\bullet)\colon\,\mu \in \CT^\bullet\}$. Clearly, $D$ has a countable dense subset, consisting of sequences of truncated seed-banks that are extended to infinite seed-banks by repeating the value $\theta_\mu^\bullet$ in the seed-bank direction. Since $D$ is also complete and metric, it is a Polish space in the stronger topology.} \hfill $\Box$ 
\end{remark}

We write $\mu(t)$ to denote the law evolved from $\mu(0)$ at time $t$, and further define
\begin{equation}
\label{lawinv}
\CI = \big\{\mu\in\CT\colon\, \mu \text{ is invariant under the evolution}\big\}.
\end{equation}


\paragraph{Topologies.}

On the set $\CT$ we use the topology of weak convergence. On the set $\CT^\bullet$, however, we need a stronger topology, which we call the \emph{topology uniform weak convergence} and which is defined on the subset 
\begin{equation}
\label{CTcov}
\CT^{\bullet,*}  = \Big\{\mu \in \CT^\bullet\colon\, \lim_{|m-m'| \to\infty} \mathrm{Cov}_\mu(y_{0,m},y_{0,m'}) = 0\Big\}.
\end{equation}
In \cite{GdHOpr1} we showed that 
\begin{equation}
\label{hatthetadef}
\hat{\theta} = \lim_{M\to\infty} \frac{x_0 + \sum_{m=0}^M K_my_{0,m}}{1+\sum_{m=0}^M K_m}
\end{equation}
exists in $L_2(\mu)$ for every $\mu \in \CT^{\bullet,*}$, which is the limiting density of $\heartsuit$. 


\paragraph{Examples.}

Three natural examples of Abelian groups and migration kernels are: 
\begin{itemize}
\item
$\G = \Z^d$ the Euclidean lattice of dimension $d$, and
\begin{equation}
\label{rwmzfv}
a(\cdot,\cdot) \text{ irreducible}, \quad \sum_{i \in \Z^d} i\,a(0,i) = 0, \quad \sum_{i \in \Z^d} |i|^2 a(0,i) < \infty. 
\end{equation}
Transient migration corresponds to $d \geq 3$ (see \cite[Section II.8]{Sp64}).
\item
$\G = \Z$, and
\begin{equation}
\label{rwtail}
a(\cdot,\cdot) \text{ symmetric}, \quad a(0,i) \sim Q\,|i|^{-1-q}, |i| \to \infty, \quad Q \in (0,\infty),\,q \in (0,2).  
\end{equation}
Transient migration corresponds to $q \in (0,1)$ (see \cite[Section II.8]{Sp64}). 
\item
$\G = \Omega_N$ the hierarchical group of order $N$ given by
\begin{equation}
\Omega_N = \Big\{i = (i_k)_{k\in\N_0} \in \{0,\ldots,N-1\}^{\N_0}\colon\, \sum_{k\in\N_0} i_k < \infty\Big\},
\end{equation} 
endowed with the hierarchical distance $d(i,j) = \inf\{k \in \N_0\colon\,i_l=j_l\,\forall\,l \geq k\}$, and 
\begin{equation}
\label{rwhier}
a(i,j) = \suml_{k \geq d(i,j)} \frac{c_{k-1}}{N^{k-1}}\frac{1}{N^k}, 
\quad i,j \in \Omega_N, i \neq j, \qquad a(i,i)=0, \quad i \in \Omega_N,
\end{equation}
where $(c_k)_{k \in \N_0}$ are coefficients satisfying $\limsup_{k \to \infty} \frac{1}{k}\,\log c_k < \log N$. The latter is the kernel of transition rates for the \emph{hierarchical random walk} that, for each $k \in \N$, at rate $c_{k-1}N^{-(k-1)}$ chooses the $k$-ball around it current location and moves to a uniformly random location in this ball. Transient migration corresponds to $\sum_{k\in \N_0} (1/c_k) < \infty$ (see \cite{DGW04a}, \cite{DGW05}).  
\end{itemize}


\paragraph{General diffusion function.}

In \eqref{gh1}, \eqref{gh1*} and \eqref{gh1**} we can replace the diffusion functions $dg_{\text{FW}}$, $d \in (0,\infty)$, with $g_{\text{FW}}(x) = x(1-x)$, $x \in [0,1]$, the Fisher-Wright diffusion function, by a general diffusion function in the class $\CG$ defined by 
\begin{equation}
\label{gh6}
\CG = \Big\{g\colon\,[0,1] \to [0,\infty)\colon\,g(0) = g(1) = 0, \, g(x) >0 \,\,\forall\,x \in (0,1), 
\, g \mbox{ Lipschitz}\Big\}.
\end{equation}
This class is appropriate because the diffusion stays confined to $[0,1]$, yet can go everywhere in [0,1] (\cite[Section 16.7]{B68}). Picking $g\neq d g_{\text{FW}}$ amounts to allowing the resampling rate to be \emph{state-dependent}, which is an important extension from a biological perspective. The resampling rate in state $x$ equals $g(x)/x(1-x)$, $x \in (0,1)$. An example is the Ohta-Kimura diffusion function $g(x)=[x(1-x)]^2$, $x \in [0,1]$, for which the resampling rate is equal to the genetic diversity of the colony \cite{ohtakimura1973}.


\paragraph{Trapping time for finite geographic space.}

Let
\begin{equation}
\label{e1233}
\begin{array}{lll}
&\text{Model 1}\colon &H = \inf\left\{t \geq 0\colon\,(x(t),y(t)) \in \{(0,0)^\G,(1,1)^\G\}\right\},\\[0.2cm]
&\text{Models 2--3}\colon\, &H = \inf\left\{t \geq 0\colon\,(x(t),y(t)) \in \{(0,0^{\N_0})^\G,(1,1^{\N_0})^\G\}\right\},
\end{array}
\end{equation}
be the time until trapping in one of the mono-type states. We note that if the geographic space $\G$ would be \emph{finite} and the seed-bank would be labeled by a \emph{finite} set rather than $\N_0$, then  
\begin{equation}
\label{trapcond}
\int_{[0,1]} \d x\,\,\frac{x(1-x)}{g(x)}<\infty \quad \Longrightarrow \quad
\P(H<\infty) =1.
\end{equation}
The integral criterion makes $\{0,1\}$ accessible for the Fisher-Wright diffusion $g_{\mathrm{FW}}$ with $g_{\mathrm{FW}}(x)=x(1-x)$, $x \in [0,1]$, but not for the Ohta-Kimura diffusion $g_{\mathrm{OK}}$ with $g_{\mathrm{OK}}(x)=(x(1-x))^2$, $x \in [0,1]$ (see \cite[Chapter 16, Section 7]{B68}).


\subsection{Core results for the infinite system}
\label{ss.coreinf}
 
In this section, which is largely copied from \cite[Section 3]{GdHOpr1}, we summarise the core results for the infinite system in order to set the stage for the results for the finite system that will be presented in Section~\ref{s.scaling}.   

In \cite{GdHOpr1} we showed that, for each of the three models introduced in Section~\ref{ss.process}, the system converges to a \emph{unique equilibrium} $\nu_\theta$ that depends on a single parameter $\theta$, namely, the density of type $\heartsuit$ in the population (active or dormant) under the initial law $\mu(0)$ (see \eqref{thetadef} and \eqref{thetadefalt}--\eqref{thetadefaltalt} below). This initial law is assumed to be an element of $\CT^\mathrm{erg}$ (recall \eqref{laws}) for $\rho<\infty$ and an element of $\CT^{\mathrm{erg},\bullet}$ (recall \eqref{lawscr}) for $\rho=\infty$. Moreover, we showed that $\nu_\theta$ exhibits a dichotomy of \emph{coexistence} (= locally multi-type equilibrium) versus \emph{clustering} (= locally mono-type equilibrium), and identified the \emph{parameter regimes} for which coexistence, respectively, clustering occurs. 


\paragraph{Regularity conditions.}
  
For $\rho=\infty$ we required certain \emph{regularity conditions} on top of Assumption~\ref{ass.migration}, which we list next.  

\begin{assumption}{\bf [Regularity conditions for $\rho=\infty$]} 
\label{ass.reginf}
$\mbox{}$\\
{\rm (i) The migration is \emph{symmetric}, i.e.,
\begin{equation}
\label{sym}
a(i,j) = a(j,i) \qquad \forall\, i,j \in \G,
\end{equation} 
and the time-$t$ transition kernel satisfies
\begin{equation}
\label{ass2}
t \mapsto a_t(0,0) \text{ is regularly varying at infinity}.
\end{equation} 
(ii) The seed-bank coeffcients are \emph{polynomial}, i.e.,
\begin{equation}
\label{ass3}
\begin{aligned}
&K_m \sim A\,m^{-\alpha}, \quad e_m \sim B\,m^{-\beta}, \quad m \to \infty,\\
&A,B \in (0,\infty), \quad \alpha,\beta \in \R\colon\,\alpha \leq 1 < \alpha +\beta.
\end{aligned}
\end{equation}
Consequently, 
\begin{equation}
\label{Ptail}
\mathbb{P}(\tau>t) \sim C\,t^{-\gamma}, \quad t\to\infty,
\end{equation}
with $\gamma = \frac{\alpha+\beta-1}{\beta}$ and $C = \frac{A}{\beta\chi}\,B^{1-\gamma}\,\Gamma(\gamma)$, where $\Gamma$ is the Gamma-function.}\hfill$\Box$
\end{assumption}

\noindent
Examples satisfying \eqref{ass2} can be found in \cite[Chapter 3]{H95}. The conditions on $\alpha,\beta$ in \eqref{ass3} guarantee that $\rho=\infty$, $\chi<\infty$.


\paragraph{Dichotomy.}

In \cite[Sections 5--7]{GdHOpr1} we showed that 
\begin{itemize}
\item
coexistence occurs if and only if the \emph{average total joint activity time} of two lineages in the dual \emph{without coalescence}, both starting from $(0,A)$, is finite,
\end{itemize} 
where joint activity means that the two lineages are active and are at the same site. (see also Appendix~\ref{appA*}). We further identified the \emph{parameter regime} for coexistence, namely, for $\rho<\infty$ we found that, subject to Assumption~\ref{ass.migration}, coexistence occurs if and only if 
\begin{equation}
\label{crfin}
I_{\hat{a}} = \int_1^\infty \hat{a}_t(0,0)\,\d t < \infty,
\end{equation} 
while for $\rho=\infty$, subject to Assumptions~\ref{ass.migration}--\ref{ass.reginf}, coexistence occurs if and only if
\begin{equation}
\label{crinf} 
I_{\hat{a},\gamma} = \int_1^\infty t^{-(1-\gamma)/\gamma} \hat{a}_t(0,0)\, \d t < \infty.
\end{equation} 
Here, $\hat{a}_t(0,0)$ is the probability that the migration with symmetrised kernel $\hat{a}(i,j)= \tfrac12[a(i,j)+a(j,i)]$, $i,j\in\G$, starting from $0$ is back at $0$ at time $t$, and $\gamma$ is the exponent in \eqref{Ptail}. For $\rho=\infty$ we showed that the claim may fail without the symmetry assumption $a(i,j)=a(j,i)$, $i,j\in\G$. Remarkably, \eqref{crfin} depends on the migration kernel only, while \eqref{crinf} depends on the migration kernel and on the asymptotics of the seed-banks coefficients $K_m$ and $e_m$. The diffusion function $g$ plays no role: either there is coexistence for all $g \in \CG$ or for no $g \in \CG$.   

Clearly, \eqref{crinf} shows that there is an interesting competition between migration and seed-bank, while a comparison of \eqref{crfin}--\eqref{crinf} shows that the seed-bank \emph{enhances genetic diversity}. The criterion in \eqref{crfin} corresponds to the symmetrised migration being \emph{transient}. The criterion in \eqref{crinf} (for symmetric migration) is less stringent and may even be met when the migration is \emph{recurrent}. For instance, it holds as soon as $\gamma \in (0,\tfrac12)$, irrespective of the migration (because $\hat{a}_t(0,0) \leq 1$ for all $t$), while for $\gamma \in [\tfrac12,1]$ it holds for certain classes of recurrent migration (see the examples below). 


\paragraph{Examples.}

To illustrate \eqref{crinf} we consider three examples.    
\begin{itemize}
\item
On $\G=\Z^d$, if $\hat{a}(\cdot,\cdot)$ satisfies \eqref{rwmzfv}, then $\hat{a}_t(0,0) \sim Ct^{-d/2}$, $t \to \infty$, for some $C \in (0,\infty)$ \cite[Chapter II, Section 7]{Sp64} and so \eqref{crinf} amounts to the requirement that
\begin{equation}
\label{gdcond}
\frac{1-\gamma}{\gamma} + \frac{d}{2} > 1,   
\end{equation}
i.e, $\gamma \in (0,1]$ when $d \geq 3$ (transient migration), $\gamma \in (0,1)$ when $d = 2$ (critically recurrent migration), and $\gamma \in (0,\tfrac23)$ when $d=1$ (strongly recurrent migration). 
\item
On $\G = \Z$, if $\hat{a}(\cdot,\cdot)$ satisfies \eqref{rwtail}, then $\hat{a}_t(0,0) \asymp t^{-1/q}$ \cite[Section 3.3.5]{H95}, and so \eqref{crinf} amounts to the requirement that 
\begin{equation}
\label{gdcondalt}
\frac{1-\gamma}{\gamma} + \frac{1}{q} > 1,
\end{equation}
i.e., $\gamma \in (0,1]$ when $q \in (0,1)$ (transient migration), $\gamma \in (0,1)$ when $q = 1$ (critically recurrent migration)  and $\gamma \in (0,\frac{q}{2q-1})$ when $q \in (1,2)$ (strongly recurrent migration). 
\item
On $\G = \Omega_N$, if $\hat{a}(\cdot,\cdot)$ satisfies \eqref{rwhier} with $c_k = c^k$, then $\hat{a}_t(0,0) \asymp t^{-1-\delta_N}$ \cite[Section 3.2]{GdHOpr3}, and so \eqref{crinf} amounts to the requirement that 
\begin{equation}
\label{gdcondaltalt}
\frac{1-\gamma_N}{\gamma_N} + \delta_N > 0,
\end{equation}
i.e., $\gamma_N \in (0,1]$ when $\delta_N \in (0,\infty)$ (transient migration), $\gamma_N \in (0,1)$ when $\delta_N = 0$ (critically recurrent migration), and $\gamma_N \in (0,\frac{1}{1-\delta_N})$ when $\delta_N \in (-\infty,0)$ (strongly recurrent migration). The inequality in \eqref{gdcondaltalt} holds if and only if $\log N \times \log(Kc) > \log c \times \log (K^2e)$ (recall that $c < N$ and $Ke < N$). When $Kc=1$, the latter condition holds for all $N$ if and only if $K>1$ and $Ke^2>1$ (recall that $K \geq 1$ is needed to ensure that $\rho=\infty$). For $N$ large enough it holds when $Kc > 1$ and fails when $Kc<1$.
\end{itemize}

\begin{remark}{{\bf [Dichotomy with modulation]}}
\label{r.modulate}
{\rm As shown in \cite[Sections 3.2 and 6.6]{GdHOpr1}, it is possible to modulate \eqref{Ptail}--\eqref{ass3} by slowly varying functions. Namely, if \eqref{Ptail} is replaced by $\mathbb{P}(\tau>t) \sim C\,\phi(t)\,t^{-\gamma}$, $t\to\infty$, with $\phi$ slowly varying at infinity, then $I_{\hat{a},\gamma}$ is replaced by
\begin{equation}
I_{\hat{a},\gamma,\phi} = \int_1^\infty \hat{\phi}(t)^{-1/\gamma}\,t^{-(1-\gamma)/\gamma} \hat{a}_t(0,0)\, \d t,
\end{equation} 
where $\hat{\phi}(t) = \phi(t)$ when $\gamma \in (0,1)$ and $\hat{\phi}(t) = \int_1^t \phi(s)\,s^{-1}\,\d s$ when $\gamma=1$.
} \hfill $\Box$
\end{remark}

\begin{remark}{{\bf [Dichotomy for hierarchical seed-bank]}}
{\rm As argued in \cite[Section 2]{GdHOpr3}, on $\G=\Omega_N$ it is natural to consider a \emph{hierarchical seed-bank}, which amounts to replacing $e_m$ by $e_m/N^m$ in \eqref{gh1*}--\eqref{gh2*} , and to assume that, instead of \eqref{ass3},
\begin{equation}
\label{asshier}
K_m = K^m, \quad e_m = e^m, \qquad m \in \N_0,\, K,e \in (0,\infty).
\end{equation}
(To guarantee that $\chi<\infty$ in \eqref{emcond} and $\rho=\infty$ in \eqref{rhodef}, we need $Ke < N$ and $K \geq 1$.) As shown in \cite[Section 3.2]{GdHOpr3}, the parameters in Remark~\ref{r.modulate} become 
\begin{equation}
\label{scal1}
\gamma_N = \frac{\log(N/Ke)}{\log(N/e)}, \quad \varphi(t) \asymp 1, \quad \hat\varphi(t) \asymp \left\{\begin{array}{ll}
1, &K \in (1,\infty),\\[0.2cm]
\log t, &K = 1,
\end{array}
\right.
\end{equation}
while, if $c_k = c^k$, $k \in \N_0$, $c \in (0,\infty)$, then
\begin{equation}
\label{scal2}
a_t(0,0) \asymp t^{-1-\delta_N},
\end{equation} 
where
\begin{equation}
\label{deltaNc}
\delta_N = \frac{\log c}{\log (N/c)}.
\end{equation}
(To guarantee that $\sum_{i\in\Omega_N} a(0,i) < \infty$ in \eqref{gh10}, we need $c<N$.) Note that $\gamma_N =1$ for all $N$ when $K=1$, while $\gamma_N<1$ for all $N$ when $K \in (1,\infty)$, but with $\gamma_N \to 1$ as $N\to\infty$. Further note that $\delta_N \to 0$ as $N\to\infty$ for all $c \in (0,\infty)$. Thus, the hierarchical mean-field limit $N\to\infty$ corresponds to a \emph{critically infinite mean wake-up time for the seed-bank} and a \emph{critically recurrent migration}.
} \hfill $\Box$
\end{remark}


\paragraph{Equilibria.}

In \cite{GdHOpr1} we also analysed the \emph{equilibria} of the infinite system, i.e., the  family of extremal invariant measures  
\begin{equation}
\label{equifam}
(\nu_\theta)_{\theta \in [0,1]}
\end{equation}
parametrised by the density of $\heartsuit$. This family depends on all the parameters of the model, i.e., the migration kernel, the seed-bank coefficients and  the diffusion function. We showed that $\nu_\theta$ is associated and mixing for all $\theta \in [0,1]$, and that $\E_{\nu_\theta}[x_0] = \theta$ and $\E_{\nu_\theta}[y_{0,m}] = \theta$ for all $m\in\N_0$ (so that $\nu_\theta$ is colour regular). We showed that for $\rho<\infty$ the deep seed-banks are \emph{random} under $\nu_\theta$ with a strictly positive variance, while (subject to \eqref{ass3} and \eqref{covcond1}) for $\rho=\infty$ the deep seed-banks are asymptotically \emph{deterministic} under $\nu_\theta$, i.e., converge in law to $\theta$ in every colony. 

We found that the seed-bank \emph{reduces volatility}, i.e., the active components have a smaller variance than in the model without seed-bank. We further showed that $\theta \mapsto \nu_\theta$ is continuous in the weak topology for Model 1 and for Model 2 with $\rho<\infty$, and  is continuous in the uniform weak topology for Model 2 with $\rho=\infty$.  


\paragraph{Domains of attraction.}

In \cite[Sections 5 and 6]{GdHOpr1} we showed that the sets $\CR^{(1)}_\theta$, $\CR^{(2)}_\theta$ and $\CR^{(2),\bullet}_\theta$ defined below are the \emph{domains of attraction} of $\nu_\theta$, $\theta \in [0,1]$, in Model 1, Model 2 with $\rho<\infty$ and Model 2 with $\rho=\infty$, respectively. See \cite[Definitions 5.6 and 6.5]{GdHOpr1}. We recall the notation
\begin{equation}
\begin{array}{lll}
&\text{Model 1}\colon &\S = \G \times \{A,D\},\\  
&\text{Model 2}\colon &\S = \G\times\{A,(D_m)_{m\in\N_0}\}, 
\end{array}
\end{equation}
and, for $u \in \S$,
\begin{equation}
\label{zudef}
z_u = \left\{\begin{array}{ll} 
x_i, &u=(i,A),\\
y_i, &u=(i,D),
\end{array}
\right. , \qquad 
z_u = \left\{\begin{array}{ll}
x_i, &u=(i,A),\\
y_{i,m}, &u=(i,D_m).
\end{array}
\right.
\end{equation}
We write $b^{(1)}_t(\cdot,\cdot)$ and $b^{(2)}_t(\cdot,\cdot)$ to denote the time-$t$ transition kernels of the dual (whose transition kernel is defined in \eqref{mrw1} and \eqref{mrw2}).

\begin{definition}{\bf [Liggett conditions]}
\label{Rtheta}
$\mbox{}$\\
{\rm (A) Model 1: $\CR^{(1)}_\theta$ is the set of measures $\mu \in \CT^{\mathrm{erg}}$ satisfying:
\begin{equation}
(\ast)_\theta
\begin{array}{lll}
&(1)\,\,\forall\, u_1 \in \S \colon\\
&\lim\limits_{t\to\infty} \sum\limits_{u_2 \in \S} 
b^{(1)}_t(u_1,u_2)\,\E_\mu[z_{u_2}] =\theta,\\[0.4cm]
&(2)\,\,\forall\,u_1,u_2 \in \S \colon\\
&\lim\limits_{t\to \infty} \sum\limits_{u_3,u_4 \in \S} 
b^{(1)}_t(u_1,u_3)\,b^{(1)}_t(u_2,u_4)\,\E_\mu[z_{u_3}z_{u_4}]=\theta^2.
\end{array}
\end{equation}
(B) Model 2 with $\rho<\infty$: $\mathcal{R}^{(2)}_\theta$ is the set of measures $\mu \in \CT^{\mathrm{erg}}$ satisfying:
\begin{equation}
(\ast)_\theta
\begin{array}{lll}
&(1)\,\,\forall\,u_1 \in \S\colon\\
&\lim\limits_{t\to \infty} \sum\limits_{u_2 \in \S} 
b^{(2)}_t(u_1,u_2)\,\E_\mu[z_{u_2}] =\theta,\\[0.4cm]
&(2)\,\,\forall\,u_1, u_2 \in \S\colon\\
&\lim\limits_{t\to \infty} \sum\limits_{u_3,u_4\in \S} 
b^{(2)}_t(u_1,u_3)\,b^{(2)}_t(u_2,u_4)\,\E_\mu[z_{u_3}z_{u_4}]=\theta^2.
\end{array}
\end{equation}
(C) Model 2 with $\rho=\infty$: $\mathcal{R}^{(2),\bullet}_\theta$ is the set
\begin{equation}
\CR_\theta^{(2),\bullet} = \big\{\mu \in \CR_\theta^{(2)}\colon\, \mu \text{ is colour regular}\big\}. 
\hfill\Box
\end{equation}
}
\end{definition}

\noindent
The Liggett conditions are equivalent to saying that, for $k=1,2$ and $u_1 \in \S$, $\sum_{u_2 \in \S} b^{(k)}_t(u_1,u_2)\,z_{u_2}$ converges in $L_2(\mu)$ to $\theta$ as $t\to\infty$.


\paragraph{Outline.}

Section~\ref{s.scaling} states our main theorems: a detailed description of the finite-systems scheme for Model 1, Model 2 with $\rho<\infty$ and Model 2 with $\rho=\infty$. Section~\ref{s.classpres} contains preparatory lemmas and observations that play a central role in the paper. Sections~\ref{s.fssrhofin}--\ref{s.fssrhoinffast} are devoted to the proofs of the main theorems. In Appendix~\ref{appA*} we show that the seed-bank reduces the volatility of the active components. In Appendix~\ref{appB*} we recall an \emph{abstract scheme} from \cite{CG94*} that lists general conditions for the existence of a finite-systems scheme, as well as a method from \cite{DGV95} to check some of these conditions in concrete settings. The proofs in Sections~\ref{s.fssrhofin}--\ref{s.fssrhoinfslow} rely on this abstract scheme. The proofs in Section \ref{s.fssrhoinffast} follow a separate route. Appendix~\ref{appC*} computes the asymptotic fraction of time spent in the active state by a lineage in the dual, which controls the various scaling regimes. In Appendix~\ref{appD*} we speculate about how in Model 2 with $\rho=\infty$ and fast-growing seed-bank the crossover from partial clustering to complete clustering may take place.


\section{Results: Finite-systems scheme}
\label{s.scaling}
 
This section contains our main results for the finite-system scheme. In Section~\ref{ss.setting} we provide the general setting. In Section~\ref{ss.model1} we focus on Model 1 (Theorems~\ref{T.finsys1} and \ref{thm:1657}). In Section~\ref{ss.model2} we prepare for Model 2 by introducing two regimes: slow growing seed-bank and fast-growing seed-bank. In Sections~\ref{ss.model2fin}--\ref{ss.model2inf} we analyse Model 2 for $\rho<\infty$ (Theorem~\ref{T.finsys2fin}), respectively, $\rho=\infty$ (Theorems~\ref{T.finsys2inf(1)}--\ref{T.finsys2inf(2)}). We will not consider Model 3 because it behaves similarly as Model 2. In Section~\ref{ss.open} we list a few open problems. 

 
\subsection{General setting} 
\label{ss.setting}
 
What does the dichotomy of the infinite system imply for \emph{large but finite systems}, for which clustering always prevails? In the \emph{coexistence regime} this question can be analysed by establishing what is called the \emph{finite-systems scheme} \cite{CG90}, \cite{CGSh95},  i.e., by identifying how a finite truncation of the system -- both in the geographic space and in the seed-bank -- behaves as both the time and the truncation level tend to infinity, properly tuned together. Our target will be to show that the finite system adopts the equilibrium of the infinite system but with a \emph{random density} of $\heartsuit$ (recall \eqref{hatthetadef}), and that the law of the latter evolves, on the proper time scale, as a diffusion on $[0,1]$ with a \emph{renormalised diffusion function}.     


\paragraph{Projective system.} 

Throughout the sequel the following assumption is in force on the geographic space:

\begin{assumption}{\bf [Profinite geographic space]}
\label{ass.profinite} 
{\rm $\G$ is \emph{profinite}, i.e., 
\begin{equation}
\label{profinite}
\begin{aligned}
&\text{$\G$ is (isomorphic to) the limit of a projective system $(\G_n)_{n\in\N}$}\\ 
&\text{of finite groups endowed with the discrete toplogy.}
\end{aligned}
\hfill\Box
\end{equation} 
}
\end{assumption}

\noindent
Key examples are
\begin{itemize}
\item
$\G=\Z^d$ and $\G_n = [-n,n]^d \cap \Z^d\, (\text{mod } 2n)$ the $n$-torus (viewed as a quotient group).  
\item
$\G=\Omega_N$ and $\G_n = (\Omega_N)_n$ the $n$-ball (viewed as a subgroup). 
\end{itemize}
We restrict the $\G$ to $\G_n$, i.e., we keep \eqref{gh1}--\eqref{gh2} and \eqref{gh1*}--\eqref{gh2*} but replace the migration kernel by
\begin{equation}
\label{perkernel}
a^n(i,j)=\sum_{ {k \in \G} \atop {k \downarrow j} } a(i,k), \qquad i,j \in \G_n,
\end{equation}
where $k \downarrow j$ means that $k$ is projected onto $j$. 

\begin{definition}{\bf [Mixing time]}
{\rm The \emph{mixing time} of the truncated migration is defined as the minimal sequence $(\psi_n)_{n\in\N}$ with $\lim_{n\to\infty} \psi_n = \infty$ such that
\begin{equation}
\label{pnmix}
 \lim_{s\to\infty} \limsup_{n\to\infty} \sup_{i \in \G_n} \Big|\, |\G_n|\, a^n_{s\psi_n}(0,i) - 1\Big| = 0, 
\end{equation}
where $a^n_t(\cdot,\cdot)$ is the time-$t$ transition kernel associated with \eqref{perkernel}.}\hfill$\Box$
\end{definition}

\noindent
In words, the migration on $\G_n$ mixes on time scale $\psi_n$. Note that 
\begin{equation}
a^n_t(0,i) \geq a_t(0,i) \qquad  \forall\,n \in \N,\, i \in \G_n,\, t \geq 0.
\end{equation} 
Also note that, for symmetric random walk (recall \eqref{sym}), 
\begin{equation}
\label{centralbds}
a^n_t(0,i) \leq a^n_t(0,0), \quad  a_t(0,i) \leq a_t(0,0), \qquad \forall\,n \in \N,\, i \in \G_n,\,t \geq 0.
\end{equation}

\begin{assumption}{\bf [Relations between the transitions kernels]}
\label{ass.trker}
$\mbox{}$\\
{\rm (i) Prior to the mixing time the random walk on $\G_n$ is close to the random walk on $\G$:
\begin{equation}
\label{afininfcompalt}
\lim_{n\to\infty} \sup_{t = o(\psi_n)} \sup_{i\in\G_n} \frac{a^n_t(0,i) - a_t(0,i)}{a_t(0,i)} = 0.
\end{equation}
(ii) Until the mixing time the random walk on $\G_n$ is comparable to the random walk on $\G$:  
\begin{equation}
\label{afininfcomp}
\exists\,C<\infty\colon\,C^{-1} a_t(0,i) \leq a^n_t(0,i) \leq C a_t(0,i) \quad \forall\,i \in \G_n\,\forall\, 0 \leq t \leq \psi_n.
\hfill\Box
\end{equation}
}
\end{assumption}

\begin{remark}{{\bf [Three examples of mixing time]}}
\label{r.1274}
$\mbox{}$
{\rm 
\begin{itemize}
\item[(1)]
On $\G = \Z^d$, for $\hat{a}(\cdot,\cdot)$ irreducible with $\sum_{i\in\Z^d} i\,\hat{a}(0,i) = 0$ and $\sum_{i\in\Z^d} |i|^2 \hat{a}(0,i)<\infty$, it is known that $\psi_n \asymp n^2$ (`diffusive mixing'; see \cite[Corollary 22.3]{BR76}, \cite[Proposition 2.8]{C89}, \cite[Lemma 2.1]{CGSh95}). In particular, $\psi_n \ll |\G_n|$ when $d \geq 3$, $\psi_n \asymp |\G_n|$ when $d=2$, and $\psi_n \gg |\G_n|$ when $d=1$. The crossover occurs at $d=2$ (critically recurrent migration). 
\item[(2)]
On $\G = \Z$, for $\hat{a}(\cdot,\cdot)$ symmetric with $\hat{a}(0,i) \sim Q |i|^{-1-q}$, $|i| \to \infty$, $q \in (0,2)$, $Q \in (0,\infty)$, it is known that $\psi_n \asymp n^\delta$ (the proof is an exercise in Fourier analysis). In particular, $\psi_n \ll |\G_n|$ when $q \in (0,1)$, $\psi_n \asymp |\G_n|$ when $q = 1$, and $\psi_n \gg |\G_n|$ when $q \in (1,2)$. The crossover occurs at $q = 1$ (critically recurrent migration).
\item[(3)]
On $\G = \Omega_N$, for the hierarchical random walk with coefficients $(c_k)_{k \in \N_0}$, it is known that $\psi_n \asymp  N^{n-1}/c_{n-1} = |\G_n|/Nc_{n-1}$ (the average time until the migration chooses horizon $n$). In particular, $\psi_n \ll |\G_n|$ when $\lim_{k\to\infty} c_k = \infty$, $\psi_n \asymp |\G_n|$ when $\lim_{k\to\infty} c_k \in (0,\infty)$, and $\psi_n \gg |\G_n|$ when $\lim_{k\to\infty} c_k = 0$. If $c_k = c^k$, then the crossover occur at $c=1$ (critically recurrent migration).
\hfill $\Box$
\end{itemize}
}
\end{remark} 

\begin{remark}{{\bf [Open questions]}}
\label{r.conj}
{\rm Three interesting questions in the general setting are: 
\begin{itemize}
\item[(I)] 
Is it true that $\psi_n = o(|\G_n|)$ for transient random walk?
\item[(II)] 
Is it true that $\hat{a}_{\psi_n}(0,0) \asymp |\G_n|^{-1}$?
\item[(III)]
Do \eqref{afininfcompalt}--\eqref{afininfcomp} hold always?
\end{itemize} 
The answer is yes for the three examples in Remark~\ref{r.1274}, but appears not to be known in general.} \hfill $\Box$
\end{remark}


\paragraph{Initial laws.}

Note that the state spaces for the finite system are 
\begin{equation}
\begin{array}{ll}
E_n = ([0,1] \times [0,1])^{\G_n} &\text{Model 1},\\
E_n = E_{M_n,n} = ([0,1] \times [0,1])^{M_n+1})^{\G_n} &\text{Models 2--3},
\end{array}
\end{equation} 
when the seed-bank is truncated at colour $M_n$. We need to decide how to choose the initial law $\mu_n$ on $\E_n$ in such a way that it properly links up with the initial law $\mu$ on $E$ (recall \eqref{Edef} and \eqref{Edef*}), which is assumed to be translation invariant (= invariant under the group action; recall \eqref{laws}). One way to go about is to let $\mu_n$ be the restriction of $\mu$ to $E_n$. This works perfectly well for $\G=\Omega_N$, because $\G_n=(\Omega_N)_n$ is a subgroup of $\G$, but does not work for $\G=\Z^d$, because $\mu_n$ is not translation invariant on $\G_n = \Z^d \cap [-n,n]^d$ (mod $2n$). A standard way out is to pick $\mu_n$ translation invariant such that $\mu_n$ converges to $\mu$ weakly as $n\to\infty$. A natural way to achieve this is by picking 
\begin{equation}
\label{choicelawn}
\mu_n((x^n,y^n)) = |\G_n|^{-1} \sum_{i\in\G_n} \mu(\tau_i(x^n,y^n)), \qquad (x^n,y^n) = (x_i(t),y_i(t))_{i \in \G_n},
\end{equation}
where $\tau_i$, $i \in \G_n$, is the \emph{shift} on $E_n$ defined by $(\tau_i (x^n,y^n))_j = (x^n,y^n)_{i+j}$, $i,j \in \G_n$. (We use the upper index $n$ to indicate that $(x^n,y^n)$ lives on $E_n$, but suppress this index from the single components.) 

Note that \eqref{choicelawn} reads as 
\begin{equation}
\label{choicelawnalt}
\mu_n = \mu \circ \Phi _n
\end{equation} 
with $\Phi_n\colon E \to E_n$ the translation invariant \emph{restriction operator} defined by $\Phi_n(x,y) = |\G_n|^{-1} \sum_{i\in\G_n} \tau_i(x^n,y^n)$. We will write \eqref{choicelawnalt} as $\mu_n = \Phi_n \mu$ by using the same symbol $\Phi_n$ for the restriction operator acting on $\CT$, and put
\begin{equation}
\label{trlinvn}
\CT_n = \{\Phi_n\mu\colon\,\mu \in \CT\}.
\end{equation}
Conversely, we write
\begin{equation}
\label{extop}
\mu = \mu_n \circ \tilde\Phi_n
\end{equation}
with $\tilde\Phi_n\colon E_n \to E$ the translation invariant \emph{extension operator} defined by $\tilde\Phi_n(x^n,y^n) = |\G_n|^{-1} \sum_{i\in\G_n} \tilde\tau_i(x^n,y^n)$, where $\tilde\tau_i$ is the periodic continuation of $\tau_i(x^n,y^n)$ from $E_n$ to $E$ in the geographic coordinate and the constant continuation of $\tau_i(x^n,y^n)$ from $\{0,\ldots,M_n\}$ to $\N_0$ in the seed-bank coordinate by the value
\begin{equation}
\frac{x_i+\sum_{m=0}^{M_n} K_m y_{i,m}}{1+\sum_{m=0}^{M_n} K_m}.
\end{equation}
We will write \eqref{extop} as $\mu = \tilde\Phi_n \mu$ by using the same symbol $\tilde\Phi_n$ for the extension operator acting on $\CT_n$.


\subsection{Model 1}
\label{ss.model1}


\paragraph{Set-up.}

We denote the system restricted to $E_n$ by
\begin{equation}
\label{gh16}
\big(x^n(t),y^n(t)\big)_{t \geq 0}.
\end{equation}
The \emph{empirical measure} of the system at time $t$ is defined by
\begin{equation}
\label{FdH1*}
\CE^n(t)
= \frac{1}{|\G_n|} \sum_{i \in \G_n} \delta_{\tau_i (x^n,y^n)}.
\end{equation}
We consider the empirical process 
\begin{equation}
\label{e1297}
\Big(\CE^n(s\beta_n)\Big)_{s \geq 0},
\end{equation}
where $\beta_n$ is a large time scale that needs to be properly chosen. We write $\mu_n(s\beta_n)$ to denote the law of $(x^n(s\beta_n),y^n(s\beta_n))$ when we choose $\mu_n(0)$ as in \eqref{choicelawn}.  


\paragraph{Macroscopic variable.} 

For large $n$ we expect that $\CE^n(s\beta_n)$ and $\mu^n(s\beta_n)$ are controlled by a random process on the set $\{\nu_\theta\colon\,\theta \in [0,1]\}$ of \emph{extremal invariant measures} of the infinite system that is determined by a \emph{macroscopic variable} $\hat\theta^n(s\beta_n)$ of the configuration on $E_n$, whose limit as $n\to\infty$ is a \emph{conserved quantity} of the infinite system, i.e., is itself a consistent estimator (recall \eqref{hatthetadef}). This macroscopic variable is given by
\begin{equation}
\label{e1311}
\hat\theta^n(t) = \frac{1}{|\G_n|} \sum_{i \in \G_n} \frac{x_i(t) + K y_i(t)}{1+K}, \qquad t \geq 0,
\end{equation}
which evolves as
\begin{eqnarray}
\label{mo2alt}
\d \hat\theta^n(t) = \frac{1}{|\G_n|\,(1+K)}
\sum_{i\in\G_n} \sqrt{g\big(x_i(t)\big)}\,\d w_i(t),
\end{eqnarray}
where the migration cancels out because of the averaging over the geographic space, and the exchange with the seed-bank cancels out because of the averaging over the seed-bank. Thus, in particular,
\begin{equation}
\label{martalt}
\big(\hat\theta^n(t)\big)_{t \geq 0} \text{ is a bounded and continuous martingale} 
\end{equation}
with increasing process
\begin{equation}
\label{incrpr1}
\left\langle \hat\theta^n(t)\right\rangle_{t \geq 0} 
= \left(\int_0^t \d s\,\frac{1}{|\G_n|\,(1+K)} \sum_{i \in \G_n} g\big(x_i(s))\big)\right)_{t \geq 0}.
\end{equation} 


\paragraph{Time scale.}

The goal is to identify $\beta_n$ such that ($\CL$ denotes probability law, convergence is always in the weak topology) 
\begin{equation}
\label{gh17}
\lim_{n\to\infty} \CL \left[\CE^n(s\beta_n)\right] 
= \lim_{n\to\infty} \CL\left[\mu^n(s\beta_n)\right] = \CL \left[ \nu_{\Theta(s)}\right], \qquad s > 0,
\end{equation}
and
\begin{equation}
\label{gh26}
\lim_{n\to\infty} \CL \left[\left(\hat \theta^n (s \beta_n)\right)_{s > 0}\right]
= \CL\left[\big(\Theta(s)\big)_{s > 0}\right],
\end{equation}
where along the way we also need to identify $(\Theta(s))_{s \geq 0}$ as a \emph{Markov process} on $[0,1]$ (via an appropriate martingale problem or an associated SDE), and show that $\Theta(0)=\theta$ and $\Theta(\infty) \in \{0,1\}$. Accordingly, we want to show that 
\begin{equation}
\label{gh17alt}
\lim_{n\to\infty} \CL \left[\CE^n(\tilde\beta_n)\right] 
= \lim_{n\to\infty} \CL\left[\mu^n(\tilde\beta_n)\right] = \nu_\theta
\end{equation}
for any time scale $\tilde\beta_n \ll \beta_n$, and 
\begin{equation}
\label{gh17altalt}
\lim_{n\to\infty} \CL \left[\CE^n(\tilde\beta_n)\right] 
= \lim_{n\to\infty} \CL\left[\mu^n(\tilde\beta_n)\right] = \theta\,\delta_{(1,1)^\G} + (1-\theta)\,\delta_{(0,0)^\G}
\end{equation}
for any time scale $\tilde\beta_n \gg \beta_n$. In the above scenario, the local configuration is controlled by the macroscopic variable in \eqref{e1311}, in the sense that it approaches the corresponding equilibrium of the \emph{infinite} system locally. The macroscopic variable itself follows an \emph{autonomous} diffusion process, and \emph{conditional} on $\hat \theta^n(s\beta_n)$ being close to some $\theta' \in [0,1]$ for some $s>0$, the system converges in distribution to $\nu_{\theta'}$. 


\paragraph{Scaling limit.}

In the model without seed-bank \cite{CGSh95}, $(\Theta(s))_{s \geq 0}$ turned out to be the diffusion 
\begin{equation}
\label{gh18a}
\d \Theta(s) = \sqrt{(\CF g)(\Theta(s)}\,\d w(s), \qquad \Theta(0) = \theta, 
\end{equation}
with diffusion function $\CF g$ given by 
\begin{equation}
\label{gh19a}
(\CF g)(\theta) = \E_{\nu_\theta}[g(x_0)] = \int_E g(x_0)\,\nu_\theta(\d x), \qquad \theta \in [0,1],
\end{equation}
and that if $g\in\CG$, then also $\CF g \in \CG$. For the special case where $g=dg_{\text{FW}}$, $d \in (0,\infty)$, it turned out that $\CF g = d^\ast g_{\text{FW}}$ with $d^\ast=d/(d+\hat G(0,0))$, where $\hat G(0,0)$ is the Green function at the origin of the symmetrised migration kernel $\hat a(\cdot,\cdot)$. In the presence of the seed-bank the role of $\hat G(0,0)$ is taken over by the quantity
\begin{equation}
\begin{array}{lll}
\hat{B}(0,0) &= &\text{the mean total joint activity time of two independent Markov processes}\\ 
&&\text{with transition kernel $b^{(1)}(\cdot,\cdot)$ in \eqref{mrw1} both starting from $(0,A)$}.
\end{array} 
\end{equation}
The key observation is that $\hat{B}(0,0) < \infty$ if and only if the system is in the \emph{coexistence regime}. In Appendix~\ref{appA*} we will see that $\CF$ is non-trivial \emph{only} in the coexistence regime, and that throughout this regime $\CF g \in \CG$ for all  $g\in\CG$. (In the clustering regime $\CF g \equiv 0$ for all $g\in\CG$, because the equilibrium lives on the states $x^n = y^n \equiv 1$ and $x^n = y^n \equiv 0$.)  


\paragraph{Results.}

Abbreviate
\begin{equation}
\label{kappadef}
\kappa = (1+K)^2.
\end{equation}
Recall \eqref{laws}. For $\mu \in \CT^{\mathrm{erg}}$, define
\begin{equation}
\label{thetadef}
\theta_\mu = \E_\mu\left[ \frac{x_0 + Ky_0}{1+K}\right]
\end{equation}
and, for $\theta \in [0,1]$, 
\begin{equation}
\label{erglaw1}
\CT^{\mathrm{erg}}_\theta = \left\{\mu\in\CT^{\mathrm{erg}}\colon\,\theta_\mu = \theta\right\}.
\end{equation}

\begin{theorem}{{\bf [Finite-systems scheme: Model 1]}}
\label{T.finsys1}
Suppose that Assumptions~\ref{ass.migration}, \ref{ass.profinite} and \ref{ass.trker} are in force. Suppose that $\mu(0) \in \CT^\mathrm{erg}_\theta$, and that $\mu_n(0)$ is given by \eqref{choicelawn}. Then, in the coexistence regime, the following are true:
\begin{itemize}
\item[{\rm (a)}] (Convergence on macroscopic time scale) For every $s>0$, \eqref{gh17}--\eqref{gh17altalt} hold with time scale
\begin{equation}
\label{e1347}
\beta_n = \kappa |\G_n|
\end{equation}
and $(\Theta(s))_{s \geq 0}$ the diffusion on $[0,1]$ given by
\begin{equation}
\label{gh18}
\d \Theta(s)=\sqrt{(\CF g)(\Theta(s))}\,\d w(s), \qquad \Theta(0)=\theta,
\end{equation}
with $\CF g$ given by (recall \eqref{equifam})
\begin{equation}
\label{gh19}
(\CF g)(\theta)=E_{\nu_\theta}[g(x_0)] = \int_E g(x_0)\,\nu_\theta(\d x,\d y), \qquad \theta \in [0,1].
\end{equation}
\item[{\rm (b)}] (Fisher-Wright diffusion) 
If $g= dg_{\text{FW}}$, $d \in (0,\infty)$, then
\begin{equation}
\label{gh20}
\CF g = d^\ast g_{\text{FW}} 
\end{equation}
with $d^\ast = d/(1+d\hat{B}(0,0))$.
\end{itemize}
\end{theorem}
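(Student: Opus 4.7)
The strategy is to carry out a martingale-characterisation argument for the macroscopic density $\hat\theta^n$ and then feed the result into the abstract finite-systems scheme of \cite{CG94*} recalled in Appendix~\ref{appB*}. The starting point is the explicit representation \eqref{mo2alt}: because migration cancels by averaging over $\G_n$ and seed-bank exchange cancels through the weighting by $K$ in \eqref{e1311}, $\hat\theta^n$ is the martingale \eqref{martalt} with increasing process whose integrand is, modulo a factor $[|\G_n|(1+K)^2]^{-1}$, the spatial average $|\G_n|^{-1}\sum_{i\in\G_n} g(x_i(s))$. An $O(1)$ quadratic variation therefore appears only after running time over the scale $\beta_n=\kappa|\G_n|$, which pins down \eqref{e1347}; rescaling yields
\begin{equation}
\bigl\langle \hat\theta^n(s\beta_n)\bigr\rangle
= \int_0^s \d r\,\frac{1}{|\G_n|}\sum_{i\in\G_n} g\bigl(x_i(r\beta_n)\bigr).
\end{equation}

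Next I would verify the hypotheses of the abstract scheme. Under Assumption~\ref{ass.trker}, the mixing time satisfies $\psi_n=o(\beta_n)$ throughout the coexistence regime (this uses the finiteness of $\hat B(0,0)$ established in Appendix~\ref{appA*}, which forces the migration on $\G$ to be transient enough that $\psi_n\ll|\G_n|$; cf.\ Remark~\ref{r.conj}(I)). On any intermediate scale $\beta_n/s_n$ with $1\ll s_n\ll \beta_n/\psi_n$, the truncated system, viewed from any site, equilibrates to the infinite-system equilibrium parametrised by the \emph{current} value of $\hat\theta^n$, because (i) the Liggett conditions in Definition~\ref{Rtheta}(A) propagate from $\mu(0)\in\CT^{\mathrm{erg}}_\theta$ to $\mu_n(0)$ via \eqref{choicelawnalt}, and (ii) the duality from \cite[Section~2]{GdHOpr1} combined with \eqref{afininfcompalt}--\eqref{afininfcomp} lets one compare first and second mixed moments of the finite system with those of the infinite system on time horizons $o(\psi_n)$. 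This \emph{local-equilibrium} step — replacing the empirical law by $\nu_{\hat\theta^n(s\beta_n)}$ in the evaluation of any cylinder functional — is the main technical obstacle; the key ingredients are (a) the speed-of-convergence estimates for the infinite system proved via the dual, and (b) the absence of effective drift in $\hat\theta^n$, so that during any window of length $o(\beta_n)$ the macroscopic density is essentially frozen.

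Granted local equilibrium, the spatial average of $g(x_i)$ inside the increasing process may be replaced by its $\nu_{\hat\theta^n}$-expectation, namely $(\CF g)(\hat\theta^n)$ with $\CF g$ as in \eqref{gh19}. Tightness of $(\hat\theta^n(s\beta_n))_{s\ge 0}$ in $C([0,\infty),[0,1])$ is immediate from the martingale property and the boundedness of $g$. Any subsequential weak limit $\Theta$ is a continuous martingale with $\langle \Theta\rangle_s=\int_0^s (\CF g)(\Theta(r))\,\d r$, hence solves the martingale problem associated with \eqref{gh18}; since $\CF g\in\CG$ (proved in Appendix~\ref{appA*}), this problem is well-posed, giving both convergence to $\nu_{\Theta(s)}$ for every fixed $s>0$ (so that \eqref{gh17} follows from local equilibrium) and the two boundary statements \eqref{gh17alt}--\eqref{gh17altalt}: for $\tilde\beta_n\ll\beta_n$ one has $\Theta(s)\equiv\theta$, while for $\tilde\beta_n\gg\beta_n$ the diffusion $\Theta$ is already absorbed in $\{0,1\}$ by Feller's test applied to \eqref{trapcond}.

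For part~(b) with $g=dg_{\mathrm{FW}}$, the computation of $\CF g$ reduces to the second moment $\E_{\nu_\theta}[x_0^2]$, which I would obtain by the moment-dual of \cite[Section~2]{GdHOpr1}. Two dual lineages started at $(0,A)$ evolve independently with kernel $b^{(1)}(\cdot,\cdot)$ and coalesce at rate $d$ whenever they are jointly active at a common site; by the strong Markov property, the total joint-activity clock has mean $\hat B(0,0)$, so the probability of eventual coalescence equals $d\hat B(0,0)/(1+d\hat B(0,0))$. Therefore
\begin{equation}
\E_{\nu_\theta}[x_0^2]=\theta\cdot\frac{d\hat B(0,0)}{1+d\hat B(0,0)}+\theta^2\cdot\frac{1}{1+d\hat B(0,0)},
\end{equation}
and subtracting from $\theta$ gives $\E_{\nu_\theta}[g_{\mathrm{FW}}(x_0)]=\theta(1-\theta)/(1+d\hat B(0,0))$, so that $\CF(dg_{\mathrm{FW}})=d^\ast g_{\mathrm{FW}}$ with $d^\ast=d/(1+d\hat B(0,0))$, as claimed in \eqref{gh20}.
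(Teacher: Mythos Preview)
Your outline for part~(a) is essentially the paper's own route: verify the hypotheses of the abstract scheme in Appendix~\ref{appB*}, use the martingale representation \eqref{mo2alt}--\eqref{incrpr1} to pin down $\beta_n=\kappa|\G_n|$, establish local equilibrium on intermediate scales via the Liggett conditions (the paper packages this in Lemmas~\ref{lem:pres}--\ref{lem:presevolve}), and identify the limit through the well-posed martingale problem for the $\CF g$-diffusion (Lemmas~\ref{l.3734}--\ref{l.3759}). One small difference: where you lean on duality to compare finite and infinite systems, the paper deliberately switches to a coupling/Lyapunov-function argument (Lemma~\ref{l.2752}) so that the verification of (A8)--(A9) goes through for \emph{all} $g\in\CG$, not just $g=dg_{\mathrm{FW}}$. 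Also, your remark that $\psi_n=o(|\G_n|)$ follows from $\hat B(0,0)<\infty$ overstates what is known: Remark~\ref{r.conj}(I) flags this as open in general. Finally, \eqref{gh17altalt} does not require Feller accessibility of $\{0,1\}$ for $\CF g$; it suffices that $\Theta$ is a bounded martingale with $\CF g>0$ on $(0,1)$, so $\Theta(\infty)\in\{0,1\}$ a.s.\ regardless.

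For part~(b) your answer is correct but the argument has a gap. From ``the total joint-activity clock has mean $\hat B(0,0)$'' one cannot conclude that the coalescence probability is $d\hat B(0,0)/(1+d\hat B(0,0))$: if $T$ denotes the total joint-activity time without coalescence, then the coalescence probability is $\E[1-\e^{-dT}]$, and this equals $d\E[T]/(1+d\E[T])$ only when $T$ is exponential. That \emph{is} the case here --- each sojourn in the set $\{\text{same site, both active}\}$ is exponential, and by the strong Markov property the number of sojourns is geometric, so $T$ is a geometric sum of i.i.d.\ exponentials, hence exponential with mean $\hat B(0,0)$ --- but this step needs to be said. The paper sidesteps the issue entirely: Theorem~\ref{T.comp}(b) plugs the identity $\E_{\nu_\theta}[x_0^2]=\theta^2+\hat B(0,0)\,(\CF g)(\theta)$ from Lemma~\ref{lem:cov} into $(\CF g)(\theta)=d\E_{\nu_\theta}[x_0(1-x_0)]$ to obtain a linear equation for $(\CF g)(\theta)$, yielding $d^\ast=d/(1+d\hat B(0,0))$ in two lines without ever computing a coalescence probability.
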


\noindent
Thus, we find the same behaviour as in the system without seed-bank, except that the macroscopic time scale $\beta_n$ runs slower by a factor $\kappa=(1+K)^2$. One factor $1+K$ arises from the time that is lost in the seed-bank, while the other factor $1+K$ arises from the fact that the seed-bank brings in no volatility during the time that is lost (see \eqref{incrpr1}, and Section \ref{s.fssrhofin} for further details). Fig.~\ref{fig:WFren} draws a typical realisation of $(\Theta(s))_{s \geq 0}$. 

\begin{figure}[htbp]
\centering 
\includegraphics[width=.4\textwidth]{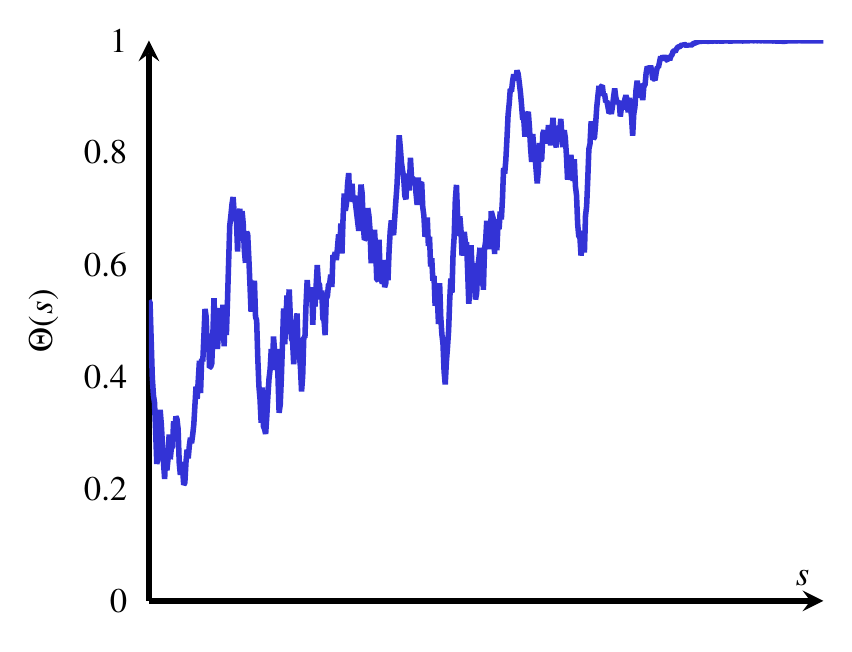}
\vspace{0.2cm}
\caption{\small Picture of the diffusion $(\Theta(s))_{s \geq 0}$. The boundary set $\{0,1\}$
is hit in finite time when $0$ and $1$ are accessible, which is the case for instance when 
$\CF g$ is a multiple of the Fisher-Wright diffusion function. The blue curve is the path of
the diffusion.}
\label{fig:WFren}
\end{figure}

\begin{remark}{{\bf [Clustering regime not universal]}}
\label{r.1267}
{\rm Theorem~\ref{T.finsys1} \emph{only holds in the coexistence regime}, because it captures on what time scale the finite system feels the boundary and begins to cluster. To derive an analogue of Theorem~\ref{T.finsys1} in the clustering regime we would need to compare the different modes of clustering, which would be more difficult (see \cite{CG91} for systems without seed-bank on $\G=\Z^2$). The outcome would \emph{not be as universal} and would depend on the \emph{degree} of recurrence of the migration kernel.} \hfill $\Box$
\end{remark}


\paragraph{Comments.}

The scaling of time by $|\G_n|$ is natural when we think of the dual. On $\G_n$, two random walks on average need $|\G_n|$ moves until they meet. Indeed, the coexistence regime corresponds to transient migration (recall \eqref{crfin}), for which the mixing time is $\psi_n=o(|\G_n|)$ (recall Remarks~\ref{r.1274} and \ref{r.conj}(1)). Hence, at time $s\beta_n=s\kappa|\G_n|$ the two random walks are more or less uniformly distributed on $\G_n$, and have probability $|\G_n|^{-1}$ to be at the same site. If we consider two Markov processes with transition kernel given by \eqref{mrw1}, then we get similar behaviour and the \emph{slow-down factor} $1/\kappa$ in \eqref{kappadef} is the \emph{fraction of time that in the dual two lineages are jointly active}. This is natural because only active lineages in the dual can move and coalesce (i.e., forward in time only active individuals can move and exchange genetic information).   

The reason why $\CF g$ appears in \eqref{gh18} is that, given the value of the macroscopic variable $\hat{\theta}^n$, the $|\G_n|$ constituent components equilibrate on a time scale that is fast with respect to the time scale $\beta_n$ on which $\hat{\theta}^n$ fluctuates. Consequently, the volatility of $\hat{\theta}^n$ is close to the expectation of the volatility of the constituent components in the quasi-equilibrium $\nu_{\hat{\theta}^n}$, as expressed by \eqref{gh19} (see Fig.~\ref{fig-renorm}).
 
We may think of $\CF$ as a \emph{renormalisation map} acting on the class  $\CG$ of diffusion functions defined in \eqref{gh6}. According to \eqref{gh19}, $\CF$ is a \emph{non-linear integral transform}, with the non-linearity arising from the fact that $\nu_\theta$ depends on $g$ (apart from $e,K$). In general no explicit formula is available for $\CF$. However, the result in \eqref{gh20} says that if $g$ is a multiple of the Fisher-Wright diffusion, then so is $\CF g$, in which case $\{0,1\}$ is accessible at both ends. 

\vspace{0.5cm}
\begin{figure}[htbp]
\begin{center}
\setlength{\unitlength}{0.4cm}
\begin{picture}(16,6)(-7,-1)
{\thicklines
\qbezier(-8,0)(-4,2)(0,4)
\qbezier(-6,0)(-3,2)(0,4)
\qbezier(-4,0)(-2,2)(0,4)
\qbezier(4,0)(2,2)(0,4)
\qbezier(6,0)(3,2)(0,4)
\qbezier(8,0)(4,2)(0,4)
} 
\put(-8,0){\circle*{.25}}
\put(-6,0){\circle*{.25}}
\put(-4,0){\circle*{.25}}
\put(4,0){\circle*{.25}}
\put(6,0){\circle*{.25}}
\put(8,0){\circle*{.25}}
\put(0,4){\circle*{.25}}
\put(-.4,0){$\dots$}
\put(-8.3,-1){$1$}
\put(-6.2,-1){$2$}
\put(4.5,-1){$|\G_n|-1$}
\put(7.6,-1){$|\G_n|$}
\put(0,4.7){\small macroscopic variable}
\end{picture}
\end{center}
\caption{Pictorial representation of the averaging procedure behind the renormalisation map. The volatility of the empirical average of the $|\G_n|$ components converges as $n\to\infty$ to the average of the volatility of the single components with respect to their limiting equilibrium distribution.}
\label{fig-renorm}
\end{figure}
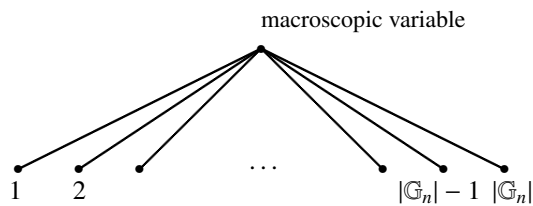

\begin{remark}{{\bf [Hierarchical mean-field limit]}}
\label{r.1261}
{\rm In \cite{GdHOpr3} we consider the case where the geographic space $\G$ is the hierarchical group $\Omega_N$. In the so-called \emph{hierarchical mean-field limit} $N\to\infty$ we are able to analyse $\CF$ in detail. Without seed-bank the \emph{mean-field finite-systems scheme} on $\Omega_N$ was established in \cite{DG93b}, \cite{DGV95}.} \hfill $\Box$
\end{remark}


\paragraph{Trapping time.}
 
Because $(\Theta(s))_{s \geq 0}$ is a bounded martingale, $\lim_{s\to\infty} \Theta(s)$ exists $\P$-a.s. Since $\CF g \in \CG$, the limit lies in $\{0,1\}$. Let
\begin{equation}
\mathfrak{T}=\inf\{s \geq 0\colon\,\Theta(s) \in \{0,1\}\}.
\end{equation} 
If $\{0,1\}$ is \emph{accessible} at both ends, i.e., 
\begin{equation}
\label{e1675}
\int_{[0,1]} \d x\,\, \frac{x(1-x)}{(\CF g)(x)} <\infty,
\end{equation} 
then $\P(\mathfrak{T}<\infty)=1$ (see Fig.~\ref{fig:WFren}). We will show that the hitting time of the traps in the finite system
\begin{equation}
H_n = \inf\left\{t \geq 0\colon\,(x^n(t),y^n(t)) \in \{(0,0)^{\G_n},(1,1)^{\G_n}\}\right\} 
\end{equation}
(compare with \eqref{e1233}) satisfies the following.

\begin{theorem}{{\bf [Scaling of hitting time: Model 1]}}
\label{thm:1657}
Under the conditions in Theorem~\ref{T.finsys1}, for all diffusion functions $g \in \CG$ satisfying~\eqref{e1675},
\begin{equation}
\label{e1437}
\lim_{n\to\infty} \CL [H_n/\beta_n] = \CL [\mathfrak{T}].
\end{equation}
\end{theorem}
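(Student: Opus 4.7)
}

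The plan is to reduce the statement about the trapping time of the full finite configuration to a statement about the hitting time of $\{0,1\}$ by the macroscopic variable $\hat\theta^n$, and then to apply the continuous mapping theorem to the convergence established in Theorem~\ref{T.finsys1}(a). The starting observation is the identity
\begin{equation*}
H_n = \inf\bigl\{t\geq 0 : \hat\theta^n(t) \in \{0,1\}\bigr\}.
\end{equation*}
Indeed, since $\hat\theta^n = |\G_n|^{-1}(1+K)^{-1}\sum_{i\in\G_n}(x_i + K y_i)$ and each summand lies in $[0,1+K]$, the value $\hat\theta^n \in \{0,1\}$ forces $x_i = y_i = 0$ or $x_i = y_i = 1$ for every $i\in\G_n$. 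Conversely, at these configurations the migration, the resampling and the exchange terms in \eqref{gh1}--\eqref{gh2} all vanish, so the traps are indeed absorbing. Hence $H_n/\beta_n$ coincides with the hitting time of $\{0,1\}$ by the rescaled process $s\mapsto \hat\theta^n(s\beta_n)$.

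The second step is to upgrade the finite-dimensional convergence in \eqref{gh26} to weak convergence in $C([0,\infty),[0,1])$. From \eqref{mo2alt} we see that $s\mapsto \hat\theta^n(s\beta_n)$ is a bounded continuous martingale, and a direct computation of its quadratic variation (using $\beta_n = (1+K)^2|\G_n|$ and $\|g\|_\infty<\infty$) yields a uniform-in-$n$ bound of order $O(s)$ on the increasing process on bounded intervals. By Aldous' criterion and Rebolledo's theorem for continuous martingales, this gives tightness on $C([0,\infty),[0,1])$. The initial value converges in probability to $\theta$ by the spatial ergodic theorem applied to $\mu(0) \in \CT^{\mathrm{erg}}_\theta$, and Theorem~\ref{T.finsys1}(a) identifies every subsequential limit with the unique (in law) solution $\Theta$ of \eqref{gh18}.

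The final step is to apply the continuous mapping theorem to the hitting-time functional $T\colon \xi \mapsto \inf\{s\geq 0 : \xi(s) \in \{0,1\}\}$. The integrability condition \eqref{e1675} together with $\CF g \in \CG$ (proved in Appendix~\ref{appA*}) means that $\Theta$ is a one-dimensional diffusion on $[0,1]$ with a locally non-degenerate diffusion coefficient on $(0,1)$ that is absorbed at $\{0,1\}$ in finite time. A standard scale-function/strong-Markov argument then shows that almost every path of $\Theta$ strictly avoids $\{0,1\}$ on $[0,\mathfrak{T})$ and is absorbed at $\mathfrak{T}$, so $T$ is continuous at $\Theta$ almost surely. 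Continuous mapping gives $H_n/\beta_n \Rightarrow \mathfrak{T}$, which is \eqref{e1437}.

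The main obstacle is the upgrade from finite-dimensional convergence (as stated in Theorem~\ref{T.finsys1}(a) only for $s>0$) to genuine Skorokhod-path convergence including a neighbourhood of $s=0$ and past the absorption time. Near $s=0$ this is handled by the ergodic-theorem convergence of the initial macroscopic variable; near and after absorption one must rule out a scenario where $\hat\theta^n$ lingers near $\{0,1\}$ without ever reaching it on the scale $\beta_n$. This is controlled by \eqref{trapcond} (which ensures that $g\in\CG$ makes $\{0,1\}$ accessible for the one-colony dynamics), together with the fact that as long as some coordinate $x_i(t)$ lies in $(0,1)$ the quadratic variation of $\hat\theta^n$ keeps accumulating at a strictly positive rate, precluding a lingering bounded-martingale excursion that would leave $\hat\theta^n$ forever outside $\{0,1\}$.
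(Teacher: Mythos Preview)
Your reduction $H_n=\inf\{t:\hat\theta^n(t)\in\{0,1\}\}$ and the tightness argument for $s\mapsto\hat\theta^n(s\beta_n)$ in $C([0,\infty),[0,1])$ are correct and match the paper's first moves. The divergence is in the last step.

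Your claim that ``almost every path of $\Theta$ strictly avoids $\{0,1\}$ on $[0,\mathfrak T)$ and is absorbed at $\mathfrak T$, so $T$ is continuous at $\Theta$ almost surely'' is not valid. Absorption at the boundary gives only the lower bound $\liminf T(\xi_n)\geq T(\Theta)$; the functional $T$ is \emph{not} upper semicontinuous at such paths in the topology of uniform convergence on compacts (take $\xi_n(s)=\max\{\Theta(s),1/n\}$ when $\Theta(\mathfrak T)=0$). You recognise this in your final paragraph, but the patch you offer --- invoking \eqref{trapcond} for $g$ and the observation that $\langle\hat\theta^n\rangle$ keeps growing while some $x_i\in(0,1)$ --- does not close the gap: the theorem assumes \eqref{e1675} for $\CF g$, not the integral condition on $g$, and ``quadratic variation keeps accumulating'' only tells you $H_n<\infty$, not that $H_n/\beta_n$ is tight.

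The paper handles exactly this point differently. Instead of applying continuous mapping to $T$, it passes through the Dambis--Dubins--Schwarz representation $\hat\theta^n(s\beta_n)=\theta+B_n(\langle\hat\theta^n\rangle(s\beta_n))$ and uses that path convergence of continuous bounded martingales forces convergence of their increasing processes to $A(s)=\int_0^s(\CF g)(\Theta(u))\,\d u$. The trap is reached precisely when the \emph{derivative} of the increasing process hits $0$ and stays $0$, and the limiting increasing process has $A'(s)=(\CF g)(\Theta(s))\to 0$ with $A(\infty)=A(\mathfrak T)$ equal to the hitting time of $\{-\theta,1-\theta\}$ by the DDS Brownian motion. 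Thus the convergence of hitting times is read off from the convergence of the increasing processes (a functional that \emph{is} well-behaved here), rather than from the discontinuous functional $T$. This time-change/increasing-process step is the ingredient your argument is missing.
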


\noindent 
Note that Theorem~\ref{thm:1657} does \emph{not} follow directly from the path convergence in \eqref{gh26}. The relation in \eqref{e1437} implies that at time $t=[1+o(1)]\,\mathfrak{T}\beta_n$ the finite system \emph{clusters}. If, on the other hand, $\{0,1\}$ is \emph{not accessible} at both ends, then $\mathfrak{T}=\infty$ and the finite system only approximately clusters on time scale $\beta_n$.

 
\subsection{Model 2: Scaling}
\label{ss.model2}


\paragraph{Truncation.}

In Model 2 the seed-bank is truncated from $\N_0$ to $\{0,\ldots,M_n\}$ with 
\begin{equation}
\label{e1702}
\lim_{n\to\infty} M_n = \infty.
\end{equation}
Different choices for $M_n$ make sense in biological applications, since they capture \emph{how many seeds can accumulate in a single colony}. The truncated state space is $E_n=E_{M_n,n} = ([0,1] \times [0,1]^{M_n+1})^{\G_n}$, the truncated system 
\begin{equation}
\big(x^{M_n,n}(t),y^{M_n,n}(t)\big) = \big(x_i(t),(y_{i,m}(t))_{0 \leq m \leq M_n}\big)_{i\in\G_n}
\end{equation} 
evolves as
\begin{eqnarray}
\label{gh1b*}
\d x_i(t) &=& \sum_{j \in \G_n} a^n(i,j)\,[x_j (t) - x_i(t)]\,\d t
+ \sqrt{g(x_i(t))}\,\d w_i(t)\\ \nonumber
&&+\,\sum_{m=0}^{M_n} K_me_m\, [y_{i,m}(t) - x_i(t)]\,\d t,\\[0.2cm]
\label{gh2b*}
\d y_{i,m} (t) &=& e_m\,[x_i(t) - y_{i,m}(t)]\,\d t, \qquad 0\leq m \leq M_n,
\end{eqnarray}
where $a^n(i,j)$ is defined in \eqref{perkernel}. 


\paragraph{Truncated macroscopic variable.}

The analogue of \eqref{e1311} is the \emph{macroscopic variable} 
\begin{equation}
\label{mo3}
\hat{\theta}^{\,M_n,n}(t) = \frac{1}{|\G_n|} \sum_{i\in\G_n} 
\frac{x_i(t)+\sum_{m=0}^{M_n} K_m y_{i,m}(t)}{1+\sum_{m=0}^{M_n} K_m},
\end{equation}
the analogue of \eqref{FdH1*} is the \emph{empirical measure} 
\begin{equation}
\label{e1721}
\CE^{M_n,n}(t) = \frac{1}{|\G_n|} \sum_{i \in \G_n} \delta_{\tau_i (x^{M_n,n}(t),y^{M_n,n}(t))},
\end{equation}
and we write $\mu^{M_n,n}(t)$ to denote the law of the truncated system at time $t$. 

In order to understand the behaviour of the macroscopic variable $\hat{\theta}^{\,M_n,n}(t)$ defined in \eqref{mo3}, we must keep track of the variance of the average of the active components, and look for a time scale in which this variance remains positive in the limit as $n \to \infty$. By \eqref{gh1b*} and \eqref{gh2b*}, we find that $\hat{\theta}^{M,n}(t)$ evolves according to the SDE
\begin{eqnarray}
\label{mo2}
\d \hat{\theta}^{\,M_n,n}(t) = \frac{1}{|\G_n|\,(1+\sum_{m=0}^{M_n}K_m)}
\sum_{i\in\G_n} \sqrt{g\big(x_i(t)\big)}\,\d w_i(t),
\end{eqnarray}
where the migration cancels out because of the averaging over the geographic space, and the exchange with the seed-bank cancels out because of the averaging over the seed-bank. Thus, in particular,
\begin{equation}
\label{mart}
\big(\hat{\theta}^{\,M_n,n}(t)\big)_{t \geq 0} \text{ is a bounded and continuous martingale}
\end{equation} 
with increasing process
\begin{equation}
\label{incpr}
\left\langle \hat{\theta}^{\,M_n,n}(t)\right\rangle_{t \geq 0} 
= \left(\int_0^t \d s\,\frac{1}{|\G_n|\,(1+\sum_{m=0}^{M_n}K_m)} \sum_{i \in \G_n} g\big(x_i(s))\big)\right)_{t \geq 0}.
\end{equation}


\paragraph{Time scale.}

To identify the proper time scale, we speed up time. Using the scaling properties $W(a^2t) \cong a W(t)$ and $\sqrt{a}\,W(t)+\sqrt{b}\,W^\prime(t)\cong\sqrt{a+b}\,W^{\prime\prime}(t)$, and putting
\begin{equation}
\label{betaKndef}
\beta_n = \kappa_{M_n} |\G_n|, \qquad \kappa_{M_n} = \left(1+\sum_{m=0}^{M_n} K_m\right)^2, 
\end{equation}
we see that \eqref{mo2} becomes
\begin{equation}
\label{e1732}
\d \hat{\theta}^{\,M_n,n}(s\beta_n)
= \sqrt{\frac{1}{|\G_n|}\sum_{i\in\G_n} g\big(x_i(s\beta_n)\big)}\,\d w_i(s).
\end{equation}
Thus, important quantities for the evolution of \eqref{mo3} are the \emph{active macroscopic variable} 
\begin{equation}
\label{act}
\hat{\theta}_x^{\,M_n,n}(t) = \frac{1}{|\G_n|} \sum_{i\in\G_n} x_i(t)
\end{equation}
and the \emph{active empirical measure}
\begin{equation}
\CE_x^{\,M_n,n}(t) = \frac{1}{|\G_n|} \sum_{i \in \G_n} \delta_{\tau_i (x^{M_n,n})}.
\end{equation}

Recall \eqref{gh18a}--\eqref{gh19a}. We might expect that again
\begin{equation}
\lim_{n\to\infty} \mathcal{L}\left[\big(\hat{\theta}^{\,M_n,n}(s\beta_n)\big)_{s\geq0}\right]
= \mathcal{L}\left[(\Theta(s))_{s\geq0}\right] 
\end{equation}
and
\begin{equation}
\label{e1748}
\lim_{n\to\infty}\frac{1}{|\G_n|} \sum_{i\in\G_n} g\big(x_i(s\beta_n)\big)
= (\mathcal{F}g)\big({\Theta}(s)\big).
\end{equation}
Moreover, we might expect that again 
\begin{equation}
\lim_{n\to\infty} \CL\left[\big(\mu^{M_n,n}(s\beta_n)\big)_{s \geq 0}\right] = \CL[(\nu_{\Theta(s)})_{s \geq 0}],
\end{equation}
i.e., at time $s\beta_n$ the law of the finite system converges as $n\to\infty$ to a \emph{random mixture} of equilibria for the infinite system, selected through the random variable $\Theta(s)$. 

We will see in Sections~\ref{ss.model2fin}--\ref{ss.model2inf} that the above heuristics is correct for $\rho<\infty$, and also for $\rho=\infty$ when the growth of $M_n$ is slow, but fails for $\rho=\infty$ when the growth of $M_n$ is fast. 


\subsection{Model 2: $\rho<\infty$}
\label{ss.model2fin}
  

\paragraph{Results.}

For $\rho < \infty$ the behaviour is qualitatively similar to what we saw in Model 1. Let $\mu(0) \in \CT^\mathrm{erg}$. For $\mu \in \CT^{\mathrm{erg}}$, define
\begin{equation}
\label{thetadefalt}
\theta_\mu = \E_\mu\left[\frac{x_0 + \sum_{m\in\N_0} K_m\,y_{0,m}}{1+\rho}\right]
\end{equation}
and, for $\theta \in [0,1]$,
\begin{equation}
\label{erglaw2fin}
\CT^{\mathrm{erg}}_\theta = \left\{\mu \in \CT^\mathrm{erg}\colon\,\theta_\mu = \theta\right\}.
\end{equation}

\begin{theorem}{{\bf [Finite-systems scheme: Model 2 with $\rho<\infty$]}}
\label{T.finsys2fin}
Suppose that Assumptions~\ref{ass.migration}, \ref{ass.profinite} and \ref{ass.trker} are in force. Suppose that $\mu(0) \in \CT^\mathrm{erg}_\theta$, and that $\mu_n(0)$ is given by \eqref{choicelawn}. Put
\begin{equation}
\label{e2033alt}
\beta_n = \kappa |\G_n|, \qquad \kappa=(1+\rho)^2.
\end{equation}
Then, in the coexistence regime, for any $M_n$ satisfying \eqref{e1702}, the same formulas as in \eqref{gh17}--\eqref{gh17altalt} and \eqref{gh18}--\eqref{gh19} apply, with $(0,0)^\G$ and $(1,1)^\G$ replaced by $(0,0^{\N_0})^\G$ and $(1,1^{\N_0})^\G$ in \eqref{gh17altalt}.
\end{theorem}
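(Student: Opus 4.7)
The plan is to follow the template of Theorem~\ref{T.finsys1} and invoke the abstract finite-systems scheme recalled in Appendix~\ref{appB*} (from \cite{CG94*,DGV95}). The scheme requires three ingredients: (i) the macroscopic variable $\hat\theta^{M_n,n}(s\beta_n)$ converges weakly to an autonomous diffusion $\Theta$ on $[0,1]$ with the claimed diffusion function $\CF g$; (ii) on any intermediate time window $\psi_n \ll t \ll \beta_n$ the law of the local configuration, seen from a typical colony, is asymptotically $\nu_{\hat\theta^{M_n,n}(t)}$, i.e.\ the system moves through the one-parameter family of equilibria of the infinite system; and (iii) the truncation of the seed-bank at level $M_n$ is asymptotically negligible. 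The dichotomy from \eqref{gh17}--\eqref{gh17altalt} then follows by a soft martingale and trapping argument, as in Model 1, and the $(0,0^{\N_0})^\G$, $(1,1^{\N_0})^\G$ traps appear instead of $(0,0)^\G$, $(1,1)^\G$ simply because the state space is larger.

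For (i), the crucial observation is already contained in \eqref{mo2}--\eqref{incpr}: the increasing process of $\hat\theta^{M_n,n}$ carries the prefactor $1/(1+\sum_{m=0}^{M_n} K_m)^2$. Under the rescaling $t = s\beta_n$ with $\beta_n = \kappa_{M_n}|\G_n|$ and $\kappa_{M_n} \to \kappa = (1+\rho)^2$, equation~\eqref{e1732} shows that the quadratic variation becomes $\int_0^s \d r\,\,|\G_n|^{-1}\sum_{i\in\G_n} g(x_i(r\beta_n))\,\d r$. By an ergodic-theorem argument on $\G_n$ (using translation invariance of $\mu_n$ and the mixing produced by the migration on scale $\psi_n$), together with the quasi-equilibrium statement (ii), this spatial average converges in probability to $\E_{\nu_{\hat\theta^{M_n,n}(r\beta_n)}}[g(x_0)] = (\CF g)(\hat\theta^{M_n,n}(r\beta_n))$. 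A standard martingale problem identification then yields the SDE \eqref{gh18} with diffusion function $\CF g$, and the Fisher--Wright identity $\CF(dg_{\mathrm{FW}}) = d^\ast g_{\mathrm{FW}}$ follows from the duality formula for $\nu_\theta$ using the Markov process with transition kernel $b^{(2)}(\cdot,\cdot)$, where $\hat B(0,0)$ is now the mean total joint activity time for the dual lineages of Model~2.

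For (ii) I would verify the Liggett conditions in Definition~\ref{Rtheta}(B) for the finite-volume system: with the choice of $\mu_n(0)$ in \eqref{choicelawn}, the first-moment condition follows from $\E_{\mu_n}[z_u] = \theta$ (using $\mu(0) \in \CT^{\mathrm{erg}}_\theta$ and translation invariance), and the second-moment condition follows from ergodicity of $\mu(0)$ together with Assumption~\ref{ass.trker}: the dual random walks on $\G_n$, run up to time $o(\beta_n)$ but beyond time $\psi_n$, are close to the dual on $\G$, and the coexistence criterion~\eqref{crfin} (equivalently $\hat B(0,0)<\infty$) guarantees that two dual lineages coalesce with probability bounded away from~1 before exiting a ball of radius $o(\sqrt{\psi_n})$, so the pair enters its equilibrium. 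This is where the abstract comparison scheme of \cite{DGV95} is applied.

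For (iii), since $\rho = \sum_{m\in\N_0} K_m < \infty$, the tail $r_n = \sum_{m>M_n} K_m$ vanishes, and consequently $|\kappa_{M_n} - \kappa| = O(r_n) \to 0$; the truncated and non-truncated macroscopic variables differ by a quantity whose $L_2$-norm is $O(r_n/(1+\rho))$ uniformly in $t$. Hence the truncated system can be coupled with the infinite-seed-bank finite-volume system at negligible cost, and the dual lineage of Model~2 with full $\N_0$-valued colours can be approximated by the truncated dual because the excess tail rates $\sum_{m>M_n} K_m e_m$ act on a vanishing fraction of dual time. The main technical obstacle is (ii): ensuring the \emph{quasi-equilibrium} statement holds uniformly on time scales up to $\beta_n$ despite the presence of the seed-bank, which acts as a slow reservoir and delays local relaxation. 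Handling this delay requires showing that once a dual lineage is active it relaxes on the fast scale $\psi_n$, while the fraction of time it spends active equals $1/(1+\rho)$ (as computed in Appendix~\ref{appC*}); this fraction is exactly the source of the slow-down factor $\kappa$ in \eqref{e2033alt}.
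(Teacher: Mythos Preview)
Your proposal is correct and follows essentially the same route as the paper: verification of the abstract scheme (A1)--(A10) from Appendix~\ref{appB*} via the increasing-process/martingale argument for $\hat\theta^{M_n,n}$, the Liggett-condition/preservation machinery of Section~\ref{s.classpres}, and a finite-vs-infinite coupling, with the truncation of the seed-bank handled by $\rho<\infty$. The only organisational difference is that the paper first freezes $M_n\equiv M$ and $g=dg_{\mathrm{FW}}$ (so that duality is available to check (A1) and the convergence $\nu^M_\theta\to\nu_\theta$ directly), and only afterwards lets $M\to\infty$ and passes to general $g\in\CG$ via coupling; your sketch merges these stages but uses the same ingredients.
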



\paragraph{Comments.}

Theorem~\ref{T.finsys2fin} shows that for $\rho<\infty$ the behaviour is similar as in Theorem~\ref{T.finsys1}. The macroscopic time scale $\beta_n$ runs slower by a factor $\kappa=(1+\rho)^2$, with one factor $1+\rho$ coming from the time that is lost in the seed-bank and the other factor $1+\rho$ coming from the fact that the seed-bank brings in no volatility during the time that is lost (again see Section \ref{s.fssrhofin} for further details). The scaling of time by $|\G_n|$ is again natural, because for $\rho<\infty$ the coexistence regime again corresponds to transient migration (recall \eqref{crfin}), for which the mixing time of the migration is $o(|\G_n|)$ (recall Remarks~\ref{r.1274}--\ref{r.conj}). The slow-down factor $1/\kappa$ in \eqref{kappadef} is again the fraction of time the two lineages are jointly active.


\paragraph{Trapping time.}

Theorem~\ref{thm:1657} carries over verbatim.


\subsection{Model 2: $\rho=\infty$}
\label{ss.model2inf}

For $\rho=\infty$, the situation is much more delicate. It turns out that there are \emph{two regimes}, which we define in Section~\ref{sss.tworeg} and refer to as \emph{slow growing} seed-bank and \emph{fast growing} seed-bank. Here, slow and fast refer to the time scale on which two active lineages in the dual coalesce, which depends on $|\G_n|$ and on $\gamma$, the exponent of the tail of the wake-up time defined in \eqref{Ptail}. We identify the finite-systems scheme in these regimes in Sections~\ref{sss.slowgrow}, respectively, \ref{sss.fastgrow}, and find that they exhibit completely different behaviour. For slow growing seed-bank there is a \emph{gradual approach} towards complete clustering as for $\rho<\infty$, while for fast growing seed-bank there is a \emph{bursty approach} towards complete clustering, with \emph{random switches} between partially clustered states, driven by dormant lineages that gradually wake up on time scales that exceed the time scale on which two active lineages in the dual coalesce.   

For $\mu \in \CT^{\mathrm{erg},\bullet}$ (recall \eqref{laws}), define
\begin{equation}
\label{thetadefaltalt}
\theta_\mu = \lim_{M\to\infty}  \E_\mu\left[\frac{x_0 + \sum_{m=0}^M K_m\,y_{0,m}}
{1+\sum_{m=0}^M K_m}\right].
\end{equation}
and, for $\theta \in [0,1]$,
\begin{equation}
\label{erglaw2inf}
\CT^{\mathrm{erg},\bullet}_\theta = \left\{\mu \in \CT^{\mathrm{erg},\bullet}\colon\,\theta_\mu = \theta\right\}.
\end{equation}
The limit in \eqref{thetadefaltalt} exists because $\mu(0)$ is colour regular (recall \eqref{covcond1}). 


\subsubsection{Two growth regimes}
\label{sss.tworeg}

Recall \eqref{Ptail}--\eqref{ass3}. We introduce \emph{two time scales}: 
\begin{equation}
\label{scales}
\beta^*_n = M_n^{\,\beta}, \qquad \beta^{**}_n = \left\{\begin{array}{ll}
|\G_n|^{1/(2\gamma-1)}, &\gamma \in (\tfrac12,1],\\[0.2cm]
\e^{|\G_n|}, &\gamma = \tfrac12,\\[0.2cm]
\infty, &\gamma \in (0,\tfrac12).     
\end{array}
\right.
\end{equation}  
The interpretation of these time scales is as follows (recall \eqref{lineages1}--\eqref{lineages2}):
\begin{itemize}
\item
$\beta^*_n$ is the time scale on which a single dormant lineage in the dual starting from the deepest seed-bank (colour $M_n$) becomes active.   
\item
$\beta^{**}_n$ is the time scale on which two active lineages in the dual coalesce (on the active layer $\G_n$). 
\end{itemize}
Note that $\beta^*_n$ only depends on the size of the seed-bank, while $\beta^{**}_n$ only depends on the size of the geographic space and the exponent of the tail of the wake-up time.  

The two regimes of interest are
\begin{equation}
\label{Mncond}
\begin{array}{lllll}
&{\rm (I)} &\beta^*_n \ll \beta^{**}_n &\text{ (`slow growth')},\\[0.2cm]
&{\rm (II)} &\beta^{**}_n \ll \beta^*_n &\text{ (`fast growth')}.
\end{array}
\end{equation}
(For the critical value $\gamma=\tfrac12$, the two regimes are actually $\log \beta^*_n \ll \log \beta^{**}_n$, respectively, $\log \beta^{**}_n \ll \log \beta^*_n$. Note that for $\gamma \in (0,\tfrac12)$ only regime (I) is possible.) In regime (I) the system behaves as if $\rho<\infty$ (see Section~\ref{s.fssrhoinfslow}). In regime (II) the system essentially behaves like a \emph{hybrid} system in which the geographic space is truncated but the seed-bank is not (see Section~\ref{s.fssrhoinffast}). In the crossover regime 
\begin{equation}
\label{Mncondcrit}
\beta^*_n \asymp \beta^{**}_n,
\end{equation} 
which we will not consider, the behaviour is more involved.

We will see that the two regimes exhibit different behaviour. In other words, the limits $|\G_n| \to \infty$ and $M_n \to \infty$ \emph{cannot be interchanged}. Note that both in regime (I) and regime (II) the time scale $\kappa_{M_n}|\G_n|$ falls between the time scales $\beta^{*}_n$ and $\beta^{**}_n$.


\subsubsection{Slow growing seed-bank}
\label{sss.slowgrow}


\paragraph{Results.}

In regime (I) the behaviour is similar as for $\rho<\infty$ except for a change of time scale.

\begin{theorem}{{\bf [Finite-systems scheme: Model 2 with $\rho=\infty$ in regime (I)]}}
\label{T.finsys2inf(1)}
Suppose that Assumptions~\ref{ass.migration}, \ref{ass.reginf}, \ref{ass.profinite} and \ref{ass.trker} are in force. Suppose that $\mu(0) \in \CT^\mathrm{erg,\bullet}_\theta$, and that $\mu_n(0)$ is given by \eqref{choicelawn}. Put 
\begin{equation}
\beta_n = \kappa_{M_n} |\G_n|.
\end{equation}
Then, in the coexistence regime, the same formulas as in \eqref{gh17}--\eqref{gh17altalt} and \eqref{gh18}--\eqref{gh19} apply.
\end{theorem}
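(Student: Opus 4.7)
The plan is to adapt the abstract scheme of Appendix~\ref{appB*} (going back to \cite{CG94*} and \cite{DGV95}) that is used in Theorem~\ref{T.finsys2fin}, with the truncation level $M_n$ playing the role of a growing but slowly evolving cutoff. The core of the argument rests on the martingale structure of the truncated macroscopic variable: from \eqref{mo2}--\eqref{betaKndef}, after rescaling time by $\beta_n=\kappa_{M_n}|\G_n|$, the process $(\hat\theta^{M_n,n}(s\beta_n))_{s\geq 0}$ is a bounded continuous martingale with increasing process
\begin{equation}
\int_0^s \d u\,\,\frac{1}{|\G_n|}\sum_{i\in\G_n} g\big(x_i(u\beta_n)\big).
\end{equation}
The renormalisation formula \eqref{gh19} should arise precisely as the limit of this quadratic variation, via the quasi-equilibrium $\nu_{\hat\theta^{M_n,n}}$ that the local block approaches on scales much smaller than $\beta_n$.

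The choice of time scale is dictated by duality. Two active lineages in the dual on $\G_n$ each spend a fraction $1/(1+\sum_{m=0}^{M_n}K_m)$ of their time active, so the effective rate at which they meet and coalesce is reduced by a factor $\kappa_{M_n}^{-1}$; combined with migration mixing on a scale $o(|\G_n|)$ (Assumption~\ref{ass.trker} plus the transience behind \eqref{crfin}), this yields the natural coalescence scale $\kappa_{M_n}|\G_n|$. The crucial input of regime~(I) is $\beta^*_n\ll \beta^{**}_n$: the deepest truncated seed-bank (level $M_n$) empties and refills on a time scale much shorter than the coalescence time, so dually the truncated system behaves like a finite-$\rho$ system with effective reservoir $\rho_{M_n}=\sum_{m=0}^{M_n}K_m$. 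I would therefore verify the two Liggett-type conditions of Definition~\ref{Rtheta}(C) for the initial laws $\mu_n(0)$ on $E_n$, using \eqref{afininfcompalt}--\eqref{afininfcomp} to transfer the long-time convergence established for the infinite system in \cite{GdHOpr1} to the finite geographic space for every $M_n$ satisfying \eqref{e1702} with $\beta^*_n\ll\beta^{**}_n$. Colour regularity of $\mu(0)$, inherited by $\mu_n(0)$ through the construction \eqref{choicelawn} and the extension operator \eqref{extop}, provides the uniform control of deep seed-bank levels required to pass to the limit.

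With these ingredients I would execute the three standard steps of the abstract scheme: (i) tightness of $(\hat\theta^{M_n,n}(s\beta_n))_{s\geq 0}$ on $C([0,\infty),[0,1])$, following from the uniform martingale bound and Lipschitz continuity of $g$; (ii) identification of any subsequential limit by verifying the martingale problem associated with \eqref{gh18}, i.e., showing that the spatially averaged volatility $|\G_n|^{-1}\sum_{i\in\G_n} g(x_i(s\beta_n))$ converges in law to $(\CF g)(\Theta(s))$, which invokes uniqueness of $\nu_\theta$ and its continuity in $\theta$ from \cite{GdHOpr1}; (iii) upgrading the local equilibrium from the macroscopic variable to the empirical measure $\CE^{M_n,n}$, yielding \eqref{gh17}, together with the two degenerate time-scale statements \eqref{gh17alt} and \eqref{gh17altalt} from monotonicity and martingale convergence. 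The main obstacle is step~(ii): controlling the quasi-equilibration uniformly in the truncation $M_n$, since the set $\CT^{\mathrm{erg},\bullet}$ is not closed in the weak topology (Remark~\ref{rem:topology}) and $\rho_{M_n}\to\infty$. The separation of scales $\beta^*_n\ll\beta^{**}_n$ is exactly what is needed to decouple the two limits $|\G_n|\to\infty$ and $M_n\to\infty$ in this regime, ensuring that contributions from colours near the cutoff $M_n$ vanish and that the truncated system is indistinguishable from the full infinite system at the colour levels relevant on scale $\beta_n$.
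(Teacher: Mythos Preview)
Your outline follows the same route as the paper: the abstract scheme of Appendix~\ref{appB*}, the martingale/increasing-process structure \eqref{mo2}--\eqref{incpr}, tightness followed by identification of the limiting martingale problem, and the separation of scales $\beta^*_n\ll\beta^{**}_n$ as the mechanism that keeps regime~(I) tractable.

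Two points need sharpening. First, your appeal to ``transience behind \eqref{crfin}'' is misplaced: for $\rho=\infty$ the coexistence criterion is \eqref{crinf}, and the migration may well be recurrent (cf.\ \eqref{gdcond}--\eqref{gdcondaltalt}). The proof does not use transience of $\hat a$; it uses finiteness of the hazard integral $I_{\hat a,\gamma}$ together with the scaling \eqref{EEprimescal} of the joint-activity probability, which is where regime~(I) enters quantitatively (see the estimates around \eqref{J12def}).

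Second, you name the main obstacle---uniform quasi-equilibration in $M_n$---but do not say how to resolve it, and this is where the work lies. The paper handles it by (a) changing the abstract-scheme setup so that the seed-bank index is part of the \emph{geographic} space rather than the single-component state space (Remark~\ref{rem:translate}), forced because the active population is asymptotically negligible and $\hat\theta^{M_n,n}$ is no longer a continuous functional of the empirical measure; (b) proving that deep seed-banks become deterministic along the evolution (Lemma~\ref{lem:deep}), which supplies colour regularity for limit points and places them in $\CT^{\mathrm{erg},\blacksquare}_\theta$; and (c) constructing a two-stage coupling of the finite and infinite systems over times $t_n^*+\bar t_n$ (Lemma~\ref{l.2752alt}), where the first stage drives the state into $\CT^{\mathrm{erg},\blacksquare}_\theta$ and the second stage runs the Lyapunov function, with the extra positive terms from truncation shown to vanish precisely because in regime~(I) one can choose $t_n=o(\beta^*_n)$ and $t_n=o(\beta^{**}_n)$ simultaneously. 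Without (a)--(c), your step~(ii) is a restatement of the goal rather than a proof.
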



\paragraph{Comments.}

Theorem~\ref{T.finsys2inf(1)} shows that for $\rho=\infty$ the macroscopic time scale $\beta_n$ needs to be speeded up in order to see fluctuations of the macroscopic variable. The fraction of time that two lineages in the dual are jointly active scales like $1/\kappa_{M_n}$. Hence the time scale must be multiplied by $\kappa_{M_n}$, but otherwise the behaviour in regime (I) is similar as for $\rho<\infty$. Note that
$\kappa_{M_n} \sim [A/(1-\alpha)]^2 M_n^{2(1-\alpha)}$ for $\alpha \in (-\infty,1)$ and $\kappa_{M_n} \sim [A \log M_n]^2$ for $\alpha = 1$ (recall \eqref{ass3}). 


\paragraph{Trapping time.}

Theorem~\ref{thm:1657} again carries over verbatim.


\subsubsection{Fast growing seed-bank} 
\label{sss.fastgrow}


\paragraph{Results.}

In regime (II) the behaviour is different than in regime (I), both in terms of time scale and scaling limit. In the following we analyse what happens on time scale $\bar{\beta}_n$ in \emph{three different ranges}: 
\begin{equation}
\label{3ranges}
\begin{array}{lll}
&(1) &1 \ll \bar{\beta}_n \ll \beta^{**}_n,\\
&(2) &\beta^{**}_n \ll \bar{\beta}_n \ll \beta^*_n,\\
&(3) &\bar{\beta}_n \gg \beta^*_n. 
\end{array}
\end{equation}
In order to state our result, we need to fix a \emph{depth} $L_n$ until which the seed-bank is being monitored. The proper choice turns out to be any $L_n$ satisfying 
\begin{equation}
1 \ll L_n \ll \bar{\beta}_n^{1/\beta},
\end{equation}
where $\beta>0$ is the exponent in \eqref{ass3}.
 
Recall from \eqref{Edef} that $\S = \G \times \{A,(D_m)_{m\in\N_0}\}$. Abbreviate $\S_n = \G_n \times \{A,(D_m)_{0 \leq m \leq M_n}\}$ and $\S^{L_n}_n = \G_n \times \{A,(D_m)_{0 \leq m \leq L_n}\}$, $0 \leq L_n \leq M_n$. Given a sequence of laws $(\mu_n)_{n\in\N}$ in $\CP([0,1]^\S)$ and a law $\mu \in \CP([0,1]^\S)$, we say that  $\lim^{L_n}_{n\to\infty} \mu_n = \mu$ when
\begin{equation}
\lim_{n\to\infty} \sup_{ {A_n \subset [0,1]^{\S^{L_n}_n}} \atop {\text{measurable}}} 
\left|\,\int_{A_n} f \d\mu_n - \int_{A_n} f \d\mu\,\right| = 0
\qquad \forall\,f \in C_b([0,1]^\S;\R),
\end{equation}
which we refer to as \emph{weak convergence to depths} $(L_n)_{n \in \N_0}$.  
 
\begin{theorem}{{\bf [Finite-systems scheme: Model 2 with $\rho=\infty$ in regime (II)]}}
\label{T.finsys2inf(2)}
Suppose that Assumptions~\ref{ass.migration}, \ref{ass.reginf}, \ref{ass.profinite} and \ref{ass.trker} are in force, and that $\psi_n = o((\beta^{**}_n)^\gamma)$. Suppose that $\mu(0) \in \CT^\mathrm{erg,\bullet}_\theta$, and that $\mu_n(0)$ is given by \eqref{choicelawn}. Then, in the coexistence regime, for the three ranges in \eqref{3ranges}:
\begin{itemize}
\item[{\rm (1)}]
Equilibrium:
\eqref{gh17alt} holds with $\beta_n$ replaced by $\bar{\beta}_n$ and $\lim_{n\to\infty}$ replaced by $\lim^{L_n}_{n\to\infty}$. 
\item[{\rm (2)}]
Partial clustering:
\begin{equation}
\hspace{-1cm}
\begin{array}{lll}
&\lim_{n\to\infty} \CL\left[\big(\hat{\theta}^{M_n,n}(\bar{\beta}_n),\hat{\theta}^{M_n,n}_x(\bar{\beta}_n)\big)\right] 
= \delta_\theta \otimes \delta_{\bar\Upsilon},\\[0.2cm]
&\lim^{L_n}_{n\to\infty} \CL\left[\big(\CE^{M_n,n}(\bar{\beta}_n)\right]
= \lim^{L_n}_{n\to\infty} \CL\left[\big(\mu^{M_n,n}(\bar{\beta}_n)\right]  
= \bar{\Upsilon}\,\delta_{(1,1^{\N_0})^{\G}} + [1-\bar{\Upsilon}]\,\delta_{(0,0^{\N_0})^{\G}},
\end{array}
\end{equation}
where $\bar{\Upsilon} \in \{0,1\}$ is a random variable with law 
\begin{equation}
\CL[\bar{\Upsilon}] = \theta\,\delta_1 + (1-\theta)\,\delta_0. 
\end{equation}
Moreover, for any two time scales $\beta^{**}_n \ll \bar{\beta}_n \ll \tilde{\beta}_n \ll \beta^*_n$,  $\bar{\Upsilon}$ and $\tilde{\Upsilon}$ are independent. 
\item[{\rm (3)}]
Complete clustering: \eqref{gh17altalt} holds with $\beta_n$ replaced by $\bar{\beta}_n$ and $\lim_{n\to\infty}$ replaced by $\lim^{M_n}_{n\to\infty}$. 
\end{itemize}
\end{theorem}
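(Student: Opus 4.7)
The proof works via the moment dual of Model~2. The dual consists of independent coloured Markov processes on $\S_n=\G_n\times\{A,(D_m)_{0\le m\le M_n}\}$ with transition kernel $b^{(2)}(\cdot,\cdot)$, coalescing at rate $d$ when two lineages are simultaneously active at the same site of $\G_n$. Three natural time scales appear: (a) $\beta^{**}_n$, the typical coalescence time of two active lineages started at independent uniform sites in $\G_n$; (b) $\beta^{*}_n=M_n^{\beta}$, the wake-up time for a lineage dormant in the deepest seed-bank; and (c) the wake-up time $\asymp m^{\beta}$ of a lineage dormant at depth $m$. The plan is to analyse each of the three ranges in \eqref{3ranges} by identifying what the dual ``sees'' on that time scale.

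In range~(1) with $\bar\beta_n\ll\beta^{**}_n$, the two lineages do not coalesce during $[0,\bar\beta_n]$, and Assumption~\ref{ass.trker} allows me to couple the finite-space dual to the infinite one up to errors that vanish on depth $L_n\ll\bar\beta_n^{1/\beta}$ (this condition ensures seed-banks of depth $\le L_n$ wake up at rate $e_{L_n}\gg\bar\beta_n^{-1}$ and thus equilibrate with the active layer). The convergence of the infinite system to $\nu_\theta$ from \cite{GdHOpr1}, together with the colour regularity of $\mu(0)$, then transfers through duality to give the analogue of \eqref{gh17alt} in the weak-convergence-to-depths sense. In range~(3), where $\bar\beta_n\gg\beta^{*}_n$, every seed-bank layer has woken up repeatedly, so the truncated system behaves as a genuinely finite Fisher-Wright system with accessible boundaries and therefore fully clusters; the clustering value is Bernoulli$(\theta)$ because $\theta$ is the conserved mean of the macroscopic variable \eqref{mo3} under the martingale \eqref{mart}, which is now forced to hit its boundary.

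Range~(2) is the heart of the proof. The strategy is to split the seed-bank at the \emph{natural wake-up depth} $\asymp\bar\beta_n^{1/\beta}$, into a \emph{shallow} part that has had time to interact with the active layer and a \emph{deep} part that has not. First, by \eqref{incpr} the martingale $\hat\theta^{M_n,n}$ has quadratic variation at most $O(\bar\beta_n/(|\G_n|\kappa_{M_n}))$, which is $o(1)$ because in regime~(II) the deep seed-banks inflate $\kappa_{M_n}$ well beyond $\bar\beta_n/|\G_n|\ll\beta^{*}_n/|\G_n|$; hence $\hat\theta^{M_n,n}(\bar\beta_n)\to\theta$ in $L^2$. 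Second, the shallow seed-bank together with the active layer forms an ``effectively $\rho<\infty$'' subsystem on $\G_n$ that completes clustering on time scale $\beta^{**}_n\ll\bar\beta_n$; the clustering value $\bar\Upsilon\in\{0,1\}$ is Bernoulli$(\theta)$, because the conserved macroscopic variable for this subsystem (essentially the initial average of $x_i$'s over $\G_n$, which concentrates at $\theta$ by ergodicity of $\mu(0)$) forces $\hat\theta^{M_n,n}_x$ to converge to a $\{0,1\}$-valued limit of mean $\theta$. The empirical-measure statement follows because, to depth $L_n\ll\bar\beta_n^{1/\beta}$, the shallow seed-banks have equilibrated with the active layer and thus also take the value $\bar\Upsilon$.

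Independence of $\bar\Upsilon$ and $\tilde\Upsilon$ for $\bar\beta_n\ll\tilde\beta_n$ in range~(2) comes from a \emph{reset mechanism}: between times $\bar\beta_n$ and $\tilde\beta_n$, seed-banks of depths $m\in(\bar\beta_n^{1/\beta},\tilde\beta_n^{1/\beta})$ wake up for the first time. Throughout $[0,\bar\beta_n]$ they were frozen at their initial values and, by ergodicity of $\mu(0)$ and the size of $\G_n$, they remain independent of the shallow clustering history that produced $\bar\Upsilon$; upon waking they trigger a fresh burst of clustering whose outcome is an independent Bernoulli$(\theta)$ sample. The main obstacle is making this reset picture quantitatively rigorous: one must couple the deep-layer martingale control with a precise argument showing that the active configuration at time $\tilde\beta_n$ is determined, up to negligible error, by the configuration of the freshly woken seed-banks. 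Because the abstract scheme of \cite{CG94*} used in the proofs of Theorems~\ref{T.finsys1}--\ref{T.finsys2inf(1)} is tailored to a single macroscopic time scale, an essentially new route is required here, which is precisely why Section~\ref{s.fssrhoinffast} follows a separate route.
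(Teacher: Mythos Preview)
Your outline captures the correct intuition for ranges (1) and (3), and the switching/independence picture in range (2) matches the paper's argument in Section~\ref{sss.switch}. However, there is a quantitative error in your treatment of the macroscopic variable in range~(2), and the mechanism you propose for partial clustering differs from the paper's in a way that matters.

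\textbf{The martingale bound is false on part of range~(2).} You claim the quadratic variation of $\hat\theta^{M_n,n}$ at time $\bar\beta_n$ is $O(\bar\beta_n/(|\G_n|\kappa_{M_n}))=o(1)$ because $\kappa_{M_n}\gg\beta^*_n/|\G_n|$. But in regime~(II) one has $\kappa_{M_n}|\G_n|\asymp(\beta^*_n/\beta^{**}_n)^{2(1-\gamma)}\beta^{**}_n$, which lies strictly \emph{between} $\beta^{**}_n$ and $\beta^*_n$ (the paper notes this after \eqref{Mncondcrit}). Hence for $\bar\beta_n$ in the upper part of range~(2), say $\kappa_{M_n}|\G_n|\ll\bar\beta_n\ll\beta^*_n$, your naive bound on the increasing process is not $o(1)$. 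The actual reason $\hat\theta^{M_n,n}(\bar\beta_n)\to\theta$ is the one the paper gives in the Comments after Theorem~\ref{T.finsys2inf(2)}: partial clustering occurs \emph{first} (on scale $\beta^{**}_n$), after which $g(x_i(\cdot))\approx0$ and the integrand in \eqref{incpr} is suppressed. So partial clustering must be established independently of, and prior to, the martingale control --- your logic is inverted here.

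\textbf{Comparison duality versus moment duality.} You frame the whole proof through the moment dual, which only applies when $g=dg_{\mathrm{FW}}$. The paper's proof of parts~(2) and~(3) (Sections~\ref{sss.partclus} and~\ref{ss.fix}) instead uses \emph{comparison duality}: for arbitrary $g\in\CG$ one bounds $g$ below by $\tilde g(x)=c(x-\epsilon)(1-\epsilon-x)$, introduces an auxiliary two-particle Markov chain $(B(t))$ on $\{1,2\}\times\S_n\times\S_n$ coalescing at rate $c$, and derives the differential inequality $F_t\ge \mathfrak S_t F_0$ to obtain $\E[z^-_{u_1}(t)(1-z^+_{u_2}(t))]\le\P_{(2,u_1,u_2)}(B_0(t)=2)$. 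Partial clustering then reduces to showing the total joint activity time of two lineages diverges in probability on scale $\bar\beta_n\gg\beta^{**}_n$, which is done by a direct hazard computation (Steps~5--6 of Section~\ref{sss.partclus}). This replaces your ``effectively $\rho<\infty$ subsystem'' heuristic --- which would require justifying that the shallow part behaves as a closed system, something that is not true since it continually exchanges with deeper layers --- by a direct second-moment argument valid for all $g\in\CG$.
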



\paragraph{Comments.}

The behaviour in regime (II) is strikingly different from that in regime (I). 
\begin{itemize}
\item[(1)]
Before time scale $\beta^{**}_n$ \emph{local convergence to equilibrium occurs}, as in the infinite system, up to seed-bank depth $L_n = o(\bar{\beta}_n^{1/\beta})$. 
\item[(1-2)]
On time scale $\beta^{**}_n$, \emph{partial clustering} sets in that is \emph{global in the geographic space but local in the seed-bank up to depth} $L_n = o((\beta^{**}_n)^{1/\beta})$, i.e., the active population and the seed-banks up to depth $L_n$ gradually move towards one of the clustered states in which they are either all $1$ or all $0$. (The latter corresponds to the geographic space partitioning into two parts, where all the active individuals and all the dormant individuals in the seed-banks up to depth $L_n$ stem from a single ancestor.) There is no movement yet towards the equilibria of the finite system, i.e., towards the completely clustered states, because the deeper seed-banks have not yet made themselves felt. 
\item[(2)]
After time scale $\beta^{**}_n$ but before time scale $\beta^*_n$, deeper seed-banks come into play, and \emph{partial clustering} occurs up to depth $L_n = o((\bar{\beta}_n)^{1/\beta})$. Since the initial mean is $\theta$, and the mean is preserved under the evolution, the value that is taken in the partial clustering is the \emph{random variable} $\bar{\Upsilon} \in \{0,1\}$ with mean $\theta$, i.e., $\bar{\Upsilon} = 0$ with probability $\theta$ and $\bar{\Upsilon} = 1$ with probability $1-\theta$. The deeper seed-banks make the active population and the seed-banks up to depth $L_n$ undergo \emph{random switches} between the two partially clustered states: on a larger time scale $\tilde{\beta_n}$ the density is equal to a freshly sampled random variable $\tilde{\Upsilon} \in \{0,1\}$ with mean $\theta$, i.e., \emph{the deeper seed-banks overrule the shallower seed-banks}.  
\item[(2-3)]
Only when time scale $\beta^*_n$ is reached do the deepest seed-banks come into play, partial clustering occurs up to depth $L_n = o((\beta^*_n)^{1/\beta})$, with $(\beta^*_n)^{1/\beta} = M_n$, after which \emph{complete clustering} sets in that is \emph{global in the geographic space and in the seed-bank}, i.e., the active population and all the seed-banks gradually move towards one of the clustered states in which they are either all $1$ or all $0$, and complete clustering occurs up to depth $L_n = M_n$. (The latter corresponds to the geographic space partitioning into two parts in which all the individuals, both active and dormant, stem from a single ancestor.) 
\item[(3)]
After time scale $\beta^*_n$ \emph{complete fixation} has been achieved. 
\end{itemize}
Note that in regime (II) the time scale $\kappa_{M_n}|\G_n|$ of regime (I) is no longer relevant. In fact, \emph{the role of $\beta_n$ is taken over by $\beta^*_n$, the time scale at which complete clustering sets in}. The fact that the macroscopic variable tends to $0$ or $1$ on time scale $\beta^*_n$ and not on time scale $\kappa_{M_n}|\G_n|$ is due to the partial clustering, which makes the right-hand side of \eqref{mo2} small.

It remains open to identify what exactly happens in the crossover regimes $\bar{\beta}_n \asymp \beta^{**}_n$ and $\bar{\beta}_n \asymp \beta^*_n$. We expect that on time scale $\beta^{**}_n$ the macroscopic variable associated with the active population and the seed-banks up to depth $L_n = o((\beta^{**}_n)^{1/\beta})$ move towards partial fixation according to a \emph{jump process}, i.e., it follows a piecewise constant path that ends in $0$ or $1$. We expect that on time scale $\beta^*_n$ the macroscopic variable associated with the full population, i.e., the active population and all the seed-banks up to depth $M_n$, moves towards fixation according to a jump process in the same manner, modulo a constant multiple of time. In both instances the behaviour is \emph{different} from the diffusion in Fig.~\ref{fig:WFren} with diffusion function $\CF g$. Nonetheless, we expect that this diffusion still plays a role in the background. See Appendix~\ref{appD*} for speculations.


\paragraph{Explanation.}

In regime (II) we may pretend that the active and dormant time lapses of a lineage in the dual behave in the same way as when $M_n\equiv\infty$. Indeed, if $\tau_n$ denotes the wake-up time of a lineage in the dual for the finite system after it has become dormant, then (compare with \eqref{tauwakeup})
\begin{equation}
\P(\tau_n > t) = \frac{1}{\chi} \sum_{m=0}^{M_n} K_me_m\,\e^{-e_m t}, \qquad t \geq 0.
\end{equation} 
Inserting \eqref{ass3}, approximating the sum by an integral and passing to the new variable $x=e_mt$, we find that
\begin{equation}
\label{tautail}
\P(\tau_n > t) \sim \frac{A}{\beta \chi}B^{1-\gamma} t^{-\gamma} \int_{Bt/\beta^*_n}^{Bt} \d x\, x^{\gamma-1} \e^{-x},
\qquad t \to\infty.
\end{equation}
Thus, we see that the asymptotics found for the infinite system in \eqref{Ptail} prevails in regime (II) because $\bar{\beta}_n/\beta^*_n \downarrow 0$ as $n\to\infty$. 

On $\G_n$, the mean total joint activity time up to time $T$ for two lineages in the dual \emph{without coalescence}, starting anywhere in $\G_n$ and being both active, equals the \emph{hazard}
\begin{equation}
\label{HnT}
H_n(T) \asymp \int_1^T t^{-2(1-\gamma)}\,|\G_n|^{-1}\, \d t, \qquad T \to\infty.
\end{equation}
Indeed, by \eqref{tautail}, the probability for each lineage to be active at time $t$ falls off like $t^{-(1-\gamma)}$, while the probability for the two lineages to be at the same site is $|\G_n|^{-1}$, provided these lineages are well mixed on $\G_n$ at times of order $T$. From \eqref{HnT} we see that 
\begin{equation}
H_n(T) \asymp |\G_n|^{-1} \times \left\{\begin{array}{ll}
T^{2\gamma-1}, &\gamma \in (\tfrac12,1],\\[0.2cm]
\log T, &\gamma = \tfrac12,\\[0.2cm]
C, &\gamma \in (0,\tfrac12),
\end{array}
\right.
\qquad T \to\infty, 
\end{equation}
and so $H_n(T)$ starts to diverge on time scale $\beta^{**}_n$ (recall \eqref{scales}).  Up to time $T$, each lineage takes $\asymp T^\gamma$ migration steps, and so we require that $\psi_n = o((\beta_n^{**})^\gamma)$ to get proper mixing on time scale $\beta_n^{**}$, which explains the assumption on $\psi_n$ in Theorem~\ref{T.finsys2inf(2)}. This assumption is met for all three examples in Remark~\ref{r.1274} when $\gamma \in (\tfrac12,1]$. For the first example we have $|\G_n|=n^d$ and $\psi_n \asymp n^2$, and so we require that $2 < d\gamma/(2\gamma-1)$, which is precisely the condition for coexistence in \eqref{gdcond}. For the second example we have $|\G_n| = n$ and $\psi_n \asymp n^\delta$, and so we require that $\delta < \gamma/(2\gamma-1)$, which is precisely the condition for coexistence in \eqref{gdcondalt}. For the third example we have $|\G_n| = N^n$ and $\psi_n \asymp (N/c)^{n-1}$, and so we require that $N/c < N^{\gamma_N/(2\gamma_N-1)}$, which is precisely the condition for coexistence in \eqref{gdcondaltalt}. The assumption is trivially met for $\gamma \in (0,\tfrac12)$ (recall \eqref{scales}) and $\gamma=\tfrac12$ (recall  Remark~\ref{r.conj})(1)). 

\begin{remark}{{\bf [Crossover for the hazard]}}
\label{r.fininf}
{\rm The integral in \eqref{crinf} arises as the mean total joint activity time on $\G$  
\begin{equation}
I_{\hat{a},\gamma} \asymp \int_1^\infty  t^{-2(1-\gamma)}\,\hat{a}_{t^\gamma}(0,0)\, \d t. 
\end{equation}
In the coexistence regime we have $I_{\hat{a},\gamma}<\infty$, while $H_n(T)$ in \eqref{HnT} starts to diverges on scale $T=\beta^{**}_n$. The reason why this is possible is related to the observation made in Remark~\ref{r.conj}(2), namely, because $|\G_n|^{-1} \asymp \hat{a}_{\psi_n}(0,0)$ and $\psi_n = o(T^\gamma)$, we have $|\G_n|^{-1} \gg \hat{a}_{T^\gamma}(0,0)$.   
} \hfill $\Box$
\end{remark}


\subsection{Open problems}
\label{ss.open}

We close by listing a few open problems. 
\begin{itemize}
\item[(A)]
How can we refine Theorem~\ref{T.finsys2inf(2)}? In particular, what happens at times of order $\beta^{**}_n$ and $\beta^*_n$? On the shorter time scale $\beta^{**}_n$ the system undergoes clustering in the geographic space but not in the seed-bank, while on the longer time scale $\beta^*_n$ the system undergoes clustering in the seed-bank (and hence complete clustering). In Section~\ref{s.fssrhoinffast} we will argue that in regime (II), unlike in regime (I), clustering in the geographic space is \emph{not diffusive}, but rather proceeds in \emph{random bursts}, i.e., on time scale $\beta^{**}_n$ the active macroscopic variable follows a random jump process that lives on $(0,1)$ and ends at $0$ or $1$.
\item[(B)]
What is the analogue of Theorem~\ref{thm:1657} in regime (II)? We expect the trapping time to be of order $\beta^*_n$, but not to be controlled by a diffusion because of the random bursts.
\item[(C)]
What happens in the crossover regime in \eqref{Mncondcrit}? We expect diffusive scaling of the macroscopic variable, but driven by a diffusion function different from $\CF g$.    
\end{itemize}


\section{Preparation: Preservation and convergence}
\label{s.classpres}

The strategy of the proof of the theorems stated in Section~\ref{s.scaling} is to make the following simple ideas rigorous, where for notational convenience we focus on Model 1. Let $S=(S(t))_{t \geq 0}$ denote the semigroup associated with the Markov process $Z=(Z(t))_{t \geq 0}$ on $\G$ defined in \eqref{e409}, and let $S^n,Z^n$ be their finite-system counterparts on $\G_n$. Suppose that we have identified the time scale $\beta_n$ on which the estimator process $\hat\theta^n=(\hat\theta^n(s\beta_n))_{s > 0}$ is tight in $D((0,\infty),[0,1])$. Then, by the Markov property of $Z^n$, we can write
\begin{equation}
\label{heu1}
\CL[Z^n(s\beta_n)] = \int_{[0,1]} \P\big(\hat\theta^n(s\beta_n-L_n) \in \d\theta'\big)\,\mu^n_{\theta'}S^n(L_n),
\qquad s>0,
\end{equation}   
where $L_n$ is chosen such that $\lim_{n\to\infty} L_n = \infty$ and $\lim_{n\to\infty} L_n/\beta_n = 0$, and  $\mu^n_{\theta'} = \CL[Z^n(s\beta_n-L_n) \mid \hat\theta^n = \theta']$ (regular version of the conditional probability). From the tightness assumption on $\hat\theta^n$ we know that the law in \eqref{heu1} barely changes during the time interval $[s\beta_n-L_n,s\beta_n]$. Suppose that, along some subsequence, $\hat\theta^n$ converges to some limit process $\Theta=(\Theta(s))_{s > 0}$ in $D((0,\infty),[0,1])$. Then we should have that
\begin{equation}
\label{heu2}
\mu^n_{\theta'}S^n(L_n) \approx \mu^{n,\uparrow}_{\theta'}(L_n) \Longrightarrow \nu_{\theta'}, \qquad n \to \infty,
\end{equation}
where $\uparrow$ denotes the extension to a law on $\G$ instead of $\G_n$. Combining \eqref{heu1}--\eqref{heu2}, we get 
\begin{equation}
\label{heu3}
\CL[Z^n(s\beta_n)] \Longrightarrow \int_{[0,1]} \P\big(\Theta(s) \in \d\theta'\big)\,\nu_{\theta'}, \qquad n \to \infty, \qquad s>0. 
\end{equation}
At the same time, we should be able to identify $\Theta$ by taking the increasing process of $\hat\theta^n$ (recall \eqref{incrpr1}), 
\begin{equation}
\label{heu4}
\langle \hat\theta^n(s)\rangle = \frac{1}{1+K} \int_0^s \d u\,\frac{1}{|\G_n|}\sum_{i \in \G_n} g\big(x_i(u\beta_n)\big),
\qquad s>0,
\end{equation}
to conclude that, by the law of large numbers, the volatility function of $\Theta$ is given by 
\begin{equation}
\label{heu5}
\theta \mapsto \frac{1}{1+K}\,\E_{\nu_\theta}[g(x_0(\cdot))]. 
\end{equation}

The question is how to transform the \emph{heuristic argument} in \eqref{heu1}--\eqref{heu5} into a \emph{rigorous argument} (one of the difficulties being that limits are being exchanged at several places). It was shown in \cite{CG94*} that, subject to a collection of assumptions listed as (A1)--(A10) in Appendix \ref{appA*}, the above steps can indeed be made rigorous. Our task will therefore be to verify these assumptions. In the present section we collect some properties that are needed to complete this task. These properties concern approximations of the infinite system by finite systems on time scales of order 1, ergodic theorems for the infinite system, and regularity properties of the equilibria for the infinite system as a function of underlying paramaters.  The verification of the assumptions is carried out in Sections \ref{s.fssrhofin} and \ref{s.fssrhoinfslow} for $\rho<\infty$ and $\rho=\infty$, respectively.
   
Before we proceed we need the following important observations concerning the set of initial laws. In Section~\ref{ss.class} we introduce a \emph{class of initial laws} $\CR_\theta$ parametrised by the density $\theta \in [0,1]$, for which the system decorrelates in space over time intervals of length $o(\beta_n)$, where $\beta_n$ is the \emph{macroscopic time scale}, which must be properly chosen (see Appendix~\ref{appB*}, part $III$). In fact, the proper choices are:
\begin{equation}
\beta_n = \left\{\begin{array}{lll}
\kappa|\G_n| &\text{Model 1} &\kappa = (1+K)^2,\\
\kappa|\G_n| &\text{Model 2 with $\rho<\infty$} &\kappa = (1+\rho)^2,\\
\kappa_{M_n}|\G_n| &\text{Model 2 with $\rho=\infty$, regime (I)} &\kappa_{M_n} = (1+\sum_{m=0}^{M_n} K_m)^2,\\
\beta^*_n &\text{Model 2 with $\rho=\infty$, regime (II)}. &
\end{array}
\right.
\end{equation}

The initial laws chosen for Model 1 and Model 2 all fall in $\CR_\theta$. In Section~\ref{ss.pres} we show that the evolved law of the system stays inside the class $\CR_\theta$ over time intervals of length $o(\beta_n)$. In Section~\ref{ss.LLN} we show that the macroscopic variable converges to $\theta$ over time intervals of length $o(\beta_n)$. In Section~\ref{ss.macroconv} we use this fact to show that on time scale $\beta_n$ the law of the system conditional on the macroscopic variable being $\theta'$ falls in the class $\CR_{\theta'}$. We will see in Sections~\ref{s.fssrhofin}--\ref{s.fssrhoinfslow} that the latter is the key to the proof that the finite system locally converges to an equilibrium of the infinite system, parametrised by the instantaneous value of the macroscopic variable. 

In Sections~\ref{ss.pres}--\ref{ss.macroconv} we \emph{exclude} Model 2 with $\rho=\infty$ in regime (II). In Section~\ref{ss.deep} we show that the results do carry over to this case as well, but \emph{only} for time intervals of length $o(\beta^{**}_n)$ rather than $o(\beta_n)$.

Throughout this section, $b^{(1),n}(\cdot,\cdot)$ and $b^{(2),n}(\cdot,\cdot)$ denote the Markov process transition kernels defined in \eqref{mrw1} and \eqref{mrw2} in Appendix~\ref{appA*}, which describe the motion of the \emph{lineages} of the individuals in the spatial population with seed-bank. Depending on the model under consideration, we write
\begin{equation}
\CR_\theta = \left\{\begin{array}{ll}
\CR^{(1)}_\theta, &\text{Model 1},\\[0.2cm] 
\CR^{(2)}_\theta, &\text{Model 2 with $\rho<\infty$},\\[0.2cm] 
\CR^{(2),\bullet}_\theta, &\text{Model 2 with $\rho=\infty$}.
\end{array}
\right.
\end{equation} 


\subsection{Classes of initial laws}
\label{ss.class}

For each model a specific class of initial laws on $\G$ and their restrictions to $\G_n$ defined by \eqref{choicelawn} play an important role. The following are adaptations of Definition~\ref{Rtheta}, where we write $\S_n = \G_n \times \{A,D\}$ in Model 1 and $\S_n = \G_n \times \{A,(D_m)_{0 \leq m \leq M_n}\}$ in Model 2, and we recall the definition of $z_u$ in \eqref{zudef}. 
 
\begin{definition}{\bf [Model 1: Class of initial laws]}
\label{def:class1}
For $\theta \in [0,1]$, let 
\begin{equation}
\label{class1}
\CR^{(1)}_\theta = \left\{\mu \in \CT^{\mathrm{erg}}\colon\, (\ast)_\theta \text{ holds}\right\}, 
\end{equation}
with $(\ast)_\theta$ the requirement that, for every sequence of times $(t_n)_{n\in\N}$ satisfying $\lim_{n\to\infty} t_n = \infty$ and $\lim_{n\to\infty}$ $t_n/\beta_n = 0$, 
\begin{equation}
\label{astcond}
(\ast)_\theta
\begin{array}{lll}
&(1) \,\,\forall\,u_1 \in \S_n\colon\\
&\lim\limits_{n\to\infty} \sum\limits_{u_2 \in \S_n} 
b^{(1),n}_{t_n}(u_1,u_2)\,\E_{\Phi_n\mu}[z_{u_2}] = \theta,\\[0.4cm] 
&(2) \,\,\forall\,u_1,u_2 \in \S_n\colon\\
&\lim\limits_{n\to\infty} \sum\limits_{u_3,u_4 \in \S_n}  
b^{(1),n}_{t_n}(u_1,u_3)\, b^{(1),n}_{t_n}(u_2,u_4)\,\E_{\Phi_n\mu}[z_{u_3}z_{u_4}] = \theta^2, 
\end{array}
\end{equation}
where $\Phi_n\mu$ is the restriction of $\mu$ defined in \eqref{choicelawn}--\eqref{choicelawnalt}, $b^{(1),n}_{t_n}(\cdot,\cdot)$ is the time-$t_n$ transition kernel of the Markov process $b^{(1)}(\cdot,\cdot)$ in \eqref{mrw1} restricted to $\G_n$.       
\end{definition}

\begin{definition}{\bf [Model 2: Class of initial laws]}
\label{def:class2}
For $\theta \in [0,1]$, let 
\begin{equation}
\label{class2}
\begin{array}{lll}
&\rho<\infty\colon
&\CR^{(2)}_\theta = \left\{\mu \in \CT^{\mathrm{erg}}\colon\,(\ast)_\theta \text{ holds}\right\},\\[0.3cm] 
&\rho=\infty\colon
&\CR^{(2),\bullet}_\theta = \left\{\mu \in \CT^{\mathrm{erg}}\colon\,(\ast)_\theta \text{ holds and } 
\mu \text{ is colour regular}\right\},
\end{array} 
\end{equation}
where in $(\ast)_\theta$ in \eqref{astcond} $b^{(1),n}_{t_n}(\cdot,\cdot)$ is replaced by $b^{(2),n}_{t_n}(\cdot,\cdot)$, the time-$t_n$ transition kernel of the Markov process $b^{(2)}(\cdot,\cdot)$ in \eqref{mrw2} restricted to $\S_n$. 
\end{definition}

\noindent
The two properties in $(\ast)_\theta$ in \eqref{astcond}, which we refer to as \emph{Liggett conditions} (see \cite[Remark 2.15]{GdHOpr1}), say that over time intervals of length $o(\beta_n)$ the following are true: (1) the average of the component seen at time $t_n$ by the Markov process with transition kernel $b^{(1),n}(\cdot,\cdot)$ and $b^{(2),n}(\cdot,\cdot)$ converges to the initial density $\theta$ defined in \eqref{thetadefalt}; (2) the covariance of the components seen at time $t_n$ by two independent Markov processes with transition kernel $b^{(1),n}(\cdot,\cdot)$ and $b^{(2),n}(\cdot,\cdot)$ converges to zero. Consequently, the component seen by such a Markov process converges to $\theta$ in $L_2$. 

The following lemma shows that all laws in $\CR^{(1)}_\theta$, $\CR^{(2)}_\theta$ and  $\CR^{(2),\bullet}_\theta$ are invariant and ergodic under translations, have density $\theta$, and are colour regular (recall \eqref{erglaw1}, \eqref{erglaw2fin} and \eqref{erglaw2inf}).  

\begin{lemma}{\bf [Ergodicity]}
\label{lem:erg}
(a) For every $\theta \in [0,1]$, $\CR^{(1)}_\theta \subset \CT^{\mathrm{erg}}_\theta$.\\
(b) For every $\theta \in [0,1]$, $\CR^{(2)}_\theta \subset \CT^{\mathrm{erg}}_\theta$, respectively, $\CR^{(2),\bullet}_\theta \subset \CT^{\mathrm{erg},\bullet}_\theta$.
\end{lemma}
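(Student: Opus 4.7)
First I would observe that, by the very definitions in \eqref{class1}--\eqref{class2}, membership in $\CR^{(1)}_\theta$, $\CR^{(2)}_\theta$ or $\CR^{(2),\bullet}_\theta$ already builds in $\mu \in \CT^{\mathrm{erg}}$ (plus colour regularity in the $\bullet$ case), so the only substantive assertion is $\theta_\mu = \theta$ in the sense of \eqref{thetadef}, \eqref{thetadefalt} or \eqref{thetadefaltalt}. The plan is to extract this from the first half of the Liggett condition $(\ast)_\theta$ in \eqref{astcond}. Since $\mu$ is translation invariant, $\E_{\Phi_n\mu}[z_{(j,s)}]$ depends only on the internal coordinate $s$, and equals $\theta_x$ for $s=A$ and $\theta_y$ (Model 1) or $\theta_{y,m}$ (Model 2) for dormant $s$. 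Taking $u_1 = (0,A)$, the left-hand side of $(\ast)_\theta(1)$ therefore collapses to $\sum_s q^{(k),n}_{t_n}(s)\,\theta_s$, where $q^{(k),n}_{t_n}(s)$ denotes the marginal probability that the dual lineage is in internal state $s$ at time $t_n$. The kernels \eqref{mrw1}--\eqref{mrw2} make $A\leftrightarrow D$ switching geographically homogeneous, so this marginal is governed by an autonomous Markov chain on the internal states alone.

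For Model 1 and Model 2 with $\rho<\infty$, that internal chain is positive recurrent with stationary distribution $\pi(A)=1/(1+\rho)$ and $\pi(D_m)=K_m/(1+\rho)$ (with $\rho=K$ in Model 1). For any $t_n\to\infty$ I would argue $q^{(k),n}_{t_n}(s)\to\pi(s)$ as $n\to\infty$: in Model 1 the two-state chain does not depend on $n$ and equilibrates at rate $e(1+K)$; in Model 2 with $\rho<\infty$ the truncated and untruncated internal chains can be coupled to coincide until the first attempted transition to a colour exceeding $M_n$, an event whose rate $\sum_{m>M_n}K_me_m$ vanishes as $n\to\infty$ (it is the tail of a series bounded by $\chi$), while the untruncated chain converges to $\pi$ by positive recurrence. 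Substituting gives $\sum_s q^{(k),n}_{t_n}(s)\,\theta_s \to \theta_x/(1+\rho)+\sum_m K_m\theta_{y,m}/(1+\rho)=\theta_\mu$, and $(\ast)_\theta(1)$ then forces $\theta_\mu=\theta$.

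For Model 2 with $\rho=\infty$ the internal chain has no proper stationary distribution, because the weight $1/(1+\rho_{M_n})$ on $A$ collapses. Instead, for a suitably slow sequence $t_n\to\infty$ with $t_n/\beta_n\to 0$, I would show the chain reaches quasi-equilibrium on the accessible block $\{A,D_0,\dots,D_{m_\ast(t_n)}\}$ with $m_\ast(t_n)\sim t_n^{1/\beta}\to\infty$, assigning quasi-weights $1/(1+\rho_{m_\ast(t_n)})$ to $A$ and $K_m/(1+\rho_{m_\ast(t_n)})$ to $D_m$, since deeper colours have wake-up clocks not yet triggered on this time scale. The active contribution vanishes as $\rho_{m_\ast(t_n)}\to\infty$, and colour regularity $\theta_{y,m}\to\theta_\mu^\bullet$ combined with the weighted Ces\`aro theorem gives $\sum_{m\leq m_\ast(t_n)}(K_m/(1+\rho_{m_\ast(t_n)}))\,\theta_{y,m}\to\theta_\mu^\bullet$. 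The same Ces\`aro argument applied to \eqref{thetadefaltalt} shows $\theta_\mu=\theta_\mu^\bullet$, so the sum again converges to $\theta_\mu$ and $(\ast)_\theta(1)$ delivers $\theta_\mu=\theta$.

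The hard part will be the quasi-equilibrium step for Model 2 with $\rho=\infty$: one must pick $t_n$ so that the truncated internal chain effectively sees only depths $m\leq m_\ast(t_n)$ and has had enough time to spread over that block in proportion to $K_m$. The exponent $\beta$ in \eqref{ass3} and the tail \eqref{tautail} together pin down $m_\ast(t_n)\sim t_n^{1/\beta}$, and a coupling between the truncated chain on $\{A,(D_m)_{0\leq m\leq M_n}\}$ and the finite chain truncated at $m_\ast(t_n)$ (absorbing any attempted deeper jump) controls the error; that finite chain equilibrates in a time independent of $n$, yielding the desired quasi-equilibrium as soon as $t_n$ is chosen to dominate its mixing time yet remain below $\beta^\ast_n = M_n^\beta$. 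Once this is in place, colour regularity bridges $\theta_\mu^\bullet$ to $\theta_\mu$ and completes the verification.
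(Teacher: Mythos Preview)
Your approach is correct and far more explicit than the paper's, which is a one-sentence deferral to the proof of Lemma~\ref{lem:pres} together with the remark that one ``uses that $\theta^\bullet_\mu=\theta$'' in the colour-regular case; the paper does not spell out how $\theta_\mu=\theta$ is actually extracted from $(\ast)_\theta(1)$. For Model~1 and Model~2 with $\rho<\infty$ your stationary-distribution argument is the natural one, and the freedom to choose any admissible $(t_n)$ lets you take $t_n$ slow enough for the truncated/untruncated coupling to succeed. For Model~2 with $\rho=\infty$ your quasi-equilibrium argument is also correct --- the renewal fact you need (that by time $t$ the internal chain has spread over depths $\leq m_\ast(t)\sim t^{1/\beta}$ with weights proportional to $1,K_0,\dots,K_{m_\ast(t)}$) is exactly the content of Appendix~\ref{appC*}, and the constraint $\alpha+\beta>1$ in \eqref{ass3} is precisely what makes $t\gg 1+\rho_{m_\ast(t)}$ so the block equilibrates. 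A shortcut closer to the paper's hint: if $(\ast)_\theta(1)$ is read as allowing $u_1=u_{1,n}\in\S_n$ to vary with $n$, take $u_{1,n}=(0,D_{m_n})$ with $m_n\to\infty$ and $e_{m_n}t_n\to 0$; then the lineage has not woken up and the sum equals $\theta_{y,m_n}+o(1)\to\theta^\bullet_\mu$ by colour regularity, giving $\theta^\bullet_\mu=\theta$ without the quasi-equilibrium step. Your Ces\`aro identification $\theta_\mu=\theta^\bullet_\mu$ closes either route.
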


\begin{proof}
The claim will be immediate from the proof of Lemma~\ref{lem:pres} below. For the finite-systems scheme, we will work with $\CT^\bullet = \cup_{\theta \in [0,1]} \CT^\bullet_\theta$ as the set of initial laws (recall \eqref{covcond1}--\eqref{lawscr}) and use that $\theta^\bullet_\mu = \lim_{m\to\infty} \E_\mu[y_{0,m}] = \theta$ for $\mu \in \CT^\bullet_\theta$.
\end{proof}

Note that (recall \eqref{laws} and \eqref{lawscr})
\begin{equation}
\begin{aligned}
\CT^\mathrm{erg} = \cup_{\theta \in [0,1]} \CT^\mathrm{erg}_\theta,
\qquad \CT^\mathrm{erg,\bullet} = \cup_{\theta \in [0,1]} \CT^\mathrm{erg,\bullet}_\theta.
\end{aligned}
\end{equation}


\subsection{Preservation of the classes}
\label{ss.pres}

The classes introduced in Definitions~\ref{def:class1}--\ref{def:class2} are preserved over time intervals of length $o(\beta_n)$. In the following lemmas $\CR_\theta$ stands for either $\CR^{(1)}_\theta$, $\CR^{(2)}_\theta$ and $\CR^{(2),\bullet}_\theta$. Recall the restriction operator $\Phi_n$ defined in \eqref{choicelawnalt}.  

\begin{lemma}{\bf [Preservation properties]}
\label{lem:pres}
For any $\theta \in [0,1]$ and any sequence of times $(t_n)_{n\in\N}$ satisfying $\lim_{n\to\infty} t_n = \infty$ and $\lim_{n\to\infty} t_n/\beta_n = 0$, the following holds for, respectively, Model 1, Model 2 with $\rho<\infty$, and Model 2 with $\rho=\infty$ in regime (I):
\begin{itemize}
\item[{\rm (a)}] 
If $\mu(0) = \mu \in \CR_\theta$, then $\mu(t_n) \in \CR_\theta$. 
\item[{\rm (b)}] 
If $\mu(0) = \mu \in \CR_\theta$, then $\nu \in \CR_\theta$ for any weak limit point $\nu$ of $(\Phi_n\mu(t_n))_{n\in\N}$.  
\end{itemize}
\end{lemma}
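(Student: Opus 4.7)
The strategy rests on moment duality, which expresses first and second moments under $\mu(t_n)$ as expectations of the initial law $\mu$ composed with dual Markov processes: a single lineage with kernel $b^{(k)}$ for the first moment, and two coalescing lineages for the second moment (with $k=1,2$ according to the model). The plan is to substitute these dual expressions into the Liggett conditions $(\ast)_\theta$ for $\mu(t_n)$, compose the outer finite-system kernel $b^{(k),n}_{s_n}$ with the inner dual dynamics via Chapman--Kolmogorov, and reduce to $(\ast)_\theta$ for $\mu$ at the combined time $s_n+t_n$, which still lies in the admissible range $o(\beta_n)$. Assumption~\ref{ass.trker} provides the comparison between $b^{(k),n}$ and $b^{(k)}$ up to the mixing time $\psi_n$ needed to match kernels on $\G_n$ with those on $\G$.

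Condition (1) of $(\ast)_\theta$ is straightforward. By translation invariance of $\mu$, $\E_\mu[z_{u'}]$ depends only on the internal state of $u'$, so single-lineage duality gives $\E_{\mu(t_n)}[z_{u_2}] = \pi^A_{t_n}(u_2)\,\E_\mu[x_0] + \sum_m \pi^{D_m}_{t_n}(u_2)\,\E_\mu[y_{0,m}]$, where $\pi^I_{t_n}(u_2)$ is the probability that the dual lineage started at $u_2$ has internal state $I$ at time $t_n$. Since the internal-state chain equilibrates with weights $1/(1+\rho)$ for $A$ and $K_m/(1+\rho)$ for $D_m$, and $\E_\mu[(x_0 + \sum_m K_m y_{0,m})/(1+\rho)] = \theta$, one obtains $\E_{\mu(t_n)}[z_{u_2}] \to \theta$ as $t_n \to \infty$, uniformly in $u_2$ (for $\rho=\infty$, uniformity in $m$ uses the colour regularity of $\mu$). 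Summing against the probability kernel $b^{(k),n}_{s_n}(u_1,\cdot)$ then yields condition (1) for $\mu(t_n)$.

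Condition (2) is the harder case. The two-lineage duality decomposes $\E_{\mu(t_n)}[z_{u_3}z_{u_4}]$ into a non-coalescence contribution (two independent lineages, evaluated against $\E_\mu[z\cdot z]$) and a coalescence contribution (a single merged lineage, evaluated against $\E_\mu[z]$). After substitution and Chapman--Kolmogorov composition with $b^{(k),n}_{s_n}(u_1,\cdot)$ and $b^{(k),n}_{s_n}(u_2,\cdot)$, the non-coalescence part matches condition (2) of $(\ast)_\theta$ for $\mu$ at time $s_n+t_n$ and tends to $\theta^2$, while the coalescence part reduces via condition (1) to $\theta$ times the coalescence probability. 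The main obstacle is to control the coalescence probability so that the two pieces recombine into the desired limit $\theta^2$. In the coexistence regime, the bound $\hat{B}(0,0)<\infty$ controls the coalescence contribution uniformly in $s_n+t_n = o(\beta_n)$, and Assumption~\ref{ass.trker}(ii) transfers the bound from $\G$ to $\G_n$. For Model 2 with $\rho=\infty$ in regime (I), the slow-down factor $\kappa_{M_n}$ in $\beta_n$ absorbs the extra time lineages spend dormant, along the lines of Appendix~\ref{appC*}.

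Part (b) follows from part (a) by a weak-limit argument. Extending each $\Phi_n\mu(t_n)$ to an element of $\CT$ via $\tilde{\Phi}_n$, a weak limit point $\nu$ is translation invariant (a property preserved under weak limits) and ergodic (by the mixing established in the proof of Lemma~\ref{lem:erg}). Weak convergence along the subsequence propagates the one- and two-site expectations $\E_{\Phi_{n_k}\mu(t_{n_k})}[z_\cdot]$ and $\E_{\Phi_{n_k}\mu(t_{n_k})}[z_\cdot z_\cdot]$ to those of $\nu$, and a diagonal extraction over admissible sequences $s_n$ transfers the Liggett conditions from the prelimit laws (verified in part (a)) to $\nu$, yielding $\nu \in \CR_\theta$.
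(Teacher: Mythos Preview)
Your approach to part (a) via moment duality has a structural problem: the coalescing two-lineage dual exists only for $g = dg_{\text{FW}}$, whereas the lemma must hold for all $g \in \CG$. Even in the Fisher--Wright case, your Chapman--Kolmogorov step is not quite right: the ``non-coalescence kernel'' for two dual lineages conditioned not to have merged is \emph{not} the product of single-lineage kernels, so composing with $b^{(k),n}_{s_n}(u_1,\cdot)\,b^{(k),n}_{s_n}(u_2,\cdot)$ does not yield $b^{(k),n}_{s_n+t_n}(u_1,\cdot)\,b^{(k),n}_{s_n+t_n}(u_2,\cdot)$. The paper sidesteps all of this: since $(\ast)_\theta$ is required to hold for \emph{every} admissible sequence and $s_n+t_n$ is again admissible, part (a) is immediate (this is the content of ``replace $t_n$ by $2t_n$'').

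The serious gap is in part (b). Your claim that ``(b) follows from (a) by a weak-limit argument'' with ``diagonal extraction over admissible sequences $s_n$'' does not work. The Liggett conditions are asymptotic statements, and weak convergence of $(\Phi_{n_k}\mu)(t_{n_k})$ to $\nu$ only transfers expectations at \emph{fixed} sites; it cannot by itself transfer the $m\to\infty$ behaviour of the kernel-weighted sums $\sum_{u_3,u_4 \in \S_m} b^{(k),m}_{s_m}(\cdot,u_3)\,b^{(k),m}_{s_m}(\cdot,u_4)\,\E[z_{u_3}z_{u_4}]$ without quantitative uniformity that you have not established. The paper's proof of (b) is not soft: it proceeds via three lemmas. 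First, $\E_\nu[z_u]=\theta$ for every $u$ (using the first-moment equation and $(\ast)_\theta(1)$ for $\mu$), which reduces $(\ast)_\theta(2)$ to a covariance statement. Second --- and this is the heart of the argument --- the covariance $\text{Cov}_{(\Phi_n\mu)(t_n)}(z_{u_1},z_{u_2})$ is expressed via the It\^o covariance formula (valid for all $g\in\CG$), the resulting time integral is split into pieces $J^1,J^2,J^3$ according to whether the joint activity time $T(s)+T'(s)$ exceeds the mixing time $\psi_n$, and each piece is controlled using Assumption~\ref{ass.trker}, the coexistence criterion, and (for $\rho=\infty$) the activity-time scaling of Appendix~\ref{appC*} through a case analysis in $\gamma$. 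This is exactly where the restriction to regime~(I) and the specific form of $\beta_n$ enter. Third, for $\rho=\infty$ one shows the deep seed-banks become deterministic. None of this machinery appears in your proposal. (Also, your appeal to ``the mixing established in the proof of Lemma~\ref{lem:erg}'' is circular: that lemma's proof explicitly defers to Lemma~\ref{lem:pres}.)
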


\begin{proof}
(a) The claim is obvious after we replace $t_n$ by $2t_n$ in $(\ast)_\theta$.

\medskip\noindent
(b) The proof proceeds in 3 Steps, each based on a lemma. Before we start we note that the macroscopic variable $\hat{\theta}^n$ in Model 1 (defined in \eqref{e1311}) satisfies 
\begin{equation}
\label{meanpres}
\E_{\Phi_n\mu(t_n)}[\hat{\theta}^n] = \E_{\Phi_n\mu}[\hat{\theta}^n] = \theta,
\end{equation}
because $(\hat{\theta}^n(t))_{t \geq 0}$ is a martingale under the law $\Phi_n\mu$ (recall \eqref{mart}). Therefore, also for any weak limit point $\nu$ of $(\Phi_n\mu(t_n))_{n\in\N}$,
\begin{equation}
\label{meanpresalt}
\E_{\Phi_n\nu(t_n)}[\hat{\theta}^n] = \E_{\Phi_n\nu}[\hat{\theta}^n] = \theta.
\end{equation}
The same observation applies to the macroscopic variable $\hat{\theta}^{M_n,n}$ in Model 2 (defined in \eqref{mo3}). 


\paragraph{Step 1.}

The following lemma says that under the law $\Phi_n\nu$ all components have mean $\theta$. Via \eqref{meanpresalt} this implies that property $(\ast)_\theta(1)$ holds and that $\nu$ is colour regular.
 
\begin{lemma}{\bf [Constant means]}
\label{lem:allmeans}
Let $\mu(0) = \mu \in \CR_\theta$. Let $(t_n)_{n\in\N}$ satisfy $\lim_{n\to\infty} t_n = \infty$ and $\lim_{n\to\infty} t_n/\beta_n = 0$. Let $\nu$ be any weak limit point of $(\Phi_n\mu(t_n))_{n\in\N}$. Then $\E_{\Phi_n\nu}[z_{(i,R_i)}] = \theta$ for all $n\in\N$, all $i \in \G_n$, and all $R_i \in \{A,D\}$ or $\{A,(D_m)_{0 \leq m \leq M_n}\}$.
\end{lemma}

\begin{proof} 
Since $\nu$ is a weak limit point of $(\Phi_n\mu(t_n))_{n\in\N}$, there exists $(t_{n_k})_{k\in\N}$ satisfying $\lim_{k\to\infty} t_{n_k} = \infty$ and $\lim_{k\to\infty} t_{n_k}/\beta_{n_k} = 0$ such that $\nu = \lim_{k\to\infty} \Phi_{n_k}\mu(t_{n_k})$. Hence, for Model 1, for all $u_1 \in \S_n$,
\begin{equation}
\E_{\nu}[z_{u_1}] = \lim_{k\to\infty} \E_{\Phi_{n_k}\mu(t_{n_k})} [z_{u_1}]
= \lim_{k\to\infty} \sum_{u_2 \in \S_{n_k}} b^{(1),n}_{t_{n_k}}\big(u_1,u_2\big)\,\E_{\Phi_{n_k}\mu}[z_{u_2}] 
= \theta,
\end{equation}
where the last equality holds because $\mu \in \CR^{(1)}_\theta$. The same holds for Model 2 with $b^{(1),n}_{t_{n_k}}(\cdot,\cdot)$ replaced by $b^{(2),n}_{t_{n_k}}(\cdot,\cdot)$. Since $\E_{\nu}[z_{u_1}] = \theta$, it follows that $\E_{\Phi_n\nu}[z_{u_1}] = \theta$ for all $n\in\N$ (use \eqref{choicelawn} and the fact that $\nu$ is invariant under translations). 
\end{proof}


\paragraph{Step 2.} 
To settle property $(\ast)_\theta(2)$, we follow the covariance computations for the infinite system carried out in \cite[Sections 5--6]{GdHOpr1}. Together with property $(\ast)_\theta(1)$, the following settles property $(\ast)_\theta(2)$.

\begin{lemma}{\bf [Decaying correlations]}
\label{lem:deccor}
$\mbox{}$\\
Let $\mu(0) = \mu \in \CR_\theta$. Let $(t_n)_{n\in\N}$ satisfy $\lim_{n\to\infty} t_n = \infty$ and $\lim_{n\to\infty} t_n/\beta_n = 0$. Let $\nu$ be any weak limit point of $(\Phi_n\mu(t_n))_{n\in\N}$. Then
\begin{equation}
\label{covdec}
\lim_{|i-j| \to \infty} \lim_{n\to\infty} \Big|\text{{\rm Cov}}_{\Phi_n\nu}(z_{(i,R_i)},z_{(j,R_j)})\Big| = 0
\quad \text{ uniformly in } R_i,R_j.
\end{equation}
\end{lemma}

\begin{proof}
For Model 1, we use It\^o's formula for the first and second moment to write (see \cite[Lemma 5.1]{GdHOpr1}) 
\begin{equation}
\label{covexpr}
\begin{aligned}
&\text{Cov}_{\Phi_n\mu(t_n)}(z_{u_1},z_{u_2}) = \int_{0}^{t_n} \d s\, \sum_{k \in \G_n} 
b^{(1),n}_{t_n-s}\big(u_1,(k,A)\big)\,b^{(1),n}_{t_n-s}\big(u_2,(k,A)\big)\,\E_{\Phi_n\mu}[g(x_k(s))]\\
&\quad + \sum_{u_3,u_4 \in \S_n} b^{(1),n}_{t_n}(u_1,u_3)\,b^{(1),n}_{t_n}(u_2,u_4)\,
\text{Cov}_{\Phi_n\mu}(z_{u_3},z_{u_4}).
\end{aligned}
\end{equation}
The same expression holds for Model 2, with $b^{(1),n}$ replaced by $b^{(2),n}$ (see \cite[Lemma 6.1]{GdHOpr1}). In what follows we focus on Model 2. Model 1 has no truncation of the seed-bank: $M_n=0$ for all $n\in\N$.

\medskip\noindent
{\bf 1.}
The second term in \eqref{covexpr} tends to zero as $n\to\infty$ by \eqref{astcond}, because $\nu \in \CR^{(2)}_\theta$ by Lemma~\ref{lem:pres}(b). To estimate the first term, which we abbreviate by 
\begin{equation}
\label{Itndef}
I_{t_n}(u_1,u_2) = \int_{0}^{t_n} \d s\, \sum_{k \in \G_n} 
b^{(2),n}_{t_n-s}(u_1,(k,A))\,b^{(2),n}_{t_n-s}(u_2,(k,A))\,\E_{\Phi_n\mu}[g(x_k(s))], 
\end{equation}
we proceed as follows (in which we no longer need that $\nu \in \CR^{(2)}_\theta$). Let $\tau^\uparrow$ denote the first time the two Markov processes are \emph{jointly active}, which is a random variable whose law depends on $u_1,u_2$. Because the building up of joint activity time can only start after time $\tau^*$, we have
\begin{equation}
\label{IADcomp}
I_{t_n}(u_1,u_2\big) \leq I_{t_n}\big((I,A),(J,A)\big) 
\end{equation}
where $I,J \in \G_n$ are the random locations of the two Markov processess at time $\tau^\uparrow$. For $s \geq 0$, let $\CE(t)$ and $\CE^\prime(t)$ be the events that the respective random walks are active at time $t$, and let 
\begin{equation}
T(t) = \int_0^t \d s\,1_{\CE(s)}, \qquad T^\prime(t) = \int_0^t \d s\,1_{\CE^\prime(s)},
\end{equation}
be their total activity time up to time $t$. Write $\P_n$ to denote the law of $(\CE(s),\CE^\prime(s))_{s \geq 0}$ given that the two random walks start in the active state.  Note that this process is independent of the migration of the Markov processes. Estimate (henceforth $\|g\|$ denotes the supremum norm of $g$)
\begin{equation}
\label{covcomp}
\begin{aligned}
I_{t_n}\big((i,A),(j,A)\big)
&\leq \|g\| \int_{0}^{t_n} \d s\, \sum_{k\in\G_n} 
b^{(2),n}_{t_n-s}\big((i,A),(k,A)\big)\,b^{(2),n}_{t_n-s}\big((j,A),(k,A)\big)\\
&= \|g\| \int_{0}^{t_n} \d s\, \sum_{k\in\G_n} 
\E_n\left[a^n_{T(s)}(i,k)\,1_{\CE(s)}\,a^n_{T^\prime(s)}(j,k)\,1_{\CE^\prime(s)}\right]\\
&= \|g\| \int_{0}^{t_n} \d s\,\,\E_n\left[\hat{a}^n_{T(s)+T^\prime(s)}(i-j,0)
\,1_{\CE(s)}\,1_{\CE^\prime(s)}\right]. 
\end{aligned}
\end{equation}
We rewrite and split the integral in the right-hand side into three parts:
\begin{equation} 
\label{Jdefs}
\begin{aligned}
J^1_{t_n} &= |\G_n|^{-1} \int_{0}^{t_n} \d s\,\,\E_n\left[1_{\{T(s)+T^\prime(s) > \psi_n\}}\,
1_{\CE(s)}\,1_{\CE^\prime(s)}\right],\\
J^2_{t_n}(i,j) &= |\G_n|^{-1} \int_{0}^{t_n} \d s\,\,\E_n\left[\left(|\G_n|\,\hat{a}^n_{T(s)+T^\prime(s)}(i-j,0)-1\right)\,
1_{\{T(s)+T^\prime(s) > \psi_n\}}\,1_{\CE(s)}\,1_{\CE^\prime(s)}\right],\\ 
J^3_{t_n}(i,j) &= \int_{0}^{t_n} \d s\,\,\E_n\left[\hat{a}^n_{T(s)+T^\prime(s)}(i-j,0)\,
1_{\{T(s)+T^\prime(s) \leq \psi_n\}}\,1_{\CE(s)}\,1_{\CE^\prime(s)}\right],
\end{aligned}
\end{equation}
where $\psi_n$ is the mixing time defined in \eqref{pnmix}. 

\medskip\noindent
{\bf 2.}
First we look at $J^3_{t_n}(i,j)$. Define the event
\begin{equation}
\label{Andef}
\CA_n = \big\{\text{until time } t_n \text{ neither of the two Markov processes visits a seed-bank with colour } m > M_n\big\}.
\end{equation}
Since $\P_n(\CA_n)=1$, we have
\begin{equation}
\label{J3bd1}
\begin{aligned}
&\E_n\left[\hat{a}^n_{T(s)+T^\prime(s)}(i-j,0)\,
1_{\{T(s)+T^\prime(s) \leq \psi_n\}}\,1_{\CE(s)}\,1_{\CE^\prime(s)}\right]\\
&\qquad = \E_n\left[\hat{a}^n_{T(s)+T^\prime(s)}(i-j,0)\,1_{\CE(s)}\,
1_{\{T(s)+T^\prime(s) \leq \psi_n\}}\,1_{\CE^\prime(s)}\,1_{\CA_n}\right].
\end{aligned}
\end{equation}
On the event $\CA_n$, the law of $\{\CE(s),\CE'(u)\}_{u \in [0,t_n]}$ is the same under $\P_n$ as under $\P$, where the former refers to the truncated system and the latter refers to the non-trunctated system. Hence 
\begin{equation}
\label{J3bd2}
\begin{aligned}
&\E_n\left[\hat{a}^n_{T(s)+T^\prime(s)}(i-j,0)\,
1_{\{T(s)+T^\prime(s) \leq \psi_n\}}\,1_{\CE(s)}\,1_{\CE^\prime(s)}\,1_{\CA_n}\right]\\
&\qquad = \E\left[\hat{a}^n_{T(s)+T^\prime(s)}(i-j,0)\,
1_{\{T(s)+T^\prime(s) \leq \psi_n\}}\,1_{\CE(s)}\,1_{\CE^\prime(s)}\,1_{\CA_n}\right].
\end{aligned}
\end{equation}
On the event $\{T(s)+T^\prime(s) \leq \psi_n\}$, we can estimate 
\begin{equation}
\label{J3bd3}
\begin{aligned}
&\E\left[\hat{a}^n_{T(s)+T^\prime(s)}(i-j,0)\,
1_{\{T(s)+T^\prime(s) \leq \psi_n\}}\,1_{\CE(s)}\,1_{\CE^\prime(s)}\,1_{\CA_n}\right]\\
&\qquad \leq C\,\E\left[\hat{a}_{T(s)+T^\prime(s)}(i-j,0)\,1_{\{T(s)+T^\prime(s) \leq \psi_n\}}\,1_{\CE(s)}\,1_{\CE^\prime(s)}\right]
\leq C\,\E\left[\hat{a}_{T(s)+T^\prime(s)}(0,0)\,1_{\CE(s)}\,1_{\CE^\prime(s)}\right],
\end{aligned}
\end{equation}
where the first inequality uses \eqref{afininfcomp} and the second inequality uses that $\hat{a}_t(k,0) \leq \hat{a}_t(0,0)$ for all $k \in \G$ and $t \geq 0$. The right-hand side integrated over $s \in [0,\infty)$ is the mean total joint activity time for two Markov processes on $\G$ both starting from $(0,A)$, which is finite if and only if the system is in the \emph{coexistence regime} (see \cite[Section 2.5]{GdHOpr1}). Since $\lim_{|k| \to \infty} \hat{a}_t(k,0) = 0$ uniformly in $t \geq 0$, it follows from the third line of \eqref{Jdefs} via \eqref{J3bd1}--\eqref{J3bd3} and dominated convergence that
\begin{equation}
\label{est1}
\lim_{|i-j|\to\infty}  J^3_{t_n}(i,j) = 0 \text{ uniformly in } (t_n)_{n\in\N}.
\end{equation}
Hence, to show that $J^3_{t_n}(I,J)$ gives a vanishing contribution to \eqref{IADcomp} as $|i-j| \to \infty$, we must show that the latter implies that $|I-J| \to \infty$ with $\P_n$-probability tending to 1 as $n\to\infty$. But this again trivially follows from the fact that $\lim_{|k|\to\infty} a_t(0,k) = 0$ uniformly in $t \geq 0$.

\medskip\noindent
{\bf 3.}
Next we look at $J^1_{t_n}$ and $J^2_{t_n}(i,j)$. Estimate, with the help of \eqref{pnmix},
\begin{equation}
\label{est2}
\forall\,\,i,j \in \G_n\colon\quad |J^2_{t_n}(i,j)| \leq C' J^1_{t_n}, \qquad 
C' = \sup_{n\in\N} \sup_{t \geq \psi_n} \sup_{k \in \G_n} 
\Big|\, |\G_n|\, \hat{a}^n_t(0,k) - 1\Big| < \infty.
\end{equation}
Further estimate 
\begin{equation}
\label{est3}
J^1_{t_n} \leq \tilde{J}_{t_n} = |\G_n|^{-1} \int_{0}^{t_n} \d s\,\P_n(\CE(s))\,\P_n(\CE^\prime(s)).
\end{equation}
We will show that 
\begin{equation}
\label{Jtildelim}
\lim_{n\to\infty} \tilde{J}_{t_n} = 0.
\end{equation}
Together with \eqref{est1}--\eqref{est3} this will imply \eqref{covdec}. 

\medskip\noindent
{\bf 4.}
For the proof of \eqref{Jtildelim} we distinguish between Model 1 and 2.


\paragraph{Model 1.}

Since $\P_n=\P$ and $\lim_{s\to\infty} \P(\CE(s)) = \lim_{s\to\infty} \P(\CE^\prime(s)) = (1+K)^{-1}$, the integral in \eqref{est3} scales like $t_n/\kappa$ with $\kappa=(1+K)^2$. Since $t_n = o(\beta_n)$ and $\beta_n=\kappa |\G_n|$, we get \eqref{Jtildelim}. 


\paragraph{\bf Model 2.}

For $\rho<\infty$, we have $\lim_{n\to\infty} \P_n=\P$, and $\lim_{s\to\infty} \P(\CE(s)) = \lim_{s\to\infty} \P(\CE^\prime(s)) = (1+\rho)^{-1}$, and so the argument for Model 1 carries over. For $\rho=\infty$, a bit more care is needed. In particular, we need to distinguish between regimes (I) and (II), which have different macroscopic time scales: $\beta_n = \kappa_{M_n}|\G_n|$, respectively, $\beta_n = \beta^*_n$. 

As shown in Appendix~\ref{appC*},
\begin{equation}
\label{EEprimescal}  
\P_n(\CE(s))\,\P_n(\CE^\prime(s)) \asymp \left\{\begin{array}{ll}
1/\kappa_{s^{1/\beta}}, &0 \leq s \leq \beta^*_n,\\
1/\kappa_{M_n}, &\beta^*_n < s \leq t_n.
\end{array}
\right.
\end{equation}
where we recall from \eqref{scales} that $\beta^*_n = M_n^{\,\beta}$ and from \eqref{betaKndef} $\kappa_M = (1+\sum_{m=0}^M K_m)^2$. Indeed, heuristically, at time $s=M^\beta$ the active component has communicated with the first $M$ dormant components, and so joint activity occurs with probability $1/\kappa_M=1/\kappa_{s^{1/\beta}}$. This holds all the way up to $s=\beta^*_n$, after which the active component has communicated with all $M_n$ dormant components, and so joint activity occurs with probability $1/\kappa_{M_n}$. Now, inserting \eqref{EEprimescal} into the right-hand side of \eqref{est3}, we get
\begin{equation}
\tilde{J}_{t_n} \asymp \tilde{J}^1_{t_n} + \tilde{J}^2_{t_n}
\end{equation}
with
\begin{equation}
\label{J12def}
\tilde{J}^1_{t_n} = |\G_n|^{-1} \int_0^{\beta^*_n} \d s\,(1/\kappa_{s^{1/\beta}}),
\qquad
\tilde{J}^2_{t_n} = |\G_n|^{-1} \int_{\beta^*_n}^{t_n} \d s\,(1/\kappa_{M_n}).
\end{equation}

\medskip\noindent
$\blacktriangleright$ Regime (I). Since $\beta_n = \kappa_{M_n}|\G_n|$, we have $\tilde{J}^2_{t_n} \leq t_n/\beta_n = o(1)$ because $t_n = o(\beta_n)$, and so we need only worry about $\tilde{J}^1_{t_n}$. Recall from \eqref{ass3} that $\gamma=\frac{\alpha+\beta-1}{\beta}$ with $\alpha \leq 1 < \alpha+\beta$. Note that $\alpha=1$ implies $\gamma=1$.
\begin{itemize}
\item
For $\gamma \in (\tfrac12,1]$ and $\alpha \in (-\infty,1)$, we have $\tilde{J}^1_{t_n} \asymp |\G_n|^{-1} (\beta^*_n)^{2\gamma-1} = (\beta^*_n/\beta^{**}_n)^{2\gamma-1}$, where we use that $\kappa_M \asymp M^{2(1-\alpha)}$ as $M\to\infty$, $\gamma = \tfrac{\alpha+\beta-1}{\beta}$, and $\beta^{**}_n = |\G_n|^{1/(2\gamma-1)}$. Since $\beta^*_n = o(\beta^{**}_n)$, we get $\tilde{J}^1_{t_n} = o(1)$. 
\item
For $\gamma = \alpha = 1$, we have $\tilde{J}^1_{t_n} \asymp  |\G_n|^{-1} (\beta^*_n/\log \beta^*_n) = (\beta^*_n/\beta_n)(\kappa_{M_n}/\log \beta^*_n) \asymp \beta^*_n/\beta_n$, where we use that $\kappa_M \asymp \log M$ as $M\to\infty$ and $\beta^*_n = M_n^{\,\beta}$. Since $\beta^*_n = o(\beta_n)$, we again get $\tilde{J}^1_{t_n} = o(1)$. 
\item
For $\gamma=\tfrac12$ and $\alpha \in (-\infty,1)$, we have $\tilde{J}^1_{t_n} \asymp  |\G_n|^{-1} \log \beta^*_n$. Since $\log \beta^*_n = o(\log \beta^{**}_n) = o(|\G_n|)$, we again get $\tilde{J}^1_{t_n} = o(1)$. 
\item
For $\gamma \in (0,\tfrac12)$, we use that $\lim_{n\to\infty} \P_n=\P$ and $\P(\CE(s)) \asymp u^{-(1-\gamma)}$, $u \to \infty$, as shown in \cite[Section 6.2]{GdHOpr1}. Hence, by monotone convergence, $\tilde{J}_{t_n} \asymp C'' |\G_n|^{-1} = o(1)$ with $C'' = \int_0^\infty \d s\,u^{-2(1-\gamma)} < \infty$. 
\end{itemize}

\medskip\noindent
$\blacktriangleright$ Regime (II). This regime is excluded because it exhibits different behaviour.  See Section~\ref{ss.deep}.
\end{proof}


\paragraph{Step 3.}

In \cite[Lemma 6.10]{GdHOpr1} we showed for Model 2 with $\rho=\infty$ that for the system on $\G$ in equilibrium the deep seed-banks are deterministic, i.e.,
\begin{equation}
\label{e4242}
\lim_{m\to\infty} \mathrm{Var}_{\nu_\theta}(y_{0,m}) = 0.
\end{equation}
The following lemma says that the same holds for the scaling limit of the system on $\G_n$.

\begin{lemma}{\bf [Deterministic deep seed-banks: Model 2, $\rho=\infty$]}
\label{lem:deep}
Let $\mu(0) = \mu \in \CR_\theta$. Let $(t_n)_{n\in\N}$ satisfy $\lim_{n\to\infty} t_n = \infty$ and $\lim_{n\to\infty} t_n/\beta_n = 0$. Let $\nu$ be any weak limit point of $(\Phi_n\mu(t_n))_{n\in\N}$. Then
\begin{equation}
\label{e4242fin}
\lim_{m\to\infty} \limsup_{n\to\infty} \mathrm{Var}_{\Phi_n\nu}(y_{0,m}) = 0.
\end{equation}
\end{lemma}

\begin{proof}
(The proof does in fact not use that $\mu \in \CR_\theta$.) Write (see \cite[Lemma 6.1]{GdHOpr1})
\begin{equation}
\begin{aligned}
\mathrm{Var}_{\Phi_n\mu(t_n)}(y_{0,m})
&= \E_{\Phi_n\mu}\left[\big(y_{0,m}(t_n)-\E_{\Phi_n\mu}[y_{0,m}(t_n)]\big)^2\right]\\
& = \int_{0}^{t_n} \d s\, \sum_{k \in \G_n} 
b^{(2),n}_{t_n-s}\big((0,R_m),(k,A)\big)\,b^{(2),n}_{t_n-s}\big((0,R_m),(k,A)\big)\,\E_{\Phi_n\mu}[g(x_k(u))]\\
&\leq \|g\| \int_{0}^{t_n} \d s\, \sum_{k \in \G_n} 
b^{(2),n}_{t_n-s}\big((0,R_m),(k,A)\big)\,b^{(2),n}_{t_n-s}\big((0,R_m),(k,A)\big),
\end{aligned}
\end{equation}
where the estimate is uniform in $\mu$. The sum under the integral is the probability that two Markov processes, both starting from $(0,R_m)$ and moving according to $b^{(2),n}(\cdot,\cdot)$, at time $t_n-s$ are at the same site and both active. Define
\begin{equation}
\label{tau}
\tau=\big\{t\geq 0\colon\,RW(t)=RW^\prime(t)=(i,A)\text{ for some }i\in\G_n\big\},
\end{equation}
i.e., the first time when the two Markov processes are jointly active at the same site. Write $\P^{(2),n}_{(0,D_m),(0,D_m)}$ to denote the joint law of the two Markov processes. Then
\begin{equation}
\label{var1}
\begin{aligned}
&\int_{0}^{t_n} \d s\, \sum_{k \in \G_n} 
b^{(2),n}_{t_n-s}\big((0,R_m),(k,A)\big)\,b^{(2),n}_{t_n-s}\big((0,R_m),(k,A)\big)\\
&= \int_{0}^{t_n}\d s\,\, \E^{(2),n}_{(0,D_m),(0,D_m)}\left[\sum_{k\in\G_n} 
1_{\{RW(s)=RW^\prime(s)=k\}}\,1_{\CE(s)}\,1_{\CE^\prime(s)}\right]\\
&= \E^{(2),n}_{(0,D_m),(0,D_m)}\left[1_{\{\tau \leq t_n\}} \int_{0}^{t_n-\tau}\d s\,\,E^{(2),n}_{(0,A),(0,A)}\left[\sum_{k\in\G_n}  
1_{\{RW(s)=RW^\prime(s)=k\}}\,1_{\CE(s)}\,1_{\CE^\prime(s)}\right]\right]\\
&\leq \P^{(2),n}_{(0,D_m),(0,D_m)}\left(\tau \leq t_n\right) \int_{0}^{t_n} \d s\,\, \E_n\left[\hat{a}^n_{T(s)+T^\prime(s)}(0,0)
\,1_{\CE(s)}\,1_{\CE^\prime(s)}\right],
\end{aligned}
\end{equation}
where for the second equality we use the strong Markov property at time $\tau$, together with the fact that on the event $\{\tau>t_n\}$ the product of the indicators equals $0$ for all $u \in [0,t_n]$, and for the inequality we recall \eqref{covcomp}. By \eqref{Jdefs}, we have
\begin{equation}
\int_0^{t_n} \d s\,\,\E_n\left[\hat{a}^n_{T(s)+T^\prime(s)}(0,0)\,1_{\CE(s)}\,1_{\CE^\prime(s)}\right]
= J^1_{t_n} + J^2_{t_n}(0,0) + J^3_{t_n}(0,0),
\end{equation}
By \eqref{est2}--\eqref{Jtildelim}, the first two terms tends to zero as $n\to\infty$. By \eqref{J3bd1}--\eqref{J3bd3}, 
\begin{equation}
J^3_{t_n}(0,0) \leq C\,\int_0^{t_n} \d s\,\,\E\left[\hat{a}_{T(s)+T^\prime(s)}(0,0)\,1_{\CE(s)}\,1_{\CE^\prime(s)}\right],  
\end{equation}
which is finite in the coexistence regime, uniformly in $(t_n)_{n\in\N}$. Furthermore, recall \eqref{Andef} to estimate 
\begin{equation}
\P^{(2),n}_{(0,D_m),(0,D_m)}\left(\tau \leq t_n\right) =  \P^{(2),n}_{(0,D_m),(0,D_m)}\left(\tau \leq t_n, \CA_n\right) 
= \P^{(2)}_{(0,D_m),(0,D_m)}\left(\tau \leq t_n,\CA_n\right) \leq \P^{(2)}_{(0,D_m),(0,D_m)}\left(\tau<\infty\right).
\end{equation}
Hence, to get the claim in \eqref{e4242fin} it suffices to show that
\begin{equation}
\lim_{m\to\infty} \P^{(2)}_{(0,D_m),(0,D_m)}\left(\tau<\infty\right) = 0.
\end{equation}
But this was done in \cite[Section 6.3, Step 3]{GdHOpr1}, the idea being that two Markov processes starting from deep seed-banks are far apart when they are jointly awake for the first time.
\end{proof}

Steps 1--3 complete the proof of Lemma~\ref{lem:pres}.
\end{proof}


\subsection{Law of large numbers for the macroscopic variable}
\label{ss.LLN}

The following lemma will play a crucial role in Sections~\ref{s.fssrhofin}--\ref{s.fssrhoinffast}.

\begin{lemma}{\bf [$L_2$-convergence of the macroscopic variable]}
\label{lem:L2macr} 
For any $\theta \in [0,1]$, any $\mu(0) = \mu \in \CR_\theta$ and any sequence of times $(t_n)_{n\in\N}$ satisfying $\lim_{n\to\infty} t_n = \infty$ and $\lim_{n\to\infty} t_n/\beta_n = 0$,
\begin{equation}
\label{LLN}
\lim_{n\to\infty} \E_{\Phi_n\mu}\left[(\hat{\theta}^n(t_n) - \theta)^2\right] = 0
\end{equation}
in Model 1, and similarly for $\hat{\theta}^{M_n,n}$ in Model 2.
\end{lemma}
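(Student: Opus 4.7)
The plan is to decompose the $L_2$-error using the martingale structure of $\hat{\theta}^n$. Since $(\hat\theta^n(t))_{t\geq 0}$ is a bounded continuous martingale (recall \eqref{martalt}) and $\E_{\Phi_n\mu}[\hat\theta^n(t_n)]=\E_{\Phi_n\mu}[\hat\theta^n(0)]=\theta$ by translation invariance of $\Phi_n\mu$ together with $\mu\in\CR_\theta\subset\CT^{\mathrm{erg}}_\theta$ (Lemma~\ref{lem:erg}), the standard variance identity for continuous martingales gives
\begin{equation*}
\E_{\Phi_n\mu}\!\left[(\hat\theta^n(t_n)-\theta)^2\right]
= \mathrm{Var}_{\Phi_n\mu}\!\left(\hat\theta^n(0)\right)
+ \E_{\Phi_n\mu}\!\left[\langle\hat\theta^n\rangle_{t_n}\right].
\end{equation*}
It then suffices to show that both terms on the right vanish as $n\to\infty$.

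For the increasing process, \eqref{mo2alt} yields $\langle\hat\theta^n\rangle_t=|\G_n|^{-2}(1+K)^{-2}\int_0^t\sum_{i\in\G_n}g(x_i(s))\,\d s$, so the crude bound $g\leq \|g\|$ gives
\begin{equation*}
\E_{\Phi_n\mu}\!\left[\langle\hat\theta^n\rangle_{t_n}\right]
\leq \frac{\|g\|\,t_n}{|\G_n|(1+K)^2}
= \frac{\|g\|\,t_n}{\beta_n} \longrightarrow 0
\end{equation*}
by the assumption $t_n=o(\beta_n)$. For Model 2 the same estimate holds after replacing $K$ by $\sum_{m=0}^{M_n}K_m$ and using \eqref{mart}--\eqref{incpr} together with $\beta_n=\kappa_{M_n}|\G_n|$ (which is the correct scaling in the regimes covered here, i.e.\ $\rho<\infty$ or $\rho=\infty$ in regime~(I)).

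For the initial variance, note that $\hat\theta^n(0)=|\G_n|^{-1}\sum_{i\in\G_n}(x_i(0)+Ky_i(0))/(1+K)$ is a spatial average of a bounded function under $\Phi_n\mu$. Expanding using translation invariance,
\begin{equation*}
\mathrm{Var}_{\Phi_n\mu}\!\left(\hat\theta^n(0)\right)
= \frac{1}{|\G_n|(1+K)^2}\sum_{k\in\G_n}\mathrm{Cov}_{\Phi_n\mu}\!\left(x_0+Ky_0,\,x_k+Ky_k\right),
\end{equation*}
which is a Cesàro average of covariances on the profinite group $\G$. Since $\mu$ is ergodic under the Abelian group action of $\G$ and $(\G_n)_{n\in\N}$ forms a Følner-type exhaustion of $\G$ by Assumption~\ref{ass.profinite}, the mean ergodic theorem for amenable groups forces this Cesàro average to zero. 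For Model 2 with $\rho=\infty$ in regime~(I) the same argument applies with $K$ replaced by $\sum_{m=0}^{M_n}K_m$; the extra seed-bank averaging is controlled by colour regularity of $\mu$ (in particular $\lim_{m\to\infty}\E_\mu[y_{0,m}]=\theta$ and the weights $K_m/\sum_{m=0}^{M_n}K_m$ sum to $1$), so that the weighted seed-bank average still has mean $\theta$ and contributes no extra variance in the limit.

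The main obstacle is the vanishing of $\mathrm{Var}_{\Phi_n\mu}(\hat\theta^n(0))$, since it is purely a spatial ergodic statement about the initial law along the approximating groups $\G_n$ rather than a consequence of the dynamics. For the three concrete examples in Remark~\ref{r.1274} (Euclidean, long-range, hierarchical) the Følner property is standard, while for general profinite $\G$ it follows from the amenability of $\G$ combined with the projective structure \eqref{profinite}.
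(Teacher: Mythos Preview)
Your martingale decomposition
\[
\E_{\Phi_n\mu}\!\left[(\hat\theta^n(t_n)-\theta)^2\right]
= \mathrm{Var}_{\Phi_n\mu}\!\left(\hat\theta^n(0)\right)
+ \E_{\Phi_n\mu}\!\left[\langle\hat\theta^n\rangle_{t_n}\right]
\]
is correct and, in fact, coincides term by term with what the paper obtains. The paper expands $\mathrm{Cov}_{\Phi_n\mu(t_n)}(z_{u_1},z_{u_2})$ via the It\^o formula \eqref{covexpr} and then sums with weights $K(u_1)K(u_2)$; using that $\pi(u)=K(u)/[(1+K)|\G_n|]$ is the invariant measure of $b^{(1),n}$, the summed ``second term'' in \eqref{covexpr} collapses to $\mathrm{Var}_{\Phi_n\mu}(\hat\theta^n(0))$ and the summed ``first term'' collapses to $\E[\langle\hat\theta^n\rangle_{t_n}]$. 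So the two routes are the same decomposition written differently. Your bound $\E[\langle\hat\theta^n\rangle_{t_n}]\leq\|g\|\,t_n/\beta_n$ is the direct version of the paper's bound via $\tilde J_{t_n}$ and is fine in all regimes covered by the lemma. For Model~1, and for Model~2 with $\rho<\infty$ (where $f_{M_n}\to f_\infty$ uniformly), your mean-ergodic-theorem argument for $\mathrm{Var}_{\Phi_n\mu}(\hat\theta^n(0))\to 0$ is valid and arguably cleaner than the paper's appeal to $(\ast)_\theta$.

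There is, however, a genuine gap in your treatment of Model~2 with $\rho=\infty$ in regime~(I). The function
\[
f_{M_n}(z)=\frac{x_0+\sum_{m=0}^{M_n}K_m y_{0,m}}{1+\sum_{m=0}^{M_n}K_m}
\]
depends on $n$ through the growing truncation $M_n$, and the mean ergodic theorem is \emph{not} uniform over the family $\{f_M\colon M\in\N\}$: strong convergence $S_n\to P$ of the averaging operators does not give $S_n f_{M_n}-Pf_{M_n}\to 0$ for an arbitrary bounded sequence $(f_{M_n})$. Your justification that ``colour regularity controls the extra seed-bank averaging so that it contributes no extra variance'' is not correct as stated: colour regularity \eqref{covcond1} only asserts $\lim_{m\to\infty}\E_\mu[y_{0,m}]=\theta$, i.e.\ it controls \emph{means}, and says nothing about $\mathrm{Var}_\mu(y_{0,m})$ or about the spatial covariances $\mathrm{Cov}_\mu(f_{M_n},f_{M_n}\circ\tau_k)$ that enter $\mathrm{Var}_{\Phi_n\mu}(\hat\theta^{M_n,n}(0))$. (A small related point: for Model~2 one does not have $\E_{\Phi_n\mu}[\hat\theta^{M_n,n}(0)]=\theta$ exactly but only $\to\theta$, so the decomposition needs an additional vanishing bias term; this is minor.) The paper handles the initial-covariance contribution by invoking the Liggett condition $(\ast)_\theta$ in Definition~\ref{def:class2}, which is the defining property of $\CR^{(2),\bullet}_\theta$, rather than the bare spatial ergodic theorem; you would need either to supply a uniform ergodic estimate or to route through $(\ast)_\theta$ as the paper does.
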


\begin{proof}
Recall \eqref{meanpres}. We again distinguish between Model 1 and 2.


\paragraph{Model 1.}

Use \eqref{e1311} to write  
\begin{equation}
\label{varestmac1}
\E_{\Phi_n\mu(t_n)}\left[(\hat{\theta}^n-\theta)^2\right]
=\frac{1}{(1+K)^2|\G_n|^2} \sum_{u_1,u_2 \in \S_n} K(u_1)\,K(u_2)\,
\text{Cov}_{\Phi_n\mu(t_n)}(z_{u_1},z_{u_2})
\end{equation}
with $K(u)=1$ for $u=(i,A)$ and $K(u)=K$ for $u=(i,D)$. Via \eqref{astcond} and \eqref{IADcomp} (with $\nu$ replaced by $\mu$) it follows that
\begin{equation}
\label{L2bd}
\begin{aligned}
\limsup_{n\to\infty} \E_{\Phi_n\mu(t_n)}\left[(\hat{\theta}^n-\theta)^2\right]
&= \limsup_{n\to\infty} \frac{1}{(1+K)^2|\G_n|^2} \sum_{u_1,u_2 \in \S_n} K(u_1)\,K(u_2)\,I_{t_n}(u_1,u_2)\\ 
&\leq \limsup_{n\to\infty} \frac{1}{|\G_n|^2} \sum_{i,j \in \G_n} I_{t_n}\big((i,A),(j,A)\big).
\end{aligned}
\end{equation}
We have
\begin{equation}
\label{Iav}
\begin{aligned}
\frac{1}{|\G_n|^2} \sum_{i,j \in \G_n} I_{t_n}\big((i,A),(j,A)\big)
&\leq \|g\|\, \frac{1}{|\G_n|^2} \sum_{i,j \in \G_n}
\int_{0}^{t_n} \d s\,\, \E\left[\hat{a}^n_{T(s)+T^\prime(s)}(i-j,0)
\,1_{\CE(s)}\,1_{\CE^\prime(s)}\right]\\
&= \|g\|\, \frac{1}{|\G_n|} \int_{0}^{t_n} \d s\,\, \E\left[1_{\CE(s)}\,1_{\CE^\prime(s)}\right]
= \|g\|\, \tilde{J}_{t_n},
\end{aligned}
\end{equation}
where the inequality uses \eqref{covcomp}, the first equality uses that $\sum_{k\in\G_n} \hat{a}^n_t(k,0) = 1$ for all $t \geq 0$ and all $n\in\N$, and the second equality is \eqref{est3}. We already showed that $\tilde{J}_{t_n} = o(1)$, and so this settles \eqref{LLN}.


\paragraph{Model 2.}

Use \eqref{mo3} to write, similarly as in \eqref{varestmac1},
\begin{equation}
\label{varestmac2}
\begin{aligned}
&\E_{\Phi_n\mu(t_n)}\left[(\hat{\theta}^{M_n,n}-\theta)^2\right]\\
&= \frac{1}{(1+\sum_{m=0}^{M_n} K_m)^2|\G_n|^2} \sum_{u_1,u_2 \in \S_n} 
K(u_1)\,K(u_2)\,\text{Cov}_{\Phi_n\mu(t_n)}(z_{u_1},z_{u_2})
\end{aligned}
\end{equation}
with $K(s)=1$ for $s=(i,A)$ and $K(s)=K_m$ for $s=(i,D_m)$ (recall \eqref{betaKndef}). Via \eqref{astcond} and \eqref{covexpr}--\eqref{IADcomp} (with $\nu$ replaced by $\mu$) it follows that, similarly as in \eqref{L2bd},
\begin{equation}
\label{L2bdalt}
\begin{aligned}
&\limsup_{n\to\infty} \E_{\Phi_n\mu(t_n)}\left[(\hat{\theta}^{M_n,n}-\theta)^2\right]\\
&= \limsup_{n\to\infty} \frac{1}{(1+\sum_{m=0}^{M_n} K_m)^2|\G_n|^2} 
\sum_{u_1,u_2 \in \S_n} K(u_1)\,K(u_2)\,I_{t_n}(u_1,u_2)\\ 
&\leq \limsup_{n\to\infty} \frac{1}{|\G_n|^2} \sum_{i,j \in \G_n} I_{t_n}\big((i,A),(j,A)\big).
\end{aligned}
\end{equation}
The rest of the argument is the same.
\end{proof}


\subsection{Convergence at macroscopic times conditional on the macroscopic variable}
\label{ss.macroconv}

Recall \eqref{laws}, \eqref{lawscr}, \eqref{choicelawn}, \eqref{extop} and \eqref{erglaw1}, \eqref{erglaw2fin} and \eqref{erglaw2inf}. We begin with the observation that, for all $\theta \in [0,1]$,
\begin{equation}
\label{ergast}
\begin{array}{lll}
&\CT^\mathrm{erg}_\theta = \CR^{(1)}_\theta &\text{(Model 1)},\\[0.2cm]
&\CT^\mathrm{erg}_\theta = \CR^{(2)}_\theta &\text{(Model 2 with $\rho<\infty$)},\\[0.2cm]
&\CT^\mathrm{erg,\bullet}_\theta = \CR^{(2),\bullet}_\theta &\text{(Model 2 with $\rho=\infty$)}.
\end{array}
\end{equation} 
Indeed the inclusion $\subset$ was shown in \cite[Lemma 5.7]{GdHOpr1} and \cite[Lemmas 6.6. and 6.9]{GdHOpr1}, while the inclusion $\supset$
was shown in Lemma~\ref{lem:erg}.  

For Model 2 with $\rho=\infty$ we need the topology of uniform weak convergence on the set $\CT^{\bullet,*}$ defined in \eqref{CTcov} (recall also Remark \ref{rem:topology}). We additionally define 
\begin{equation}
\label{ergbulletlaw}
\CT^{\mathrm{erg,\blacksquare}}_\theta = \left\{\mu \in \CT^{\mathrm{erg,\bullet}}_\theta\colon\,
\lim_{m\to\infty} \mathrm{Var}_{\mu}(y_{0,m}) = 0\right\},
\end{equation}
which is a subset of  $\CT^{\mathrm{erg},\bullet,*}$. 

\begin{lemma}{{\bf [Conditional convergence at macroscopic time]}}
\label{lem:presevolve}
Fix $\mu(0) = \mu \in \CT$ (Model 1, Model 2 with $\rho<\infty$) and $\mu \in \CT^\bullet$ (Model 2 with $\rho=\infty$). Fix $s>0$, and let 
\begin{equation}
\mu_n = (\Phi_n\mu)(s\beta_n).
\end{equation}
Then every weak limit point $\bar\nu$ of $(\mu_n)_{n\in\N}$ has the representation 
\begin{equation}
\label{nurepr}
\bar\nu = \int_{[0,1]} Q(\d\theta)\,\bar\nu_\theta,
\end{equation}
where 
\begin{equation}
Q = \left\{\begin{array}{ll}
\lim\limits_{n\to\infty} \CL_{\mu_n}[\hat\theta^n] &\text{{\rm (Model 1)}},\\[0.2cm]
\lim\limits_{n\to\infty} \CL_{\mu_n}[\hat\theta^{M_n,n}] &\text{{\rm (Model 2)}},
\end{array}
\right.
\end{equation}
is the associated limit law of the macroscopic variable, and 
\begin{equation}
\label{nurepralt}
\bar\nu_\theta = \left\{\begin{array}{ll}
\int_{\,\CT^{\mathrm{erg}}_{\theta}} Q_\theta(\d\mu)\,\mu &\text{{\rm (Model 1)}},\\[0.2cm]
\int_{\,\CT^{\mathrm{erg}}_{\theta}} Q_\theta(\d\mu)\,\mu &\text{{\rm (Model 2 with $\rho<\infty$)}},\\[0.2cm]
\int_{\,\CT^{\mathrm{erg},\blacksquare}_{\theta}} Q_\theta(\d\mu)\,\mu &\text{{\rm (Model 2 with $\rho=\infty$)}},
\end{array}
\right.
\end{equation}
for some Choquet measure $Q_\theta$. Moreover, $\theta \mapsto \bar\nu_\theta$ is continuous on $\CT$ in the weak topology, respectively, on $\CT^\bullet$ in the uniform weak topology.
\end{lemma}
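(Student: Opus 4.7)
The plan is to combine three ingredients already established in Section~\ref{s.classpres}: the preservation Lemma~\ref{lem:pres}, the $L_2$-law of large numbers Lemma~\ref{lem:L2macr} for the macroscopic variable, and the deep seed-bank result Lemma~\ref{lem:deep} (for Model 2 with $\rho=\infty$). Together these will force any weak limit point $\bar\nu$ of $(\mu_n)_{n\in\N}$ to live on translation-invariant laws with spatially decorrelating correlations, from which the claimed ergodic decomposition follows by Choquet's theorem. First I would note that, by the Markov property, we may write $\mu_n = (\Phi_n\mu)(s\beta_n - t_n)\, S^n(t_n)$ for any auxiliary sequence $t_n \to \infty$ with $t_n = o(\beta_n)$. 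Since $\mu \in \CT \subset \cup_\theta \CR_\theta$ (or $\mu \in \CT^\bullet \subset \cup_\theta \CR^{(2),\bullet}_\theta$ for $\rho=\infty$) and in view of \eqref{ergast}, Lemma~\ref{lem:pres} guarantees that the Liggett conditions $(\ast)_{\theta(\omega)}$ continue to hold after time $s\beta_n - t_n$ with random density $\theta(\omega)$, so all subsequent developments propagate through the final lapse of length $t_n$.

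Next I would enlarge the state space to $E \times [0,1]$ and consider the joint laws $\tilde\mu_n = \CL_{\mu_n}(Z^n, \hat\theta^{\,(M_n),n})$. Tightness on $E$ (with the product topology) is automatic, tightness of $\hat\theta^{(M_n),n}$ is trivial, so along subsequences we extract a joint weak limit $\tilde{\bar\nu}$. The key step is to identify its marginal on $[0,1]$ as $Q$ and to show that, conditional on the second coordinate being $\theta$, the first coordinate concentrates on $\CT^\mathrm{erg}_\theta$, respectively $\CT^{\mathrm{erg},\blacksquare}_\theta$. For this, disintegrate as $\tilde{\bar\nu}(\mathrm d z, \mathrm d\theta) = Q(\mathrm d\theta)\,\bar\nu_\theta(\mathrm d z)$. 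Translation invariance of each $\bar\nu_\theta$ is inherited from the translation invariance of $\Phi_n\mu$. Lemma~\ref{lem:L2macr}, applied to the evolution of the system started from (a regular conditional version of) $\mu_n$ further evolved by an additional time lapse $t_n = o(\beta_n)$, shows that the macroscopic variable stays $L_2$-close to its value at time $s\beta_n$, so that the density of every component under $\bar\nu_\theta$ equals $\theta$. Lemma~\ref{lem:pres}(b) combined with Lemma~\ref{lem:deccor} yields the covariance decay $\mathrm{Cov}_{\bar\nu_\theta}(z_{(i,R_i)},z_{(j,R_j)}) \to 0$ as $|i-j| \to \infty$, which by standard arguments is equivalent to mixing under spatial translations; since the geographic group is Abelian, mixing implies ergodicity. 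The Choquet representation \eqref{nurepralt} then follows by decomposing $\bar\nu_\theta$ into its ergodic components, all of which must have density $\theta$ by the very conditioning. For Model 2 with $\rho=\infty$, Lemma~\ref{lem:deep} pushes this one step further: $\mathrm{Var}_{\bar\nu_\theta}(y_{0,m}) \to 0$ as $m\to\infty$, so the Choquet measure $Q_\theta$ is supported on $\CT^{\mathrm{erg},\blacksquare}_\theta$.

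Finally, for continuity of $\theta \mapsto \bar\nu_\theta$, I would exploit that the conditional laws can be approximated by the equilibria of the infinite system: Lemma~\ref{lem:pres} tells us that for any representative $\mu' \in \CR_\theta$ the further evolution $\mu'(t)$ remains in $\CR_\theta$, and standard convergence-to-equilibrium for the infinite system (established in \cite{GdHOpr1}) together with the continuity of $\theta \mapsto \nu_\theta$ (weak for $\rho<\infty$, uniform weak for $\rho=\infty$, as recalled at the end of Section~\ref{ss.coreinf}) transfers to $\bar\nu_\theta$. The main obstacle is the last point: controlling the dependence of $\bar\nu_\theta$ on $\theta$ uniformly in the seed-bank direction when $\rho=\infty$. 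This is handled by invoking Lemma~\ref{lem:deep}, which supplies the equicontinuity in the colour coordinate needed to upgrade weak to uniform weak convergence on $\CT^{\bullet,\ast}$, via the construction described in Remark~\ref{rem:topology}. The subsequential argument then upgrades to genuine convergence because the limit $\bar\nu$ is characterised uniquely by $Q$ and the family $(\bar\nu_\theta)_{\theta\in[0,1]}$, both of which are forced by the above.
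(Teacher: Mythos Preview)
Your proposal follows essentially the same route as the paper's proof: both introduce an auxiliary time $t_n\to\infty$ with $t_n=o(\beta_n)$, split $s\beta_n=(s\beta_n-t_n)+t_n$, invoke Lemmas~\ref{lem:pres}, \ref{lem:L2macr} and (for $\rho=\infty$) Lemma~\ref{lem:deep} on the final stretch, and then appeal to Choquet and the coupling argument from \cite{GdHOpr1} for the continuity of $\theta\mapsto\bar\nu_\theta$. The paper organises this slightly differently---first taking a weak limit $\bar\nu^-$ at time $s\beta_n-t_n$, Choquet-decomposing it over $\CT^{\mathrm{erg}}_\theta$ (respectively $\CT^{\mathrm{erg},\bullet}_\theta$), and then evolving each ergodic component for time $t_n$ to land in $\CT^{\mathrm{erg}}_\theta$ (respectively $\CT^{\mathrm{erg},\blacksquare}_\theta$) via \eqref{ergast} and Lemmas~\ref{lem:allmeans}--\ref{lem:deep}---while you track the joint law of $(Z^n,\hat\theta^{(M_n),n})$ directly; but these are two packagings of the same argument.

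One minor point: your claim that the covariance decay from Lemma~\ref{lem:deccor} is ``by standard arguments equivalent to mixing'' overstates things (second-moment decay alone does not give mixing), and in any case is not needed. The statement of the lemma only asks for a Choquet decomposition of $\bar\nu_\theta$ over $\CT^{\mathrm{erg}}_\theta$, not that $\bar\nu_\theta$ itself be ergodic; once $\bar\nu_\theta\in\CT$ has all component means equal to $\theta$, Choquet decomposes it over ergodic measures, each of which inherits density $\theta$ by the ergodic theorem, exactly as you say two sentences later. The paper's proof does not go through a mixing step either.
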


\begin{proof}
Fix $s>0$, pick any $(t_n)_{n\in\N}$ with $\lim_{n\to\infty} t_n = \infty$ and $\lim_{n\to\infty} t_n/\beta_n = 0$, and put 
\begin{equation}
\mu^-_n = (\Phi_n\mu)(s\beta_n-t_n).
\end{equation}
Let $\bar\nu^-$ be any weak limit point of $(\mu^-_n)_{n\in\N}$. Let $Q^-=\lim_{n\to\infty} \CL_{\mu^-_n}[\hat\theta^n]$ (Model 1), respectively, $Q^-=\lim_{n\to\infty} \CL_{\mu^-_n}[\hat\theta^{M_n,n}]$ (Model 2) be the associated limit law of the macroscopic variable. Then
\begin{equation}
\label{numin1}
\bar\nu^- = \int_{[0,1]} Q^-(\d\theta)\,\bar\nu^-_\theta,
\end{equation}
where, by Choquet's theorem, 
\begin{equation}
\label{numin2}
\bar\nu^-_\theta = \left\{\begin{array}{ll}
\int_{\,\CT^{\mathrm{erg}}_{\theta}} Q^-_\theta(\d\mu)\,\mu, &\text{{\rm (Model 1, Model 2 with $\rho<\infty$)}},\\[0.2cm]
\int_{\,\CT^{\mathrm{erg},\bullet}_{\theta}} Q^-_\theta(\d\mu)\,\mu, &\text{{\rm (Model 2 with $\rho=\infty$)}}, 
\end{array}
\right.
\end{equation} 
for some Choquet measure $Q^-_\theta$.

Next we evolve the dynamics over time $t_n$ to see what happens at time $s\beta_n$. To that end we consider the limit points $\bar\nu_\theta$ of $((\Phi_n \bar\nu^-_{\theta})S_n(t_n))_{n\in\N}$, where $(S_n(s))_{s \geq 0}$ is the semigroup of the dynamics on $\G_n \times \{A,(D_m)_{0 \leq m \leq M_n}\}$. By \eqref{ergast} and Lemmas~\ref{lem:allmeans}--\ref{lem:deep}, these limit points are in $\CT^{\mathrm{erg}}_\theta$, respectively, $\CT^{\mathrm{erg},\blacksquare}_\theta$. Moreover, by Lemma~\ref{lem:L2macr}, $\hat\theta^n$ and $\hat\theta^{M_n,n}$ have the same limit points under $\mu_n^-$ and $\mu_n$. Hence \eqref{numin1} and \eqref{numin2} imply \eqref{nurepr} and \eqref{nurepralt}. 

The continuity of $\theta\mapsto\bar\nu_\theta$ uses the same coupling argument that was used in \cite{GdHOpr1} for the infinite system, and that was modified in Sections~\ref{ss.class}--\ref{ss.pres} to deal with the finite system. 
\end{proof}

The observation made in the proof of Lemma \ref{lem:erg} guarantees that 
\begin{equation}
\label{ergbulletlawalt}
\CT^{\mathrm{erg,\blacksquare}}_\theta = \left\{\mu \in \CT^{\mathrm{erg,*}}_\theta\colon\,
\lim_{m\to\infty} y_{0,m} = \theta \,\, \mu\text{-a.s.}\right\},
\end{equation}
i.e., in Model 2 with $\rho=\infty$ the deep seed-banks are deterministic in equilibrium.

In Sections~\ref{s.fssrhofin}--\ref{s.fssrhoinffast} we will see that Lemma~\ref{lem:presevolve} is the key to showing that, for any $\theta \in [0,1]$, $\mu(0) = \mu \in \CT$ and $s>0$,
\begin{equation}
\lim_{n\to\infty} \CL[(\Phi_n\mu)(s\beta_n)(\cdot \mid \hat\theta^n = \theta)] = \nu_{\theta}(\cdot),
\end{equation}
i.e., $\bar\nu_\theta = \nu_\theta$. The proof requires the use of an abstract scheme, which is outlined in Appendix~\ref{appB*}. The latter will allow us to identify the limit law of the macroscopic variable on time scale $\beta_n$. A similar statement holds for Model 2 conditional on $\hat\theta^{M_n,n} = \theta$, for $\mu(0) = \mu \in \CT$, respectively, $\mu(0) = \mu \in \CT^\bullet$. 
 

\subsection{Extension to fast growing seed-banks}
\label{ss.deep} 

For Model 2 with $\rho=\infty$ in regime (II), Lemmas~\ref{lem:erg}--\ref{lem:deep} carry over provided we restrict to time intervals of length $o(\beta^{**}_n)$ rather than $o(\beta^*_n)$ (recall that $\beta^*_n$ is the macroscopic time scale for Model 2 with $\rho=\infty$ in regime (II)). Indeed, recall from \eqref{Jtildelim} that 
\begin{equation}
\tilde{J}_{t_n} = |\G_n|^{-1} \int_0^{t_n} \d s\,(1/\kappa_{s^{1/\beta}}). 
\end{equation}
It suffices to check that $\lim_{n\to\infty} \tilde{J}_{t_n}=0$ when $t_n = o(\beta^{**}_n)$, because this allows us to carry through the estimates given in Part 4 of Step 2 in the proof of Lemma~\ref{lem:pres}. For this we refer to the four bullets below \eqref{J12def}.

The reason for the restriction is that \emph{ergodicity breaks down on time scale} $\beta^{**}_n$. In Section~\ref{s.fssrhoinffast} we will analyse Model 2 with $\rho=\infty$ in regime (II). Lemmas~\ref{lem:L2macr}--\ref{lem:presevolve} will not be needed there (even though they are trivially true on times scales $o(\beta^{**}_n)$).


\section{Proofs: $\rho<\infty$}
\label{s.fssrhofin} 

In this section we prove Theorems~\ref{T.finsys1}, \ref{thm:1657} and \ref{T.finsys2fin}. Section~\ref{ss.fssmod1} focusses on Model 1, Section~\ref{ss.fssmod2rhofin} on Model 2 with $\rho<\infty$.

\subsection{Model 1}
\label{ss.fssmod1}

The proof of Theorem~\ref{T.finsys1} will follow from the same argument as given below for Model 2 when $\rho < \infty$.

To prove Theorem~\ref{thm:1657} note that, by using the same Brownian motions for every $n\in \N$, we can realise weak convergence of the path in \eqref{gh16} as a.s.\ uniform convergence on a compact space via the Skorohod representation. Moreover, the increasing process of the macroscopic variable $\hat{\theta}^{n}$ in \eqref{mo3} is the increasing process of the active macroscopic variable $\hat{\theta}_x^n$ in \eqref{act} divided by $1+K$, and both hit the traps $0$ or $1$ if and only if their increasing process hits $0$ and remains $0$. Since $(\hat{\theta}^{n}(t))_{t \geq 0}$ is a continuous martingale, it is a time-transformed Brownian motion, with the time transformation given by its increasing process. We conclude from the path convergence that the increasing processes converge to the increasing process of the $\CF g$-diffusion. The latter has a derivative that converges to zero as the macroscopic tome $s$ tends to infinity, and so the limit path becomes constant. For that reason we can conclude that on time scale $\beta_n$ the hitting time of the traps by the path in \eqref{gh16} converges to the hitting time of the traps for the limit process, which is the $\CF g$-diffusion.


\subsection{Model 2: $\rho<\infty$}
\label{ss.fssmod2rhofin}

In this section we prove Theorem~\ref{T.finsys2fin}. The proof is built on an \emph{abstract scheme} for deriving the finite-systems scheme, developed in \cite[Section 1]{CG94*} and \cite[Section 4]{DGV95} for general spatial systems and summarised in Appendix~\ref{appB*}. This abstract scheme has been applied to several classes of systems, but not yet to systems with seed-banks. 

We will see that we can use this abstract scheme by \emph{incorporating the seed-bank into the single-component state space}, namely, by considering the state space $I^\G$ with $I = [0,1] \times [0,1]^{\N_0}$, respectively, $I^{\G_n}$ with $I = [0,1] \times [0,1]^{M_n+1}$ (see Remark~\ref{rem:translate}). Along the way, various ingredients need to be specified, for which we use the lemmas derived in Section~\ref{s.classpres}. In particular, we need to show that the macroscopic time scale chosen in \eqref{e2033alt}, namely, $\beta_{M_n,n} = \kappa |\G_n|$ with $\kappa=(1+\rho)^2$, emerges as the correct scaling time (which is related to the verification of Assumption (A10) in Corollary~\ref{cor.3636} in Appendix~\ref{appB*}). 

We will see that the abstract scheme allows for a \emph{bootstrappig argument}. Namely, we first use tightness of the macroscopic variable process to establish that the finite system locally converges to the equilibrium of the infinite system with a density that is given by the limiting value of the macroscopic variable. Afterwards, we use this equilibrium to identify how to macroscopic variable process evolves on the macroscopic time scale. 

The proof is organised into 5 Steps. In Steps 1--3 we first carry out the proof for $M_n \equiv M$ and $g=dg_{\text{FW}}$, $d \in (0,\infty)$. In Steps 4--5 we let $M_n\to\infty$ and consider general $g\in\CG$.


\paragraph{Step 1: Checking the assumptions.}

We must check Assumptions (A1)--(A9) in Theorem~\ref{th.3624} and assumption (A10) in Corollary~\ref{cor.3636}. We proceed item per item, after first  checking that our set-up fits into the abstract scheme. 

\medskip\noindent
$\bullet$ {\bf Set-up.} Our geographic space is a countable Abelian group $\G$ endowed with the discrete topology, while our single-component state space $I=[0,1]\times[0,1]^{M+1}$ is a Polish space equipped with the product topology of $[0,1]$. Our assumption that $\G$ is \emph{profinite} yields a projective system $(\G_n)_{n\in\N}$ of finite groups endowed with the discrete topology. Our full state spaces $E,E_{M,n}$ are $I^\G$, $I_M^{\G_n}$. We can apply Theorem~\ref{th.3624}, provided we choose the initial law appropriately, namely, from the class $\CR^{(2)}_\theta$ introduced in Section~\ref{ss.class}, which is the domain of attraction of the equilibrium $\nu_\theta$ and which, by Lemma~\ref{lem:pres}, is preserved on time scale $o(\beta_{M,n})$. 


\paragraph{(A1).}

If $g(x)=dg_{\text{FW}}$, $d \in (0,\infty)$, then the dual on $\G_n$ is the spatial coalescent with truncated transition kernel $a_n(\cdot,\cdot)$ for the random walk on $\G_n$. For this process it is straightforward to see that, as $n \to \infty$, the random walk on $\G_n$ converges to the random walk on $\G$ because $a_n(\cdot,\cdot)$ converges pointwise to $a(\cdot,\cdot)$. Therefore $b^n(\cdot,\cdot)$ converges to $b(\cdot,\cdot)$, and also the dual lineages converge. The seed-bank leads to different waiting times until the next jump occurs in the dual. Therefore, as $n\to\infty$, the spatial coalescent on $\G_n$ converges in law to the spatial coalescent on $\G$. Hence, by the duality relation and the fact that the moments are convergence determining, as $n\to\infty$ the forward process on $\G_n$ converges to the forward process on $\G$ in the sense of marginal distributions. 


\paragraph{(A2).}

Let
\begin{equation}
\label{e2657}
\begin{aligned}
\CT &= \text{set of translation invariant probability measures on } I^\G \text{ under the group action on } \G,\\ 
\CT_n &= \text{set of translation invariant probability measures on } I^{\G_n} \text{ under the group action on } \G_n. 
\end{aligned}
\end{equation}
In our context, $\CT$ is defined in \eqref{laws}, while $\CT_n$ is the image of $\CT$ under the projection given by \eqref{choicelawn}. Since the evolution mechanism is translation invariant, $\CT_n$ and $\CT$ are preserved under the semigroup of the evolution, and so (A2) holds.


\paragraph{(A3).}

The Polish space $\CJ$ is the set $[0,1]$ labelling the density, and $\CT^\mathrm{erg}_\theta$ is the set of translation invariant probability measures that are \emph{ergodic} under translation with density $\theta$ given in \eqref{erglaw2fin}. Thus, (A3) is a consequence of \cite[Theorem 3.2]{GdHOpr1}.


\paragraph{(A4).}

Define the statistics
\begin{equation}
\label{e2872}
\hat{\theta}^{\,M,n} = \frac{1}{|\G_n|} \sum_{i\in\G_n} \frac{x_i
+\sum_{m=0}^M K_m y_{i,m}}{1+\sum_{m=0}^M K_m},
\end{equation}
and recall the \emph{empirical measure} $\CE_{M,n}$ defined in \eqref{e1297}. Recall that $\hat\theta^{M,n}$ is the \emph{average} w.r.t.\ $\CE_{M,n}$ of
\begin{equation}
\label{e4081}
\frac{x_0 +\sum_{m=0}^M K_m y_{0,m}}{1+\sum_{m=0}^M K_m}.
\end{equation}
We get the definition of $\CJ$ in (A3) from the general ergodic theorem in \cite[Section 6.4]{Krengel85}.


\paragraph{(A5).}

The continuity property of the statistic $\hat{\theta}^{\,M,n}$ and the ergodic behaviour of the infinite system can be deduced with the help of $L^2$-theory and moment calculations, given by Lemma~\ref{lem:deccor}.


\paragraph{(A6).}

For the time scale choose
\begin{equation}
\label{e2676}
\beta_{M,n} = \kappa_M |\G_n|, \qquad \kappa_M = \left(1+\sum_{m=0}^M K_m\right)^2.
\end{equation}
We have to show that, with $z^{M,n}=(z_{i,m})_{i \in \G_n,0 \leq m \in M}$, 
\begin{equation}
\label{e2680}
\CL \left[\left(\hat{\theta}^{\,M,n}\big(z^{M,n}(t \beta_{M,n})\big)\right)_{t \geq 0}\right], \qquad n \in \N, 
\end{equation}
is tight in the space of paths $C([0,\infty),[0,1])$. For that purpose we consider the \emph{increasing process} on time scale $\beta_{M,n}$, and compute (recall \eqref{e1732})
\begin{equation}
\label{e2694}
\langle \hat{\theta}^{\,M,n} \rangle = \left\langle \hat \theta \big(z^{M,n}(t\beta_{M,n})\big)\right\rangle_{t \geq 0} 
= \left(\int_0^t \d s\,\frac{1}{|\G_n|} \sum_{i \in \G_n} g\big(x_i(s\beta_{M,n})\big)\right)_{t \geq 0}.
\end{equation}
Our task is to show that as $n\to\infty$ the right-hand side converges to 
\begin{equation}
\label{limproc}
\left(\int_0^t \d s\, (\CF g)\big(\Theta(s)\big)\right)_{t \geq 0},
\end{equation} 
with $(\Theta(u))_{s \geq 0}$ a non-trivial limit process that starts at $\theta$ and as $u \to\infty$ tends to $0$ or $1$, in other words, the choice of time scale $\beta_{M,n}$ in \eqref{e2676} is proper. The associated processes for finite $n$ are \emph{continuous martingales}, namely, time-changed Brownian motions, and the increasing processes are bounded and continuous with a bounded derivative on finite time intervals (uniformly in $n$). Hence the time-changed Brownian motions are tight in $C([0,\infty),\R)$, and so are the martingales. Since $s \mapsto\Theta(s)$ is continuous, it follows that
\begin{equation}
\label{e2684}
s \mapsto \E \left[f \left(\Theta(s)\right)\right] \text{ is continuous for } f \in C([0,1],\R),
\end{equation}
for every weak limit point arising from \eqref{e2680}, where $\E$ is with respect to the law of $\Theta(s)$. 


\paragraph{(A7).}

Translation invariance of laws is preserved under weak convergence. Tightness follows from compactness of the state space.


\paragraph{(A8).}

We choose to work with coupling rather than with duality, because this will be convenient and instructive later, when we include general $g$. In \cite{CGSh95}, assumptions (A8) and (A9) are verified for the model without seed-bank when $\CT_n,\CT$ are translation invariant laws, based on the construction of a coupling of the two finite or the two infinite processes, starting from different initial points, and a coupling of the finite and the infinite system, with the finite system starting in the translation invariant version of the restriction of the infinite system. The coupling is done by constructing the two processes as strong solutions of an SSDE with the same Brownian motions. 

In order to prove (A8), it suffices to have a coupling and a Lyapunov function such that the distance between the two coupled processes can be controlled by a Lyapunov function with non-increasing expectation. For the model with seed-bank this coupling was constructed in \cite[Section 5.3]{GdHOpr1}. Hence, we indeed have (A8). 


\paragraph{(A9).}

In order to prove (A9) (i.e., to establish ergodicity), we proceed as was done for systems without seed-bank in \cite[Proposition 2.4]{CGSh95}. In particular, we define the bivariate process 
\begin{equation}
\label{bivar}
(z^{M,n},z)
\end{equation}
as the strong solutions to the corresponding SSDEs in \eqref{gh1b*}--\eqref{gh2b*}, respectively,  \eqref{gh1**}--\eqref{gh2**} with the same collection $(w_i(t))_{t \geq 0}$, $i \in \G$, of standard Brownian motions, starting from initial laws in $\CT_n,\CT$ that are linked as in \eqref{choicelawn}. As Lyapunov function we use the same quantity as in \cite[Lemma 5.8]{GdHOpr1}. Because of (A8), it suffices to prove Lemma~\ref{l.2752} below. 


\paragraph{(A10).}

This assumption will be verified in Step 3. 


\paragraph{Step 2: Coupling the finite system and the infinite system.}

To compare the finite and the infinite system, we apply the coupling techniques that were developed in \cite{GdHOpr1} to deal with different initial laws and in \cite{GdHOpr3} to deal with different dynamics. Abbreviate $\S = \G \times \{A,(D_m)_{m\in\N_0}\}$, $\S_M = \G \times \{A,(D_m)_{0 \leq m \leq M}\}$ and $\S_{M,n} = \G_n \times \{A,(D_m)_{0 \leq m \leq M}\}$.      

\begin{lemma}{{\bf [Comparison finite-infinite]}}
\label{l.2752}
Let $(z^{M,n},z)$ be the bivariate process on $E_{M,n} \times E$ defined in \eqref{bivar}, with initial laws in $\CT_n,\CT$ linked as in \eqref{choicelawn}. Write $z=(z_u)_{u \in \S}$ and $z^{M,n}  = (z_u)_{u \in \S_{M,n}}$, and denote by $\Lambda_{M,n}$ the diagonal of $\S_{M,n} \times \S_{M,n}$. Let $\CV^{M,n}$ be the collection of all probability measures $\bar\mu$ on $E_{M,n} \times E$ such that
\begin{equation}
\bar \mu\left(z^{M,n}_u = z_u\,\,\forall\,u \in \Lambda_{M,n}\right)=1.
\end{equation} 
Then
\begin{equation}
\lim_{n\to\infty} \sup_{\bar \mu \in \CV^{M,n}} \E_{\bar \mu}|z^{M,n}_u(t)-z_u(t)| = 0
\qquad \forall\,u \in \S_M \times \S_M,\,t >0. 
\end{equation} 
In particular, there exists a sequence of times $(\ell_n)_{n\in\N}$, satisfying $\lim_{n\to\infty} \ell_n = \infty$, such that
\begin{equation}
\label{e.2787}
\lim_{n\to\infty} \sup_{\bar \mu \in \CV^{M,n}} \E_{\bar \mu} |z^{M,n}_u(\ell_n)-z_u(\ell_n)| = 0 
\qquad \forall\,u \in \S_M \times \S_M.
\end{equation}
\end{lemma}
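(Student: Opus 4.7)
The plan is to couple the two systems through the same family of Brownian motions $(w_i)_{i \in \G}$ (extending trivially those indexed by $\G_n$ via the projection $\G \to \G_n$) and to study the difference process $D_u(t) = z^{M,n}_u(t) - z_u(t)$ for $u \in \S_M$. By the coupling hypothesis on $\bar\mu \in \CV^{M,n}$ we have $D_u(0)=0$ for all $u$ that are identified between the two state spaces (i.e.\ via the diagonal $\Lambda_{M,n}$), while for $u \in \S_M \setminus \S_{M,n}$ the initial difference is controlled by the translation invariant restriction/extension procedure of \eqref{choicelawn}--\eqref{extop}. I would then write the SDE for $D_u(t)$, noting that the resampling noise cancels exactly on the diagonal so that the martingale part of $|D_u(t)|$, after regularising $|\cdot|$ and applying It\^o, has non-positive drift contribution. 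What survives is a deterministic driving term coming from (i) the mismatch between $a^n(i,\cdot)$ and $a(i,\cdot)$ under the projection $\G \to \G_n$, and (ii) the missing seed-bank layers $m > M$ in the finite system.

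The next step is to introduce the Lyapunov function used in \cite[Lemma 5.8]{GdHOpr1}, namely a weighted sum
\begin{equation*}
\Phi_{M,n}(t) = \sum_{i \in \G_n} \pi_i\, \E_{\bar\mu}|x^{M,n}_i(t) - x_i(t)|
+ \sum_{i \in \G_n} \sum_{m=0}^{M} K_m\, \pi_i\, \E_{\bar\mu}|y^{M,n}_{i,m}(t) - y_{i,m}(t)|,
\end{equation*}
where $(\pi_i)_{i \in \G}$ is a summable positive weight compatible with the migration kernel (e.g.\ a fixed reference measure with $\sum_i \pi_i = 1$ such that $\sum_i \pi_i a(i,\cdot)$ has finite $\ell^1$ mass; for the torus-type profinite examples the uniform weight on $\G_n$ supplemented by decaying weights outside works). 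Using Assumption~\ref{ass.trker} and $\rho < \infty$ one checks that the drift of $\Phi_{M,n}$ is bounded by a linear functional of $\Phi_{M,n}$ itself plus an error term $\eta_{M,n}(t)$ coming solely from the geographic and seed-bank truncations. Gronwall then gives $\Phi_{M,n}(t) \leq C\,[\,\Phi_{M,n}(0) + t\, \eta_{M,n}(t)\,] e^{Ct}$ for some $C = C(M) < \infty$ uniform in the coupling $\bar\mu \in \CV^{M,n}$.

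It remains to show that both the initial term and the error term vanish as $n \to \infty$. The initial term is zero on the diagonal $\Lambda_{M,n}$ by construction and is controlled on the complement by the translation invariance of the extension \eqref{extop}, together with the fact that the weights $\pi_i$ on $\G \setminus \G_n$ shrink to zero. For the error term, one uses Assumption~\ref{ass.trker}(i)--(ii) to bound $|a^n_t(0,i) - a_t(0,i)|$ on the time scale of interest, which is of order $o(\psi_n)$ since $t$ is fixed, and the fact that $\sum_{m > M}$ contributions vanish once $M$ is fixed and we only track coordinates $u \in \S_M$. This yields the pointwise limit
\begin{equation*}
\lim_{n\to\infty} \sup_{\bar\mu \in \CV^{M,n}} \E_{\bar\mu}|z^{M,n}_u(t) - z_u(t)| = 0
\quad \forall\, u \in \S_M,\, t > 0.
\end{equation*}

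The main obstacle is quantifying the error $\eta_{M,n}(t)$ in a way that is uniform in $\bar\mu \in \CV^{M,n}$; this is handled exactly as in \cite[Section 5.3]{GdHOpr1}, where the coupling is robust under translation invariance of the marginals. Finally, the existence of a diverging sequence $(\ell_n)_{n \in \N}$ with \eqref{e.2787} is a standard diagonal argument: pick $\ell_n$ growing slowly enough that $\sup_{\bar\mu} \E_{\bar\mu}|z^{M,n}_u(\ell_n) - z_u(\ell_n)| \to 0$ along a diagonal subsequence of the pointwise-in-$t$ convergences just proved, which is possible since $\ell_n$ may be taken much smaller than the mixing time $\psi_n$ while still tending to infinity.
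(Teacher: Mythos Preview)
Your approach is in the same spirit as the paper's --- couple the two systems via the same Brownian motions and control the $L^1$-difference through a Lyapunov function built from $|x^{M,n}_i - x_i| + \sum_{m=0}^M K_m |y^{M,n}_{i,m} - y_{i,m}|$ --- and the diagonal extraction of $(\ell_n)$ from the fixed-$t$ statement is exactly what is needed. So the overall plan is sound.

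There are two points where you diverge from the paper. First, you use a $\pi$-weighted \emph{global} Lyapunov function $\Phi_{M,n}$ and close it by Gronwall, whereas the paper works with the \emph{single-site} quantity in \eqref{e2760} and follows \cite[Proposition~2.4(a)]{CGSh95} verbatim, the only change being that the Green function $\hat G(0,0)$ of the symmetrised migration is replaced by the hazard $\hat B(0,0)$ of \eqref{hazard}. That route feeds the differential inequality for $\E|D_i(t)|$ back through the transition kernel $b^{(1)}$ (respectively $b^{(2)}$) rather than through a crude Gronwall exponential, and the error from the kernel truncation is then controlled by the finiteness of $\hat B(0,0)$ --- which is exactly the coexistence hypothesis. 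Your Gronwall bound $e^{Ct}$ is adequate for the stated lemma (fixed $t$, then diagonalise), but it loses the structural link to $\hat B(0,0)<\infty$ and would not give the uniformity in $t$ up to scales $o(\beta_n)$ that the CGSh95-type argument provides and that is implicitly needed downstream for (A9).

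Second, your handling of the weights is slightly tangled: you sum $\Phi_{M,n}$ over $i\in\G_n$ but then invoke the smallness of $\pi_i$ on $\G\setminus\G_n$, which only makes sense if you first periodically extend $z^{M,n}$ to $\G$ via $\tilde\Phi_n$ and sum over all of $\G$. Either convention works, but they should not be mixed. The paper sidesteps this entirely by staying with the single-site functional and using translation invariance of the marginals (from \cite[Section~5.3]{GdHOpr1}) together with Lemmas~\ref{lem:erg}--\ref{lem:pres} to close the argument.
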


\begin{proof}
We have to estimate the \emph{Lyapunov function}
\begin{equation}
\label{e2760}
t \mapsto \E \left( |x^{M,n}_i(t)-x_i(t) | + \sum_{m=0}^M K_m\,|y^{M,n}_{i,m}(t)-y_{i,m}(t)|\right), \qquad i \in \G,
\end{equation}
and show that the right-hand side tends to zero as $t\to\infty$. Using the computations on the coupling in \cite[Section 5.3]{GdHOpr1}, carried out for the infinite system starting from different initial configurations, we can proceed exactly as in the proof of \cite[Proposition 2.4(a)]{CGSh95}, replacing the Green function $\hat{G}(0,0)$ of $\hat a(\cdot,\cdot)$ appearing there by the hazard integral $\hat{B}(0,0)$ defined in \eqref{hazard}. The fact that the coupling is successful follows from Lemmas~\ref{lem:erg}--\ref{lem:pres}. 
\end{proof}


\paragraph{Step 3: Completion of the proof for $M<\infty$.}

We verify Assumption (A10) in Corollary~\ref{cor.3636} in Appendix~\ref{appB*}, which will complete the proof of Theorem~\ref{T.finsys2fin} for the case where $M < \infty$. A key observation made in \cite[Section 5.1]{GdHOpr1} is the 
fact that 
\begin{equation}
\label{martdef}
\CM_{M,n}(t) = \lim_{n\to\infty} \frac{1}{|\G_n|} \sum_{i\in\G_n} 
\left[\frac{x^{M,n}_i(t) + \sum_{m=0}^M K_m y_{i,m}^{M,n}(t)}{1+\sum_{m=0}^M K_m}\right], \qquad t \geq 0,
\end{equation}
is a martingale that has \eqref{e2694} as increasing process. From this we want to conclude that: (i) the time-transformed processes converge in law and hence so do the martingales themselves; (ii) the limit is the diffusion with the claimed diffusion function. 

No new ideas are needed. Lemmas~\ref{l.3734}--\ref{l.3759} below are versions of \cite[Lemmas 4.8-4.10]{DGV95} reformulated for our purposes (see also Appendix~\ref{appB*}). Their proof is a straightforward adaption of the proofs given in \cite[Section 4(b)]{DGV95}.  Both (i) and (ii) can be treated exactly as in the proofs of~\cite[Lemmas 4.8--4.10]{DGV95}, after we replace the martingale $(\Theta(s))_{s \geq 0}$ used there by the martingale in \eqref{martdef} adapted to the seed-bank, and use the facts derived in \cite[Section 5.3]{GdHOpr1}. Recall the definition of $\CT=\CT([0,1]^{\G \times \N_0})$ in \eqref{laws} and $\CE^{M,n}$ in \eqref{e1721}.

\begin{lemma}{{\bf [Tightness of the macroscopic variable]}}
\label{l.3734}
(a) The sequence
\begin{equation}
\label{e3736}
\CL \left[\left(\left\{\int_0^t \d s\,\frac{1}{|\G_n|} \sum_{i \in \G_n} 
g\big(x^{M,n}_i(s\,\beta_{M,n})\big)\right\},\hat{\theta}_n(t\,\beta_{M,n})\right)_{t \geq 0} \right], \qquad n \in \N,
\end{equation}
is tight in $C([0,\infty),[0,\infty) \times [0,1])$.\\
(b) The sequence $\CL[(\hat \theta^{M,n}(t\,\beta_{M,n}))_{t \in \geq 0}]$, $n\in\N$, is tight in $C([0,\infty),[0,1])$.\\
(c) The sequence $\CL [(\int^t_0 \d s\,\CE^{M,n}(s\beta_{M,n}))_{t \geq 0}]$, $n\in\N$, is tight in $C([0,\infty), 
\CT ([0,1]^{\G \times \{0,\ldots,M\}}))$. 
\end{lemma}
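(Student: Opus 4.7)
For tightness in $C([0,\infty),\cdot)$ of sequences of continuous processes, the strategy is to combine tightness of the time-marginals with an equicontinuity condition (verified through either Arzelà--Ascoli or Aldous's criterion, depending on whether the candidate is absolutely continuous or merely a martingale). The key input is the bounded-martingale structure of the macroscopic variable recorded in \eqref{mart}--\eqref{e2694}: since $g\in\CG$ is bounded on the compact interval $[0,1]$, the density $|\G_n|^{-1}\sum_{i\in\G_n}g(x_i(\cdot))$ of the increasing process of the rescaled martingale is bounded by $\|g\|$ uniformly in $n$.

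First I would prove part (a). The first coordinate of \eqref{e3736} is absolutely continuous in $t$ with density bounded by $\|g\|$ uniformly in $n$, and starts at $0$; hence, by the Arzelà--Ascoli theorem, it is tight in $C([0,\infty),[0,\infty))$. For the second coordinate, which by \eqref{mart} is a continuous martingale with marginals in the compact interval $[0,1]$, I would apply Aldous's tightness criterion: for any $T>0$, any family of stopping times $\tau_n\leq T$, and any $\delta_n\downarrow 0$,
\begin{equation*}
\E\bigl[(\hat\theta^{M,n}((\tau_n+\delta_n)\beta_{M,n})-\hat\theta^{M,n}(\tau_n\beta_{M,n}))^2\bigr]
= \E\Biggl[\int_{\tau_n}^{\tau_n+\delta_n}\d s\,\frac{1}{|\G_n|}\sum_{i\in\G_n}g(x^{M,n}_i(s\beta_{M,n}))\Biggr] \leq \|g\|\,\delta_n,
\end{equation*}
which tends to $0$. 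This, together with the trivial marginal tightness from $\hat\theta^{M,n}\in[0,1]$, yields tightness in $C([0,\infty),[0,1])$. Joint tightness of the pair then follows from coordinate-wise tightness in Polish target spaces. Part (b) is then immediate by projecting the joint tightness from (a) onto the second coordinate.

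For part (c), the state space $[0,1]^{\G\times\{0,\ldots,M\}}$ is compact in the product topology, hence $\CT([0,1]^{\G\times\{0,\ldots,M\}})$ is itself compact in the weak topology, which gives tightness of time-marginals for free. Since $\CE^{M,n}(s\beta_{M,n})$ is a probability measure for every $s$, one has the deterministic Lipschitz estimate
\begin{equation*}
\left|\int_t^{t+h}\d s\,\langle\CE^{M,n}(s\beta_{M,n}),f\rangle\right|\leq \|f\|\,h
\qquad \forall\,f\in C_b([0,1]^{\G\times\{0,\ldots,M\}};\R),
\end{equation*}
uniformly in $n$. Testing against a countable convergence-determining family of such $f$ yields equicontinuity of the integrated process in the weak topology uniformly in $n$, and combining this with marginal tightness gives tightness in $C([0,\infty),\CT([0,1]^{\G\times\{0,\ldots,M\}}))$ by Prokhorov's theorem.

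The main obstacle, if one can call it that, is the verification of Aldous's criterion in (a); however, the explicit closed form of the increasing process in \eqref{e2694} reduces this to a one-line bound via the boundedness of $g$. The substantive content of the finite-systems scheme, namely the \emph{identification} of the weak limit points as the $\CF g$-diffusion, is deferred to the subsequent Lemmas~\ref{l.3759} and is not required at this tightness stage.
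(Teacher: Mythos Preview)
Your proposal is correct and matches the paper's approach. The paper does not spell out a proof of this lemma, instead citing \cite[Lemmas~4.8--4.10]{DGV95} and the argument already given under (A6), which is exactly your observation: the rescaled macroscopic variable is a continuous martingale whose increasing process \eqref{e2694} has density bounded by $\|g\|$ uniformly in $n$, and the integrated processes in (a) and (c) are deterministically Lipschitz. The only cosmetic difference is that the paper phrases the martingale tightness via the Dubins--Schwarz time change (``time-changed Brownian motions \ldots\ with bounded derivative''), whereas you invoke Aldous's criterion; both routes rest on the same uniform bound $\langle\hat\theta^{M,n}\rangle_{t+\delta}-\langle\hat\theta^{M,n}\rangle_t\le\|g\|\delta$. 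One small caveat: Aldous's criterion as you state it yields tightness in $D([0,\infty),[0,1])$, so to conclude $C$-tightness you should either observe that the same bound upgrades via Burkholder--Davis--Gundy and Kolmogorov's criterion to a uniform modulus of continuity, or simply invoke the time-change argument directly as the paper does.
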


\begin{lemma}{{\bf [Convergence of the macroscopic variable]}}
\label{l.3745}
Suppose that $(n_k)_{k \in \N}$ is such that
\begin{equation}
\label{e3747}
\lim_{k\to\infty} \CL \left[\Big(\hat \theta^{M,n_k}(t\,\beta_{M,n_k})\Big)_{t \geq 0}\right]
= \CL \left[(\Theta(t))_{t \geq 0}\right].
\end{equation}
Then
\begin{eqnarray}
\label{e3761}
&&(\Theta(t))_{t \geq 0} \quad \mbox{is a continuous martingale},\\
\label{e3764}
&&\left(\Theta(t)^2-\int_0^t \d s\,(\CF g)(\Theta(s))\right)_{t \geq 0} \quad \mbox{is a martingale}.
\end{eqnarray}
\end{lemma}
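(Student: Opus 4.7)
The plan is to proceed in two stages, exploiting the ingredients already assembled in Section~\ref{s.classpres} and in Lemma~\ref{l.3734}. First I would establish that $(\Theta(t))_{t \geq 0}$ is a continuous martingale. For each $n$, $(\hat\theta^{M,n}(t))_{t \geq 0}$ is a bounded continuous martingale by \eqref{mart}, and its time rescaling is tight in $C([0,\infty),[0,1])$ by Lemma~\ref{l.3734}(b). Since these martingales are uniformly bounded by $1$ and hence uniformly integrable, the identity $\E[\phi\cdot (\hat\theta^{M,n_k}(t\beta_{M,n_k}) - \hat\theta^{M,n_k}(s\beta_{M,n_k}))] = 0$, valid for every bounded continuous $\phi$ depending on the path up to time $s$, carries over to $\E[\phi\cdot (\Theta(t)-\Theta(s))] = 0$ in the limit, giving \eqref{e3761}. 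Path continuity of $\Theta$ is inherited from tightness in $C([0,\infty),[0,1])$.

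Next I would identify the increasing process of $\Theta$. By \eqref{e2694}, the increasing process of $(\hat\theta^{M,n}(t\beta_{M,n}))_{t \geq 0}$ is
\[
A^{M,n}(t) = \int_0^t \d s\,\frac{1}{|\G_n|}\sum_{i\in\G_n} g\big(x_i^{M,n}(s\beta_{M,n})\big).
\]
The target is the joint convergence, along $n_k$,
\[
\big(\hat\theta^{M,n_k}(t\beta_{M,n_k}),\, A^{M,n_k}(t)\big)_{t \geq 0}
\Longrightarrow
\Big(\Theta(t),\,\int_0^t (\CF g)(\Theta(s))\,\d s\Big)_{t \geq 0}
\]
in $C([0,\infty),[0,1]\times [0,\infty))$. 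Joint tightness is Lemma~\ref{l.3734}(a). To identify the limit of $A^{M,n_k}$, I would invoke Lemma~\ref{lem:presevolve}: at each fixed macroscopic time $s>0$, conditioning on $\hat\theta^{M,n}(s\beta_{M,n}) = \theta'$ forces the empirical measure $\CE^{M,n}(s\beta_{M,n})$ to converge weakly to $\nu_{\theta'}$, so that, since $g$ is bounded and continuous,
\[
\frac{1}{|\G_n|}\sum_{i\in\G_n} g\big(x_i^{M,n}(s\beta_{M,n})\big) \,\Big|\, \hat\theta^{M,n}(s\beta_{M,n}) = \theta'
\,\longrightarrow\, (\CF g)(\theta').
\]
Dominated convergence in $s$, together with the continuity of $\theta \mapsto (\CF g)(\theta)$ established in Appendix~\ref{appA*}, then yields the claimed limit of $A^{M,n_k}$. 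Once this joint convergence is in hand, \eqref{e3764} follows by passing to the limit in the Doob--Meyer decomposition of $(\hat\theta^{M,n_k}(t\beta_{M,n_k}))^2 - A^{M,n_k}(t)$, which is itself a bounded continuous martingale.

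The main obstacle is the \emph{replacement step}: justifying rigorously that the empirical $g$-average at time $s\beta_{M,n}$ may be replaced by its $\nu_{\Theta(s)}$-expectation for almost every $s$. This is where the preparations in Section~\ref{s.classpres} pay off. One combines (i) the $L_2$-convergence of the macroscopic variable on precritical time scales (Lemma~\ref{lem:L2macr}), which guarantees that $\hat\theta^{M,n}$ barely moves over windows of length $\ell_n = o(\beta_{M,n})$; (ii) preservation of the class $\CR^{(2)}_\theta$ under the dynamics (Lemma~\ref{lem:pres}), ensuring that after any such window the evolved law still lies in the domain of attraction of $\nu_{\hat\theta^{M,n}}$; and (iii) the coupling of finite and infinite systems in Lemma~\ref{l.2752}, which permits the replacement of the finite dynamics by the infinite one on those windows, so that the ergodic theorem for the infinite system installs the local equilibrium $\nu_{\hat\theta^{M,n}(s\beta_{M,n})}$. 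This is precisely the verification of assumption (A10) of the abstract scheme recalled in Appendix~\ref{appB*}, so the remaining work is to package these three ingredients into a mollification-in-$s$ argument that upgrades pointwise local equilibration to convergence of the time-integral $A^{M,n_k}$.
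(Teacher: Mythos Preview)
Your proposal is correct and follows essentially the same route as the paper, which does not spell out a detailed proof but refers to \cite[Lemmas 4.8--4.10, Section 4(b)--(d)]{DGV95} and states that the argument rests on Lemmas~\ref{lem:erg}--\ref{lem:pres}, \ref{lem:L2macr}--\ref{lem:presevolve} and the coupling Lemma~\ref{l.2752}; you have identified exactly these ingredients and assembled them in the right order. One minor caution: Lemma~\ref{lem:presevolve} alone only shows that the conditional limit lies in the appropriate class $\CT^{\mathrm{erg}}_{\theta'}$, not yet that it equals $\nu_{\theta'}$---that identification comes from the abstract Theorem~\ref{th.3624} once (A1)--(A9) are in place, which is indeed what your subsequent invocation of the coupling and the ergodic theorem supplies.
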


\begin{lemma}{{\bf [Identification of the scaling limit]}}
\label{l.3759}
The martingale problem in \eqref{e3761}--\eqref{e3764} has a unique solution, namely, the diffusion with diffusion function $\CF g$.
\end{lemma}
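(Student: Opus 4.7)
The strategy is to recognize the martingale problem in \eqref{e3761}--\eqref{e3764} as the standard one-dimensional martingale problem on $[0,1]$ associated with the generator $L f(x) = \tfrac12 (\CF g)(x) f''(x)$. The plan is to first check that $\CF g$ has enough regularity so that a classical uniqueness result applies, and then to identify the unique solution with the SDE \eqref{gh18}.

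First, I would invoke the results announced for Appendix~\ref{appA*}, where it is shown that $\CF\colon\CG\to\CG$, i.e., that $\CF g \in \CG$ whenever $g \in \CG$. In particular, $\CF g$ is Lipschitz on $[0,1]$ with $(\CF g)(0) = (\CF g)(1) = 0$ and $(\CF g)(x) > 0$ for $x \in (0,1)$. Combined with the fact that $\Theta$ takes values in the compact set $[0,1]$, this means $\sqrt{\CF g}$ is bounded and at least $\tfrac12$-Hölder continuous on $[0,1]$.

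Next, from \eqref{e3761}--\eqref{e3764} one obtains in the standard way that, for every $f \in C^2([0,1])$, the process
\begin{equation*}
\Bigl(f(\Theta(t)) - \tfrac12\int_0^t (\CF g)(\Theta(s))\,f''(\Theta(s))\,\d s\Bigr)_{t \geq 0}
\end{equation*}
is a martingale, by applying Itô's lemma to polynomials and using a density argument. A measurable selection / Dambis–Dubins–Schwarz argument on the enlarged probability space then produces a standard Brownian motion $w$ adapted to the filtration of $\Theta$ such that $\Theta$ is a strong solution to the SDE $\d\Theta(s) = \sqrt{(\CF g)(\Theta(s))}\,\d w(s)$ with $\Theta(0) = \theta$. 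This realises any solution of the martingale problem as a weak solution of the SDE.

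Finally, uniqueness in law follows from Yamada--Watanabe: since $\CF g$ is Lipschitz and non-negative, $\sqrt{\CF g}$ satisfies the Yamada condition (its modulus of continuity $\rho(u) = C\sqrt{u}$ satisfies $\int_{0+} \rho(u)^{-2}\,\d u = \infty$), and the drift is zero, so pathwise uniqueness holds for the SDE on $[0,1]$; this upgrades to uniqueness in law, hence to uniqueness of the martingale problem. The hitting of the boundary $\{0,1\}$ is not an issue because $\CF g$ vanishes there, making $\{0,1\}$ absorbing, and the Lipschitz bound at the boundary ensures that no extra boundary condition needs to be specified. I expect the only real obstacle to be the invocation of $\CF g \in \CG$, which is proved in Appendix~\ref{appA*}; once this regularity is in hand, uniqueness is completely standard, and indeed the Fisher–Wright special case \eqref{gh20} gives the canonical example to which classical uniqueness for Wright–Fisher diffusions applies verbatim.
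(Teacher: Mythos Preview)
Your proof is correct and essentially fills in the details that the paper omits: the paper simply states that Lemma~\ref{l.3759} is a reformulation of \cite[Lemma 4.10]{DGV95} and that its proof is a straightforward adaptation of \cite[Section 4(b)]{DGV95}, without spelling anything out. Your route via $\CF g\in\CG$ (hence Lipschitz, from Appendix~\ref{appA*}), It\^o's formula to pass from \eqref{e3761}--\eqref{e3764} to the full $C^2$-martingale problem, a martingale representation to obtain a weak solution of the SDE, and Yamada--Watanabe pathwise uniqueness for the $\tfrac12$-H\"older diffusion coefficient $\sqrt{\CF g}$ is exactly the standard argument one expects behind that citation.
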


Lemma~\ref{l.3745} guarantees that the time scale $\beta_{M,n}$ is proper, i.e., $(\Theta(t))_{t \geq 0}$ is a non-trivial random process that starts at $\theta$ and eventually converges to either $0$ or $1$.  

Lemmas~\ref{l.3745}--\ref{l.3759} imply the assumption in Corollary~\ref{cor.3636}, as proved in \cite[Section 4(d)]{DGV95} with the help of stochastic analysis of semi-martingales. To prove \eqref{e2680}, we can follow the strategy in \cite[Section 4(d)]{DGV95} with only minor changes. To identify the time scale $\beta_{M,n}$ and verify \eqref{e3764}, we start with the following observation about the dual process. The fraction of time the two random walks are jointly active equals $\kappa=1/(1+\rho)^2$. Therefore the mixing property in \eqref{pnmix} tells us that $\kappa|\G_n|$ is the average time it takes the two random walks to meet on $\G_n$ and be jointly active. When they do, they coalesce a rate $d$, and so coalescence eventually occurs on time scale $\beta_{M,n} = \kappa|\G_n|$. 

The proof of Lemmas~\ref{l.3734}--\ref{l.3759} follows the line of argument in \cite[Section 4(d)]{DGV95}, and rests on Lemmas~\ref{lem:erg}--\ref{lem:pres} and \ref{lem:L2macr}--\ref{lem:presevolve}.  


\paragraph{Step 4: Extension to $M=\infty$.}

We show how to drop the assumption $M < \infty$. Since $\rho < \infty$, $\hat{\theta}^{n}$ can be approximated uniformly by $\hat{\theta}^{\,M,n}$ as $M\to\infty$ uniformly in $n$ on the time scale $\beta_n$, and we do not need to change the scale other than by the constant $\kappa=(1+\rho)^2$. Therefore we have uniform convergence to the model with $M=\infty$ when $\lim_{n\to\infty} M_n = \infty$. Hence the same argument as in Steps 1-3 goes through, provided we show that the equilibrium for the model with $M_n$ converges to the equilibrium for the model with $M=\infty$. For $g = d g_{\rm FW}$, $d \in (0,\infty)$, this follows from duality, because all moments of the equilibria converge. 

Indeed, denote by $(\Theta^M(s))_{s \geq 0}$ the process in \eqref{gh18} for Model 2 with $M$ seed-banks, and let $\nu_\theta^M$ be its equilibrium on $\G$. We need that
\begin{equation}
\label{e3801}
\lim_{M\to\infty} \nu^M_\theta = \nu_\theta
\end{equation}
and
\begin{equation}
\label{e3804}
\lim_{M\to\infty} \CL \left[(\hat{\theta}^M(t))_{t > 0}\right]
= \CL \left[(\Theta (t))_{t > 0}\right],
\end{equation}
where the latter is the $(\CF g)$-diffusion starting in $\theta$. Clearly, \eqref{e3801} implies \eqref{e3804}. Indeed, \eqref{e3801} yields $\lim_{M\to\infty} \CF_M g = \CF g$ (with $\CF_M$ the renormalisation map for the model with $M$ seed-banks). This in turn yields convergence of the solution of the corresponding martingale problem (by \cite[Lemma 5.1, Chapter 4]{EK86} and \cite[Proposition 3.2.3]{JM86}). It remains to show \eqref{e3801}. For $g=dg_{\rm FW}$, $d \in (0,\infty)$, this follows from convergence of the dual process, which implies convergence of moments. 


\paragraph{Step 5: Extension to general $g$.}

Finally, we point out how to modify the argument for general $g$. Duality entered into the proof of (A1), and also in the proof that the limiting point arising in (A6) leads to the dynamics given by \eqref{e3764}. The former arises in the standard construction of the model (see \cite{GdHOpr1}). The latter requires us to verify that, conditional on $\hat{\theta}^{M,n}$, the increasing process defined in \eqref{e2694} satisfies a law of large numbers in $L_2$. This verification is based on the property that $\nu_\theta$ is ergodic for every $\theta$, irrespectively of $g\in\CG$ (see \cite{GdHOpr1}), which follows from Lemmas~\ref{lem:allmeans}--\ref{lem:deccor}. As to the other assumptions, without duality we can work with the generator and verify these assumptions directly with the help of Lemma~\ref{lem:presevolve}. See \cite[Section 1]{DGV95}. We also need that $\theta \mapsto \nu_\theta$ is continuous, a fact that was established in \cite[Section 6.3.1, Lemma 6.6]{GdHOpr1} with the help of coupling.


\section{Proofs: $\rho=\infty$ and slow growing seed-bank}  
\label{s.fssrhoinfslow}

In this section we prove Theorem~\ref{T.finsys2inf(1)}. We follow the same line of argument as in Section~\ref{s.fssrhofin}, but with a number of adaptations. In order to apply the abstract scheme in Appendix~\ref{appB*}, it no longer suffices to incorporate the seed-bank via an extension of the single-component state space, as was done for $\rho<\infty$ in Section~\ref{s.fssrhofin}. Instead, we need to \emph{extend the geographic space with the seed-bank space}, namely, consider the state space $I^\S$ with $I=[0,1]$, respectively, $I^{\S_n}$ with $I=[0,1]$ (see Remark~\ref{rem:translate}). We will see below that, because the active population is a negligible fraction of the total population, new arguments are needed to control the deep seed-banks, based on the computations in Section~\ref{s.classpres}. As shown in \cite{GdHOpr1}, the validity of the ergodic theorem for the infinite system requires the additional assumption of \emph{colour regularity} for the initial law. 

The main change is that the macroscopic variable and the active macroscopic variable (recall \eqref{mo3} and \eqref{act}) evolve on \emph{different} times scales, namely, $\kappa_{M_n}|\G_n|$, respectively, $|\G_n|$ (provided the initial law is non-degenerate). Since the former is asymptotically larger than the latter, under certain conditions this opens up the possibility that correlations over typical distances in $\G_n$ change via migration before the macroscopic variable is able to move via the Brownian motions, leading to a break down of ergodicity in the geographic space. Consequently, if we try to follow the abstract scheme as for $\rho<\infty$ in Section~\ref{s.fssrhofin}, then we possibly run into problems. 

Fix a sequence $(M_n)_{n\in\N}$ such that $\lim_{n\to\infty}$ $M_n=\infty$. We will see that, as long as $M_n$ is in the slow growth regime (Regime (I) in \eqref{Mncond}), the proper time scale is $\beta_n=\kappa_{M_n}|\G_n|$. The factor $\kappa_{M_n}=(1+\sum^{M_n}_{m=0} K_m)^2$, which is the square of the size of the seed-bank, compensates for the fact that the seed-bank slows down the evolution and reduces the volatility. The proper time scale for the macroscopic variable must be calculated by looking at the \emph{increasing process} $\langle\hat{\theta}^{\,M_n,n}\rangle$ and using It\^o's formula. In particular, we must check tightness of $(\hat{\theta}^{\,M_n,n}(\cdot\,\beta_n))_{n\in\N}$ via \eqref{e2872}. Because $g$ is bounded, the expression we obtain for the increasing process at time $s\beta_n$ is bounded by $\|g\|_\infty s$. Our goal is to prove the analogue of \eqref{limproc} and show that the limit proces $(\Theta(s))_{s \geq 0}$ is non-trivial.  For that we need to show that, on time scale $\beta_n$, $(\Theta(s))_{s \geq 0}$ has non-zero fluctuations, which gradually vanish as the boundary $\{0,1\}$ is approached or hit. The latter is possible because of the slow growth of $M_n$.

To ensure that the line of argument used for $\rho<\infty$ and $M < \infty$ in Section~\ref{s.fssrhofin} carries over to $\rho=\infty$ and $M = \infty$, we need the \emph{new setting} of the abstract scheme mentioned above (extension of the geographic space with the seed-bank space). This also requires an adaptation of the sets from which we draw the initial law. For $\rho<\infty$ these were the sets $\CT,\CT^{\mathrm{erg}}$ defined in \eqref{laws} and $\CT^{\mathrm{erg}}_\theta$ defined in \eqref{erglaw1}, \eqref{erglaw2fin}. For $\rho=\infty$ these sets must be replaced by $\CT^\bullet,\CT^{\mathrm{erg},\bullet}$ defined in \eqref{lawscr} and $\CT^{\mathrm{erg},\bullet}_\theta$ defined \eqref{erglaw2inf}. Similarly, $\CR^{(2)}_\theta \subset  \CT^{\mathrm{erg}}_\theta$ must be replaced by $\CR_\theta^{(2),\bullet} \subset \CT^{\mathrm{erg},\bullet}_\theta$ (recall Definition~\ref{def:class2}). 

In this setting, Assumptions (A1)--(A3) are straightforward in view of the results for the infinite system summarised in Section~\ref{ss.coreinf} (see the paragraph on equilibria) in combination with the preservation property stated in Lemma~\ref{lem:pres}, which uses and extends the computations carried out in \cite{GdHOpr1}. Assumptions (A4)--(A10), however, are not straightforward. There are \emph{four tasks}: 
\begin{itemize}
\item[{\bf (A)}] 
To settle (A4)--(A8), the key obstacle is that the seed-banks are not translation invariant and ergodic in the seed-bank direction. We use the results in Section~\ref{s.classpres} to gain control over the `deep' seed-banks. To verify Assumptions (A4)--(A7), for $\rho=\infty$ we take into account that, in order to control the long-time behaviour of the infinite system, we must require that the initial law $\mu(0)\in\CT^\mathrm{erg}$ (recall \eqref{erglaw2inf}) is \emph{colour regular}, i.e., $\mu(0)\in\CT^{\mathrm{erg},\bullet}$ (recall \eqref{covcond1}). Furthermore, to verify Assumption (A8), we show that the \emph{colour regularity is preserved over time} and that the \emph{conserved quantity} is \emph{continuous in the initial state}. Consequently, we have continuity of the ergodic theorem and we can apply the ergodic theorem that is known for the infinite system.
\item[{\bf (B)}] 
To verify Assumption (A9), we first use a coupling argument to prove \emph{uniformity of the ergodic theorem} in the \emph{initial law}, over a time stretch $o(\beta_n)$. In other words, we show that for the slow growing seed-bank the problem of the breaking of ergodicity mentioned above does not occur, and the macroscopic variable depends continuously on the initial state. 
\item[{\bf (C)}]
To complete the verification of Assumption (A9), we next check that the approximation of the infinite system by the finite system in Lemma~\ref{l.2752} still holds in modified form for the slow growing seed-bank when $\rho=\infty$. For this we construct a coupling of the two systems that is successful because of the properties established in Task (B), and use that as time proceeds the deep seed-banks become deterministic. 
\item[{\bf (D)}]
To verify Assumption (A10), we deal with the fact that $\hat{\theta}$ no longer is a continuous functional of the empirical measure, since the active population is negligible in the limit.   
\end{itemize}
We will address these items in Sections~\ref{ss.firsttaskI}--\ref{ss.fourthtaskI}.


\subsection{Task (A)}
\label{ss.firsttaskI}

We need to verify Assumptions (A4)--(A8). In order to transfer the ergodic theorem that is known for the infinite system to the sequence of finite systems, we use a restarting argument that requires modified versions of Assumptions (A4) and (A8) where the initial laws are drawn from $\CR^{(2),\bullet}_\theta$ for some $\theta \in [0,1]$ (see above) rather than from $\CT^{\mathrm{erg},\bullet}_\theta$, as was done in Section~\ref{s.fssrhofin}. In particular, we need the fact that all weak limit points along sequences of times that are $o(\beta_n)$ satisfy the \emph{Liggett conditions} (recall Definitions~\ref{def:class1}--\ref{def:class2}) and are \emph{colour regular} (recall\eqref{covcond1}), as was shown in Lemma~\ref{lem:pres}.    

Once we have established (A4)--(A5), (A6) will follow from the fact that $\hat{\theta}^{M_n,n}(t\beta_n)$ is a continuous-path martingale with bounded increasing process given by \eqref{e1732}. In what follows we first verify on (A4), (A7) and then (A5), (A8).

To verify (A4) we argue as follows. For the system on $\S_n$ we determine the random variable $\Theta$ (the macroscopic variable) corresponding to the current state as required for (A4) via the estimator $\hat{\theta}^{\,M_n,n}$, which determines an associated equilibrium measure for the evolution on $\S$ that is our candidate for the limit of our sequence of scaled finite systems. In particular, (A4) requires that the estimator $\hat{\theta}^{\,M_n,n}$ defined in \eqref{e2872} converges in probability to a number $\Theta$ as $n\to\infty$, which for ergodic and colour regular initial laws is the parameter $\Theta$ selecting the extremal equilibrium measure $\nu_\Theta$ in whose domain of attraction the system is. Assumption (A4) will follow for $\rho=\infty$, $M=\infty$ if we can establish $L_2$-convergence of the average in \eqref{e4081} as $M \to \infty$, under all possible current laws of our system, i.e.,
\begin{equation}
\label{e4229}
\lim_{n\to\infty} \CL[\hat{\theta}^{\,M_n,n}(t)] = \CL[\hat{\theta}^{\,\infty,\infty}(t)] \qquad \text{ in } L_2 \qquad \forall\,t \geq 0,
\end{equation}
for some random variable $\hat{\theta}^{\,\infty,\infty}(t)$, playing the role of the macroscopic variable associated with the limit law of the finite system as $n\to\infty$.   

As shown in \cite[Section 6]{GdHOpr1}, subject to \eqref{thetadefaltalt} we know that the infinite system is $L_2$-ergodic and that
\begin{equation}
\label{thetaatt}
\lim_{M\to\infty}  \E_{\mu(t)}\left[\frac{x_0 + \sum_{m=0}^M K_m\,y_{0,m}}
{1+\sum_{m=0}^M K_m}\right] = \theta \qquad \forall\,t \geq 0,
\end{equation}
where $\mu(t)$ is the law of the system at time $t$. Via Lemma~\ref{lem:L2macr} this yields Assumption (A4). 

Assumption (A7) amounts to showing that colour regularity is preserved under the evolution up to times of order $\beta_n$. But this follows from Lemma~\ref{lem:presevolve} via ergodic decomposition.

To settle Assumptions (A5) and (A8), which we treat together, we actually need more, namely, the existence of the limit in \eqref{thetaatt} for \emph{sequences} of initial laws $(\mu_n)_{n\in\N}$ induced by the state of the finite system at time $t\beta_{M_n,n}$. Therefore two complications arise: 
\begin{itemize}
\item[(1)]
In \eqref{thetaatt} we need convergence in law rather than in expectation.
\item[(2)]
We need that \emph{weak limit points} as $n \to \infty$ have the property that the limit of the macroscopic variable as $M \to \infty$ exists, i.e., $\hat{\theta}^{\,M_n,n}(s \beta_{M_n,n})$ converges in $L_2$ (and hence in law) to a random variable $\Theta(s)$, and conditional on $\Theta(s)$ colour regularity holds. 
\end{itemize}
We want to show that, for a subsequence $(n_k)_{k \in \N}$ and for the law of the pair $((x^{M_n,n},y^{M_n,n}),\hat{\theta}^{\,M_n,n})$ evaluated at time $s \beta_{M_n,n}$, we have
\begin{equation}
\label{e4241}
\lim_{k\to\infty} \CL_\nu \left[\hat{\theta}^{\,M_{n_k},n_k}(t \beta_{M_{n_k},n_k})\right] 
= \CL_{\nu(t)} [\Theta (t)] \quad \text{ for some limit law } \nu(t). 
\end{equation}
In fact, we want to show the sharper statement
\begin{equation}
\label{e4250}
\lim_{k\to\infty} \CL_{\nu_{n_k}} \left[\hat{\theta}^{\,M_{n_k},n_k}(t \beta_{M_{n_k},n_k})\right] = \CL_{\nu(t)} [\Theta (t)]
\qquad \forall\, (\nu_{n_k})\colon\,\lim_{k\to\infty} \nu_{n_k} = \nu(t).
\end{equation}
This we will get by coupling the evolution on $\S_n$ and the evolution on $\S$, which we provide in Task (C) via a restart argument for $\nu_{n_k}$. For the latter we use the tightness part of Assumption (A6), which gives us convergence in law of the process $\hat\theta^{\,M(n_k),n_k}$ as $k\to\infty$ along a subsequence $(n_k)_{k\in\N}$. From \eqref{e4241}--\eqref{e4250} we conclude that
\begin{equation}
\label{e4245}
\lim_{n\to\infty} \nu_n(t) = \CL \left[ \nu_{\Theta (t)} \right],
\qquad \nu_n(t) = \CL \left[ \left(x^{M_n,n}\big(t\beta_{M_n,n}\big), 
y^{M_n,n}\big(t\beta_{M_n,n}\big)\right)\right],
\end{equation}
which gives Assumption (A5). Note that $\nu_n(s)$ is concentrated on configurations in $\S$ that are \emph{periodic} in space and \emph{constant} (equal to the estimator) in the seed-bank beyond colour $M_n$, since they are lifted from $\S_n$ to $\S$. 

To prove \eqref{e4241}, recall from Section~\ref{ss.fssmod2rhofin} that for $\rho<\infty$ we used $L_2$-theory of stationary random fields. For $\rho=\infty$ this needs to be amended. We need to show that $\hat\theta^{\,M_n,n}(s \beta_n)$ converges in law and in $L_2$ to a random variable $\Theta(s)$ as $n \to \infty$. We also need to show that $\lim_{m\to\infty} \E_{\nu(s)}[y_{i,m}]=\theta$, i.e., on the set of invariant laws $\CI$  (recall \eqref{laws}) the property in \eqref{e4242} is valid even in the limit as $n \to \infty$. But this was already settled in Lemma~\ref{lem:presevolve}. Since the averages take values in $[0,1]$, we have compactness of their laws, so that along a further subsequence we have weak convergence of their laws. The limit point must have mean $\theta$, which arises from the existence as $M \to \infty$ of the mean of the initial law (recall \eqref{thetadefaltalt}), which is an immediate consequence of first moment computations via the dual.

To prove \eqref{e4250}, which is a \emph{continuity property} of the limiting $L_2$-average as a function of $\nu_{n_k}$, we use that, by Lemma~\ref{lem:deep}, the deep seed-banks are \emph{deterministic uniformly in} $n$. In fact, for any sequence of initial measures $(\nu_n)_{n\in\N}$ such that $\lim_{n\to\infty} \nu_n = \nu$ for some $\nu$, we also have that $\lim_{n\to\infty} \hat{\theta}^{M_n,n}(s\beta_n) = \theta(s)$ for some $\theta(s)$. To prove \eqref{e4245}, note that every ergodic component of the state at time $s \beta_n$ satisfies $\lim_{n\to\infty} \lim_{m\to\infty} \E_{\nu_n}[y_{0,m}]=\theta(s)$, which fixes $\nu_{\theta(s)}$, the corresponding equilibrium approached after a restart. This is precisely the requirement in Assumption (A8), and altogether settles Assumption (A5).


\subsection{Task (B)}
\label{ss.secondtaskI}

We prove that, once we have a successful coupling as in Lemma~\ref{l.2752} (which will be established in Task (C)), we can verify Assumption (A9). Here, we need to decompose the sequence $(\mu_n)_{n\in\N}$ and its limit points $\mu$ into ergodic components by using Lemma~\ref{lem:presevolve}. For each of the ergodic components we can apply the version of Lemma~\ref{l.2752} adapted to $\rho=\infty$, to get that $\hat{\theta}(\mu_n(t_n))$ and $\mu_n(t_n)$ have limit points along the \emph{same} subsequential limits. This implies that we can replace $\mu_n(t_n)$ by $\mu(t_n)$ in the limit as $n\to\infty$. 


\subsection{Task (C)}
\label{ss.thirdtaskI}    

We can define the bivariate dynamics in the same way as we did for Lemma~\ref{l.2752}, namely, as in \eqref{bivar} with $M=M_n$ for the finite system and $M=\infty$ for the infinite system. But the quantity in \eqref{e2760} with $M$ replaced by $M_n$ is no longer well defined after we have passed to the limit $n\to\infty$ for the bivariate system, except for configurations drawn for a \emph{restricted class} (which we will achieve by restricting the class of initial laws). This must be resolved through the fact that the deep seed-banks become deterministic after a long time. Two problems come up (which for the infinite system were resolved in \cite{GdHOpr1}): 
\begin{itemize}
\item[(1)] 
The Lyapunov function for the infinite system (recall \eqref{e2760}) is well defined only for certain classes of initial laws in the limit as $M_n \to \infty$.
\item[(2)] 
Instead of the monotone decreasing Lyapunov function for the infinite system, we now have a function where the derivative also involves positive terms, arising from the migration in the infinite system, transporting mass in and out of $\G_n$. In addition, there is exchange with the deep seed-banks of colour beyond $M_n$. 
\end{itemize}

\medskip\noindent
Ad (1): This observation restricts the set of initial measures in $\CT^{\mathrm{erg}}_\theta$ that we can compare via coupling, which was already an issue in the proof of the ergodic theorem for the infinite system in \cite{GdHOpr1}.  Ad (2): For fixed $t$ the extra positive terms vanish when $M_n,n\to\infty$, so this puts a restriction on the growth rate of the sequence $(t_n)_{n\in\N}$.

Therefore the following analogue of Lemma~\ref{l.2752} is needed in order to provide the successful coupling used in Task (B). Recall from \eqref{ergbulletlawalt} that $\CT^{\mathrm{erg},\blacksquare}_\theta \subset \CT^{\mathrm{erg}}_\theta$ is the set of initial laws that are invariant and ergodic under translations such that
$$
(\ast) \qquad \forall\,i \in \G\colon\,\lim_{m\to\infty} y_{i,m} = \theta \quad \text{in probability}.
$$

\begin{lemma}{{\bf [Comparison finite-infinite: $\rho=\infty$, Regime (I)]}}
\label{l.2752alt}
\begin{itemize}
\item[{\rm (a)}] 
For every $\mu \in \CT^{\mathrm{erg},\bullet}_\theta$, and every $(t_n)_{n\in\N}$ such that $\lim_{n\to\infty} t_n=\infty$ and $\lim_{n\to\infty} t_n/\beta_n = 0$, all weak limit points of $((\Phi_n\mu) S_n(t_n))_{n\in\N}$ are in $\CT^{\mathrm{erg},\blacksquare}_\theta$. 
\item[{\rm (b)}] 
For every $\mu \in\CT^{\mathrm{erg},\bullet}_\theta$ the same properties as in Lemma~\ref{l.2752} hold.
\end{itemize}
\end{lemma}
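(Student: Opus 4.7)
The plan is to prove (a) via the preservation and decorrelation lemmas of Section~\ref{s.classpres}, and (b) by adapting the coupling argument of Lemma~\ref{l.2752}, with the Lyapunov function handled in truncated form so as to accommodate $\rho=\infty$.

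For part~(a), any weak limit point $\nu$ of $((\Phi_n\mu) S_n(t_n))_{n\in\N}$ is translation invariant by the projective structure of $\G$. Lemma~\ref{lem:pres}(b), which is valid in Regime~(I) as shown in Step~2 of its proof, places $\nu$ in $\CR^{(2),\bullet}_\theta$, and Lemma~\ref{lem:erg}(b) then places $\nu$ in $\CT^{\mathrm{erg},\bullet}_\theta$. To upgrade this to the deterministic-deep-seed-bank property encoded in $\CT^{\mathrm{erg},\blacksquare}_\theta$, I would combine Lemma~\ref{lem:allmeans}, which gives $\E_{\Phi_n\nu}[y_{0,m}]=\theta$ uniformly in $n$ and $m\le M_n$, with Lemma~\ref{lem:deep}, which gives $\lim_{m\to\infty}\limsup_{n\to\infty} \mathrm{Var}_{\Phi_n\nu}(y_{0,m})=0$. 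Together these yield $y_{0,m}\to\theta$ in $L_2(\nu)$ as $m\to\infty$, which via~\eqref{ergbulletlawalt} gives $\nu\in\CT^{\mathrm{erg},\blacksquare}_\theta$.

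For part~(b), I would build $(z^{M_n,n},z)$ as strong solutions of \eqref{gh1b*}--\eqref{gh2b*} and \eqref{gh1*}--\eqref{gh2*} driven by the common Brownian motions $(w_i(t))_{t \geq 0}$, $i \in \G$, with joint initial law in $\CV^{M_n,n}$, and track the truncated Lyapunov function
\begin{equation}
V_n(t) = \E_{\bar\mu}\!\left[\,|x^{M_n,n}_i(t)-x_i(t)| + \sum_{m=0}^{M_n} K_m\,|y^{M_n,n}_{i,m}(t)-y_{i,m}(t)|\,\right].
\end{equation}
Applying It\^o's formula (the diffusion parts cancel by the common noise choice, as in \cite[Section 5.3]{GdHOpr1}), the derivative of $V_n$ splits into the non-positive contraction already handled for the infinite system (with $\hat{G}(0,0)$ replaced by the hazard $\hat{B}(0,0)$), a migration error bounded via Assumption~\ref{ass.trker}(i), and a deep-seed-bank error bounded by $2\sum_{m>M_n} K_me_m$, which vanishes because $\chi<\infty$ in~\eqref{emcond}. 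A Gronwall estimate, with exponential rate $\chi$ from the exchange coefficients, then shows $V_n(t)\to 0$ as $n\to\infty$ for each fixed $t>0$, after which a diagonal extraction yields a sequence $\ell_n\to\infty$ with $\ell_n=o(\beta_n)$ satisfying \eqref{e.2787}.

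The principal obstacle is the joint calibration of the three small quantities $\sum_{m>M_n} K_me_m$, the migration tail, and $e^{-\chi \ell_n}$ against the growth of $\ell_n$: the coupling is guaranteed to succeed for any fixed $t$, but to pass to $\ell_n\to\infty$ one needs the perturbative rates to decay faster than $e^{\chi \ell_n}$ grows. In Regime~(I) this is comfortably achievable because $\beta^*_n \ll \beta^{**}_n$ and the tails of $K_me_m$ and of $a(0,\cdot)$ are summable (the former via \eqref{emcond}, the latter via Assumption~\ref{ass.migration}); in Regime~(II) these constraints become incompatible, which is precisely why Theorem~\ref{T.finsys2inf(2)} requires the separate analysis of Section~\ref{s.fssrhoinffast}. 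Once \eqref{e.2787} has been established, the successful coupling feeds into Task~(B) via the ergodic decomposition of Lemma~\ref{lem:presevolve}, matching weak limit points of $(\mu_n(\ell_n))_{n\in\N}$ component-wise and completing the verification of Assumption~(A9).
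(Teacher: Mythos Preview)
Your treatment of part~(a) is correct and in fact cleaner than the paper's presentation: citing Lemma~\ref{lem:pres}(b), Lemma~\ref{lem:erg}(b), Lemma~\ref{lem:allmeans} and Lemma~\ref{lem:deep} directly yields $\nu\in\CT^{\mathrm{erg},\blacksquare}_\theta$. The paper folds into its proof of (a) a two-time-scale construction (times $t_n^*$ and $\bar t_n$, with an intermediate event $(\ast\ast)$ controlling excursions out of $\G_n\times\{0,\dots,M_n\}$) that is really machinery for establishing the successful coupling $(\ast\ast\ast)$ needed in (b). Your separation of concerns is legitimate.

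For part~(b), however, your Gronwall step contains a genuine gap. You write that the derivative of $V_n$ splits into a non-positive contraction plus perturbations, and then invoke ``a Gronwall estimate with exponential rate $\chi$'', concluding that for $\ell_n\to\infty$ one needs the perturbative rates to decay faster than $e^{\chi\ell_n}$ grows. But if the contraction term is genuinely non-positive (as it is, by the computation in \cite[Section~5.3]{GdHOpr1} for the specific Lyapunov function $|\Delta x_i|+\sum_m K_m|\Delta y_{i,m}|$), then there is no $\chi V_n$ term on the right-hand side at all: the correct differential inequality is $V_n'(t)\le \epsilon_n^{\mathrm{migr}}(t)+\epsilon_n^{\mathrm{seed}}(t)$, and integration gives $V_n(\ell_n)\le \int_0^{\ell_n}\epsilon_n(s)\,\d s$, i.e.\ at worst \emph{linear} accumulation in $\ell_n$, not exponential. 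This matters, because under your stated exponential constraint the polynomial tail $\sum_{m>M_n}K_me_m\asymp M_n^{-\gamma\beta}$ could never beat $e^{\chi\ell_n}$, and your claim that ``in Regime~(I) this is comfortably achievable because \dots\ the tails are summable'' would be false.

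The paper's route for (b) avoids the issue by arguing perturbatively without Gronwall. For the migration discrepancy it refers to \cite[Section~4]{DGV95}, using that in Regime~(I) automatically $t_n/\beta^{**}_n\to 0$. For the seed-bank discrepancy it works on the event $\CA_n$ (no lineage visits a colour $>M_n$ before time $t_n$), noting that $\P_n(\CA_n)\to 1$ whenever $t_n/\beta^*_n\to 0$; since $\beta^*_n\ll\beta_n$ in Regime~(I), this leaves room for $\ell_n\to\infty$ with $\ell_n=o(\beta^*_n)$. For general $g\in\CG$ the paper falls back on the coupling/Lyapunov argument, but exploiting only that the non-positive part dominates and that the bounds depend on $g$ solely through $\|g\|$. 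If you drop the spurious exponential rate and use the linear-in-$t$ bound together with the $\CA_n$ argument (or equivalently the constraint $\ell_n\,\sum_{m>M_n}K_me_m\to 0$, which is implied by $\ell_n=o(\beta^*_n)$ only up to the exponent $\gamma$, so the event-based argument is sharper), your outline becomes sound.
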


\begin{proof}
Note that (a) and (b) do not imply that the conclusions in Lemma~\ref{l.2752} hold for every $\mu \in \CT^\mathrm{erg}_\theta$, which we used to obtain the claim in Task (B) for finite seed-banks. We need to restrict to $\mu \in \CT^{\mathrm{erg},\bullet}_\theta$ and construct a successful coupling by showing that the limit of $\mu(t)$ as $t\to\infty$ lies in $\CT^{\mathrm{erg},\blacksquare}_\theta$. This is achieved by considering two time scales, which we call $(t_n^*)_{n\in\N}$ and $(\bar{t}_n)_{n\in\N}$.

\medskip\noindent
(a) To prove part (a), we proceed as follows.
\begin{itemize}
\item 
Choose a sequence $(t_n^*)_{n\in\N}$ such that 
\begin{itemize}
\item[(i)] all weak limit points of $\{\mu S(t_n^*)\}_{n\in\N}$ lie in $\CT^{\mathrm{erg},\blacksquare}_\theta$,
\item[(ii)] all weak limit points of $\{\tilde\Phi_n[(\Phi_n\mu) S_n(t_n^*)]\}_{n\in\N}$ lie in $\CT^{\mathrm{erg},\blacksquare}_\theta$,
\item[(iii)] $\lim_{n\to\infty} t_n^* = \infty$ and $\lim_{n\to\infty} t_n^*/\beta_n = 0$.
\end{itemize}  
where $(S(t))_{t \geq 0}$ and $(S_n(t))_{t \geq 0}$ are the semi-groups of the infinite, respectively, finite system, and $(\tilde\Phi_n)_{n\in\N}$ are the extension operators defined in \eqref{extop} (see also the general set-up in Appendix~\ref{appB*}, in particular, \eqref{e3610}). In our setting, $\tilde\Phi_n$ is the periodic continuation from $\G_n$ to $\G$ in the geographic coordinate and the constant continuation by $\hat\theta^{M_n,n}$ from $\{0,\ldots,M_n\}$ to $\N_0$ in the seed-bank coordinate.
\item
Choose a sequence $(\bar{t}_n)_{n\in\N}$ satisfying $\lim_{n\to\infty} \bar{t}_n = \infty$ and $\lim_{n\to\infty} \bar{t}_n/\beta_n = 0$ such that the Markov jump process $Z$ on $\G \times \N_0$ with transition kernel $b^{(2)}(\cdot,\cdot)$ satisfies
$$
(\ast\ast) \qquad \lim_{n\to\infty} \P_0\Big(Z \text{ leaves } \G_n \times \{0,\ldots,M_n\} \text{ before time } \bar{t}_n\Big) = 0. 
$$
\end{itemize}
Property (i) holds for $\mu \in \CT^{\mathrm{erg},\bullet}_\theta$ by the ergodic theorem for $(S(t))_{t \geq 0}$ and properties of the equilibrium, stated in Lemma~\ref{lem:pres}. Property (ii) holds by Lemma~\ref{lem:deep}. Property (iii) is imposed to limit the time scale, so that we are in a position to carry out Task (B).

The choice of $t_n^*$ and $\bar{t}_n$ must achieve that, along any subsequence $(n_k)_{k\in\N}$, 
$$
(\ast\ast\ast) \qquad \lim_{k\to\infty} \CL\big[(z^{M_n,n}(t_{n_k}^*+\bar{t}_{n_k}),z(t_{n_k}^*+\bar{t}_{n_k})\big] = \Gamma
\quad \Longrightarrow \quad \Gamma\big(\{z_1,z_2  \in \mathrm{Diag}(E \times E)\}\big) = 1.
$$
On the time interval  $[0,t_n^*]$ the system converges along subsequences to a state in which we can couple the two sequences of seed-banks at every $i\in\G$ because the two components $(z^{M_n,n}_i,z_i)$ agree on the tail sigma-field even for the limit state, as follows from Lemma~\ref{lem:deep}. On the time interval $[t_n^*,\bar{t}_n]$ we want to use the bivariate dynamics for the finite, respectively, the infinite system with its Lyapunov function to obtain the successful coupling. We want to combine the two to arrive at $(\ast\ast\ast)$. Two problems arise:
\begin{itemize}
\item[--]
$(\ast)$ only holds for the weak limit points in (i) and (ii), and not necessarily for finite $n$.
\item[--] 
During the time interval $[0,t_n^*]$ discrepancies between the finite and the infinite system may build up in $\G_n \times \{A,(D_m)_{0 \leq m \leq M_n}\}$.  
\end{itemize} 
The second item will be dealt with in the proof of part (b). 

Concerning the first item, it suffices to show that at the end of the interval $[t_n^*,t_n^*+\bar{t}_n]$ the finite system looks like the equilibrium measure of the infinite system in whose domain of attraction a weak limit point of $((\Phi_n\mu) S_n(t_n^*))_{n\in\N}$ lies. This property is enough because by running the system for a total time $t_n^*+\bar{t}_n$ we can conclude that $(\ast\ast\ast)$ holds. To obtain said property, we have to argue that we can make the replacement at time $t_n^*$ for any weak limit point. In particular, we have to prove that any weak limit point of $((\Phi_n\mu) S_n(t_n^*))_{n\in\N}$ can be successfully coupled via the bivariate finite dynamics associated with $\CL[\bar{\nu}_{\theta^\bullet}]$, where $\theta^\bullet$ is the limit in probability of the conserved quantity of the measure $\mu$. For this we again use coupling. 

For every $k\in\N$, for the bivariate finite dynamics 
\begin{itemize}
\item[]
starting in $(\Phi_k\mu,\Phi_k\bar\mu)$, with $\mu \in \CR^{(2)}_\theta$ and $\bar\mu$ a weak limit point of $((\Phi_n\mu) S_n(t_n^*))_{n\in\N}$,
\end{itemize}
we have a Lyapunov function (recall \eqref{e2760}) that is well-defined and that is non-decreasing. We want to show that, for the bivariate dynamics of two finite systems,
\begin{equation}
\lim_{k\to\infty} \CL[(\Phi_k\mu)S_k(t^*_k),(\Phi_k\bar\mu) S_k(t^*_k)] = 0
\end{equation}  
when $\lim_{k\to\infty} t^*_k = \infty$. We know that for both marginals, with $\mu$ and $\bar\mu$ as marginal initial laws, 
\begin{equation}
\lim_{k\to\infty} \hat\theta^{M_k,k}(t^*_k) = \theta \quad \text{in law}
\end{equation}
when $\lim_{k\to\infty} t^*_k/\beta_k = 0$. Hence, under this condition, correlations between typical sites tend to zero as $k\to\infty$. This implies that a weak limit point of the bivariate dynamics, at time $t^*_k$ in the limit as $k\to\infty$, lies on the diagonal, and the Lyapunov function tends to zero. Indeed, the two configurations can be ordered in the limit, because they have the same value of $\hat\theta$ and are ergodic. In particular, this also holds when we consider times $(t^*_k +\bar{t}_k)_{k\in\N}$. We next put this together to get $(\ast\ast\ast)$.

Since the finite and the infinite bivariate dynamics constructed above allow for a coupling of $\Phi_k\bar\mu$ and $\mu \in \CR^{(2),\bullet}_\theta$ that is successful when run for a time $\bar{t}_k$ with $k\to\infty$, we use this is an ingredient to conclude that $\Phi_k\mu$ and $\nu_\theta$ can be successfully coupled when the bivariate dynamics is run for a time $t^*_k + \bar{t}_k$ with $k\to\infty$. The latter we may view as quadrivariate dynamics: namely, take the infinite bivariate dynamics whose components start from $\mu,\nu_\theta$ and the finite bivariate dynamics whose components start from $\Phi_k\mu,\Phi_k\nu_\theta$, and place them on the same probability space. The claim now follows by picking $t_k= t^*_k + \bar{t}_k$ and letting $k\to\infty$.

\medskip\noindent
(b) To prove part (b), we need to show that also the positive terms in the derivative of the Lyapunov function tend to zero as $n\to\infty$. Here there are two effects:
\begin{itemize}
\item[--]
The truncated migration on $\G_n$ is different from the migration on $\G$.
\item[--]
The truncated seed-bank $\{0,\ldots,M_n\}$ is different from the seed-bank $\N_0$. 
\end{itemize}
We need to show that both differences are negligible in the limit as $n\to\infty$. 

The first was already dealt with for systems without seed-bank, for which we refer to \cite[Section 4, pp.\ 2319--2322]{DGV95}. The same reasoning carries over to systems with seed-bank because in Regime (I) automatically $\lim_{n\to\infty} t_n/\beta_n^{**} = 0$ (recall that $\beta_n^{**} \gg \beta_n$ is the time scale on which two active lineages in the dual coalesce on the active layer $\G_n$). 

For the second we need the restrictions posed in Regime (I) (recall \eqref{scales}--\eqref{Mncond}) in order to be able to show that perturbations arising from the truncation of the seed-bank are negligible in the limit as $n\to\infty$. If $g$ is a multiple of the Fisher-Wright diffusion function, then we can use duality and restrict to the event $\CA_n$ defined in \eqref{Andef}, which is the event that a single Markov process does not visit a seed-bank with colour $>M_n$ until time $t_n$. The claim now follows from the fact that $\lim_{n\to\infty} \mathbb{P}_n(\CA_n) = 1$ when we choose $t_n$ such that  $\lim_{n\to\infty} t_n/\beta_n^* = 0$ (recall that $\beta_n^* \ll \beta_n$ is the time scale on which a single dormant lineage in the dual starting from the deepest seed-bank with colour $M_n$ becomes active on $\G_n$).

To get the claim for general $g\in\CG$, we need to use the coupling argument in \cite[Sections 5--6]{GdHOpr1}, which uses contraction via a Lyapunov function to show that the coupling is successful when $t_n^*$ has the above properties. As is clear from Section~\ref{s.classpres}, the function $g$ plays no role in the estimates as long as $\|g\|<\infty$.
\end{proof}


\subsection{Task (D)}
\label{ss.fourthtaskI}

To show that there is only \emph{one} limit point, we can again follow the \emph{abstract scheme} developed in \cite[Section 4]{DGV95}. We work with the tightness of the process $(\hat\theta^{\,M(n_k),n_k} (t \beta_{M(n_k),n_k}))_{t \geq 0}$ and the fact that the weak limit points must solve a \emph{well-posed} martingale problem (see Lemmas~\ref{l.3734}--\ref{l.3759}). The latter we can obtain from the very same argument as in Section~\ref{s.fssrhofin} for $\rho < \infty$, $M<\infty$. There we showed that $(\hat\theta^{\,M_n,n}(t \beta_{M_n,n}))_{t \geq 0}$ is a martingale for which we can determine the increasing process. Namely, in the formula for the increasing process in \eqref{e2694} we can apply a \emph{law of large numbers} to the empirical measure, which allows us to identify the limiting martingale problem as before. Here, the difficulty is that the active component of the empirical measure is negligible in the limit as $n\to\infty$. 

To circumvent this obstacle, we look at $\CE_{L_n}^{M_n,n}$, the truncated empirical measure that includes the active component and the first $L_n$ dormant components, with $L_n\in\N_0$ chosen appropriately  (see Section~\ref{sss.fastgrow}). For this quantity it suffices to obtain the increasing process and to describe the empirical measure in the weak topology. Indeed, for every $L\in\N_0$ the truncated empirical measure converges to the truncated equilibrium measure. Since the increasing process only depends on the active component, as before we get a unique limit point and so we have \emph{full convergence} as $n \to \infty$ (not just along subsequences).

To identify the process $(\Theta(t))_{t \geq 0}$ as the $\CF g$-diffusion, we can follow the same route as in Step 3 in Section~\ref{ss.fssmod2rhofin}, without any changes. Therefore we have verified (A10).


\section{Proofs: $\rho=\infty$ and fast growing seed-bank}
\label{s.fssrhoinffast}

In this section we prove Theorem~\ref{T.finsys2inf(2)}. The line of argument is different from that in Sections~\ref{s.fssrhofin}--\ref{s.fssrhoinfslow}, and requires new ideas because the conditional spatial ergodicity used in Section~\ref{s.fssrhoinfslow} for slow growing seed-bank fails at times of order $\beta^{**}_n$. Hence the argument in Appendix \ref{appB*} breaks down. The three time scales in Theorem~\ref{T.finsys2inf(2)} are treated in Section~\ref{ss.convequi}--\ref{ss.fix}, respectively. The initial law satisfies $\mu(0) \in \CT^{\mathrm{erg},\bullet}_\theta = \CR^{(2),\bullet}_\theta$ for some $\theta \in [0,1]$.


\subsection{Convergence to equilibrium on short time scales}
\label{ss.convequi} 

In this section we prove Theorem~\ref{T.finsys2inf(2)}(1). 

\medskip\noindent
{\bf 1.} Consider two independent Markov processes on $\S_n$ with transition kernel $b^{(2),n}(\cdot,\cdot)$, both starting in the active state at the origin. Let $\CE(t)$ and $\CE'(t)$ be the events that the Markov processes are active at time $t$, and let $T(t) = \int_0^t \d s\,1_{\CE(s)}$ and $T'(t) = \int_0^t \d s\,1_{\CE'(s)}$ be their total activity times up to time $t$. Then their \emph{total joint activity time} at time $T$ (i.e., the total time the two Markov processes are jointly active at the same site up to time $T$) is equal in law to  
\begin{equation}
\label{tjatfin}
I^{(2),n}(T) = \int_0^T \d t\,1_{\CE(t)}\,1_{\CE'(t)}\,1_{\{RW_1^{n,\uparrow}(T(t)) = RW_2^{n,\uparrow}(T'(t))\}}, 
\end{equation}
where $(RW_1^{n,\uparrow}(t))_{t \geq 0}$ and $(RW_2^{n,\uparrow}(t))_{t \geq 0}$ are two independent Markov processes on $\G_n$ with transition kernel $b^n(\cdot,\cdot)$. This is true because the transition rates between active and dormant do not depend on the location in geographic space.

We want to compare this quantity with the same quantity for the infinite system, given by
\begin{equation}
\label{tjatinf}
I^{(2)}(T) = \int_0^T \d t\,1_{\CE(t)}\,1_{\CE'(t)}\,1_{\{RW_1^\uparrow(T(t)) = RW_2^\uparrow(T'(t))\}}, 
\end{equation}
where $(RW_1^\uparrow(t))_{t \geq 0}$ and $(RW_2^\uparrow(t))_{t \geq 0}$ are two independent random walks on $\G$ with transition kernel $a(\cdot,\cdot)$. We can couple the transitions between active and dormant, provided no transition from $A \to D_m \to A$ occurs in the infinite system until time $T$ for any $m>M_n$. However, as long as $T = o(\beta^*_n)$, the probability of this event tends to zero as $n\to\infty$ (recall \eqref{Andef}). Therefore, under this coupling we have  
\begin{equation}
\begin{aligned}
&0 \leq I^{(2),n}(T) - I^{(2)}(T)\\ 
&= o(1) + \int_0^T \d t\,1_{\CE(t)}\,1_{\CE'(t)}
\left[1_{\{RW_1^{n,\uparrow}(T(t)) = RW_2^{n,\uparrow}(T'(t))\}} 
- 1_{\{RW_1^\uparrow(T(t)) = RW_2^\uparrow(T'(t))\}}\right]. 
\end{aligned}
\end{equation}
Put 
\begin{equation}
\Delta^n(T) = \hat{\E}_{(0,A),(0,A)}[I^{(2),n}(T) - I^{(2)}(T)],
\end{equation} 
where the expectation is with respect to the law of the coupling. Then 
\begin{equation}
0 \leq \Delta^n(T) = o(1) + \int_0^T \d t\,\,\E\,\left[1_{\CE(t)}\,1_{\CE'(t)}
\left[\sum_{i\in\G_n} a^n_{T(t)}(0,i)\,a^n_{T'(t)}(0,i) - \sum_{i\in\G} a_{T(t)}(0,i)\,a_{T'(t)}(0,i)\right]\right],
\end{equation}
where $\E$ denotes expectation with respect to the law of the process $(1_{\CE(t)},1_{\CE'(t)})_{t \geq 0}$ for the infinite system starting from $(1,1)$. Since for Model 2 with $\rho=\infty$ we have assumed that the transition kernel is symmetric (recall \eqref{sym}), this gives
\begin{equation}
\label{bds}
0 \leq \Delta^n(T) = o(1) + \int_0^T \d t\,\,\E\,\Big[1_{\CE(t)}\,1_{\CE'(t)}\,
\big[a^n_{T(t)+T'(t)}(0,0) - a_{T(t)+T'(t)}(0,0)\big]\Big].
\end{equation}

\medskip\noindent
{\bf 2.}
Our goal is to show that the right-hand side tends to zero as $n\to\infty$ as long as $T=o(\beta^{**}_n)$. To do so we will use that, as shown in \cite[Section 6.2]{GdHOpr1},
\begin{equation}
\label{Tasymp}
\begin{aligned}
&\lim_{t\to\infty} \frac{1}{t^\gamma}\,T(t) = V, 
\qquad 
\lim_{t\to\infty} \frac{1}{t^\gamma}\,T'(t) =  V' 
\quad\text{ in $\mathbb{P}$-probability},\\
&\lim_{t\to\infty} t^{1-\gamma}\,\mathbb{P}\big(\CE(t)\big) = \mathbb{E}[V],
\qquad 
\lim_{t\to\infty} t^{1-\gamma}\,\mathbb{P}\big(\CE'(t)\big) = \mathbb{E}[V'],
\end{aligned}
\end{equation}
where $V=W^{-\gamma}/\chi$, with $W$ a stable law random variable on $(0,\infty)$ with exponent $\gamma$, satisfying $\E[V]<\infty$, and $V'$ is an independent copy of $V$. We split the integral into two parts, 
\begin{equation}
\label{intsplit}
\begin{aligned}
\Delta^n_* &= \int_0^{o(\psi_n^{1/\gamma})} \d t\,\,\E\,\Big[1_{\CE(t)}\,1_{\CE'(t)}\,
\big[a^n_{T(t)+T'(t)}(0,0) - a_{T(t)+T'(t)}(0,0)\big]\Big],\\
\Delta^n_*(T) &= \int_{o(\psi_n^{1/\gamma})}^T \d t\,\,\E\,\Big[1_{\CE(t)}\,1_{\CE'(t)}\,
\big[a^n_{T(t)+T'(t)}(0,0) - a_{T(t)+T'(t)}(0,0)\big]\Big],
\end{aligned}
\end{equation}
and show that both parts vanish. 

\medskip\noindent
{\bf 3.} 
For the first integral in \eqref{intsplit} we use \eqref{afininfcompalt} to estimate
\begin{equation}
\label{bds1}
\Delta^n_* = \int_0^{o(\psi_n^{1/\gamma})} \d t\,\,\E\,\Big[1_{\CE(t)}\,1_{\CE'(t)}\,o\big(a_{T(t)+T'(t)}(0,0)\big)\Big],
\end{equation}
where we use that $T(t)+T'(t) \asymp t^\gamma = o(\psi_n)$ in $\P$-probability. Combining \eqref{Tasymp} and \eqref{bds1}, we get
\begin{equation}
\Delta^n_* = o\left(\int_0^{o(\psi_n^{1/\gamma})} \d t\,(1 \wedge t^{-2(1-\gamma)})\,a_{t^\gamma}(0,0)\right),
\end{equation}
where we use that $t \mapsto a_t(0,0)$ is regularly varying at infinity (recall \eqref{ass2}) to get rid of the factor $V+V'$ and the fluctuations of $T(t)+T'(t)$ on scale $t^\gamma$. Inserting the change of variable $s=t^\gamma$, we obtain
\begin{equation}
\Delta^n_* = o\left(\int_0^{o(\psi_n)} \d s\,(1 \wedge s^{-(1-\gamma)/\gamma})\,a_s(0,0)\right).
\end{equation}
But
\begin{equation}
\int_0^\infty \d s\,(1 \wedge s^{-(1-\gamma)/\gamma})\,a_s(0,0) < \infty 
\end{equation}
in the coexistence regime (recall \eqref{crinf}), and so we have shown that $\lim_{n\to\infty} \Delta^n_* = 0$. 

\medskip\noindent
{\bf 4.}
For the second integral in \eqref{intsplit} we use the definition of the mixing time in \eqref{pnmix} in combination with the comparison property in \eqref{afininfcomp} to estimate $C^{-1} |\G_n|^{-1} \leq a^n_s(0,0) \leq C|\G_n|^{-1}$ for $o(\psi_n) \leq s \leq T$ and some $C<\infty$. This gives
\begin{equation}
\label{intrepr}
\Delta^n_*(T) \asymp C|\G_n|^{-1} \int_{o(\psi_n)}^{T^\gamma} \d s\,(1 \wedge s^{-(1-\gamma)/\gamma}) 
\asymp |\G_n|^{-1} \left\{\begin{array}{ll}
T^{2\gamma-1}, &\gamma \in (\tfrac12,1],\\[0.2cm]
\log T, &\gamma = \tfrac12.
\end{array}
\right.
\end{equation}
Since the latter tends to zero as long as $T = o(|\G_n|^{1/(2\gamma-1)})$, respectively, $T = \e^{o(|\G_n|)}$ (recall \eqref{scales} and the remark below \eqref{Mncond}), we have shown that $\lim_{n\to\infty} \sup_{T = o(\beta^{**}_n)} \Delta^n_*(T) = 0$. Note that we need the assumption $\psi_n = o((\beta^{**}_n)^\gamma)$ because this guarantees that the divergence of the integral in \eqref{intrepr} indeed occurs on time scale $\beta^{**}_n$ and not later.  

\medskip\noindent
{\bf 5.} 
Combining the estimates Steps 2--4, we arrive at
\begin{equation}
\lim_{n\to\infty} \sup_{T = o(\beta^{**}_n)} \big[I^{(2),n}(T) - I^{(2)}(T)\big] = 0 \quad \text{ in probability}.
\end{equation}
Hence, up to time $o(\beta^{**}_n)$ the two Markov processes starting from $(0,A),(0,A)$ see no difference in their total joint activity time between the finite system and the infinite system. It is easy to extend this fact to arbitrary starting points $(0,R_1),(0,R_2)$ with $R_1,R_2 \in \{A,(D_m)_{0 \leq m \leq L_n}\}$. Indeed, we wait until the two Markov processes are jointly active for the first time, which occurs at a finite random time (whose law depends on $L_n$). At that time they start from $(I,A),(J,A)$ with $I,J \in \G_n$ some random locations. This amounts to replacing $a^n_s(0,0)$ and $a_s(0,0)$ by $a^n_s(I,J)$ and $a_s(I,J)$ in \eqref{bds}, \eqref{intsplit} and \eqref{bds1}. However, this does not affect the estimates in Steps 2 and 3 because of the approximation in \eqref{afininfcompalt} and the bound in \eqref{centralbds}, respectively.

\medskip\noindent
{\bf 6.}
The estimates in Steps 2--4 can be trivially extended to $k \in \N$ Markov processes, because for each of the $\binom{k}{2}$ pairs the discrepancy between the total joint activity times tends to zero in probability. In case $g = dg_{\mathrm{FW}}$, $d \in (0,\infty)$, the dual is available and is obtained by letting active Markov processes at the same site coalesce at rate $d$. Consequently, all the mixed moments at time $\bar{\beta}_n$ have the same limit in the finite system as in the infinite system as long as $\bar{\beta}_n \to \infty$ and $\bar{\beta}_n/\beta^{**}_n \to 0$. Hence there is local convergence to $\nu_\theta$, the equilibrium for the infinite system.    

\medskip\noindent
{\bf 7.}
For $g \in \CG$ we do not have a dual, and comparison duality (exploited in Section~\ref{sss.partclus}) does not work either because $\nu_\theta$ depends on $g$. However, as in Section~\ref{s.fssrhoinfslow}, we can follow the abstract scheme from Appendix~\ref{appB*}, as long as we consider times scales $\bar{\beta}_n = o(\beta^{**}_n)$. For this we have to work with the macroscopic variable, which on times scales $\bar{\beta}_n = o(\beta^{**}_n)$ is the constant process equal to $\theta$. We again have to check Assumptions (A1)--(A10). The key tools are once again Lemmas~\ref{lem:erg}--\ref{lem:deep}, which hold for times scales $\bar{\beta}_n = o(\beta^{**}_n)$, as shown in Section~\ref{ss.deep}. Steps 1-5 are needed to couple the finite and the infinite system and show decorrelation. 

\subsection{Partial clustering and switching on intermediate time scales}
\label{ss.partclussw}


\subsubsection{Partial clustering}
\label{sss.partclus} 

In this section we prove the partial clustering part of Theorem~\ref{T.finsys2inf(2)}(2). The proof works for $g\in\CG$ and does not need the dual because it is based on a second moment computation only. Recall \eqref{zudef} and write $u=(u^1,u^2)$ with $u^1 \in \G_n$ and $u^2 \in \{A,(D_m)_{0 \leq m \leq M_n}\}$.  

\begin{proof}
The proof is an adaptation of the proof of \cite[Lemma 5.5]{GdHOpr1}. Pick $\beta^{**}_n \ll \bar{\beta}_n \ll \beta^{*}_n$ ($= \beta_n$)
and $L \in \N_0$.

\medskip\noindent
{\bf 1.}
To prove asymptotic \emph{partial clustering} to depth $L_n$ we must show that, for any $\mu \in \CT$,
\begin{equation}
\label{partcl}
\lim_{n\to\infty} \sup_{u_1,u_2 \in \S_n^{L_n}} \E_{\Phi_n\mu}\left[z_{u_1}(\bar{\beta}_n)\big(1-z_{u_2}(\bar{\beta}_n)\big)\right] = 0.
\end{equation}
Indeed, this implies that, uniformly in $u_1,u_2 \in \S_n^{L_n}$,
\begin{equation}
\begin{aligned}
u_1=u_2\colon &\lim_{n\to\infty} \P_{\Phi_n\mu}\Big(z_{u_1}(\bar{\beta}_n) \in [0,\epsilon) \cup (1-\epsilon,1]\Big)=1 
\quad \forall\,\epsilon>0,\\ 
u_1 \neq u_2\colon &\lim_{n\to\infty} \P_{\Phi_n\mu}\Big(z_{u_2}(\bar{\beta}_n) \in (1-\epsilon,1] 
~\Big|~ z_{u_1}(\bar{\beta}_n) \in (1-\epsilon,1]\Big) = 1
\quad \forall\,\epsilon>0.
\end{aligned}
\end{equation}
Like in \cite{CG94}, we will give the proof by \emph{comparison duality}. Fix $\epsilon>0$. Since $g\in\CG$ we can choose a $c=c(\epsilon)>0$ such that $g(x) \geq \tilde{g}(x) = c(x-\epsilon)(1-\epsilon-x)$, $x\in [0,1]$. Note that $\tilde{g}(x)<0$ for $x \in [0,\epsilon) \cup (1-\epsilon,1]$, so we cannot replace $g$ by $\tilde{g}$ in the evolution equations. Instead we use $\tilde{g}$ as an auxiliary function. 

\medskip\noindent
{\bf 2.}
Let $(B(t))_{t \geq 0}$ be the Markov chain with state space $\{1,2\} \times \S_n \times \S_n$ and $B(t)=(B_0(t),B_1(t),B_2(t))$, evolving according to
\begin{equation}
\label{12MCrates}
\begin{aligned}
(1,u_1,u_1) &\to (1,u_3,u_3), \quad  \text{ at rate } b^{(2),n}(u_1,u_3),\\
(2,u_1,u_2) &\to
\begin{cases}
(2,u_3,u_2), &\text{ at rate } b^{(2),}(u_1,u_3),\\
(2,u_1,u_4), &\text{ at rate } b^{(2),n}(u_2,u_4),\\
(1,u_1,u_1), &\text{ at rate } c\,1_{\{u_1^1=u_2^1\}}\,1_{\{u_1^2=u_2^2=A\}}.
\end{cases}
\end{aligned}
\end{equation}
This describes two random walks, evolving independently according to the transition kernel $b^{(2),n}(\cdot,\cdot)$, that coalesce at rate $c>0$ when they are at the same site and both active. We put $B_0(t)=1$ when the two random walks have already coalesced by time $t$, and $B_0(t)=2$ otherwise. Let $\P_{(2,u_1,u_2)}$ denote the law of the Markov chain $(B(t))_{t \geq 0}$ that starts in $(2,u_1,u_2)$. Note that 
\begin{equation}
\begin{aligned}
\P_{(2,u_1,u_2)}\left(B_1(t)=u_3\right) &= b^{(2),n}_t(u_1,u_3),\\
\P_{(2,u_1,u_2)}\left(B_2(t)=u_4\right) &= b^{(2),n}_t(u_2,u_4).
\end{aligned}
\end{equation}
Below we will show that, for $L_n \ll \bar{\beta}_n^{1/\beta}$,
\begin{equation}
\label{P2lim}
\lim_{n \to \infty} \sup_{u_1,u_2 \in \S_n^{L_n}} \P_{(2,u_1,u_2)}\big(B_0(\bar{\beta}_n)=2\big) = 0.
\end{equation}

\medskip\noindent
{\bf 3.}
The evolution equations read
\begin{equation}
\label{m3}
\d z_{u_1}(t) = \sum_{u_3\in\S_n}b^{(2),n}(u_1,u_3)\,[z_{u_3}(t)-z_{u_1}(t)]\,\d t + \sqrt{g(z_{u_1}(t))}\,1_{\{R_i=A\}}\,\d w_i(t),
\qquad u_1 = (i,R_i) \in \S_n.
\end{equation}
Put
\begin{equation}
\label{zepsdef}
z^-_u = z_u-\epsilon, \qquad z^+_u = z_u+\epsilon.
\end{equation}
For $t\geq 0$, define $F_t\colon\,\{1,2\}\times \S_n\times \S_n \to\R$ by  
\begin{equation}
\label{Fdefs}
F_t(1,u_1,u_1) = \E_{\Phi_n\mu}\big[z^-_{u_1}(t)\big], \qquad 
F_t(2,u_1,u_2) =\E_{\Phi_n\mu}\big[z^-_{u_1}(t)z^+_{u_2}(t)\big].
\end{equation}
Using It\^o-calculus, we obtain from \eqref{m3} that
\begin{equation}
\label{Ito1}
\frac{\d}{\d t} F_1(1,u_1,u_1) = \sum_{u_3\in\S_n} b^{(2),n}(u_1,u_3)\,
\big[F_t(1,u_3,u_3)-F_1(1,u_1,
u_1)\big]
\end{equation}
and
\begin{equation}
\label{Ito2}
\begin{aligned}
&\frac{\d}{\d t} F_t(2,u_1,u_2) = \E_{\Phi_n\mu}\left[g(z_{u_1}(t))\right]\,1_{\{u_1^1=u_2^1\}}\,1_{\{u_1^2=u_2^2 = A\}}\\
&+\sum_{u_3\in\S_n} b^{(2),n}(u_1,u_3)\,\big[F_t(2,u_3,u_2)-F_t(2,u_1,u_2)\big] 
+ \sum_{u_4\in\S_n} b^{(2),n}(u_2,u_4)\,\big[F_t(2,u_1,u_4)-F_t(2,u_1,u_2)\big].
\end{aligned}
\end{equation}
Note that
\begin{equation}
\label{pro1}
\E_{\Phi_n\mu}\left[\tilde{g}(z_{u_1}(t))\right] = \E_{\Phi_n\mu}\left[cz^-_{u_1}(t)(1-z^+_{u_1}(t))\right]
= c F_t(2,u_1,u_1) .
\end{equation}

\medskip\noindent
{\bf 4.}
For $t \geq 0$, define $G_t\colon\,\{1,2\}\times \S_n \times \S_n\to\R$ by
\begin{equation}
\label{Gtdef}
G_t(1,u_1,u_1) =0, \qquad
G_t(2,u_1,u_2) =\E_{\Phi_n\mu}\left[\big(g(z_{u_1}(t))-\tilde{g}(z_{u_1}(t))\big)\right]
\,1_{\{u_1^1=u_2^1\}}\,1_{\{u_1^2=u_2^2A\}}.
\end{equation}
Let $\mathfrak{B}$ denote the generator of $(B(t))_{t \geq 0}$, and let $(\mathfrak{S}_t)_{t \geq 0}$ be the associated semigroup. Then, with the help of \eqref{pro1}--\eqref{Gtdef}, \eqref{Ito1}--\eqref{Ito2} can be written as
\begin{equation}
\frac{\d F_t}{\d t}=\mathfrak{B}F_t+G_t
\end{equation} 
and hence \cite[Theorem I.2.15]{Lig85}
\begin{equation}
F_t=\mathfrak{S}_tF_0+\int_0^t \d s\,\mathfrak{S}_{t-s}G_s.
\end{equation}
Since $G_t \geq 0$ for all $t\geq 0$, we obtain
\begin{equation}
\begin{aligned}
&F_t(2,u_1,u_2) \geq (\mathfrak{S}_t F_0)(2,u_1,u_2) = \E_{(2,u_1,u_2)}\left[F_0(B(t))\right]\\
&= \E_{(2,u_1,u_2)}\left[F_0(B(t))\,1_{\{B_0(t)=1\}}\right] + \E_{(2,u_1,u_2)}\left[F_0(B(t))\,1_{\{B_0(t)=2\}}\right].
\end{aligned}
\end{equation}
But
\begin{equation}
\begin{aligned}
\E_{(2,u_1,u_2)}\left[F_0(B(t))\,1_{\{B_0(t)=1\}}\right]
&= \E_{(2,u_1,u_2)}\left[\E_{\Phi_n\mu}[z^-_{B_1(t)}]\,1_{\{B_0(t)=1\}}\right]\\
&= \E_{(2,u_1,u_2)}\left[\E_{\Phi_n\mu}[z^-_{B_1(t)}]\right] - \E_{(2,u_1,u_2)}\left[\E_{\Phi_n\mu}[z^-_{B_1(t)}]\,1_{\{B_0(t)=2\}}\right],
\end{aligned}
\end{equation}
while
\begin{equation}
\E_{(2,u_1,u_2)}\left[F_0(B(t))\,1_{\{B_0(t)=2\}}\right]
= \E_{(2,u_1,u_2)}\left[\E_{\Phi_n\mu}[z^-_{B_1(t)}\,z^+_{B_1(t)}]\,1_{\{B_0(t)=2\}}\right].
\end{equation}
Since $F_t(1,u_1,u_1) = \E_{(2,u_1,u_2)}[\,\E_{\Phi_n\mu}[z^-_{B_1(t)}]]$, we get
\begin{equation}
\label{estfinal}
\begin{aligned}
&\E_{\Phi_n\mu}\left[z^-_{u_1}(t)\,(1-z^+_{u_2}(t))\right] = F_t(1,u_1,u_1)-F_t(2,u_1,u_2)\\
&\leq \E_{(2,u_1,u_2)}\left[\E_{\Phi_n\mu}[z^-_{B_1(t)}(1-z^+_{B_1(t)})]\,1_{\{B_0(t)=2\}}\right]
\leq \P_{(2,u_1,u_2)}(B_0(t)=2).
\end{aligned}
\end{equation}
Combining \eqref{P2lim} and \eqref{estfinal}, we obtain
\begin{equation}
\liminf_{n \to \infty} \sup_{u_1,u_2 \in \S_n^{L_n}} 
\E_{\Phi_n\mu}\left[(z_{u_1}(\bar{\beta}_n)-\epsilon)(1-\epsilon - z_{u_2}(\bar{\beta}_n))\right] = 0.
\end{equation}
Noting that $z_{u_1}(1-z_{u_2}) \leq (z_{u_1}-\epsilon)(1-\epsilon - z_{u_2}) +\epsilon(2-\epsilon)$ and letting $\epsilon\downarrow 0$, we get \eqref{partcl}.

\medskip\noindent
{\bf 5.} It remains to prove \eqref{P2lim}. Let $\tau_1,\tau_2$ denote the first wake-up times of two independent Markov processes with transition kernel $b^{(2),n}(\cdot,\cdot)$ starting in $u_1,u_2$. If $u_1^2 = u_2^2= A$, then $\tau_1=\tau_2=0$. Otherwise, $\tau_1,\tau_2$ are positive. Since the wake-up time from the deepest seed-bank that is being monitored is of order $1/e_{L_n} \asymp L_n^\beta$, we have that $\tau_1,\tau_2$ are at most of order $L_n^\beta$ uniformly in $u_1,u_2 \in \S^{L_n}_n$. Return to \eqref{tjatfin}. We found in Step 1 in Section~\ref{ss.convequi} that the average total joint activity time of the two Markov processes up to time $T$ in the finite system satisfies
\begin{equation}
\E_{u_1,u_2}[I^{(2),n}(T)] \asymp  |\G_n|^{-1} \int_{\tau_1 \vee \tau_2}^T \d s\,
[1 \wedge (s-\tau_1)^{-(1-\gamma)}]\, [1 \wedge (s-\tau_2)^{-(1-\gamma)}].
\end{equation} 
when $T \gg \psi_n^{1/\gamma}$. Assume without loss of generality that $\tau_1<\tau_2$. Then 
\begin{equation}
\E_{u_1,u_2}[I^{(2),n}(T)] =  |\G_n|^{-1} \int_0^{T-\tau_2}  \d s\,[1 \wedge (s+(\tau_2-\tau_1))^{-(1-\gamma)}]\, [1 \wedge s^{-(1-\gamma)}], 
\end{equation}
Pick $T = \bar{\beta}_n$, and note that $\bar{\beta_n} \gg \psi_n^{1/\gamma}$ by our assumption that $\psi_n = o((\beta^{**}_n)^\gamma)$ because $\bar{\beta}_n \gg \beta^{**}_n$. Since $\bar{\beta}_n \gg L_n^\beta$, we have (recall \eqref{scales})
\begin{equation}
\E_{u_1,u_2}[I^{(2),n}(\bar{\beta}_n)] \asymp |\G_n|^{-1} \int_0^{\bar{\beta}_n}  \d s\,[1 \wedge s^{-2(1-\gamma)}]
\asymp |\G_n|^{-1} (\bar{\beta}_n)^{2\gamma-1} = (\bar{\beta}_n/\beta^{**}_n)^{2\gamma-1}
\end{equation} 
for $\gamma \in (\tfrac12,1]$, which diverges as $n\to\infty$. For $\gamma=\tfrac12$ we have $\E[I^{(2),n}(\bar{\beta}_n) \asymp |\G_n|^{-1} \log \bar{\beta}_n = \log (\bar{\beta}_n/\beta^{**}_n)$, which again diverges as $n\to\infty$. Thus we have proved that the average total joint activity time of the two Markov processes on $\S_n$ diverges as $n\to\infty$. We will complete the proof by showing that the same is true in $\P$-probability. This settles \eqref{P2lim} because the rate of coalescence $c$ in \eqref{12MCrates} is strictly positive. 

\medskip\noindent
{\bf 6.}
The process $(\CE(s))_{s \geq 0}$ marks the times at which the Markov process on $\{A,(D_m)_{m\in\N_0}\}$ is in $A$. Since $A$ is a renewal state for this Markov process, we can think of $(\CE(s))_{s \geq 0}$ as a \emph{Markov renewal process}. Once more return to \eqref{tjatfin}. Let $u_1=u_2=(0,A)$. Pick any $\phi_n$ such that $\psi_n \ll \phi_n \ll \bar{\beta}_n^\gamma$, which is possible because $\psi_n = o((\beta^{**}_n)^\gamma)$ and $\bar{\beta}_n \gg \beta^{**}_n$. Put $t_{k,n} = (k\phi_n)^{1/\gamma}$, $k \in \N_0$, and let $K_n$ be such that $t_{K_n,n} \asymp \bar{\beta}_n$, i.e., $K_n \asymp \bar{\beta}_n^\gamma/\phi_n \gg 1$. By the \emph{sample-path} version of \eqref{Tasymp}, we have 
\begin{equation}
\begin{aligned}
&\lim_{n\to\infty} \left\{\CL\left[\Big(T(t_{k,n})-T(t_{k-1,n}),\CE(t_{k,n})\Big)_{1 \leq k \leq K_n}\right]
- \otimes_{1 \leq k \leq K_n} \left(\delta_{\tau_{k,n}^\gamma-\tau_{k,n}^\gamma}, \left[\big(1-\bar{\tau}_{k,n}^{-(1-\gamma)}\big)\,\delta_0 
+ \bar{\tau}_{k,n}^{-(1-\gamma)}\,\delta_1\right]\right)\right\}\\
&\qquad\qquad  = \delta_{0^{\N_0}}
\end{aligned}
\end{equation}
for some random variables $\tau_{k,n},\bar{\tau}_{k,n}$ satisfying $\tau_{k,n} \asymp t_{k,n} \asymp \bar{\tau}_{k,n}$, $1 \leq k \leq K_n$, in $\P_n$-probability. Here we use that
\begin{equation}
\label{condtkn}
t_{k,n} - t_{k-1,n} \gg (t_{k-1,n})^{1-\gamma}
\end{equation} 
to ensure that during each time interval $[t_{k-1,n},t_{k,n}]$ many renewals to $A$ take place. (The condition in \eqref{condtkn} is satisfied uniformly in $k$ because $t_{k,n}/t_{k-1,n} \asymp 1$ uniformly in $k$ and $\phi_n \gg 1$.) Moreover, by \eqref{pnmix},
\begin{equation}
\lim_{n\to\infty} \left\{\CL\left[\big(RW_1^{n,\uparrow}(T(t_{k,n}))\big)_{1 \leq k \leq K_n}\right] - U(\G_n)^{\otimes K_n} \right\} 
= \delta_{0^{\G \times \N_0}}
\end{equation}
with $U(\G_n)$ the uniform distribution on $\G_n$. Here we use that $(t_{k,n})^\gamma-(t_{k-1,n})^\gamma = \phi_n \gg \psi_n$, $1 \leq k \leq K_n$ to ensure that during each time interval $[t_{k-1,n},t_{k,n}]$ the Markov processes mix well over $\G_n$. It follows via the law of large numbers that  
\begin{equation}
I^{(2),n}(\bar{\beta}_n) \asymp |\G_n|^{-1} \sum_{1 \leq k \leq K_n} \big(t_{k,n}-t_{k-1,n})\,(t_{k,n})^{-2(1-\gamma)}
\quad \text{in $\P_n$-probability}.
\end{equation}
The sum scales like the integral $\int_0^{\bar{\beta}_n} \d t\,[1 \wedge t^{-2(1-\gamma)}] \asymp \bar{\beta}_n^{2\gamma-1}$, and so $I^{(2),n}(\bar{\beta}_n) \asymp \E_{u_1,u_2}[I^{(2),n}(\bar{\beta}_n)]$ in $\P_n$-probability. The same argument works for $u_1,u_2 \in \S_n^{L_n}$ because Markov processes starting at depth $\leq L_n$ become active in time $o(\bar{\beta}_n)$.        
\end{proof}


\subsubsection{Switching}
\label{sss.switch}

In this section we prove the switching part of Theorem~\ref{T.finsys2inf(2)}(2). 

Pick any time scale $\bar\beta_n$ and any seed-bank depths $1 \leq L^-_n < L^+_n \leq M_n$ satisfying 
\begin{equation}
\beta^{**}_n \ll \bar{\beta}_n \ll \beta^*_n, \qquad (L^-_n)^\beta \ll \bar{\beta}_n \ll (L^+_n)^\beta.
\end{equation}
We know from Section~\ref{sss.partclus} that partial clustering has occurred to depth $L^-_n$ at time $\bar{\beta}_n$. We show that none of the seed-banks with colour $L^+_n < m \leq M_n$ has changed up to time $\bar{\beta}_n$.

Consider a Markov process with transition kernel $b^{(2),n}(\cdot,\cdot)$. Because the transitions into and out of the seed-bank do not depend on the location of the Markov process, it suffices to look at the Markov process projected onto $E=\{A,(D_m)_{0 \leq m \leq M_n}\}$. Transitions $A \to D_m$ occur at rate $K_me_m$, transitions $D_m \to A$ occur at rate $e_m$. We know that the total activity time up to time $T$ is $\asymp T^\gamma$. (This is the asymptotics for the infinite seed-bank, but as long as $T\ll \beta^*_n$ the probability that in the infinite seed-bank a jump to a colour $>M_n$ occurs up to time $T$ is $o(1)$.) The rate to jump from $A$ to $D_m$ for some $L^+_n < m \leq M_n$ is (recall \eqref{ass3})
\begin{equation}
\sum_{L^+_n < m \leq M_n} K_me_m = \sum_{m>L^+_n} K_me_m - \sum_{m>M_n} K_me_m
\sim \frac{AB}{\gamma\beta}\big[(L^+_n)^{-\gamma\beta} - (M_n)^{-\gamma\beta}\big] \asymp (L^+_n)^{-\gamma\beta}.
\end{equation}
The total rate accumulated up to time $T$ is $\asymp T^\gamma (L^+_n)^{-\gamma\beta}$. For $T= \bar{\beta}_n$ this equals $(\bar{\beta}_n/(L^+_n)^\beta)^\gamma$, which is $o(1)$, and so the claim follows.

Pick $\mu(0) = \mu \in \CR^{(2),\bullet}_\theta$. We know from the above observations that 
\begin{equation}
\lim_{n\to\infty} \CL\Big[\Phi_n\mu(\bar{\beta}_n) - (1-\theta)\,\Phi_n\mu^{0,L^-_n,L^+_n}(\bar{\beta}_n) 
+ \theta\,\Phi_n\mu_1{1,L^-_n,L^+_n}(\bar{\beta}_n)\Big] 
= \delta_{(0,0^{\N_0})^\G},
\end{equation}
where $\Phi_n\mu^{0,L^-_n,L^+_n}(\bar{\beta}_n)$ and $\Phi_n\mu^{1,L^-_n,L^+_n}(\bar{\beta}_n)$ restricted to $\G_n \times \{(D_m)_{L^+_n < m \leq M_n}\}$ coincide with $\Phi_n\mu(\bar{\beta}_n)$, but restricted to $\G_n \times \{A,(D_m)_{0 \leq m \leq L^-_n}\}$ are `all $0$', respectively, `all $1$', according to an independent draw $\bar\Upsilon \in \{0,1\}$ with mean $\theta$. Clearly, both are elements of $\CR^{(2),\bullet}_\theta$, because the seed-banks beyond depth $L^+_n$ have not changed and $L^+_n \ll M_n$. Therefore, at any time $\tilde{\beta}_n$ satisfying $\bar{\beta}_n \ll \tilde{\beta}_n \ll \beta^*_n$, partial clustering has occurred in both, to some depth $o(\tilde{\beta}_n^{1/\beta})$ that is larger than $L^-_n$ (and possibly larger than $L^+_n$ as well), according to an independent draw $\tilde\Upsilon \in \{0,1\}$ with mean $\theta$. 

\subsection{Complete clustering on large time scales}
\label{ss.fix}

In this section we prove Theorem~\ref{T.finsys2inf(2)}(3). The proof again works for $g\in\CG$ because it uses the comparison duality exploited in Section~\ref{sss.partclus}. The core is the following lemma, which implies that the macroscopic variable $\hat{\theta}^{M_n,n}(\bar{\beta}_n)$ observed at time $\bar{\beta}_n$ converges to $0$ or $1$ in probability as $n\to\infty$ when $\bar{\beta}_n \gg \beta^*_n$. Since $t \mapsto \hat{\theta}^{M_n,n}(t)$ is a martingale, the convergence to 0 and 1 occurs at all times $t \geq \bar{\beta}_n$ simultaneously. From \eqref{mo3} we see that this implies complete fixation (= complete clustering). 

\begin{lemma}
For any $\mu \in \CT$, $g \in \CG$ and $\bar{\beta}_n \gg \beta^*_n$,
\begin{equation}
\lim_{n\to\infty} \E_{\Phi_n\mu}\Big[\hat{\theta}^{M_n,n}(\bar{\beta}_n)\big(1-\hat{\theta}^{M_n,n}(\bar{\beta}_n)\big)\Big] = 0.
\end{equation}
\end{lemma}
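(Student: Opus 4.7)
The plan is to reduce the claim to a uniform \emph{pairwise partial clustering} bound on the \emph{entire} truncated state space $\S_n = \G_n\times\{A,(D_m)_{0\leq m\leq M_n}\}$, and then re-run the comparison-duality argument of Section~\ref{sss.partclus} all the way down to the deepest seed-bank. Writing, as in \eqref{varestmac2},
\begin{equation}
\hat{\theta}^{M_n,n}(1-\hat{\theta}^{M_n,n}) = \frac{1}{|\G_n|^2\kappa_{M_n}}\sum_{u_1,u_2\in\S_n}K(u_1)K(u_2)\,z_{u_1}(1-z_{u_2})
\end{equation}
with $K(u)=1$ if $u=(i,A)$ and $K(u)=K_m$ if $u=(i,D_m)$, and using that the weights sum to $|\G_n|(1+\sum_{m=0}^{M_n}K_m)=|\G_n|\kappa_{M_n}^{1/2}$, one obtains the crude but sufficient bound
\begin{equation}
\E_{\Phi_n\mu}\!\left[\hat{\theta}^{M_n,n}(\bar{\beta}_n)(1-\hat{\theta}^{M_n,n}(\bar{\beta}_n))\right]
\leq \sup_{u_1,u_2\in\S_n}\E_{\Phi_n\mu}\!\left[z_{u_1}(\bar{\beta}_n)(1-z_{u_2}(\bar{\beta}_n))\right].
\end{equation}
Thus it is enough to prove that this pairwise correlation vanishes uniformly in $u_1,u_2\in\S_n$, including starting points arbitrarily deep in the seed-bank.

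Next, Steps 1--4 of Section~\ref{sss.partclus} apply unchanged (they require only $g\in\CG$ and use no restriction on the depth), reducing the pairwise bound to showing that the two-particle coalescing chain $(B(t))_{t\geq 0}$ on $\S_n\times\S_n$ satisfies $\P_{(2,u_1,u_2)}(B_0(\bar{\beta}_n)=2)\to 0$ uniformly in $u_1,u_2$. This in turn reduces to divergence in probability of the total joint activity time $I^{(2),n}(\bar{\beta}_n)$. The new input, valid in the present range $\bar{\beta}_n\gg\beta^*_n$, is the \emph{saturated} activity probability $\P_n(\CE(t))\P_n(\CE'(t))\asymp 1/\kappa_{M_n}$ for $t\gg\beta^*_n$, already recorded as the second line of \eqref{EEprimescal}. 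Combined with the mixing assumption $\psi_n=o((\beta^{**}_n)^\gamma)$ (which lets one replace $a^n_s(i,j)$ by $|\G_n|^{-1}$ for $s\gg\psi_n$), this yields
\begin{equation}
\E[I^{(2),n}(\bar{\beta}_n)] \asymp \frac{\bar{\beta}_n}{\kappa_{M_n}|\G_n|} \longrightarrow \infty,
\end{equation}
since $\kappa_{M_n}|\G_n|\ll\beta^*_n\ll\bar{\beta}_n$ in Regime (II) by the remark following \eqref{Mncondcrit}. For initial states deep in the seed-bank the wake-up delay is $O(1/e_{M_n})=O(\beta^*_n)=o(\bar{\beta}_n)$ and is absorbed. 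A Markov-renewal block argument identical to Step 6 of Section~\ref{sss.partclus} (chopping $[0,\bar{\beta}_n]$ into blocks of size $\phi_n^{1/\gamma}$ with $\psi_n\ll\phi_n\ll\bar{\beta}_n^\gamma$) then promotes convergence in mean to convergence in probability, uniformly in $u_1,u_2\in\S_n$. Since coalescence occurs at strictly positive rate $c$ during joint activity, this gives $\P_{(2,u_1,u_2)}(B_0(\bar{\beta}_n)=2)\to 0$.

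The main obstacle is the careful justification of the saturated regime: once $t\gg\beta^*_n$ one can no longer invoke the stable-law asymptotics \eqref{Tasymp}, because the full depth of the (truncated) seed-bank is effectively being sampled and the two lineages behave essentially like Markov chains whose active time grows linearly at rate $1/\sqrt{\kappa_{M_n}}$. One must therefore verify that mixing on $\G_n$ genuinely produces an effective coalescence rate of order $c/|\G_n|$ uniformly in $n$, and that stitching together the short-time regime $t\leq\beta^*_n$ (where the first line of \eqref{EEprimescal} applies) with the long-time regime $t>\beta^*_n$ introduces only negligible boundary corrections. This refinement of the second-moment computations underlying Lemma~\ref{lem:pres} and Lemma~\ref{lem:L2macr} is the non-routine ingredient needed to push the Section~\ref{sss.partclus} scheme past its native range $\bar{\beta}_n\ll\beta^*_n$.
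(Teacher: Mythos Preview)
Your proposal is correct and follows essentially the same route as the paper: expand $\hat\theta^{M_n,n}(1-\hat\theta^{M_n,n})$ as a weighted average over pairs $(u_1,u_2)\in\S_n\times\S_n$, bound by the uniform supremum of $\E[z_{u_1}(1-z_{u_2})]$, apply the comparison-duality of Section~\ref{sss.partclus} to reduce to $p_n(\bar\beta_n)=\sup_{u_1,u_2}\P_{(2,u_1,u_2)}(B_0(\bar\beta_n)=2)\to 0$, and conclude via divergence of the joint activity time using $M_n^\beta=\beta^*_n\ll\bar\beta_n$. The paper's proof is in fact terser than yours: where you explicitly invoke the saturated activity probability $1/\kappa_{M_n}$ from \eqref{EEprimescal} and argue that $\bar\beta_n/(\kappa_{M_n}|\G_n|)\to\infty$ in Regime~(II), the paper simply points back to Steps~5--6 of Section~\ref{sss.partclus} with $L_n$ replaced by $M_n$, leaving the passage through the saturated regime implicit.
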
 

\begin{proof}
Write (recall \eqref{betaKndef})
\begin{equation}
\E_{\Phi_n\mu}\Big[\hat{\theta}^{M_n,n}(t)\big(1-\hat{\theta}^{M_n,n}(t)\big)\Big] 
=  \frac{1}{|\G_n|^2} \frac{1}{\kappa_{M_n}} \sum_{u_1,u_2 \in \S_n} K(u_1)K(u_2)\,\E_{\Phi_n\mu}\big[z_{u_1}(t)\big(1-z_{u_2}(t)\big)\big].
\end{equation}
The comparison duality in Section~\ref{sss.partclus} gave us that, for every $t \geq 0$ and $\epsilon>0$ (recall \eqref{zepsdef}),
\begin{equation}
\E_{\Phi_n\mu}\big[z^-_{u_1}(t)\big(1-z^+_{u_2}(t)\big)\big] \leq \P_{(2,u_1,u_2)}(B_0(t)=2),
\end{equation}
where we recall the definition of the Markov process $(B(t))_{t \geq 0}$ on state space $\{1,2\} \times \S_n \times \S_n$ with transition rates defined in \eqref{12MCrates}. Let $p_n(t) = \sup_{u_1,u_2 \in \S_n} \P_{(2,u_1,u_2)}(B_0(t)=2)$. Then
\begin{equation}
\E_{\Phi_n\mu}\Big[\hat{\theta}^{M_n,n}(t)\big(1-\hat{\theta}^{M_n,n}(t)\big)\Big] 
\leq  [p_n(t)+\epsilon(2-\epsilon)]\,\frac{1}{|\G_n|^2} \frac{1}{\kappa_{M_n}} \sum_{u_1,u_2 \in \S_n} K(u_1)K(u_2) 
= p_n(t)+\epsilon(2-\epsilon).
\end{equation}
Letting $\epsilon \downarrow 0$, we see that it suffices to show that
\begin{equation}
\label{pnlim}
\lim_{n\to\infty} p_n(\bar{\beta}_n) = 0.
\end{equation}
But this follows from the same type of argument as in the last steps of the proof in Section~\ref{sss.partclus} with $L_n$ replaced by $M_n$, where we use that $M_n^\beta = \beta^*_n \ll \bar{\beta}_n$.
\end{proof}


\appendix


\section{Preservation of diffusion function class and reduction of volatility}
\label{appA*}

In this appendix we show that the renormalisation map $\CF$ for the infinite system preserves the class $\CG$ of proper diffusion functions (recall \eqref{gh6}). This is needed for the existence and uniqueness of the scaling limit in the finite-systems scheme. We further show that $\CF$ maps the Fisher-Wright diffusion function to a smaller multiple of itself (recall \eqref{gh20}). This says that the seed-bank reduces the volatility. 

We begin with Model 1, and proceed as in \cite{CGSh95}. Let
\begin{equation}
\label{mrw1}
b^{(1)}\big((i,R_i),(j,R_j)\big) = \left\{ \begin{array}{ll}
\hat{a}(i,j), & R_i = R_j =A,\\
Ke, & i = j, R_i=A, R_j = D, \\
e, & i = j, R_i=D, R_j = A,\\
0, &\mbox{otherwise}.
\end{array}
\right.
\end{equation} 
be the kernel for the motion of a lineage in the dual of Model 1. This describes a Markov process on $\G$ with migration kernel $\hat{a}(\cdot,\cdot)$ that becomes dormant (state $D$) at rate $Ke$ (after which it stops moving) and active (state $A$) at rate $e$ (after which it starts moving again). Let 
\begin{equation}
\label{hazard}
\hat{B}(i,j)=\int_{0}^{\infty} \d t \sum_{k\in\G} b^{(1)}_t\big((k,A),(i,A)\big)\,b^{(1)}_t\big((k,A),(j,A)\big),
\qquad i,j \in \G,
\end{equation}
where $b^{(1)}_t(\cdot,\cdot)$ is the time-$t$ transition kernel of the Markov process with transition kernel $b^{(1)}(\cdot,\cdot)$ in \eqref{mrw1}. This is the mean of the \emph{total joint activity time} of two copies of the Markov process starting from $(i,A)$ and $(j,A)$. The coexistence regime corresponds to $\hat{B}(0,0)<\infty$ (recall Section~\ref{ss.coreinf}). 
   
The following lemma is the seed-bank analogue of \cite[Lemma 2.11]{CGSh95}.

\begin{lemma}{\bf [Moments and coupling]}
\label{lem:cov} 
Suppose that $\hat{B}(0,0)<\infty$. Then
\begin{enumerate}
\item 
For all $i,j \in \G$,
\begin{equation}
\label{mm1}
\E_{\nu_\theta}[x_{i}]=\theta,\qquad \E_{\nu_\theta}[x_{i}x_j]=\theta^2+\hat{B}(i,j)\,\E_{\nu_\theta}[g(x_0)]. 
\end{equation} 
\item 
For every $\theta,\theta^\prime\in [0,1]$ with $\theta < \theta^\prime$ there exist a coupling $\P^{\theta,\theta^\prime}$ of the random variables $z=(z_i)_{i\in\G}$ and $z^\prime=(z_i^\prime)_{i\in\G}$ such that $\CL[z]=\nu_\theta$, $\CL[z^\prime]=\nu_{\theta^\prime}$ and $\P^{\theta,\theta^\prime}(x_i \leq x^\prime_i\,\,\forall\,i\in\G)=1$. Consequently, $\E^{\theta,\theta'}[|x_i-x^\prime_i|] = |\theta-\theta^\prime|$ for all $\theta,\theta^\prime\in [0,1]$.
\end{enumerate}
\end{lemma}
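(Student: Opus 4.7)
The plan is to deduce both parts from structural features of the dynamics together with estimates already developed for the infinite system in \cite{GdHOpr1}, with the dual Markov process \eqref{mrw1} providing the link between the microscopic evolution and the equilibrium moments.

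For part 1, I first observe that $\nu_\theta$ is translation invariant, so $i \mapsto \E_{\nu_\theta}[x_i]$ and $i \mapsto \E_{\nu_\theta}[y_i]$ are constant. Applying It\^o's formula to \eqref{gh2} in stationarity gives $e(\E_{\nu_\theta}[x_0]-\E_{\nu_\theta}[y_0])=0$, so $\E_{\nu_\theta}[x_0]=\E_{\nu_\theta}[y_0]$; combined with the identification of the density from \eqref{thetadef} this yields $\E_{\nu_\theta}[x_0]=\theta$. For the mixed moments, I apply the It\^o formula for $z_{u_1}z_{u_2}$ derived in \cite[Lemma 5.1]{GdHOpr1}, which gives exactly the decomposition \eqref{covexpr} for the time-$t$ covariance started from $\nu_\theta$:
\begin{equation}
\text{Cov}_{\nu_\theta}(z_{u_1},z_{u_2})
= \int_0^t \d s \sum_{k\in\G} b^{(1)}_{t-s}(u_1,(k,A))\,b^{(1)}_{t-s}(u_2,(k,A))\,\E_{\nu_\theta}[g(x_0)]
+ \sum_{u_3,u_4} b^{(1)}_t(u_1,u_3)\,b^{(1)}_t(u_2,u_4)\,\text{Cov}_{\nu_\theta}(z_{u_3},z_{u_4}).
\end{equation}
Since the left-hand side is independent of $t$ by stationarity, I let $t\to\infty$. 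In the coexistence regime the hypothesis $\hat{B}(0,0)<\infty$ combined with the ergodicity of $\nu_\theta$ (and the Liggett conditions verified for $\nu_\theta$ in \cite{GdHOpr1}) forces the second term to vanish, while the first term converges to $\hat B(i,j)\,\E_{\nu_\theta}[g(x_0)]$ for $u_1=(i,A)$, $u_2=(j,A)$, which yields \eqref{mm1}.

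For part 2, I construct a coupling by realizing $(x,y)$ and $(x',y')$ as strong solutions of the SSDE \eqref{gh1}--\eqref{gh2} driven by the \emph{same} family of Brownian motions $(w_i)_{i\in\G}$, starting from initial configurations that can be produced on a common probability space so as to be coordinatewise ordered with prescribed marginal densities $\theta,\theta'$ (for instance via product measures of Bernoulli-type, or by independent uniform constructions at density $\theta$ and $\theta'$). The comparison argument for Fisher-Wright type SDEs with Lipschitz diffusion coefficient (a standard fact for $g\in\CG$; the seed-bank terms are linear and monotone, hence preserve the ordering) shows that $x_i(t)\leq x'_i(t)$ and $y_i(t)\leq y'_i(t)$ almost surely for all $t\geq 0$ and $i\in\G$. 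Letting $t\to\infty$ and using the convergence of the two marginals to $\nu_\theta$ and $\nu_{\theta'}$ (established in \cite{GdHOpr1}), I pass to a tight weak limit along the coupled pair; the coordinatewise ordering is preserved by weak convergence on $E\times E$, giving a coupling $\P^{\theta,\theta'}$ with the stated property. The final identity $\E^{\theta,\theta'}[|x_i-x'_i|]=|\theta-\theta'|$ is then immediate from $x_i\leq x'_i$ a.s.\ together with part 1.

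The main obstacle will be justifying the passage to the limit in the covariance identity rigorously: controlling the second term requires that $b^{(1)}_t(u_1,\cdot)\otimes b^{(1)}_t(u_2,\cdot)$ smears out the correlations $\text{Cov}_{\nu_\theta}(z_{u_3},z_{u_4})$ uniformly, which is the content of property $(\ast)_\theta(2)$ verified for $\nu_\theta$ via the coupling machinery of Section~\ref{s.classpres}. Once that uniform decorrelation is in hand (and $\hat B(0,0)<\infty$ ensures the first integral converges absolutely), the identity drops out of the fixed-point equation above, and the monotone coupling then follows from the standard strong-solution comparison.
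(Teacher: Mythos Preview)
Your proposal is correct and follows essentially the same route as the paper. The paper's own proof consists of two citations: Part~1 is obtained by applying the It\^o covariance identity from \cite[Lemma~6.1]{GdHOpr1} (the Model~1 version is \cite[Lemma~5.1]{GdHOpr1}, which you cite) with $\nu_\theta$ as initial law and letting $t\to\infty$; Part~2 is obtained from the monotone coupling via identical Brownian motions carried out in \cite[Lemma~6.6, Section~6.3.1]{GdHOpr1}. Your write-up simply unpacks these citations, and your remark that the vanishing of the second term in the covariance identity amounts to the Liggett condition $(\ast)_\theta(2)$ for $\nu_\theta$ is exactly the mechanism behind the cited result.
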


\begin{proof}
Part 1 follows from \cite[Lemma 6.1]{GdHOpr1} by picking the equilibrium measure as the initial measure. Part 2 follows from \cite[Lemma 6.6, Section 6.3.1]{GdHOpr1}, where the coupling is achieved by using the same Brownian motions. 
\end{proof}

For Model 2,  the analogue of Lemma~\ref{lem:cov} holds with $b^{(1)}(\cdot,\cdot)$ replaced by the kernel for the motion of a lineage in the dual of Model 2 (for symmetric migration):
\begin{equation}
\label{mrw2}
b^{(2)}\big((i,R_i),(j,R_j)\big) = \left\{ \begin{array}{ll}
\hat{a}(i,j), &R_i = R_j = A,\\
K_m e_m, &i = j, R_i=A, R_j = D_m, m\in\N_0,\\
e_m, & i = j, R_i = D_m, R_j = A, m\in\N_0,\\
0, &\mbox{otherwise}.
\end{array}
\right.
\end{equation}
and in the dual of Model 3 (for symmetric migration):
\begin{equation}
\label{mrw3}
b^{(3)}((i,R_i), (j,R_j)) = \left\{ \begin{array}{ll}
\hat{a} (i,j), &R_i = R_j = A,\\
K_m e_m  \hat{a}_m(j,i), &R_i=A, R_j = D_m, m\in\N_0,\\
e_m \hat{a}_m(i,j), &R_i=D_m, R_j = A, m\in\N_0.
\end{array}
\right.
\end{equation} 
These kernels define appropriate analogues of $\hat{B}(i,j)$ in \eqref{hazard}.
  
Our main result in this appendix is the following theorem.

\begin{theorem}{{\bf [Preservation and reduction: Models 1--3]}}
\label{T.comp}
\begin{itemize}
\item[{\rm (a)}] 
If $g\in\CG$, then $\CF g\in\CG$.
\item[{\rm (b)}] 
If $g=dg_{\mathrm{FW}}$, $d \in (0,\infty)$, then $\CF g = d^\ast g_{\mathrm{FW}}$ 
with $d^\ast \in (0,d)$. 
\end{itemize}
\end{theorem}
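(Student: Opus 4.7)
\textbf{Proof plan for Theorem~\ref{T.comp}.}

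The strategy is to read off all three defining properties of $\CG$ (vanishing at the endpoints, strict positivity in the interior, and Lipschitz continuity) from the moment and coupling input provided by Lemma~\ref{lem:cov}, and then to solve a closed linear equation in the Fisher-Wright case.

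\emph{First, the boundary values.} At $\theta=0$ the equilibrium $\nu_0$ is the point mass on the all-$\diamondsuit$ configuration $(0,0^{\cdots})^{\G}$, since $\E_{\nu_0}[x_0]=0$ forces $x_0=0$ $\nu_0$-a.s.\ (and similarly for the dormant components by colour regularity). By symmetry, $\nu_1$ is concentrated on the all-$\heartsuit$ configuration. Using $g(0)=g(1)=0$, this gives $(\CF g)(0)=(\CF g)(1)=0$.

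\emph{Second, Lipschitz continuity.} Fix $\theta<\theta'$ in $[0,1]$ and take the order-preserving coupling $\P^{\theta,\theta'}$ of $\nu_\theta$ and $\nu_{\theta'}$ provided by Lemma~\ref{lem:cov}(2). With $L$ the Lipschitz constant of $g$,
\begin{equation*}
|(\CF g)(\theta')-(\CF g)(\theta)|
= \big|\E^{\theta,\theta'}[g(x'_0)-g(x_0)]\big|
\leq L\,\E^{\theta,\theta'}[|x'_0-x_0|] = L\,|\theta'-\theta|,
\end{equation*}
where the last equality is the second statement in Lemma~\ref{lem:cov}(2). Hence $\CF g$ inherits Lipschitz continuity from $g$, with constant at most $L$.

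\emph{Third, strict positivity on $(0,1)$ — this is the main obstacle.} Since $g(x)>0$ for $x\in(0,1)$, it suffices to show that in the coexistence regime $\nu_\theta(\{x_0\in(0,1)\})>0$ for all $\theta\in(0,1)$. Suppose, for contradiction, that $\nu_\theta$ is concentrated on $\{x_0\in\{0,1\}\}$ for some $\theta\in(0,1)$. Then $\E_{\nu_\theta}[x_0^2]=\E_{\nu_\theta}[x_0]=\theta$, while $g(x_0)=0$ a.s., so $(\CF g)(\theta)=0$. Substituting into the second moment identity \eqref{mm1} of Lemma~\ref{lem:cov}(1),
\begin{equation*}
\theta = \E_{\nu_\theta}[x_0^2] = \theta^2 + \hat{B}(0,0)\,(\CF g)(\theta) = \theta^2,
\end{equation*}
forcing $\theta\in\{0,1\}$, a contradiction. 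Therefore $\nu_\theta$ puts positive mass on $\{x_0\in(0,1)\}$, and $(\CF g)(\theta)>0$. This completes part~(a). The same argument applies to Models~2 and~3 with $b^{(1)}(\cdot,\cdot)$ replaced by $b^{(2)}(\cdot,\cdot)$ or $b^{(3)}(\cdot,\cdot)$ in the definition of $\hat{B}(0,0)$; colour regularity of $\nu_\theta$ (established in \cite{GdHOpr1}) ensures the boundary values behave as above.

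\emph{Fourth, the Fisher-Wright case.} For $g=dg_{\mathrm{FW}}$ we compute directly using \eqref{mm1}:
\begin{equation*}
(\CF g)(\theta) = d\,\E_{\nu_\theta}\!\left[x_0(1-x_0)\right]
= d\big(\theta-\E_{\nu_\theta}[x_0^2]\big)
= d\theta(1-\theta) - d\hat{B}(0,0)\,(\CF g)(\theta).
\end{equation*}
Solving the resulting linear equation for $(\CF g)(\theta)$ yields
\begin{equation*}
(\CF g)(\theta) = \frac{d}{1+d\hat{B}(0,0)}\,\theta(1-\theta) = d^{\ast}\,g_{\mathrm{FW}}(\theta),
\qquad d^{\ast} = \frac{d}{1+d\hat{B}(0,0)}.
\end{equation*}
Coexistence implies $\hat{B}(0,0)\in(0,\infty)$, whence $d^{\ast}\in(0,d)$, proving part~(b). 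The strict inequality $d^{\ast}<d$ is the quantitative expression of the volatility reduction caused by the seed-bank: every extra unit of joint activity time between dual lineages enlarges the denominator and shrinks the effective resampling rate.
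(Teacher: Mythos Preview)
Your proof is correct and follows essentially the same approach as the paper: both rely on the moment identity and the monotone coupling from Lemma~\ref{lem:cov}, and your computation for part~(b) is identical to the paper's. The only cosmetic difference is that for strict positivity on $(0,1)$ the paper simply asserts that $\nu_\theta$ charges $(0,1)$, whereas you derive this by contradiction from the second-moment formula~\eqref{mm1}; your argument is slightly more self-contained but uses exactly the same ingredient.
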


\begin{proof}
(a) We must check that $\CF g$ satisfies the constraints defining the class $\CG$ in \eqref{gh6}.
\begin{itemize}
\item 
If $\theta \in (0,1)$, then $\nu_\theta$ puts positive mass in $(0,1)$. Since $g(x)>0$ for all $x \in (0,1)$, it follows from \eqref{gh19} that $(\CF g)(\theta)>0$. 
\item
By \eqref{mm1},
\begin{equation}
(\CF g)(\theta)=\E_{\nu_\theta}[g(x_0)]=\frac{1}{\hat{B}(0,0)}\,\E_{\nu_\theta}[(x_0-\theta)^2].
\end{equation}  
Thus, if $(\CF g)(\theta)=0$, then by translation invariance we have $\nu_\theta=\delta_\theta$. Since $g(0)=g(1)=0$, this implies  that $(\CF g)(0)=(\CF g)(1) = 0$.
\item 
For any $\theta,\theta^\prime \in [0,1]$,
\begin{equation}
\begin{aligned}
\big|(\CF g)(\theta) - (\CF g)(\theta^\prime)\big| &= \big|\E^{\theta,\theta'}[g(x_0) - g(x^\prime_0)]\big|
\leq \E^{\theta,\theta'}[|g(x_0) - g(x^\prime_0)|]\\
&\leq \text{Lip}[g] \, \E^{\theta,\theta'}[|x_0 - x^\prime_0|] = \text{Lip}[g] \,|\theta-\theta^\prime|,
\end{aligned}
\end{equation}
with $\text{Lip}[g]$ the global Lipschitz constant of $g$. Hence $\text{Lip}[\CF g] \leq \text{Lip}[g]$. Since $g$ is globally Lipschitz, so is $\CF g$.
\end{itemize}

\medskip\noindent
(b) If $g=dg_{\mathrm{FW}}$, then 
\begin{equation}
(\CF g)(\theta) = \E_{\nu_\theta}[g(x_0)] = \E_{\nu_\theta}[dx_0(1-x_0)] 
= d \theta - d\big[\theta^2 + \hat{B}(0,0)(\CF g)(\theta)\big], 
\end{equation}
where we use \eqref{mm1}. Hence $(\CF g)(\theta) = d^\ast g$ with $d^\ast = d/(1+d\hat{B}(0,0))$.
\end{proof}


\section{General conditions for the finite-systems scheme} 
\label{appB*}

In this appendix we recall an \emph{abstract scheme} from \cite{CG94*} that lists general conditions for the validity of the finite-systems scheme. Together with ideas from \cite[Section 4]{DGV95}, this allows us in Sections~\ref{s.fssrhofin}--\ref{s.fssrhoinfslow} to prove the finite-systems scheme described in Section~\ref{s.scaling}. Model 2 with $\rho=\infty$ in regime (II) needs a different route, which is followed in Section~\ref{s.fssrhoinffast}.  

\begin{remark}{{\bf [Conversion of notation for abstract scheme with seed-bank]}}
\label{rem:translate}
{\rm In what follows we stick to the notation used in \cite{CG94*}. Since the latter was written for systems \emph{without} seed-bank, the reader must everywhere insert the following objects:

\bigskip\noindent
$\bullet$ $\rho<\infty$:\\[0.2cm]
\begin{tabular}{lll}
$\G$, $\G_n$        &$\to$     &$\G$, $\G_n$\\
$I$         &$\to$     &$[0,1] \times [0,1]$ (Model 1)\\
             &$\to$     &$[0,1] \times [0,1]^{\N_0}$, $[0,1] \times [0,1]^{M_n+1}$ (Model 2--3)\\
$\CJ$    &$\to$      &$[0,1]$\\
$E$       &$\to$      &$I^\G$\\
$\CT,\CT^\mathrm{erg},\CI$ &$\to$ &$\CT,\CT^\mathrm{erg},\CI$ in \eqref{laws}, \eqref{lawinv}\\
$\CT^\mathrm{erg}_\theta$ &$\to$ &$\CT^\mathrm{erg}_\theta$ in \eqref{erglaw1}, \eqref{erglaw2fin}\\
$\mu_n$       &$\to$     &$\mu_n$ in \eqref{choicelawn}\\
$\CT_n$       &$\to$     &$\CT_n$ in \eqref{trlinvn}
\end{tabular}

\bigskip\noindent
$\bullet$ $\rho=\infty$:\\[0.2cm]
\begin{tabular}{lll}
$\G$, $\G_n$      &$\to$     &$\G \times (A,D)$, $\G_n \times (A,D)$ (Model 1)\\  
                           &$\to$     &$\G \times \{(A,(D_m)_{m\in\N_0}\}$, 
                                             $\G_n \times \{(A,(D_m)_{0 \leq m \leq M_n}\}$ (Model 2--3)\\
$I$         &$\to$     &$[0,1]$\\
$\CJ$    &$\to$     &$[0,1]$\\
$E$       &$\to$     &$I^{\G \times (A,D)}$ (Model 1)\\
              &$\to$    &$I^{\G \times \{(A,(D_m)_{m\in\N_0}\}}$ (Model 2--3)\\
$\CT,\CT^\mathrm{erg},\CI$ &$\to$ &$\CT^\bullet,\CT^{\mathrm{erg},\bullet},\CI$ in \eqref{lawscr}, \eqref{lawinv}\\
$\CT^\mathrm{erg}_\theta$ &$\to$ &$\CT^{\mathrm{erg},\bullet}_\theta,\CT^{\mathrm{erg},\bullet,*},\CT^{\mathrm{erg},\blacksquare}_\theta$ in \eqref{CTcov},\eqref{erglaw2inf}, \eqref{ergbulletlaw}\\
$\mu_n$        &$\to$     &$\mu_n$ in \eqref{choicelawn}\\
$\CT_n$        &$\to$     &$\CT_n$ in \eqref{trlinvn}
\end{tabular}

\medskip\noindent
With these replacements, the abstract scheme applies to systems \emph{with} seed-bank. For $\rho<\infty$ the seed-bank can be incorporated via an extension of the single-component state space $I$, and no extension of the geographic space $\G$ is needed. For $\rho=\infty$, however, this is not enough and the geographic space $\G$ needs to be extended with the seed-bank space $\{(A,(D_m)_{m\in\N_0}\}$. This difference once more underpins the fact that for $\rho<\infty$ the effect of the seed-bank is minor, while for $\rho=\infty$ it is not. In the latter case, colour regularity is needed to control the deep seed-banks.    
}\hfill $\Box$
\end{remark}


\paragraph{General set-up.}

In order to be able to formulate the abstract theorem, we need to set up the framework and introduce the relevant notation. Let $\G$ be a countable set, let $I$ be a Polish space, and let $(Z(t))_{t\geq 0}$ be a Markov processes with state space $I^\G$ (endowed with the product topology) i.e., $Z(t)=(Z_i(t))_{i \in \G}$, with associated semigroup of transition operators $(S(t))_{t\geq 0}$. Let $(\G_n)_{n\in\N}$ be a projective systems of discrete finite groups that increases to $\G$, and let $(Z_n(t))_{t\geq 0}$, $n\in\N$, be Markov processes with state space $E \subseteq I^{\G_n}$ and semigroup $(S_n(t))_{t\geq 0}$, $n\in\N$. For $Z \in I^\G$, define the element $Z |_{\G_n} \in I^{\G_n}$ by the restriction $(Z|_{\G_n})_i=Z_i$, $i \in \G_n$. Suppose that there are extension operators $\tilde\phi_n\colon\,I^{\G_n} \to I^\G$, $n\in\N$, satisfying $(\tilde\phi_nZ_n)|_{\G_n}=Z_n$, and write $\tilde\phi_n$ also for the corresponding push forward acting on measures.
 
If $\mu$ is a measure on $I^\G$ and $\mu_n$ is a measure on $I^{\G_n}$, $n\in\N$, then write $\lim_{n\to\infty} \mu_n = \mu$ as $\lim_{n\to\infty} \tilde\phi_n \mu_n = \mu$. A function $f\colon\,I^\G \to \R$ is called tame when it is bounded, continuous and depends on only \emph{finitely many} coordinates. Write $\lim_{n\to\infty}\CL[\mu_n - \nu_n] = 0$ as $n \to \infty$ when $\lim_{n\to\infty} [\langle \mu_n,f \rangle - \langle \nu_n,f \rangle] = 0$ for all tame $f$.

We need four groups of assumptions.


\paragraph{\emph{$I$: The processes $Z_n(\cdot)$, $n\in\N$, are finite versions of $Z(\cdot)$.}}

This first set of assumptions makes precise the statement that $Z_n(\cdot)$, $n\in\N$, can be considered \emph{finite} versions of $Z(\cdot)$, and identifies certain classes of appropriate initial distributions $\CT$ and $\CT_n$ for $Z(t)$ and $Z_n(t)$.

\begin{itemize}
\item[\textbf{(A1)}] 
For each $Z(0) \in I^\G$, with $Z_n(0)= Z(0)|_{\G_n}$,
\begin{equation}
\label{e3556}
\lim_{n\to\infty} \CL \left[Z_n(t)\right] = \CL \left[Z(t)\right], \qquad \forall\,t \geq 0.
\end{equation}
\item[\textbf{(A2)}] 
There are sets of measures $\CT$ and $\CT_n$ on $I^\G$, respectively, $I^{\G_n}$, such that $\tilde\phi_n(\CT_n) \subset \CT$, and these sets are closed under the actions of the associate semigroups, i.e., for all $t \geq 0$, $\mu_n \in \CT_n$ implies $\mu_n S_n(t) \in \CT_n$ and $\mu \in \CT$ implies $\mu S(t) \in \CT$.
\end{itemize}


\paragraph{\emph{$II$: Basic ergodic properties of the infinite system.}} 

The second set of assumptions are concerned with the equilibrium states and the ergodic properties of the infinite system. Let $\CI$ denote the set of invariant measures for $Z(\cdot)$, i.e., all $\mu \in \CT^\mathrm{erg}$ such that $\mu S(t)=\mu$ for all $t \geq 0$.

\begin{itemize}
\item[\textbf{(A3)}] 
There is a Polish space $\CJ$ and a set of probability measures $\{\nu_\theta\colon\,\theta \in \CJ\}$ on $I^\G$ such that $\{\nu_\theta\colon,\theta \in \CJ\} \subset \CI$. For $\theta \in \CJ$, let $\CT^\mathrm{erg}_\theta$ be the set of all $\mu \in \CT^\mathrm{erg}$ such that $\lim_{t\to\infty} \mu S(t)= \nu_\theta$. (Typically, $\{\nu_\theta\colon\,\theta \in \CJ\}$ is the set of \emph{extreme points} of $\CI \cap \CT^\mathrm{erg}$.)
\item[\textbf{(A4)}] 
There is a random variable $\Theta$ on $I^\G$, called the \emph{macroscopic variable}, and functions $\hat \theta^m \colon\,I^{\G_n} \to \CJ$ that are estimators of $\Theta$, such that for all $\mu \in \CT^\mathrm{erg}$ the following limit exists (saying that $\hat \theta$ is a consistent estimator of $\theta$):
\begin{equation}
\label{e3572}
\lim_{n\to\infty} \hat \theta^n(Z|_{\G_n}) = \Theta(Z), \qquad \text{in law under } \mu.
\end{equation}
Furthermore, if $\Theta(Z)=\theta$ $\mu$-a.s., then $\mu \in \CT^\mathrm{erg}_\theta$ and, for all $\mu \in \CT$,
\begin{equation}
\label{e3576}
\mu= \int_\CJ \Gamma(\d\theta)\,\mu_\theta,
\end{equation}
where $\Gamma$ is the law of $\Theta(Z)$ with respect to $\mu$, and $\mu_\theta \in \CT^\mathrm{erg}_\theta$, $\theta \in \CJ$.
\end{itemize}


\paragraph{\emph{$III$: Time scale.}} 

In the third set of assumptions we impose certain regularity properties on $\hat \theta_n$, and identify a fundamental \emph{time scale} $\beta_n$ connected with $\hat \theta^n$. We write $\bar \theta^n(t)$ for $\hat \theta^n(Z_n(t))$.

\begin{itemize}
\item[\textbf{(A5)}] 
Let $(T_n)_{n\in\N}$ be any sequence tending to infinity. Suppose that $\CL[Z_n(0)] \in \CT_n$, and $\CL[Z(0)] \in \CT^\mathrm{erg}_\theta$ for some $\theta \in \CJ$. Suppose further that, along some subsequence $(n_k)_{k\in\N}$,
\begin{equation}
\label{e3589}
\lim_{k\to\infty} \CL \left[Z_{n_k}\big(T_{n_k}\big)\right] = \CL[Z].
\end{equation}
Then 
\begin{equation}
\label{e3593}
\lim_{k\to\infty} \CL\left[\hat \theta^{n_k} \big(Z_{n_k} (T_{n_k}) \big) \right] 
= \CL \left[\Theta(Z) \right].
\end{equation}
\item[\textbf{(A6)}] 
There is a sequence $(\beta_n)_{n\in\N}$ tending to infinity such that if $\CL[Z_n(0)] \in \CT_n$ and $\lim_{n\to\infty} \CL[Z_n(0)] = \CL[Z(0)] \in \CT^\mathrm{erg}$, then
\begin{itemize}
\item[--] 
the family of measures $\CL[\hat \theta^n(s\beta_n)_{s \geq 0}]$ on $D (\CJ, [0,\infty))$ is \emph{tight}, 
\item[--] 
all weak limit points $(\Theta(s))_{s \geq 0}$ have the property that $s \mapsto E[f(\Theta(s))]$ is continuous for all bounded continuous functions $f$ on $\CJ$.
\end{itemize}
We want that $\beta_n$ is proper, i.e., leads to a non-trivial limit process. This has to be checked afterwards. 
\end{itemize}


\paragraph{\emph{$IV$: Compactness and strengthened convergence properties.}} 

In the fourth set of assumptions we impose compactness and require a strengthened form of (A1), as well as a \emph{strengthened Feller property} for $(Z(t))_{t\geq 0}$.

\begin{itemize}
\item[\textbf{(A7)}] 
Fix $T < \infty$, and let $(t_n)_{n\in\N}$ satisfy $0 \leq t_n \leq T \beta_n$ for $n\in\N$. If $\CL[Z_n(0)] \in \CT^n$, then the family $\CL(Z_n(t_n))$, $n\in\N$, is tight and all its weak limit points belong to $\CT$.
\item[\textbf{(A8)}] 
If $\mu,\mu_n \in \CT^\mathrm{erg}$ are such that $\lim_{n\to\infty} \mu_n = \mu$ and $\lim_{n\to\infty} t_n = \infty$, then
\begin{equation}
\label{e3606}
\lim_{n\to\infty} [\mu_n S(t_n)-\mu S(t_n)] = 0.
\end{equation}
\item[\textbf{(A9)}] 
There is a sequence $(L_n)_{n\in\N}$ tending to infinity such that if $0 \leq t_n \leq L_n$ and $\mu_n \in \CT_n$, then
\begin{equation}
\label{e3610}
\lim_{n \to\infty} [\mu_n S_n(t_n)-(\tilde\phi_n \mu_n)S(t_n)] = 0.
\end{equation}
\end{itemize} 
Assumptions (A8) and (A9) should be viewed as statements about uniformity of the ergodic theorem in the initial measure and uniformity in the approximation of the infinite system by finite systems for large times. The most difficult assumptions to check are (A6), (A8) and (A9), and (A10) below.


\paragraph{Results.}

We are now ready to state our abstract theorem.

\begin{theorem}{\bf [Abstract theorem]}
\label{th.3624}
Suppose that $Z(t),Z_n(t)$, $n\in\N$, satisfy {\rm (A1)--(A9)}, and that $\CL(Z(0)) \in \CT_\theta$ for some $\theta \in \CJ$. Let $(n_k)_{k\in\N}$ be any sequence such that
\begin{equation}
\label{e3627}
\lim_{k\to\infty} \CL \left[\big(\hat \theta^{n_k}(s \beta_{n_k})\big)_{s \geq 0}\right] 
= \CL \left[(\Theta(s))_{s \geq 0}\right] \mbox{ in } D \left(\CJ,[0,\infty)\right).
\end{equation}
For any $s \in (0,\infty)$, if $T(n) \to \infty$ and $T(n)/\beta_n \to s$ as $n \to \infty$, then
\begin{equation}
\label{e3631}
\lim_{k\to\infty} \CL \left[Z_{n_k} \big(sT(n_k)\big)\right] 
= \int_\CJ \; P(\Theta(s) \in d\theta)\,\nu_\theta,
\end{equation}
where $P$ is the law of $(\Theta(s))_{s \geq 0}$.
\end{theorem}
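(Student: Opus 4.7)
The plan is to turn the heuristic \eqref{heu1}--\eqref{heu5} into a rigorous argument by a Markov restart at a suitable intermediate time. Pick $L_n\to\infty$ with $L_n/\beta_n\to 0$ and with $L_n$ dominated by the sequence supplied in (A9), and apply the Markov property of $Z_n$ at time $T(n)-L_n$ to write
\begin{equation}
\label{e.planrestart}
\CL[Z_n(T(n))] = \int_\CJ \mathbb{P}\bigl(\hat\theta^n(T(n)-L_n)\in \d\theta'\bigr)\,\mu_n^{\theta'}\,S_n(L_n),
\end{equation}
where $\mu_n^{\theta'}$ is a regular conditional version of $\CL[Z_n(T(n)-L_n)\mid \hat\theta^n(T(n)-L_n)=\theta']$. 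The two ingredients that need to be controlled are the mixing measure on the left factor and the propagated conditional law on the right factor.

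For the mixing measure, I would use that $T(n)-L_n = s\beta_n\,(1+o(1))$, so the assumed path convergence \eqref{e3627} of $\hat\theta^{n_k}$ on the time scale $\beta_{n_k}$ together with the continuity of $s\mapsto E[f(\Theta(s))]$ in (A6) forces $\CL[\hat\theta^{n_k}(T(n_k)-L_{n_k})] \Rightarrow \CL[\Theta(s)]$ (and likewise with $T(n_k)$ in place of $T(n_k)-L_{n_k}$). This provides the desired weight $P(\Theta(s)\in\d\theta)$ in the limit.

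For the inner propagation, I would proceed in three stages along an arbitrary subsequence of $(n_k)$. By (A7) applied on $[0,T(n)]$, the family $\CL[Z_n(T(n)-L_n)]$ is tight with limit points in $\CT$, so along a further subsequence the conditional laws $(\mu_n^{\theta'})_{n\in\N}$ admit weak limit points $\mu^{\theta'}$; by (A5) applied to $T_n=T(n)-L_n$, these limit points satisfy $\Theta=\theta'$ a.s., hence by (A4) they lie in $\CT^{\mathrm{erg}}_{\theta'}$. Then (A9), with $t_n=L_n$ admissible by construction, gives
\begin{equation}
\mu_n^{\theta'}\,S_n(L_n) - (\tilde\phi_n\mu_n^{\theta'})\,S(L_n) \longrightarrow 0,
\end{equation}
and (A8) applied to the pair $(\tilde\phi_n\mu_n^{\theta'},\mu^{\theta'})$ with $t_n=L_n\to\infty$ further identifies $(\tilde\phi_n\mu_n^{\theta'})\,S(L_n)$ with $\mu^{\theta'}\,S(L_n)$ in the limit. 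Since $\mu^{\theta'}\in\CT^{\mathrm{erg}}_{\theta'}$ and $L_n\to\infty$, property (A3) gives $\mu^{\theta'}\,S(L_n)\Rightarrow \nu_{\theta'}$. Plugging back into \eqref{e.planrestart}, testing against a tame function $f$, and using dominated convergence with respect to the weight identified in the previous paragraph yields the target mixture $\int_\CJ P(\Theta(s)\in\d\theta)\,\nu_\theta$.

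The main obstacle is justifying the interchange of the subsequential limit with the $\theta'$-integral in \eqref{e.planrestart}: the convergences $\mu_n^{\theta'}\,S_n(L_n)\Rightarrow \nu_{\theta'}$ are obtained only along subsequences and a priori pointwise in $\theta'$. I would resolve this by a standard tight\-ness--unique-limit argument: tameness of the test function reduces matters to bounded integrands, (A7) provides tightness of $\CL[Z_n(T(n))]$, and the uniqueness of the limit mixture (the same $\int P(\Theta(s)\in\d\theta)\,\nu_\theta$ is produced along every subsequence) forces full convergence, removing the need for a full joint continuity in $\theta'$. A secondary technical point is that (A5) requires convergence of the full law of $Z_{n_k}(T(n_k)-L_{n_k})$ rather than of the conditional law; this is handled by choosing the subsequence so that (A7) gives a genuine weak limit for the full law, then disintegrating that limit via (A4) to match it with the conditional construction of $\mu^{\theta'}$.
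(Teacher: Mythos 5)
You should know at the outset that the paper does not actually prove Theorem~\ref{th.3624}: it is recalled from \cite{CG94*}, and the only in-paper guidance is the heuristic \eqref{heu1}--\eqref{heu5} together with the remark that turning it into a rigorous argument (the several exchanges of limits) is precisely the content of \cite{CG94*}. Your proposal is that heuristic, executed with the assumptions in their intended roles: Markov restart at $T(n)-L_n$, convergence of the mixing measure from \eqref{e3627} plus the continuity property in (A6), and the chain (A9), (A8), (A3) for the propagation of the conditional law. So in approach you are aligned with the paper.

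However, two steps do not go through as written, and both are instances of the limit-interchange difficulty the paper flags. First, you invoke (A8) for the pair $(\tilde\phi_n\mu_n^{\theta'},\mu^{\theta'})$, but (A8) as stated requires both entries to lie in $\CT^{\mathrm{erg}}$: the shift-averaged periodic extension $\tilde\phi_n\mu_n^{\theta'}$ of a finite-volume conditional law is a nontrivial mixture of orbit measures and is not ergodic, and a weak limit point $\mu^{\theta'}$ of conditional laws is a priori only a mixture of ergodic measures sharing the value $\theta'$ --- conditioning on $\hat\theta^n$ fixes the macroscopic variable but does not produce ergodicity, and neither (A4) nor (A5) collapses such a mixture to a single element of $\CT^{\mathrm{erg}}_{\theta'}$. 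Second, your dominated-convergence step needs $\mu_n^{\theta'}S_n(L_n)\Rightarrow\nu_{\theta'}$ for a.e.\ $\theta'$ along a \emph{single common} subsequence, whereas your compactness argument yields a subsequence that may depend on $\theta'$; ``tightness plus uniqueness of the limit'' cannot close this by itself, since identifying the limit is exactly what is at stake. The standard repair --- the route taken in \cite{CG94*} and also by this paper at the analogous point, Lemma~\ref{lem:presevolve} --- is never to condition the prelimit system: extract via (A7) a weak limit $\mu\in\CT$ of the \emph{full} law at time $T(n)-L_n$, disintegrate $\mu$ via (A4) and Choquet's theorem into components $\mu_\theta\in\CT^{\mathrm{erg}}_\theta$, identify the mixing measure as $P(\Theta(s)\in\cdot)$ using (A5), (A6) and \eqref{e3627}, and only then apply (A9), (A8) (now legitimately to ergodic components) and (A3), finishing with dominated convergence over this single disintegration. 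Your closing paragraph gestures at exactly this restructuring, so the defect is one of execution rather than conception; but as written, the (A8) step and the interchange step are genuine gaps.
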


\noindent
Note that, because of (A6), sequences satisfying \eqref{e3627} exist. Suppose that we add the assumption: 
\begin{itemize}
\item[\textbf{(A10)}] 
The weak limit in \eqref{e3627} is unique.
\end{itemize}
Then the finite-systems scheme holds as claimed. The way we are able to check (A10) is as follows.

\begin{corollary}{\bf [Verification of (A10)]}
\label{cor.3636}
If a weak limit point of $\CL[(\hat\theta^{n_k}(s \beta_{n_k}))_{s \geq 0}]$ as $k\to\infty$ satisfies a well-posed martingale problem, independently of $(n_k)_{k \in \N}$, then \eqref{e3631} holds and $\CL[(\hat\theta^n(s\beta_n))_{s \geq 0}]$ converges as $n\to\infty$ to the solution of this martingale problem.
\end{corollary}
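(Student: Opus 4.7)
The plan is to apply a standard tightness-plus-uniqueness argument to upgrade the subsequential convergence supplied by Theorem~\ref{th.3624} into full convergence, and then to invoke that theorem along the full sequence. First I would note that Assumption~(A6), which is among the hypotheses inherited from Theorem~\ref{th.3624}, ensures tightness of $\{\CL[(\hat\theta^n(s\beta_n))_{s \geq 0}]\}_{n\in\N}$ in $D(\CJ,[0,\infty))$. By Prokhorov's theorem, every subsequence therefore admits a further sub-subsequence $(n_k)_{k\in\N}$ along which $\CL[(\hat\theta^{n_k}(s\beta_{n_k}))_{s \geq 0}]$ converges weakly to some limit $\CL[(\Theta(s))_{s \geq 0}]$, as required in \eqref{e3627}.

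Next I would exploit the hypothesis of the corollary: every such weak limit $\Theta$ solves the same well-posed martingale problem. Well-posedness means uniqueness of the solution, so any two subsequential limit points necessarily coincide in law. A tight sequence of probability measures with a unique weak limit point is itself convergent, so
\begin{equation}
\lim_{n\to\infty} \CL[(\hat\theta^n(s\beta_n))_{s \geq 0}] = \CL[(\Theta(s))_{s \geq 0}]
\end{equation}
in $D(\CJ,[0,\infty))$. This verifies the additional Assumption~(A10) that was missing from the purely subsequential statement of Theorem~\ref{th.3624}.

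Finally, since \eqref{e3627} now holds along the \emph{full} sequence with the same limit law, applying Theorem~\ref{th.3624} yields, for every $s>0$ and every $T(n)$ satisfying $T(n)\to\infty$ and $T(n)/\beta_n\to s$,
\begin{equation}
\lim_{n\to\infty} \CL[Z_n(sT(n))] = \int_\CJ P(\Theta(s)\in \d\theta)\,\nu_\theta,
\end{equation}
i.e.\ \eqref{e3631}. This delivers both conclusions of the corollary at once: path-level convergence of $(\hat\theta^n(s\beta_n))_{s \geq 0}$ to the unique solution of the martingale problem, and convergence of the finite-system marginals to a $\Theta(s)$-mixture of the extremal equilibria $\nu_\theta$.

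The hard part is not this abstract bootstrap, which is essentially a textbook compactness-plus-uniqueness upgrade, but the concrete verification of its hypothesis in the applications. In the setting of Sections~\ref{s.fssrhofin}--\ref{s.fssrhoinfslow}, identifying the well-posed martingale problem requires combining the increasing-process computation in \eqref{e2694} with ergodicity and the conditional convergence statements of Lemma~\ref{lem:presevolve}, so that any limit $\Theta$ is recognised as a continuous $[0,1]$-valued martingale with volatility $\CF g$; well-posedness then follows from the Lipschitz property of $\CF g\in\CG$ supplied by Theorem~\ref{T.comp}(a). Thus the real work lies in the verifications of (A1)--(A9) and in identifying the renormalised diffusion coefficient, not in the present corollary.
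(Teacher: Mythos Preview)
Your argument is correct and is exactly the standard compactness-plus-uniqueness bootstrap the paper has in mind: the paper does not spell out a proof of Corollary~\ref{cor.3636}, treating it as an immediate consequence of tightness from (A6), Prokhorov, and uniqueness of the martingale problem, with the detailed stochastic-analytic verification deferred to \cite[Section 4(d)]{DGV95} and \cite{CG94*}. Your write-up fills in precisely those steps, and your closing remark correctly locates the actual work in identifying the limiting martingale problem via Lemmas~\ref{l.3734}--\ref{l.3759} rather than in the corollary itself.
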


Once we have checked (A1)--(A10), we will want to verify that the macroscopic time scale $\beta_n$ is proper, i.e., at time $s\beta_n$ the finite system locally converges to $\nu_{\Theta(s)}$, the equilibrium of the infinite system with density $\Theta(s)$, where $(\Theta(s))_{s \geq 0}$ is a non-trivial random process with cadlag paths hitting the traps of the evolution as $s\to\infty$, which in our models are $0$ or $1$. 


\section{Fraction of time spent in the active state}
\label{appC*}

In this appendix we show that, in the limit as $M\to\infty$, the fraction of time spent in the active state until time $M^\beta$ is $\asymp f_M$ with
\begin{equation}
\label{acfrac}
f_M = \frac{1}{1+\sum_{0 \leq m \leq M} K_m}.
\end{equation} 
Because the transitions into and out of the seed-bank do not depend on the location of the Markov process, it suffices to look at the Markov process projected onto $E=\{A,(D_m)_{\N_0}\}$. Transitions from dormant states with colour $> M$ occur at rate $O(M^{-\beta})$ as $M\to\infty$. Hence these transitions may be ignored, at the cost of a finite correction factor, so that we only need to look at the projection onto $\mathcal{C}_M=\{A,(D_m)_{0 \leq m \leq M}\}$.

Let $(U_k)_{k\in\N}$ be the discrete-time Markov chain on $\mathcal{C}_M$ with transition probabilities $1$ for $D_m \to A$ and $p_m$ for $A \to D_m$ with
\begin{equation}
\label{trpr}
p_m = \frac{K_m e_m}{\sum_{0 \leq \ell \leq M} K_\ell e_\ell}, \qquad 0 \leq m \leq M. 
\end{equation}
The transition times are equal in distribution to 
\begin{equation}
A \to (D_m)_{0 \leq m \leq M}\colon\,\frac{1}{\sum_{0 \leq m \leq M} K_m e_m}\, Z,
\qquad 
D_m \to A\colon\,\frac{1}{e_m}\, Z',
\end{equation}
with $Z,Z' $ independent $\mathrm{Exp}(1)$ random variables. For $n \in \N$, let $T^{A}_n$ and $T^{D}_n$ denote the \emph{total} active time and dormant time after $n$ transitions out of and into $A$, when the Markov process starts in $A$. Then, in distribution,
\begin{equation}
T^{A}_n = \sum_{k=0}^{n-1} \frac{1}{\sum_{0 \leq m \leq M} K_m e_m}\, Z_k,
\qquad
T^{D}_n = \sum_{k=0}^{n-1} \frac{1}{e_{M_k}}\, Z'_k,
\end{equation}
where $(Z_k)_{k\in\N_0},(Z'_k)_{k\in\N_0}$ are sequences of independent $\mathrm{Exp}(1)$ random variables. Since $\mathcal{C}_M$ is finite, by \eqref{trpr} we have
\begin{equation}
\label{LLNalt}
\lim_{n\to\infty} \frac{1}{n} \sum_{k=0}^{n-1} \delta_{U_k} = \sum_{0 \leq m \leq M} p_m \delta_m \quad \text{in probability}. 
\end{equation}
Hence
\begin{equation}
\lim_{n\to\infty} \frac{1}{n} T^{A}_n = \mu^{A}, 
\qquad \lim_{n\to\infty} \frac{1}{n} T^{D}_n = \mu^{D}
\quad \text{in probability},
\end{equation}
with 
\begin{equation}
\mu^{A} = \frac{1}{\sum_{0 \leq m \leq M} K_m e_m},
\qquad  \mu^{D} = \frac{1}{\sum_{0 \leq m \leq M} K_m e_m} \sum_{0 \leq m \leq M} K_m.
\end{equation}
Note that
\begin{equation}
\mu^{D} = \mu^{A} \sum_{0 \leq m \leq M} K_m. 
\end{equation}
Consequently,
\begin{equation}
\lim_{n\to\infty} \frac{T^{A}_n}{T^{A}_n+T^{D}_n} = \frac{1}{1+\sum_{0 \leq m \leq M} K_m} = f_M 
\quad \text{in probability}.
\end{equation}
Hence a fraction $f_M$ of time is spent in the active state, where we note that until time $M^\beta$ \emph{many} transitions $A \to D_m$ occur for \emph{most} $0 \leq m \leq M$, so that the law of large numbers in \eqref{LLNalt} applies, at the cost of a finite correction factor.


\section{Speculations about clustering for fast growing seed-bank}
\label{appD*}

In this appendix we speculate about how partial and complete clustering may come about in Model 2 with $\rho=\infty$ in regime (II). We will argue that both on time scale $\beta^{**}_n$ and time scale $\beta^*_n$ the macroscopic variable moves towards fixation according to a \emph{jump process}, i.e., it follows a piecewise constant path that ends in $0$ or $1$. This is different from the diffusive clustering found in Model 1, Model 2 with $\rho<\infty$ and Model 2 with $\rho=\infty$ in regime (I) (see Fig.~\ref{fig:WFren}). 


\subsection{Partial clustering} 


\paragraph{Step 1: Hazard.}

Let $I(t) \in \{0,1\}$ be the indicator of the event that two independent Markov processes with transition kernel $b^{(2),n}(\cdot,\cdot)$, starting at a distance of order $n$, at time $t$ occupy the same colony in $\G_n$ and are jointly active. Let
\begin{equation}
\label{e2787}
\zeta_0 = 0, \quad \eta_0 = 0, \quad \zeta_k = \inf\{t > \eta_{k-1}\colon\,I(t)=1\}, \quad  
\eta_k = \inf\{t > \zeta_k\colon\,I(t)=0\}, \qquad k \in \N,
\end{equation}
be the successive times at which joint active occupancy is switched on and off. The successive intervals of joint active occupancy are $(\Delta_k)_{k\in\N}$ with $\Delta_k = [\zeta_k,\eta_k)$, and so $I(t) = 1$ if and only if $t \in \cup_{k\in\N} \Delta_k$. The number of completed intervals up to time $t$ is 
\begin{equation}
\label{e2795}
K(t) = \sup\{k\in\N\colon\,\eta_k \leq t\}. 
\end{equation}
We are interested in the \emph{total joint active occupancy time} up to time $t$,
\begin{equation}
\label{e2799}
C(t) = \sum_{k=1}^{K(t)} |\Delta_k| + (t-\zeta_{K(t)+1})\,1_{\{\zeta_{K(t)+1}<t\}}.
\end{equation}
This is the total time up to time $t$ during which the two Markov processes coalesce at rate $d$. For $t \to\infty$ the boundary term is negligible (because $(t-\zeta_{K(t)+1})\,1_{\{\zeta_{K(t)+1}<t\}} \leq |\Delta_{K(t)+1}|$). Therefore, the scaling of $C(t)$ is determined by the scaling of $K(t)$ and the law of $(|\Delta_k|)_{k\in\N}$. However, in order to find the scaling of $C(t)$ it is better to look at a \emph{larger time scale}. Indeed, what happens is that the two Markov processes experience \emph{bursts} of joint active occupancy times until they separate and meet again. These joint activity time intervals come in rapid succession, until the two Markov processes move far apart and loose track of each other. After that they both need to make order $|\G_n|$ steps until they meet again. In view of this scale separation, we divide time into two parts: the long stretches \emph{between} the burst and the short stretches \emph{during} the burst. 


\paragraph{Step 2: Bursts.}

We begin by focussing on the \emph{long stretches}. Let us discretise time and consider a renewal process $\zeta = (0,\zeta_1,\zeta_1+\zeta_2,\ldots)$ on $\N_0$ with i.i.d.\ increments such that $\P(\zeta_1<\infty) = 1$ and $\P(\zeta_1 = n) \sim C n^{-(1+\gamma)}$, $n\to\infty$. Let $\zeta'$ be an independent copy of $\zeta$, and put $\CG = \zeta \cap \zeta'$. Then also $\CG$ is a renewal process on $\N_0$, written $\CG = (0,\CG_1,\CG_1+\CG_2,\ldots)$. It was shown in \cite[Theorems 1.3 and 1.5]{AB16} that, for $\gamma \in (\tfrac12,1)$,
\begin{equation}
\label{jr1}
\P(\CG_1<\infty) = 1, \qquad \P(\CG_1 = n) \sim C^*n^{-(1+\gamma^*)}, \quad n \to\infty, 
\end{equation}
with
\begin{equation}
\label{jr2}
\gamma^* = 2\gamma-1, \qquad C^* =  \pi C^2\,\big[\gamma^*\sin(\pi\gamma^*)\big]\,
\big[\gamma\sin(\pi\gamma)\big]^{-2}.
\end{equation}
Returning to continuous time, we see that \eqref{jr1}--\eqref{jr2} tell us that if we \emph{ignore} the time lapses during which the two Markov processes \emph{are active} (which are negligible because they have finite mean), then the time lapses $U_1,U_2,\ldots$ between the successive times at which they \emph{become jointly active} are i.i.d.\ with
\begin{equation}
\label{Uscal}
\P(U_1<\infty) = 1, \qquad \frac{\P(U_1 \in \d t)}{\d t} \sim C^* t^{-(1+\gamma^*)}, \quad t \to \infty.
\end{equation} 
However, by \eqref{pnmix}, on average at only one out of $|\G_n|$ such times the two Markov processes occupy the same site. Hence, the time lapses $V_1,V_2,\ldots$ between the successive times at which they \emph{become jointly active at the same colony} are i.i.d.\ with 
\begin{equation}
\label{e2844}
\P(V_1 \in \d t) = \sum_{k\in\N} \frac{1}{|\G_n|}\,\left[1-\frac{1}{|\G_n|}\right]^{k-1}\,
\P\left(\sum_{\ell=1}^k U_\ell \in \d t\right).
\end{equation}
Putting $t = s|\G_n|^{1/\gamma^*}$, $k=\bar{s}|\G_n|$ and letting $n\to\infty$, we obtain 
\begin{equation}
\label{e2850}
\lim_{n\to\infty} \frac{\P(|\G_n|^{-1/\gamma^*}V_1 \in \d s)}{\d s} 
= \int_0^\infty \d \bar{s}\,\e^{-\bar{s}} \lim_{n\to\infty}
\frac{\P(|\G_n|^{-1/\gamma^*} \sum_{\ell=1}^{\bar{s}|\G_n|} U_\ell \in \d s)}{\d s}.
\end{equation}
It follows from \eqref{Uscal} that
\begin{equation}
\label{e2856}
\lim_{N\to\infty} \CL\left[N^{-1/\gamma^*} \sum_{\ell=1}^N U_\ell\right] = \CL[U^*]
\end{equation}
with $U^*$ a positive stable law random variable with exponent $\gamma^*$. Hence, putting $T^n_1 = |\G_n|^{-1/\gamma^*} V_1=V_1/\beta^{**}_n$, we find that
\begin{equation}
\label{Zlim}
\lim_{n\to\infty}  \frac{\P(T^n_1 \in \d s)}{\d s} = \int_0^\infty \d \bar{s}\,\e^{-\bar{s}}\, 
\frac{\P(\bar{s}^{1/\gamma^*} U^* \in d s)}{\d s}.
\end{equation}  
This says that $T^n_1$ converges in distribution to a random variable $T_1$ whose density is given by the right-hand side of \eqref{Zlim}. In terms of the Laplace transform, we get
\begin{equation}
\label{e2868}
\begin{aligned}
\E[\e^{-\lambda T_1}] &= \int_0^\infty \d s\,\e^{-\lambda s}\,\P(T_1 \in \d s)
= \int_0^\infty \d s\,\e^{-\lambda s}\,\int_0^\infty \d \bar{s}\,\e^{-\bar{s}}\,\P(\bar{s}^{1/\gamma^*} U^* \in d s)\\
&= \int_0^\infty \d \bar{s}\,\e^{-\bar{s}}\,\E\left[e^{-\lambda \bar{s}^{1/\gamma^*} U^*}\right]
=  \int_0^\infty \d \bar{s}\,\e^{-\bar{s}}\,\e^{-D(\lambda \bar{s}^{1/\gamma^*})^{\gamma^*}}\\
&= \int_0^\infty \d \bar{s}\,\e^{-\bar{s}}\,e^{-\bar{s}D\lambda^{\gamma^*}}
= (1+D\lambda^{\gamma^*})^{-1}, \qquad \lambda>0,
\end{aligned}
\end{equation}
for some $D \in (0,\infty)$ depending on $C^*$.


\paragraph{Step 3: Short stretches.}

Next we turn to the \emph{short stretches}. In the limit as $n\to\infty$, the total joint active occupancy times during the successive bursts, which we denote by $W_1,W_2,\ldots$, are equal in distribution to the total joint active occupancy time of two Markov processes on $\G$ starting from the origin. Indeed, for a burst to end the two Markov processes must move apart and loose track of each other (as they do on $\G$). It is only after they meet again (as they do on $\G_n$) and are jointly active that the next burst starts, which takes a much longer time than the typical time of a burst. Thus, the law of $W_1$ is the same as that of the \emph{total joint active occupancy times} we analyzed in \cite[Section 6.2]{GdHOpr1}. In particular, $\E[W_1]=I$ with $I$ the integral in \cite[Eq.~(5.19)]{GdHOpr1}, which by \cite[Eq.~(6.25)]{GdHOpr1} is finite because $I_{\hat{a},\gamma}<\infty$ in the regime of coexistence (recall \eqref{crinf}).

\begin{figure}[htbp]
\begin{center}
\setlength{\unitlength}{.5cm}
\begin{picture}(10,10)(1,0)
\put(0,0){\line(1,0){13}}
\put(0,0){\line(0,1){8}}
\put(0,0){\circle*{.25}}
\put(2,2){\circle*{.25}}
\put(5,3){\circle*{.25}}
\put(7,5){\circle*{.25}}
\put(10,6){\circle*{.25}}
\put(13.5,-.1){$s$} 
\put(-.6,8.5){$H(s)$}
\put(.9,-.7){$T_1$}
\put(3.2,-.7){$T_2$}
\put(5.8,-.7){$T_3$}
\put(8.2,-.7){$T_4$}
\put(10.8,-.7){$T_5$}
\put(2.2,0.9){$W_1$}
\put(5.2,2.3){$W_2$}
\put(7.2,3.9){$W_3$}
\put(10.2,5.2){$W_4$}
\qbezier[10](2,0)(2,1)(2,2)
\qbezier[15](5,0)(5,1.5)(5,3)
\qbezier[25](7,0)(7,2.5)(7,5)
\qbezier[30](10,0)(10,3)(10,6)
\qbezier[30](12,0)(12,3)(12,6)
{\thicklines
\qbezier(0,0)(1,0)(2,0)
\qbezier(2,2)(3.5,2)(5,2)
\qbezier(5,3)(6,3)(7,3)
\qbezier(7,5)(8.5,5)(10,5)
\qbezier(10,6)(11,6)(12,6)
}
\end{picture}
\vspace{0.7cm}
\caption{\small Accumulation of the joint activity time $s \mapsto H(s)$ of two Markov processes in the finite system on time scale $\beta^{**}_n$ in the limit as $n\to\infty$. The time lapses $T_1,T_2,\ldots$ are i.i.d.\ with law given by \eqref{e2868}, the increments $W_1,W_2,\ldots$ are i.i.d.\ with law given by the total joint activity time of two Markov processes in the infinite system.}
\label{fig:accum}
\end{center}
\end{figure}
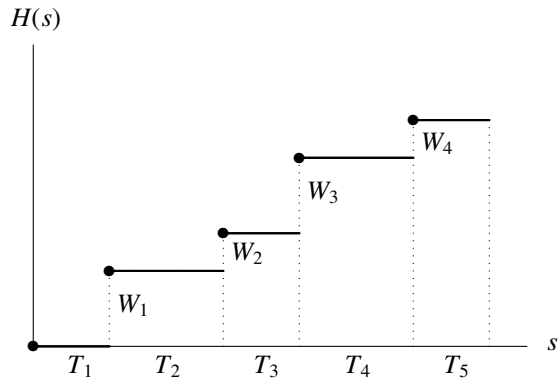


\paragraph{Step 4: Towards partial clustering.}

In view of Steps 1--3 we expect that, for $L_n = o((\beta^{**}_n)^{1/\beta})$,
\begin{equation}
\label{scaldens}
\lim_{n\to\infty} \CL\big[\big(\hat{\theta}^{L_n,n}(s\beta^{**}_n)\big)_{s>0}\big] = (\Theta^*(s))_{s>0}
\end{equation} 
with 
\begin{equation}
\Theta^*(s) = \Theta(H(s)), \qquad s>0, 
\end{equation}
where $(\Theta(s))_{s>0}$ is the diffusion on $[0,1]$ defined in \eqref{gh18}, with diffusion function $\CF g$. Thus, partial clustering is achieved via i.i.d.\ random jumps $W_1,W_2,\ldots$ occurring after i.i.d.\ random time lapses $T_1,T_2,\ldots$.   


\subsection{Complete clustering} 

We expect the exact same behaviour as in \eqref{scaldens} for $(\hat{\theta}^{M_n,n}(s\beta^*_n))_{s>0}$ on time scale $\beta^*_n$, modulo a constant that multiplies $T_1,T_2,\ldots$ and depends on the parameters controlling the deep seed-banks (recall in \eqref{ass3}). The hazard at time $s$ is again growing in a bursty manner, now for Markov processes that start from the deepest seed-banks.


\bibliography{seedbank}
\bibliographystyle{alpha}


\end{document}